\makeatletter \@addtoreset{equation}{section} \makeatother
\newtheorem{theorem}{Theorem}[section]
\newtheorem{lemma}{Lemma}[section]
\newtheorem{remark}{Remark}[section]
\newtheorem{corollary}[theorem]{Corollary}
\newcommand{\R}{\mathbb{R}}
\begin{document}

\title
{\bf Normalized solutions for Schr\"{o}dinger system with quadratic and cubic interactions}

\date{}

\maketitle
\begin{center}
\author{\bf {Xiao Luo}}
\footnote{Email addresses: luoxiao@hfut.edu.cn (Luo).}
\author{\bf{Juncheng Wei}}
\footnote{Email addresses: jcwei@math.ubc.ca (Wei).}
\author{\bf{Xiaolong Yang}}
\footnote{Email addresses: yangxiaolong@mails.ccnu.edu.cn (Yang).}
\author{\bf{Maoding Zhen}}
\footnote{Email addresses: maodingzhen@163.com (Zhen).}
\end{center}
\begin{center}
\footnotesize {1 School of Mathematics, Hefei University of Technology, Hefei, 230009, P. R. China}\\
\footnotesize {2 Department of Mathematics, University of British Columbia, Vancouver, B.C.,
V6T 1Z2, Canada}\\
\footnotesize {3 School of Mathematics and Statistics, Central China Normal University, Wuhan, 430079, P. R. China}\\
\footnotesize {4 School of Mathematics, Hefei University of Technology, Hefei, 230009, P. R. China}\\
\end{center}

\begin{abstract}
{In this paper, we give a complete study on the existence and non-existence of solutions to the following mixed coupled nonlinear Schr\"{o}dinger system
\begin{equation*}
\begin{cases}
-\Delta u+\lambda_{1}u=\beta uv+\mu_{1}u^{3}+\rho v^{2}u  & \text{in} \ \ \mathbb{R}^{N},\\
-\Delta v+\lambda_{2}v= \frac{\beta}{2}u^{2}+\mu_{2}v^{3}+\rho u^{2}v& \text{in} \ \ \mathbb{R}^{N},
\end{cases}
\end{equation*}
under the normalized mass conditions $\int_{\mathbb{R}^{N}}u^{2}dx=b^{2}_{1}$  and $\int_{\mathbb{R}^{N}}v^{2}dx=b^{2}_{2}$.
Here $b_1, b_2>0$ are prescribed constants, $N\geq 1$, $\mu_{1}, \mu_{2}, \rho>0$, $\beta\in \mathbb{R}$ and the frequencies $\lambda_{1},\lambda_{2}$ are unknown and will appear as Lagrange multipliers. In the one dimension case, the energy functional is bounded from below on the product of $L^2$-spheres, normalized ground states exist and are obtained as global minimizers.  When $N=2$, the energy functional is not always bounded on the product of $L^2$-spheres. We give a classification of the existence and nonexistence of global minimizers. Then under suitable conditions on $b_1$ and $b_2$, we prove the existence of normalized solutions.  When $N=3$, the energy functional is always unbounded on the product of $L^2$-spheres. We show that under suitable conditions on $b_1$ and $b_2$, at least two normalized solutions exist, one is a ground state and the other is an excited state. Furthermore, by refining the upper bound of the ground state energy, we provide a precise mass collapse behavior of the ground state and a precise limit behavior of the excited state as $\beta\rightarrow 0$. Finally, we deal with the high dimensional cases $N\geq 4$. Several non-existence results are obtained if $\beta<0$.  When $N=4$, $\beta>0$, the system is a mass-energy double critical problem, we obtain the existence of a normalized ground state and  its synchronized mass collapse behavior. Comparing with the well studied homogeneous case $\beta=0$, our main results indicate that the quadratic interaction term not only enriches the set of solutions to the above Schr\"{o}dinger system but also leads to a stabilization of the related evolution system.
}\medskip

\emph{\bf Keywords:}  Schr\"{o}dinger system; mixed couplings; normalized solution; mass collapse behavior.\medskip

\emph{\bf 2010 Mathematics Subject Classification:} 35J50, 35B33, 35R11, 58E05.
\end{abstract}

\section{Introduction and main results}

In this paper, we look for solutions to the following coupled Schr\"{o}dinger system
\begin{equation}\label{23}
\begin{cases}

-\Delta u+\lambda_{1}u=\beta uv+\mu_{1}u^{3}+\rho v^{2}u  & \text{in} \ \ \mathbb{R}^{N},\\

-\Delta v+\lambda_{2}v= \frac{\beta}{2}u^{2}+\mu_{2}v^{3}+\rho u^{2}v& \text{in} \ \ \mathbb{R}^{N},
\end{cases}
\end{equation}
satisfying the additional constraints
\begin{equation}\label{24}
\int_{\mathbb{R}^{N}}u^{2}dx=b^{2}_{1}\ \text{and} \ \int_{\mathbb{R}^{N}}v^{2}dx=b^{2}_{2}.
\end{equation}
Here, $b_{1}, b_{2}>0$ are prescribed constants, $N\geq 1$, $\mu_{1}, \mu_{2}, \rho>0$, $\beta\in \mathbb{R}$ and the frequencies $\lambda_{1},\lambda_{2} $ are unknown and will appear as Lagrange multipliers.

Problem \eqref{23}-\eqref{24} arises from the research of stationary exponentially localized bright solitary waves
for the following two-wave mixing system
\begin{equation}\label{H}
\begin{cases}

i\frac{\partial \Phi_{1}}{\partial z}+\Delta \Phi_{1}+\beta \Phi^{\ast}_{1}\Phi_{2}+s(|\Phi_{1}|^{2}+\rho|\Phi_{2}|^{2})=0  ,\\

i\frac{\partial \Phi_{2}}{\partial z}+\Delta \Phi_{2}-\gamma\Phi_{2}+\frac{\beta}{2} \Phi^{2}_{1}+s(\eta|\Phi_{2}|^{2}+\rho|\Phi_{1}|^{2})\Phi_{2}=0,
\end{cases}
\end{equation}
which describes the dynamics of beam propagation in lossless bulk $\chi^{(2)}$ media inhering cubic nonlinearity, under conditions for second-harmonic generation type-I. $z$ is the propagation distance coordinate, the superscript $\ast$ denotes the complex conjugate function and $\beta$ is a constant. The slowly varying complex envelope functions of the fundamental wave $\Phi_{1}=\Phi_{1}(x,z)$ and of the second harmonic $\Phi_{2}=\Phi_{2}(x,z)$ are assumed to propagate with a constant polarization, $\overrightarrow{e}_{1},\overrightarrow{e}_{2}$ along the $z$ axis. The electric field $\overrightarrow{E}=\overrightarrow{E}(\overrightarrow{R},Z,M)$ is given by
\begin{align}\label{H_{1}}
\overrightarrow{E}=\overrightarrow{E}(\overrightarrow{R},Z,M)=E_{0}(\Phi_{1}e^{i\theta_{1}}\overrightarrow{e}_{1}+2\Phi_{2}e^{i2\theta_{1}}\overrightarrow{e}_2),
\end{align}
where $\overrightarrow{R}=r_{0}x,\ Z=z_{0}z,\ \theta_{1}=k_{1}Z-\omega_{1}M$, $\omega_{1}$ is fundamental frequency and $(\Phi_{1},\  \Phi_{2})$ satisfies \eqref{H}. The real normalization parameters $E_{0},\ z_{0}$ and $ R_{0}$ are given by
\begin{align}\label{H_{2}}
E_{0}=\frac{4\widetilde{\chi}^{(2)}_{1}}{3|\widetilde{\chi}^{(3)}_{1k}|},\ \ z_{0}=2k_{1}r^{2}_{0}\ \text{and}\  r^{2}_{0}=\frac{3|\widetilde{\chi}^{(3)}_{1k}|}{16\mu_{0}\omega^{2}_{1}(\widetilde{\chi}^{(2)}_{1})^{2}},
\end{align}
where $\mu_{0}$ is the vacuum permeability and $k_{p}$ is the wave number at the frequency $\omega_{p}$. The real parameters $\gamma,s,\eta$ and $\rho$ are given by
\begin{align}\label{H_{3}}
\gamma=2z_{0}(2k_{1}-k_{2}), \ s=sign(\widetilde{\chi}^{(3)}_{1k}),\ \eta=16\frac{\widetilde{\chi}^{(3)}_{2k}}{\widetilde{\chi}^{(3)}_{1k}},\ \  \rho=8\frac{\widetilde{\chi}^{(3)}_{1c}}{\widetilde{\chi}^{(3)}_{1k}},
\end{align}
where $2k_{1}-k_{2}\ll k_{1}$ is the phase-mismatch parameter, $\widetilde{\chi}^{j}_{p}=\widetilde{\chi}^{j}(\omega_{p})$ denote the Fourier components at frequency $\omega_{p}$ of the $j$th order susceptibility tensor and  the scalar $\chi^{3}_{p,q}$ is the vectorial Fourier transform of $\chi^{3}$ $(p,q=1,2)$. Thus, $\widetilde{\chi}^{(2)}_{1}=\widetilde{\chi}^{(2)}_2$ represents the quadratic nonlinearity, $\widetilde{\chi}^{(3)}_{pk}$ and $\widetilde{\chi}^{(3)}_{1c}=\widetilde{\chi}^{(3)}_{2c}$ are the parts
of the cubic nonlinearity responsible for self-phase and cross-phase modulation, respectively. For more details about physical meaning of system \eqref{H}, one can refer to the papers \cite{B1,B2,B3,B4}.

Indeed, if we substitute
\begin{align}\label{H_{4}}
\Phi_{1}(x,z)=u(x)e^{i\lambda_{0}z}\ \text{ and }\ \Phi_{2}(x,z)=v(x)e^{i2\lambda_{0}z}
\end{align}
into \eqref{H}, then $(u,v)$ solves the following stationary system
\begin{equation}\label{H_{5}}
\begin{cases}

-\Delta u+\lambda_{1}u=\beta uv+s(u^{2}+\rho v^{2}) u & \text{in} \ \ \mathbb{R}^{N},\\

-\Delta v+\lambda_{2}v= \frac{\beta}{2}u^{2}+s(\eta v^{2}+\rho u^{2})v& \text{in} \ \ \mathbb{R}^{N},
\end{cases}
\end{equation}
with $\lambda_1=\lambda_0$, $\lambda_2=4\lambda_0+\gamma$ and $s=\pm1$. In this paper we consider the focusing case, i.e., $s=1$ in \eqref{H_{5}}. After rescaling and renaming the parameters, we obtain system \eqref{23}.

Motivated by the fact that the $L^{2}$-norm is a preserved quantity of the evolution, (see \cite{B3}), we are interested in searching solutions to \eqref{23} with prescribed $L^2$-norm--the so-called normalized solutions to (\ref{23}). It is standard  that the solutions of \eqref{23}-\eqref{24} can be obtained as critical points of the energy functional
$$
J_{\beta}(u,v)=\frac{1}{2}\int_{\mathbb{R}^{N}}(|\nabla u|^{2}+|\nabla v|^{2})dx-\frac{1}{4}\int_{\mathbb{R}^{N}}(\mu_{1}u^{4}+\mu_{2}v^{4}+2\rho u^{2}v^{2})dx-\frac{\beta}{2}\int_{\mathbb{R}^{N}}u^{2}vdx,
$$
on the constraint space $\mathrm{T}_{b_{1}}\times \mathrm{T}_{b_{2}} $, where for $b\in\mathbb{R}$ we define
$$\mathrm{T}_{b}:=\big\{u\in H^{1}(\mathbb{R}^{N}):\int_{\mathbb{R}^{N}}u^{2}=b^{2}\big\}.$$

In the last ten years, the study of normalized solutions for Schr\"{o}dinger equations or systems has received lots of attention. However almost all the results deal with the cubic interactions.
When $\beta=0,\ N=1,\ \rho>0$, N. Nguyen and Z. Wang in \cite{nw1} proved the existence of normalized solutions to problem \eqref{23} by  minimizing the corresponding energy functional constrained on the product of the $L^{2}$-sphere and using concentration-compactness arguments, they also studied the stability properties of these solutions. Since the corresponding constrained functional is unbounded both from above and from below on the $L^{2}$-sphere, the approach used in \cite{nw1} does not work for problem \eqref{23} with $N=3$. When $\beta=0, \ N=3$, T. Bartsch, L. Jeanjean and N. Soave \cite{BJJN}  proved that for arbitrary masses $b_{i}$ and positive parameter $\mu_{i}$, there exists $\rho_{2}>\rho_{1}>0$ such that for both $0<\rho<\rho_{1}$ and $\rho>\rho_{2}$, system \eqref{23}-\eqref{24} has a positive radial solution.
T. Bartsch and N. Soave in \cite{TBNS} proved the existence of at least one normalized solution to problem \eqref{23} in the case of $\rho<0$ by a new approach based on the introduction of a natural constraint associated to problem \eqref{23}, and in \cite{TBNSS}, they proved the existence of infinitely many solutions of problem \eqref{23} with $\mu_{1}=\mu_{2}>0$ and $\rho \leq -\mu_{1}$ by using a suitable minimax argument. Later, by using bifurcation theory and the continuation method, T. Bartsch, X. Zhong and W. Zou \cite{BZZ20} obtained the existence of normalized solutions for any given $b_{1},b_{2}>0$ for $\rho$ in a large range.  They also got a result about the nonexistence of positive solutions which shows that their existence theorem is almost optimal. By using standard Ljusternik-Schnirelmann theory,  when $\beta=0$, the authors in \cite{nttv,htst} considered problem \eqref{23} in bounded domains under the case $\mu_{1}$, $\mu_{2}$, $\rho<0$, they proved the existence of infinitely many normalized solutions and occurrence of phase-separation as $\rho\rightarrow -\infty$. In \cite{bnhtgv}, B. Noris et al. studied problem \eqref{23} in bounded domains of $\R^{N}$, or the problem with trapping potentials in the whole space $\R^{N}$ (the presence of a trapping potential makes the two problems essentially equivalent) with $N\leq 3$. In both cases, they proved the existence of positive solutions with small masses $b_{1}$ and $b_{2}$, and the orbital stability of the associated solitary waves. When $\beta=0, \ N=2$, Guo et al. in \cite{glwz,glwz1,GZZ} considered the existence, non-existence, uniqueness and asymptotic behavior of solutions to problem \eqref{23}-\eqref{24} with certain type of trapping potentials.

Since competing quadratic and cubic nonlinearities is a general physical phenomenon, it is important to know the effect of such a competition on normalized solutions. In \cite{WX20}  the authors gave a  first study to problem (\ref{23}) with both quadratic and cubic interactions (without the mass constraints). In the present paper, we give a complete study on  the existence of normalized solutions in the less studied case $\beta\neq0.$ First, we consider the one dimensional case, in which the energy functional is bounded from blow on the product of $L^2$-spheres $\mathrm{T}_{b_{1}}\times \mathrm{T}_{b_{2}} $(see Lemma \ref{LA43}), so we define
\begin{equation}\label{eq1.4}
m_{\beta}(b_{1},b_{2})=\inf_{(u,v)\in \mathrm{T}_{b_{1}}\times \mathrm{T}_{b_{2}}}J_{\beta}(u,v)
\end{equation}
and then we prove the following result
\begin{theorem}\label{Th4}
If $N=1$, $\mu_{1}, \mu_{2}, \rho>0$, then for every $\beta>0$, $-\infty<m_{\beta}(b_{1},b_{2})<0$ is achieved. In addition any minimizing sequence for \eqref{eq1.4} is, up to translation, strongly convergent in $H^{1}(\mathbb{R})\times H^{1}(\mathbb{R}) $ to a solution of \eqref{23}-\eqref{24}.
\end{theorem}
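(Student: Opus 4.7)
\textbf{Proof plan for Theorem \ref{Th4}.}
The plan is to combine direct minimization with Lions' concentration--compactness, exploiting the fact that in dimension one every nonlinear term in $J_\beta$ is mass--subcritical. First, to show $m_\beta(b_1,b_2)>-\infty$ and coercivity on $T_{b_1}\times T_{b_2}$, I would apply the one-dimensional Gagliardo--Nirenberg inequality $\|u\|_4^4\le C\|u'\|_2\|u\|_2^3$ together with $\int u^2v\le\|u\|_4^2\|v\|_2$ and $\int u^2v^2\le\|u\|_4^2\|v\|_4^2$: on $T_{b_1}\times T_{b_2}$ these produce
\[
J_\beta(u,v)\ \ge\ \tfrac12\bigl(\|u'\|_2^2+\|v'\|_2^2\bigr)-C_1\bigl(\|u'\|_2+\|v'\|_2\bigr)-C_2\bigl(\|u'\|_2^{1/2}+\|v'\|_2^{1/2}\bigr),
\]
which is bounded below and coercive in the kinetic norms. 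For the upper bound $m_\beta(b_1,b_2)<0$ I would use the $L^2$-preserving dilation $u_t(x)=t^{1/2}u_0(tx)$, $v_t(x)=t^{1/2}v_0(tx)$ applied to any fixed nonnegative $(u_0,v_0)\in T_{b_1}\times T_{b_2}$ with $\int u_0^2v_0>0$: a direct computation gives
\[
J_\beta(u_t,v_t)=\tfrac{t^2}{2}K_0-\tfrac{t}{4}Q_0-\tfrac{\beta\sqrt{t}}{2}C_0\qquad (K_0,Q_0,C_0>0),
\]
and as $t\to0^+$ the quadratic coupling term, with $\beta>0$ and fractional power $\sqrt t$, dominates with a negative sign, forcing $J_\beta(u_t,v_t)<0$.

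Given any minimizing sequence $(u_n,v_n)$, bounded in $H^1(\mathbb{R})^2$ by the coercivity above, I would apply Lions' concentration--compactness lemma to $\rho_n=u_n^2+v_n^2$. The vanishing alternative would force $u_n,v_n\to 0$ in every $L^q(\mathbb{R})$ with $q>2$, hence all quartic and cubic integrals in $J_\beta$ would tend to zero and $\liminf J_\beta\ge0$, contradicting $m_\beta<0$. To rule out dichotomy I would establish the strict subadditivity
\[
m_\beta(b_1,b_2)\ <\ m_\beta(a_1,a_2)+m_\beta\bigl(\sqrt{b_1^2-a_1^2},\sqrt{b_2^2-a_2^2}\bigr)
\]
for every admissible non-trivial splitting $(a_1,a_2)\ne(0,0),(b_1,b_2)$ with $0\le a_i\le b_i$; the $\le$ direction follows from gluing far-translated near-optimizers (cross terms vanish in the separation limit), and the strict inequality is obtained by applying the dilation of the previous paragraph to the candidate split pair, using that $\beta>0$ contributes a decisive negative $\sqrt{t}$--term. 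This leaves concentration: up to translations $y_n$ and a subsequence, $(u_n(\cdot-y_n),v_n(\cdot-y_n))\rightharpoonup (u,v)\ne(0,0)$ in $H^1$ and a.e.

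Finally, to upgrade weak to strong convergence, set $p_n=u_n(\cdot-y_n)-u$ and $q_n=v_n(\cdot-y_n)-v$. Brezis--Lieb for the quartic terms, together with the one-dimensional compact embedding $H^1\hookrightarrow L^q_{\mathrm{loc}}$ (which handles the mixed cross pieces such as $\int p_n^2 v$, $\int u p_n q_n$, $\int p_n^2 v q_n$, $\ldots$ by splitting each into a local part that vanishes by compactness and a tail part controlled by the smallness of $u,v$ at infinity in $L^2\cap L^4$), yields
\[
J_\beta\bigl(u_n(\cdot-y_n),v_n(\cdot-y_n)\bigr)=J_\beta(u,v)+J_\beta(p_n,q_n)+o(1),
\]
together with $\|u_n(\cdot-y_n)\|_2^2=\|u\|_2^2+\|p_n\|_2^2+o(1)$ and the analogue for $v_n$. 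If $(\|u\|_2,\|v\|_2)\ne(b_1,b_2)$, this decomposition combined with the strict subadditivity above would contradict $\lim J_\beta(u_n,v_n)=m_\beta(b_1,b_2)$. Hence $\|u\|_2=b_1$, $\|v\|_2=b_2$, so $p_n,q_n\to0$ in $L^2$, Gagliardo--Nirenberg makes the nonlinear part of $J_\beta(p_n,q_n)$ negligible, and $\tfrac12(\|p_n'\|_2^2+\|q_n'\|_2^2)\to0$ delivers the $H^1$ strong convergence; the Lagrange multiplier theorem then identifies $(u,v)$ as a solution of \eqref{23}--\eqref{24}. The main obstacle, as is typical in such vector-valued problems with two mass constraints and heterogeneous scaling powers, is the strict subadditivity of $m_\beta$ in $(b_1,b_2)$, especially at boundary splittings where one coordinate vanishes and the problem partially reduces to a scalar NLS subproblem.
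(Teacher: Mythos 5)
Your overall framework (coercivity via the one-dimensional Gagliardo--Nirenberg inequality, negativity of the infimum, then Lions' concentration--compactness with a Brezis--Lieb decomposition) is the same as the paper's, and your argument for $m_\beta(b_1,b_2)<0$ is in fact more self-contained than the paper's (which quotes the negativity of the scalar quartic NLS ground-state energy from \cite{CZ21}): with $u_t(x)=t^{1/2}u_0(tx)$ one indeed gets $J_\beta(u_t,v_t)=\tfrac{t^2}{2}K_0-\tfrac{t}{4}Q_0-\tfrac{\beta\sqrt t}{2}C_0<0$ for small $t>0$. The coercivity bound and the exclusion of vanishing are also fine.

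The genuine gap is in your treatment of dichotomy. You propose to prove the full strict subadditivity $m_\beta(b_1,b_2)<m_\beta(a_1,a_2)+m_\beta(\sqrt{b_1^2-a_1^2},\sqrt{b_2^2-a_2^2})$ for every nontrivial splitting ``by applying the dilation of the previous paragraph to the candidate split pair.'' This does not close: the dilation $t^{1/2}u(t\cdot)$ preserves both masses, and the amplitude scaling $u\mapsto\theta u$ multiplies both masses by the same factor, so scaling arguments only compare $m_\beta$ along the ray of mass vectors proportional to $(b_1,b_2)$. A dichotomy splitting $(a_1,a_2)$ can have an arbitrary ratio $a_1/a_2\neq b_1/b_2$ (including the boundary cases $a_1=0$ or $a_2=0$), and no single scaling connects such a pair to $(b_1,b_2)$; this is precisely the known obstruction for two-constraint problems. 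The paper avoids it: Lemma \ref{Lema}(ii) only asserts the \emph{non-strict} subadditivity, and the strictness needed to kill dichotomy is produced in Lemma \ref{Lem12} by a different device --- if splitting occurred, each limiting profile would be a minimizer for its own masses, hence (after Schwarz symmetrization) a positive radial decreasing solution, and Shibata's vector rearrangement $\{\widetilde w,\widetilde u\}^{\star}$ of Lemma \ref{LM} (whose kinetic-energy inequality in item (iv) is strict for such functions) merges the two profiles into a single admissible pair with strictly smaller energy, contradicting the non-strict subadditivity. You need this rearrangement mechanism, or some substitute yielding strictness along the actual dichotomy profiles; without it the argument is incomplete. (A secondary point: the splitting of the non-homogeneous term $\int u_n^2v_n$ is not covered by the classical Brezis--Lieb lemma and requires the variant used in \cite{GL18}, though your local-plus-tail decomposition can be made rigorous.)
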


Next, we turn to $N=2.$ In this case, the energy functional is not always bounded on $\mathrm{T}_{b_{1}}\times \mathrm{T}_{b_{2}} $, relating to the values $b_{1}$ and $b_{2}$. Let $Q$ be the unique positive radial solution to
\begin{equation}
\label{Qdef}
-\Delta Q+ Q= Q^3,\  Q\in H^1 (\R^2).
\end{equation}

 The main results in dimension $N=2$ can be stated as follows:
\begin{theorem}\label{Th3}
Let $N=2$ and $\mu_{1}, \mu_{2}, \rho>0$:

(i) If \ $\max\{(\mu_{1}+\rho)b^{2}_{1}, (\mu_{2}+\rho)b^{2}_{2}\}<\| Q\|_{L^{2}(\mathbb{R}^{2})}^{2}$, then

 a) for every $\beta<0$, problem \eqref{23}-\eqref{24} has no positive solutions,

 b) for every $\beta>0$, when $0<b_{2}\leq \| Q\|_{L^{2}(\mathbb{R}^{2})}$, $-\infty<m_{\beta}(b_{1},b_{2})<0$ is achieved by $(u,v)$ which is a positive solution  of \eqref{23}-\eqref{24}.

(ii) If $\frac{\mu_{1}b^{4}_{1}+\mu_{2}b^{4}_{2}+2\rho b^{2}_{1}b^{2}_{2}}{(b^{2}_{1}+b^{2}_{2})}>\| Q\|_{L^{2}(\mathbb{R}^{2})}^{2}$,  then $m_{\beta}(b_{1},b_{2})=-\infty$ for any $\beta \in \mathbb{R}$.
\end{theorem}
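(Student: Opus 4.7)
The statement splits into three pieces which require three distinct arguments, all built around the sharp two-dimensional Gagliardo-Nirenberg inequality
\begin{equation*}
\|u\|_{L^4}^4 \leq \frac{2}{\|Q\|_{L^2}^2}\|\nabla u\|_{L^2}^2 \|u\|_{L^2}^2
\end{equation*}
(with equality on dilates of $Q$) and the companion Pohozaev identities $\|\nabla Q\|_{L^2}^2 = \|Q\|_{L^2}^2$ and $\|Q\|_{L^4}^4 = 2\|Q\|_{L^2}^2$ for problem \eqref{Qdef} in $\mathbb{R}^2$.

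For part (ii), I would exhibit an unbounded-below family. Set $u_t(x)=\frac{tb_1}{\|Q\|_{L^2}}Q(tx)$ and $v_t(x)=\frac{tb_2}{\|Q\|_{L^2}}Q(tx)$, which lie on $\mathrm{T}_{b_1}\times\mathrm{T}_{b_2}$. A direct computation using the two Pohozaev identities above yields
\begin{equation*}
J_\beta(u_t,v_t) = \frac{t^2}{2}\Bigl[(b_1^2+b_2^2) - \frac{\mu_1 b_1^4 + \mu_2 b_2^4 + 2\rho b_1^2 b_2^2}{\|Q\|_{L^2}^2}\Bigr] - \frac{\beta\, b_1^2 b_2\, \|Q\|_{L^3}^3}{2\|Q\|_{L^2}^3}\,t.
\end{equation*}
Under the hypothesis of (ii) the bracket is strictly negative, and since the coupling term is only $O(t)$ while the others are $O(t^2)$, sending $t\to+\infty$ forces $J_\beta(u_t,v_t)\to-\infty$ irrespective of the sign of $\beta$.

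For part (i)(a), assume for contradiction that $(u,v)$ is a positive solution of \eqref{23}-\eqref{24} with $\beta<0$. Testing the system against $(u,v)$ and against $(x\cdot\nabla u,x\cdot\nabla v)$ yields the usual Nehari and Pohozaev identities; in two dimensions the Pohozaev identity reads $\lambda_1 b_1^2+\lambda_2 b_2^2=\beta\int u^2 v+\tfrac{\mu_1}{2}\|u\|_{L^4}^4+\tfrac{\mu_2}{2}\|v\|_{L^4}^4+\rho\int u^2 v^2$. Subtracting this from the Nehari identity eliminates the Lagrange multipliers and gives
\begin{equation*}
\|\nabla u\|_{L^2}^2+\|\nabla v\|_{L^2}^2 = \tfrac{\beta}{2}\int u^2 v + \tfrac{\mu_1}{2}\|u\|_{L^4}^4 + \tfrac{\mu_2}{2}\|v\|_{L^4}^4 + \rho\int u^2 v^2.
\end{equation*}
Discarding the negative $\beta$-term, bounding $\|u\|_{L^4}^4$ and $\|v\|_{L^4}^4$ by Gagliardo-Nirenberg, and using $2u^2v^2\leq u^4+v^4$ for the coupling term, the right-hand side is dominated by $\tfrac{(\mu_1+\rho)b_1^2}{\|Q\|_{L^2}^2}\|\nabla u\|_{L^2}^2+\tfrac{(\mu_2+\rho)b_2^2}{\|Q\|_{L^2}^2}\|\nabla v\|_{L^2}^2$. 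The hypothesis of (i) then forces $\nabla u\equiv\nabla v\equiv 0$, contradicting positivity.

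For part (i)(b), the plan is a constrained global minimization. The same two estimates as above, together with the bound $\int u^2 v\leq b_2\|u\|_{L^4}^2$ followed by Gagliardo-Nirenberg and Young's inequality, yield $J_\beta(u,v)\geq c(\|\nabla u\|_{L^2}^2+\|\nabla v\|_{L^2}^2)-C$ on $\mathrm{T}_{b_1}\times\mathrm{T}_{b_2}$ for some $c,C>0$, so $m_\beta(b_1,b_2)>-\infty$ and every minimizing sequence is bounded in $H^1\times H^1$. For strict negativity $m_\beta<0$, rescale any fixed positive pair by $(u_0,v_0)\mapsto(tu_0(tx),tv_0(tx))$: in two dimensions the quadratic, quartic and cross terms all scale as $t^2$, whereas $-\tfrac{\beta}{2}\int u_t^2 v_t$ scales as $-\beta t$, so for $\beta>0$ and $t>0$ small one obtains $J_\beta<0$. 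The main obstacle is compactness. Because all integrals with positive sign in $-J_\beta$ are rearrangement-monotone, I would Schwarz-symmetrize each element of the minimizing sequence (using Hardy-Littlewood and P\'olya-Szeg\"o); the compact embedding $H^1_{\mathrm{rad}}(\mathbb{R}^2)\hookrightarrow L^p(\mathbb{R}^2)$ for $p>2$ then passes all super-quadratic integrals to the weak limit $(u,v)$. Ruling out escape of mass at infinity is the delicate step: one must prove the strict subadditivity $m_\beta(b_1,b_2)<m_\beta(a_1,a_2)+m_0(\sqrt{b_1^2-a_1^2},\sqrt{b_2^2-a_2^2})$ for every nontrivial splitting, and the supplementary assumption $b_2\leq\|Q\|_{L^2}$ is precisely what controls the pure-$v$ limit problem $-\Delta v+\lambda_2 v=\mu_2 v^3$ and rules out $v$-mass escape. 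Once the infimum is attained, $m_\beta<0$ forces both components nontrivial, and the Euler-Lagrange equations together with standard regularity and maximum-principle arguments deliver the sought positive solution of \eqref{23}-\eqref{24}.
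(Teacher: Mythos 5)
Your arguments for part (ii) and part (i)(a) are correct and essentially coincide with the paper's. For (ii), the paper uses the same test pair $\bigl(\tfrac{b_1Q}{\|Q\|_{L^2}},\tfrac{b_2Q}{\|Q\|_{L^2}}\bigr)$ to show that the constant $A$ in \eqref{A} is below $\tfrac12$, hence $J_0<0$ somewhere on $\mathrm{T}_{b_1}\times\mathrm{T}_{b_2}$, and then lets the $L^2$-preserving dilation run off; your one-step explicit computation of $J_\beta(u_t,v_t)$ is the same argument written out, and your coefficient of $t^2$ and the $O(t)$ coupling term are both correct. For (i)(a), your Nehari-minus-Pohozaev identity $\|\nabla u\|_{L^2}^2+\|\nabla v\|_{L^2}^2-\tfrac12\int(\mu_1u^4+\mu_2v^4+2\rho u^2v^2)=\tfrac{\beta}{2}\int u^2v$ is exactly the paper's, and your Gagliardo--Nirenberg domination of the right-hand side is the same estimate the paper packages as the lower bound $A\ge \|Q\|_{L^2}^2/\bigl(2\max\{(\mu_1+\rho)b_1^2,(\mu_2+\rho)b_2^2\}\bigr)>\tfrac12$.

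For (i)(b) the skeleton (coercivity from $A>\tfrac12$, negativity of $m_\beta$ by shrinking the dilation parameter, Schwarz symmetrization plus the compact embedding $H^1_r(\R^2)\hookrightarrow L^p(\R^2)$, $p>2$) again matches the paper, but the one step you defer is precisely the step where the two proofs diverge, and it is the only genuine gap. You propose to rule out loss of mass by a strict subadditivity inequality $m_\beta(b_1,b_2)<m_\beta(a_1,a_2)+m_\beta(\sqrt{b_1^2-a_1^2},\sqrt{b_2^2-a_2^2})$, but you do not prove it, and establishing it for this system is not routine: the rearrangement machinery needed to compare a split minimizing configuration with a single one (the paper's Lemma \ref{LM}) is deployed in the paper only for the $N=1$ proof, and the inhomogeneous term $\int u^2v$ complicates the usual scaling proofs of strict subadditivity. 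The paper instead recovers the full masses by a Lagrange-multiplier argument: it shows $\lambda_1,\lambda_2>0$ (using a Liouville-type lemma and the fact that $m_\beta(0,b_2)=0$ for $0<b_2\le\|Q\|_{L^2}$ --- this is where the hypothesis on $b_2$ enters, consistent with your remark), then combines the Nehari and Pohozaev identities for the limit and for the sequence to obtain $\lambda_1\bigl(b_1^2-\|u\|_{L^2}^2\bigr)+\lambda_2\bigl(b_2^2-\|v\|_{L^2}^2\bigr)=0$, which forces $\|u\|_{L^2}^2=b_1^2$ and $\|v\|_{L^2}^2=b_2^2$; strong $H^1$ convergence then follows from a Brezis--Lieb splitting. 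If you want to complete your version you should either carry out the subadditivity proof (including the delicate case where one component of the split is trivial) or switch to the multiplier identity, which is shorter and is what the paper actually does.
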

In order to prove Theorem \ref{Th3}, in section \ref{sec7}, we introduce the following minimization problem
\[A=A(\mu_1,\mu_1,\rho,b_1,b_2):=\inf_{(u,v)\in \mathrm{T}_{b_{1}}\times \mathrm{T}_{b_{2}}}\frac{\int_{\mathbb{R}^{2}}(|\nabla u|^{2}+|\nabla v|^{2})dx}{\int_{\mathbb{R}^{2}}\left(\mu_{1}u^{4}+\mu_{2}v^{4}+2\rho u^{2}v^{2}\right)dx}\]
and prove that  \begin{align*}
\frac{\| Q\|_{L^{2}(\mathbb{R}^{2})}^{2}}{2\max\{(\mu_{1}+\rho)b^{2}_{1}, (\mu_{2}+\rho)b^{2}_{2}\}}\leq A\leq\frac{\frac{1}{2}(b^{2}_{1}+b^{2}_{2})\| Q\|_{L^{2}(\mathbb{R}^{2})}^{2}}{\mu_{1}b^{4}_{1}+\mu_{2}b^{4}_{2}+2\rho b^{2}_{1}b^{2}_{2}}.
 \end{align*}
We shall prove that if  $A>\frac{1}{2}$, then  $J_{\beta}(u,v)$ is coercive on $\mathrm{T}_{b_{1}}\times \mathrm{T}_{b_{2}}$ and $m_{\beta}(b_{1},b_{2})>-\infty$ is achieved, so there exists at least one normalized ground state for \eqref{23}.

\begin{remark}
When $N=2$, if $A<\frac{1}{2}$, then $m_{\beta}(b_{1},b_{2})=-\infty.$ Thus the minimization problem constrained on $\mathrm{T}_{b_{1}}\times \mathrm{T}_{b_{2}}$ does not work. Since $\int_{\mathbb{R}^{2}}|\nabla  u|^{2}+|\nabla  v|^{2}$ and $\int_{\mathbb{R}^{2}}\mu_{1}u^{4}+\mu_{2} v^{4}+2\rho  u^{2} v^{2}$ behave at the same way under $L^{2}$ preserving scaling of $(u,v)$, $J_{\beta}(\iota u(\iota x),\iota v(\iota x))$ may strictly increasing with respect to $\iota\in \mathbb{R}$, so the usual methods developed on the Pohozaev-Nehari constraint can not be applied directly here, see \cite{BJJN,TBNS,Soave,Soave1}. Inspired by the recent works \cite{YY1,LIZOU2},  we try to construct a submanifold of $\mathrm{T}_{b_{1}}\times \mathrm{T}_{b_{2}}$ as following
 \small{\begin{align*}
N_{b_{1},b_{2}}:=\Big\{(u,v)\in \mathrm{T}_{b_{1}}\times \mathrm{T}_{b_{2}}\mid P_{b_{1},b_{2}}(u,v)=0 \text{ and } \int_{\mathbb{R}^{2}}(|\nabla u|^{2}+|\nabla v|^{2})dx<\frac{1}{2}\int_{\mathbb{R}^{2}}\left(\mu_{1}u^{4}+\mu_{2}v^{4}+2\rho u^{2}v^{2}\right)dx\Big\},
\end{align*}}
on which $J(u,v)$ may admits a minimizer, where $P_{b_{1},b_{2}}(u,v)=0$ is the related Pohozaev-Nehari identity. Indeed, for any $(u,v)$ on the set $N_{b_{1},b_{2}}$, $J_{\beta}(\iota u(\iota x),\iota v(\iota x))$ has a unique maximum point ${\iota}_{u,v}$ and is strictly increasing in $(-\infty,{\iota}_{u,v})$ and decreasing in $({\iota}_{u,v},+\infty)$. Therefore, we expect a constrained variation can be used to obtain a normalized solution to \eqref{23}. However, due to the uncertainty of sign of the quadratic interaction term and inhomogeneity of the coupling term, it seems difficult to prove the compactness of the minimizing sequences (or Palais-Smale sequences) developed in $N_{b_{1},b_{2}}$. We believe that when $A<\frac{1}{2}$, the existence of normalized solutions to \eqref{23} in dimension two is an expected interesting result.
\end{remark}

%
Now, we deal with the three dimensional case. In this case, $m_{\beta}(b_{1},b_{2})=-\infty$ for any $b_{1},b_{2}>0. $ Indeed, the leading term is $L^{2}$ supercritical and Sobolev subcritical, the energy function $J_{\beta}(u,v)$ is unbounded both from above and from below on $\mathrm{T}_{b_{1}}\times \mathrm{T}_{b_{2}} $. In order to search for two normalized solutions, we use the ideas introduced by N. Soave \cite{Soave1, Soave} to study the corresponding fiber maps $\Psi_{u,v}(t)$ (see \eqref{pp}), which has the same Mountain pass structure as the original functional. The benefits of the fiber map are that the critical point of $\Psi_{u,v}(t)$ allow to project a function on $\mathcal{P}_{b_{1},b_{2}}$ (see \eqref{int4}).  The monotonicity and convexity of $\Psi_{u,v}(t)$ has a strongly affect the structure of $\mathcal{P}_{b_{1},b_{2}}$ and then intimately related to the minimax structure of $J_{\beta}(u,v)|_{\mathrm{T}_{b_{1}}\times \mathrm{T}_{b_{2}}}$. To show our main results, we first introduce the Gagliardo-Nirenberg-Sobolev inequality
\begin{equation}\label{4}
\|u\|_{L^{p}(\mathbb{R}^{N})}\leq C_{N,p}\|\nabla u\|_{L^{2}(\mathbb{R}^{N})}^{\gamma_{p}}\| u\|_{L^{2}(\mathbb{R}^{N})}^{1-\gamma_{p}}\ \ \text{for all} \ \ u\in H^{1}(\mathbb{R}^{N}),
\end{equation}
where
\begin{align*}
\gamma_{p}=\frac{N(p-2)}{2p},
\end{align*}
and we denote by $C_{N,p}$ the best constant in the the Gagliardo-Nirenberg-Sobolev inequality $H^{1}(\mathbb{R}^{N})\hookrightarrow L^{p}(\mathbb{R}^{N}), \ 2<p<2^{\ast}=\frac{2N}{N-2}(N\ge 3).$
Our main results are the following:
\begin{theorem}\label{Th1}
When $N=3$, $\mu_{1}, \mu_{2}, \rho, \beta>0$ and $$\beta\left(2b^{\frac{3}{2}}_{1}+b^{\frac{3}{2}}_{2}\right)C^{3}_{3,3}C^{2}_{3,4}\sqrt{\mu_{1}b_{1}+\mu_{2}b_{2}+\rho b^{\frac{1}{2}}_{1}b^{\frac{1}{2}}_{2}}<\frac{2\sqrt{6}}{3},$$ then \eqref{23}-\eqref{24} has at least two positive normalized solutions, one is a ground state $(\widehat{u}_{\beta},\widehat{v}_{\beta})$, the other is an excited state $(\widetilde{u}_{\beta},\widetilde{v}_{\beta})$. Moreover, $J_{\beta}(\widehat{u}_{\beta},\widehat{v}_{\beta})\rightarrow0^{+}$, $\int_{\mathbb{R}^{3}}(|\nabla \widehat{u}_{\beta}|^{2}+|\nabla \widehat{v}_{\beta}|^{2})dx\rightarrow 0$ and there exists $\rho_{2}>0 $ such  that when $\rho>\rho_{2}$, then $(\widetilde{u}_{\beta},\widetilde{v}_{\beta})\rightarrow(\widetilde{u}_{0},\widetilde{v}_{0})$ strongly in $H^{1}(\mathbb{R}^{3})\times H^{1}(\mathbb{R}^{3})$ as $\beta\rightarrow 0$, where $(\widetilde{u}_{0},\widetilde{v}_{0})$ is a normalized ground state of \eqref{23} with $\beta=0$.
\end{theorem}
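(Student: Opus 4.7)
The plan is to apply Soave's fiber-map method on the Pohozaev constraint, adapted to the mixed quadratic/cubic coupling, working throughout in the radial class $H^1_r(\mathbb{R}^3)\times H^1_r(\mathbb{R}^3)$ to recover compactness of the embedding into $L^p$ for $2<p<6$. For $(u,v)\in T_{b_1}\times T_{b_2}$ and $t>0$ introduce the $L^2$-preserving dilation $(u\star t)(x):=t^{3/2}u(tx)$ and the fiber map
\[
\Psi_{u,v}(t)=J_\beta(u\star t,v\star t)=\tfrac{t^2}{2}A(u,v)-\tfrac{t^3}{4}B(u,v)-\tfrac{\beta t^{3/2}}{2}C(u,v),
\]
with $A=\int(|\nabla u|^2+|\nabla v|^2)$, $B=\int(\mu_1u^4+\mu_2v^4+2\rho u^2v^2)$, $C=\int u^2v$. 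Set $\mathcal{P}_{b_1,b_2}:=\{(u,v)\in T_{b_1}\times T_{b_2}:\Psi'_{u,v}(1)=0\}$ and split it into $\mathcal{P}^+\cup\mathcal{P}^0\cup\mathcal{P}^-$ according to the sign of $\Psi''_{u,v}(1)$. Using \eqref{4} with $p=3$ and $p=4$ to bound $B$ and $C$ in terms of $A$ and the masses $b_1,b_2$, an elementary analysis of the cubic-in-$t^{1/2}$ equation $\Psi'_{u,v}(t)=0$ shows that the algebraic smallness assumption on $\beta$ in the statement is exactly what guarantees that each fiber $\Psi_{u,v}$ admits exactly two positive critical points $t^+_{u,v}<t^-_{u,v}$, the first a strict local minimum and the second a strict local maximum. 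Consequently $\mathcal{P}^0=\emptyset$, $\mathcal{P}_{b_1,b_2}$ is a smooth natural constraint, and there is an explicit threshold $R_0>0$ of $A(u,v)$ separating $\mathcal{P}^+$ from $\mathcal{P}^-$.

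For the ground state, let $V=\{(u,v)\in T_{b_1}\times T_{b_2}:A(u,v)<R_0\}$, on which $J_\beta$ is coercive by \eqref{4}, and define $m^+(b_1,b_2):=\inf_{V}J_\beta=\inf_{\mathcal{P}^+}J_\beta$. Evaluating $\Psi_{u,v}(t^+_{u,v})$ along a fixed positive test pair with $C>0$ gives $m^+(b_1,b_2)<0$, so minimizing sequences remain strictly interior to $V$. A radial minimizing sequence is bounded in $H^1_r\times H^1_r$ and compact in $L^p$ for $2<p<6$; Ekeland's variational principle yields an associated Palais--Smale sequence, and the Pohozaev relation in the limit combined with the signs $\beta,\mu_i,\rho>0$ and the nondegeneracy of $\mathcal{P}^+$ forces positive Lagrange multipliers $\lambda_1,\lambda_2>0$. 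Testing the linearized equation against $(u_n-\hat u_\beta,v_n-\hat v_\beta)$ then upgrades weak to strong $H^1$-convergence to a minimizer $(\hat u_\beta,\hat v_\beta)\in\mathcal{P}^+$, and positivity follows by replacing $(u,v)$ by $(|u|,|v|)$ and invoking the maximum principle.

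The excited state is produced by a mountain-pass argument on $T_{b_1}\times T_{b_2}$ with min-max level
\[
M(b_1,b_2):=\inf_{\gamma\in\Gamma}\max_{s\in[0,1]}J_\beta(\gamma(s))=\inf_{\mathcal{P}^-}J_\beta>0,
\]
the paths $\gamma$ joining a pair in $V$ with $J_\beta\le 0$ to a pair $(u\star t,v\star t)$, $t$ large, at which $J_\beta\ll 0$; the identification with $\inf_{\mathcal{P}^-}J_\beta$ comes directly from the fiber-map analysis. Jeanjean's trick on $\mathbb{R}\times H^1_r\times H^1_r$ via $(s,u,v)\mapsto(u\star e^s,v\star e^s)$ produces a PS sequence $(u_n,v_n)$ at level $M$ with the additional Pohozaev property $\Psi'_{u_n,v_n}(1)\to 0$. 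The main obstacle is Palais--Smale compactness at this positive level: boundedness of $(u_n,v_n)$ follows from the linear combination $J_\beta(u_n,v_n)-\tfrac13\Psi'_{u_n,v_n}(1)=\tfrac{1}{6}A(u_n,v_n)-\tfrac{\beta}{4}C(u_n,v_n)$ and \eqref{4}; the radial compact embedding into $L^p$ prevents vanishing, and the limiting equation together with the Brezis--Lieb lemma upgrades weak to strong convergence once the Lagrange multipliers are shown to be positive, which again uses $\beta,\mu_i,\rho>0$. The strict positivity of $\beta$ is crucial to rule out semi-trivial decouplings $(u_n,0)$ or $(0,v_n)$ in the limit.

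The asymptotics as $\beta\to 0^+$ are derived by refining the estimates of $m^+$ and $M$. For the ground state, testing $\Psi_{U,V}(t)$ at a fixed positive pair $(U,V)\in T_{b_1}\times T_{b_2}$ with scaling parameter $t^+_\beta\to 0$ (which is the scale where the $t^2$- and $\beta t^{3/2}$-terms balance) yields $m^+(b_1,b_2)\to 0$ as $\beta\to 0^+$, and the Pohozaev identity on $\mathcal{P}^+$ then forces $A(\hat u_\beta,\hat v_\beta)\to 0$, which is the announced mass-collapse behavior. For the excited state, a refined upper bound $M(b_1,b_2)\le M_0(b_1,b_2)+o_\beta(1)$, where $M_0$ denotes the mountain-pass level for the system with $\beta=0$, is obtained by testing against the $\beta=0$ mountain-pass path; combined with uniform $H^1_r$-boundedness of $(\tilde u_\beta,\tilde v_\beta)$ this yields weak convergence to a critical point $(\tilde u_0,\tilde v_0)$ of $J_0$ on $T_{b_1}\times T_{b_2}$. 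The assumption $\rho>\rho_2$ together with the Bartsch--Jeanjean--Soave existence theorem identifies $(\tilde u_0,\tilde v_0)$ as a normalized ground state of \eqref{23} with $\beta=0$, and matching the Pohozaev identities and masses promotes weak to strong $H^1_r$-convergence. The hardest step throughout remains the Palais--Smale compactness on the $L^2$-spheres in $\mathbb{R}^3$, which is resolved by the combined use of the radial setting, the fiber-map control, and the strict positivity of $\beta,\mu_i,\rho$ that rules out degenerate Lagrange multipliers.
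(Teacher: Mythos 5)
Your overall architecture (Soave-type fiber maps, a local minimizer on a gradient ball plus a mountain-pass level identified with $\inf_{\mathcal P^-}J_\beta$, radial compactness, Jeanjean's trick for the Pohozaev property of the PS sequence, monotone/refined energy bounds for the $\beta\to0^+$ asymptotics) is the same as the paper's. However, there is a genuine gap at the very first structural step. You claim that the smallness assumption on $\beta$ guarantees that \emph{every} fiber $\Psi_{u,v}$, $(u,v)\in \mathrm{T}_{b_{1}}\times \mathrm{T}_{b_{2}}$, has exactly two positive critical points, so that $\mathcal P^0=\emptyset$ and $\mathcal P_{b_1,b_2}$ is a smooth natural constraint. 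This is false whenever $C(u,v)=\int_{\mathbb{R}^3}u^2v\,dx\le 0$: in that case $-\tfrac{\beta}{2}t^{3/2}C\ge 0$, the equation $\Psi_{u,v}'(t)=0$ reduces (after dividing by $t^{1/2}$) to $s A-\tfrac34 s^{3}B+\tfrac{3\beta}{4}|C|=0$ in $s=t^{1/2}$, whose left-hand side is positive at $s=0$, increases and then decreases to $-\infty$, hence has exactly \emph{one} positive root no matter how small $\beta$ is. So the convex--concave fiber geometry, and with it the whole decomposition $\mathcal P^+\cup\mathcal P^0\cup\mathcal P^-$, only exists on the subset where $\beta\int u^2v\,dx>0$. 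The paper addresses this by building the sign condition $\beta\int_{\mathbb{R}^3}u^2v\,dx>0$ into the definition of $\mathcal P_{b_1,b_2}$ (see \eqref{int4} and \eqref{k1}--\eqref{k3}), by invoking the Clarke/Mederski--Schino result on inequality constraints to show that this extra restriction does not destroy the natural-constraint property, and by a final verification (via the bound $R_0<\widetilde t$ from Lemma \ref{LAA} against the lower bound \eqref{U} forced by the Pohozaev identity when $\int u^2v=0$) that the limit of the minimizing sequence still satisfies $\beta\int u^2v\,dx>0$. None of these three steps appears in your proposal, and without them the manifold on which you minimize is not well defined and the local-minimum structure could be lost in the limit.

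A second, smaller gap concerns the exclusion of semi-trivial limits. You assert that ``the strict positivity of $\beta$'' rules out limits of the form $(0,v)$, but positivity of $\beta$ only forces $v=0\Rightarrow u=0$ through the second equation; it does not prevent $u=0$ with $v$ a nontrivial solution of the scalar cubic NLS. Excluding that scenario requires the strict energy inequalities $m^{+}_{\beta}(b_1,b_2)<\min\{m^{+}_{\beta}(b_1,0),m^{+}_{\beta}(0,b_2)\}$ and $m^{-}_{\beta}(b_1,b_2)<\min\{m^{-}_{\beta}(b_1,0),m^{-}_{\beta}(0,b_2)\}$, which the paper proves separately (Lemmas \ref{LeM} and \ref{Lem1}, the latter via the singular cut-off test functions $c\varphi(x)|x|^{-m}$ and the exponential decay of the scalar ground state). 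These comparisons are not automatic and must be supplied for your compactness argument at both the negative and the positive energy level to close.
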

\begin{remark}
Theorem \ref{Th1} gives the existence of two normalized solutions for \eqref{23}. The first one is a local minimizer for which we establish the compactness of minimizing sequence. The second solution is obtained through a constrained linking. Theorem \ref{Th1}  also shows the limit behavior of the solutions as $\beta\rightarrow 0 $. The first solution will disappear and the second solution  will converge to the normalized solution of system \eqref{23} with $\beta=0$, which has been studied by T. Bartsch, L. Jeanjean and N. Soave  in \cite{BJJN}.  When $\beta=0, \ N=3$, in \cite{BJJN} the authors proved that for arbitrary masses $b_{i}$ and parameter $\mu_{i}$, there exists $\rho_{2}>\rho_{1}>0$ depending on the masses such that for both $0<\rho<\rho_{1}$ and $\rho>\rho_{2}$ system \eqref{23}--\eqref{24} has a positive radial solution. Recently,
 T. Bartsch, X. Zhong and W. Zou in \cite{BZZ20} obtained the existence of normalized solutions for any given $b_{1}, b_{2}>0$ and $\rho>0$ in a large range, which is independent of the masses. They also have a result about the nonexistence of positive solutions which shows that their existence theorem is almost optimal. Therefore our results indicate that the quadratic interaction term not only enriches the set of solutions to the above Schr\"{o}dinger system but also expands the permissible range of $\rho$.
\end{remark}

Furthermore, we give a mass collapse behavior of the ground states obtained in Theorem \ref{Th1}.
From Theorem 1.2 of \cite{ZZS15}, we know that $(u_{0},v_{0})=(\sqrt{2}\beta^{-1} w, \beta^{-1}w)$ is the unique positive solution of
\begin{equation}\label{eqd1}
\begin{cases}

-\Delta u+u= \beta uv  & \text{in} \ \ \mathbb{R}^{3},\\

-\Delta v+v=\frac{\beta}{2} u^{2}& \text{in} \ \ \mathbb{R}^{3},
\end{cases}
\end{equation}
 where $w$ is the unique positive solution of
\begin{equation}\label{eqd2}
-\Delta u +u =u^{2}, \ u \in H^{1}(\mathbb{R}^{3}).
\end{equation}

\begin{theorem}\label{Th2}
Assume that the assumptions in Theorem 1.3 hold, and $(u_{b_{1},b_{2}},v_{b_{1},b_{2}})$ is a ground state for \eqref{23}-\eqref{24}. Up to a subsequence, we have 
$$\big(L^{-1}_{1}u_{b_{1},b_{2}}(\theta^{-1}_{1}(x)), L^{-1}_{2}v_{b_{1},b_{2}}(\theta^{-1}_{2}(x))\big)\rightarrow (\bar{u}, \bar{v})$$
in \ $H^{1}(\mathbb{R}^{3})\times H^{1}(\mathbb{R}^{3})$  as $b_1,b_2\to 0$ and $b_1\backsim b_2$, where $(\bar{u}, \bar{v}) $ satisfies
\begin{equation*}
\begin{cases}

-\Delta u+\lambda^{\ast}_{1}u= \beta uv  & \text{in} \ \ \mathbb{R}^{3},\\

-\Delta v+\lambda^{\ast}_{2}v= \frac{\beta}{2}u^{2}& \text{in} \ \ \mathbb{R}^{3},
\end{cases}
\end{equation*}
for some $\lambda^{\ast}_{1},\ \lambda^{\ast}_{2}>0$, where $\theta_{1}=\frac{2\beta^{2}b_{1}^{\frac{6}{5}}b_{2}^{\frac{4}{5}}}{16^{\frac{2}{5}}\|w\|^{2}_{L^{2}(\mathbb{R}^{3})}},\ \theta_{2}=\frac{\beta^{2}b_{1}^{\frac{8}{5}}b_{2}^{\frac{2}{5}}}{16^{\frac{1}{5}}\|w\|^{2}_{L^{2}(\mathbb{R}^{3})}},\ L_{1}=\frac{2\beta^{4}b_{1}^{\frac{14}{5}}b_{2}^{\frac{6}{5}}}{16^{\frac{3}{5}}\|w\|^{4}_{L^{2}(\mathbb{R}^{3})}}$, $L_{2}=\frac{4\beta^{4}b_{1}^{\frac{12}{5}}b_{2}^{\frac{8}{5}}}{16^{\frac{4}{5}}\|w\|^{4}_{L^{2}(\mathbb{R}^{3})}}$.
Moreover, if $ \lambda^{\ast}_{1}= \lambda^{\ast}_{2}$, then $(\bar{u}, \bar{v})=(\sqrt{2}\beta^{-1} w,\beta^{-1} w)$. 
\end{theorem}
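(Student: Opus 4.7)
The plan is to perform a blow-up (rescaling) analysis of the ground states $(u_n, v_n) := (u_{b_1^n, b_2^n}, v_{b_1^n, b_2^n})$ along a sequence $b_1^n, b_2^n \to 0^+$ with $b_1^n \backsim b_2^n$, and to identify the limit with a positive solution of the pure quadratic-coupling system \eqref{eqd1}. I would first derive the sharp asymptotic behaviour of the ground-state energy $m_\beta(b_1, b_2)$ as $b_1, b_2 \to 0^+$. The upper bound comes from the test pair $(u, v) = (L_1 \Phi_1(\theta_1 \cdot), L_2 \Phi_2(\theta_2 \cdot))$, where $(\Phi_1, \Phi_2)$ is a positive solution of \eqref{eqd1}; imposing $\|u\|_2^2 = b_1^2$, $\|v\|_2^2 = b_2^2$ together with the balance between the gradient and quadratic-interaction terms (which forces the algebraic identities $L_2 = \theta_1^2$ and $L_1^2 = L_2\theta_2^2$, both verifiable directly from the $\theta_i, L_i$ given in the theorem) pins down precisely the powers of $b_1, b_2$ in $\theta_i, L_i$ listed. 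Since $L_1^2/\theta_1^2$ and $L_2^2/\theta_1^2$ are positive powers of $b_1, b_2$ vanishing as $b_i \to 0$, the self- and cross-cubic terms contribute only at lower order. Combined with a matching lower bound via the Pohozaev--Nehari identity from the proof of Theorem \ref{Th1}, this provides a priori control of $\|\nabla u_n\|_2$, $\|\nabla v_n\|_2$ and of the Lagrange multipliers $\lambda_1^n, \lambda_2^n$, which are of order $\theta_1^2$ and $\theta_2^2$ respectively.

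Next I would introduce the rescaled pair $\bar u_n(y) := L_1^{-1} u_n(\theta_1^{-1} y)$, $\bar v_n(y) := L_2^{-1} v_n(\theta_2^{-1} y)$. A direct substitution into \eqref{23} transforms the system into
\begin{equation*}
\begin{cases}
-\Delta \bar u_n + \dfrac{\lambda_1^n}{\theta_1^2}\bar u_n = \beta\, \bar u_n(y)\,\bar v_n\!\left(\tfrac{\theta_2}{\theta_1}\,y\right) + \alpha_{1,n}\, \bar u_n^{3} + \alpha_{2,n}\, \bar v_n\!\left(\tfrac{\theta_2}{\theta_1}\,y\right)^{\!2}\! \bar u_n, \\
-\Delta \bar v_n + \dfrac{\lambda_2^n}{\theta_2^2}\bar v_n = \dfrac{\beta}{2}\, \bar u_n\!\left(\tfrac{\theta_1}{\theta_2}\,y\right)^{\!2} + \alpha_{3,n}\, \bar v_n^{3} + \alpha_{4,n}\, \bar u_n\!\left(\tfrac{\theta_1}{\theta_2}\,y\right)^{\!2}\! \bar v_n,
\end{cases}
\end{equation*}
where the $\theta_i, L_i$ are chosen so that the quadratic-coupling coefficients are exactly $\beta$ and $\beta/2$, and the $\alpha_{j,n}$ are positive powers of $b_1^n, b_2^n$ tending to zero. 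The rescaled masses are the constants $\|\bar u_n\|_2^2 = 2\beta^{-2}\|w\|_{L^2(\R^3)}^2$ and $\|\bar v_n\|_2^2 = \beta^{-2}\|w\|_{L^2(\R^3)}^2$. Under $b_1^n \backsim b_2^n$ the ratio $\theta_2/\theta_1$ stays bounded and, along a subsequence, tends to some $\kappa \in (0,\infty)$. The previous a priori bounds yield $(\bar u_n, \bar v_n)$ bounded in $H^1(\R^3)^2$ and $\lambda_i^n/\theta_i^2$ bounded; hence, up to subsequences, $(\bar u_n, \bar v_n) \rightharpoonup (\bar u, \bar v)$ weakly in $H^1\times H^1$ and $\lambda_i^n/\theta_i^2 \to \lambda_i^*$, with $(\bar u, \bar v)$ weakly solving the limit quadratic system (after absorbing $\kappa$ into a further spatial rescaling of one component, if needed).

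To upgrade weak to strong convergence I would apply a concentration-compactness argument. Translating each $u_n$ to place its centre of mass at the origin (by translation invariance of \eqref{23}) and combining with the fixed rescaled masses excludes vanishing; dichotomy is excluded by a strict subadditivity of the ground-state level for the limit quadratic system, which follows from the positivity $\beta>0$ and the nontriviality of $\int\bar u^{2}\bar v$ on positive solutions. Thus $(\bar u_n, \bar v_n) \to (\bar u, \bar v)$ strongly in $H^1(\R^3)\times H^1(\R^3)$ and in particular $\lambda_i^* > 0$. In the special case $\lambda_1^* = \lambda_2^*$, Theorem 1.2 of \cite{ZZS15} gives the uniqueness (up to translation) of positive solutions of \eqref{eqd1} as $(\sqrt{2}\beta^{-1} w, \beta^{-1} w)$, whose $L^2$-masses coincide exactly with the limits of $\|\bar u_n\|_2^2$ and $\|\bar v_n\|_2^2$, so no mass is lost and $(\bar u, \bar v) = (\sqrt{2}\beta^{-1} w, \beta^{-1} w)$. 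The hard step is expected to be this compactness step: the rescaled problem lives on the non-compact $\R^3$ with no confining potential, so vanishing and translation-induced dichotomy must both be ruled out; the key will be a sharp strict-subadditivity estimate for the limit ground-state energy, exploiting $\beta>0$ and the minimising character of $(u_n, v_n)$ established in Theorem \ref{Th1}.
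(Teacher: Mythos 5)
Your proposal follows essentially the same route as the paper: the scaling parameters $\theta_i,L_i$ are pinned down by the same algebraic identities ($L_2=\theta_1^2$, $L_1^2=L_2\theta_2^2$, plus the two mass normalizations), the sharp upper bound on the ground-state level is obtained by testing with the rescaled unique positive solution of \eqref{eqd1} (the paper's Lemmas \ref{LE} and \ref{LEM}), the matching lower bound and the a priori control of $\|\nabla u_{b_1,b_2}\|_2$, $\|\nabla v_{b_1,b_2}\|_2$ and of $\lambda_{i}\backsim\theta_i^2$ come from the Pohozaev identity exactly as in the paper, and the identification $\lambda_1^\ast=\lambda_2^\ast=1$ via matching the $L^2$-masses of the rescaled limit against those of $(\sqrt{2}\beta^{-1}w,\beta^{-1}w)$ is the paper's closing argument verbatim. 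The one place where you genuinely diverge is the compactness step: you propose recentering by translations and excluding dichotomy through a strict subadditivity inequality for the two-constraint limit problem, whereas the paper exploits that the ground states produced in Theorem \ref{Th1} are radially decreasing, so the compact embedding $H^1_r(\R^3)\hookrightarrow\hookrightarrow L^p_r(\R^3)$, $2<p<6$, rules out vanishing directly and strong $H^1$ convergence then follows by testing the rescaled and limit equations against the differences $\widetilde u_k-\bar u$, $\widetilde v_k-\bar v$. Your route is more general (it would handle non-radial ground states) but buys this at the cost of a subadditivity estimate that you assert rather than prove; note that for a two-parameter constrained quadratic system this is not automatic from $\beta>0$ and would need an argument of the type the paper develops in dimension one via Shibata rearrangement. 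On the credit side, you correctly flag that the two components are rescaled with different spatial factors, so the coupling terms in the rescaled system involve $\widetilde v_k(\tfrac{\theta_2}{\theta_1}y)$ and $\widetilde u_k(\tfrac{\theta_1}{\theta_2}y)$ with $\theta_1/\theta_2=2^{1/5}(b_2/b_1)^{2/5}$ only convergent along a subsequence; the paper writes the limit system without this ratio, and your proposal to pass to a subsequence and absorb the limiting ratio by a further rescaling is a point where your sketch is actually more careful than the published argument.
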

Denote the set of ground states to \eqref{23}-\eqref{24} by $\mathcal{M}_{b_{1},b_{2}}$. If $(u,v)$ solves \eqref{23} with some $\lambda_{1},\lambda_{2}\in \mathbb{R}$, then $\Phi(z,x)=e^{-i\lambda_{1}z}u(x),\ \Psi(z,x)=e^{-i\lambda_{2}z}v(x)$ satisfies the time-dependent system
\begin{equation}\label{L}
\begin{cases}
i\partial_{z} \Phi-\Delta \Phi=\beta \Phi \Psi+\mu_{1}\Phi^{3} +\rho \Psi^{2}\Phi& \text{in} \ \ \mathbb{R}^{3},\\

i\partial_{z} \Psi-\Delta \Psi= \frac{\beta}{2}\Phi^{2}+\mu_{2}\Psi^{3}+\rho \Phi^{2}\Psi& \text{in} \ \ \mathbb{R}^{3},
\end{cases}
\end{equation}
where $(z,x)\in \mathbb{R}\times \mathbb{R}^{3},\ i=\sqrt{-1}$. From \cite{BBR98}, we know  the local well-posedness of solutions to \eqref{L} holds for $\beta>0$, then we can consider the stability of $\mathcal{M}_{b_{1},b_{2}}$. The set $\mathcal{M}_{b_{1},b_{2}}$ is said to be stable under the Cauchy flow of \eqref{L} if for any $\epsilon>0$, there exists $\delta>0$ such that for any $(\Phi_{0},\Psi_{0})\in H^{1}(\mathbb{R}^{3})\times H^{1}(\mathbb{R}^{3})$ satisfying $$dist_{H^{1}(\mathbb{R}^{3})\times H^{1}(\mathbb{R}^{3})}((\Phi_{0},\Psi_{0}),\mathcal{M}_{b_{1},b_{2}})<\delta,$$ then the solution $((\Phi(z,\cdot),\Psi(z,\cdot))$ of \eqref{L} with $((\Phi(0,\cdot),\Psi(0,\cdot))=(\Phi_{0},\Psi_{0})$ satisfies $$\sup dist_{H^{1}(\mathbb{R}^{3})\times H^{1}(\mathbb{R}^{3})}((\Phi(z,\cdot),\Psi(z,\cdot)),\mathcal{M}_{b_{1},b_{2}})<\epsilon,$$ where $Z$ is the maximal existence time for $((\Phi(z,\cdot),\Psi(z,\cdot))$. We have

\begin{theorem}\label{Th5}
When $\mu_{1}, \mu_{2}, \rho, \beta>0$ and $$\beta\left(2b^{\frac{3}{2}}_{1}+b^{\frac{3}{2}}_{2}\right)C^{3}_{3,3}C^{2}_{3,4}\sqrt{\mu_{1}b_{1}+\mu_{2}b_{2}+\rho b^{\frac{1}{2}}_{1}b^{\frac{1}{2}}_{2}}<\frac{2\sqrt{6}}{3},$$ then the set $\mathcal{M}_{b_{1},b_{2}}$  is compact, up to translation, and it is stable.
\end{theorem}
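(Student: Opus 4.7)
The argument follows the classical Cazenave-Lions scheme tailored to the fact that, under the hypotheses of Theorem \ref{Th1}, the ground states in $\mathcal{M}_{b_{1},b_{2}}$ are constructed as local minimizers of $J_{\beta}$ on an open set $\mathcal{V}\subset \mathrm{T}_{b_{1}}\times \mathrm{T}_{b_{2}}$ (roughly a ball in $\|\nabla u\|_{L^{2}}^{2}+\|\nabla v\|_{L^{2}}^{2}$), with common local minimum value $m:=\inf_{\mathcal{V}} J_{\beta}<\inf_{\partial \mathcal{V}} J_{\beta}$. This energy gap, together with the local well-posedness of \eqref{L} from \cite{BBR98}, will be the engine of the stability proof. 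The two ingredients we need to isolate are (i) compactness, up to translation, of minimizing sequences for $m$ inside $\mathcal{V}$, which was already extracted in the existence half of Theorem \ref{Th1}, and (ii) the strict barrier $\inf_{\partial \mathcal{V}}J_{\beta}>m$ separating the ground-state well from higher-energy configurations.

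\emph{Compactness of $\mathcal{M}_{b_{1},b_{2}}$.} Let $(u_{n},v_{n})\in \mathcal{M}_{b_{1},b_{2}}$. By construction $(u_{n},v_{n})\in\mathcal{V}$, $\|u_{n}\|_{L^{2}}=b_{1}$, $\|v_{n}\|_{L^{2}}=b_{2}$ and $J_{\beta}(u_{n},v_{n})=m$, so $(u_{n},v_{n})$ is itself a minimizing sequence for $m$ on $\mathcal{V}$. Reusing the concentration-compactness analysis of the first solution in Theorem \ref{Th1} (vanishing is ruled out by the coercivity on $\mathcal{V}$ and the subcritical nonlinear control, dichotomy by the strict concavity/subadditivity of $m(b_{1},b_{2})$ in $(b_{1},b_{2})$), we obtain translations $y_{n}\in\mathbb{R}^{3}$ such that $(u_{n}(\cdot-y_{n}),v_{n}(\cdot-y_{n}))$ converges strongly in $H^{1}(\mathbb{R}^{3})\times H^{1}(\mathbb{R}^{3})$ to some $(u,v)\in\mathcal{V}$ with $J_{\beta}(u,v)=m$. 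Since $\mathcal{M}_{b_{1},b_{2}}$ is translation invariant, this proves compactness modulo translations.

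\emph{Stability by contradiction.} Suppose $\mathcal{M}_{b_{1},b_{2}}$ is not stable. Then there exist $\varepsilon_{0}>0$, initial data $(\Phi_{0,n},\Psi_{0,n})$ with $\operatorname{dist}_{H^{1}\times H^{1}}\!\bigl((\Phi_{0,n},\Psi_{0,n}),\mathcal{M}_{b_{1},b_{2}}\bigr)\to 0$, and times $z_{n}$ in their maximal interval of existence such that the corresponding solutions satisfy
\[
\operatorname{dist}_{H^{1}\times H^{1}}\!\bigl((\Phi_{n}(z_{n},\cdot),\Psi_{n}(z_{n},\cdot)),\,\mathcal{M}_{b_{1},b_{2}}\bigr)\ge\varepsilon_{0}.
\]
By continuity of $\Phi,\Psi$ in $z$ and by the conservation of the two $L^{2}$-masses and of $J_{\beta}$ along the flow of \eqref{L}, we have $\|\Phi_{n}(z_{n})\|_{L^{2}}^{2}\to b_{1}^{2}$, $\|\Psi_{n}(z_{n})\|_{L^{2}}^{2}\to b_{2}^{2}$ and $J_{\beta}(\Phi_{n}(z_{n}),\Psi_{n}(z_{n}))\to m$. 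Normalize
\[
\widetilde{\Phi}_{n}=\frac{b_{1}}{\|\Phi_{n}(z_{n})\|_{L^{2}}}\Phi_{n}(z_{n}),\qquad \widetilde{\Psi}_{n}=\frac{b_{2}}{\|\Psi_{n}(z_{n})\|_{L^{2}}}\Psi_{n}(z_{n}),
\]
so that $(\widetilde{\Phi}_{n},\widetilde{\Psi}_{n})\in\mathrm{T}_{b_{1}}\times\mathrm{T}_{b_{2}}$ with $J_{\beta}(\widetilde{\Phi}_{n},\widetilde{\Psi}_{n})\to m$. To apply the compactness step we must certify $(\widetilde{\Phi}_{n},\widetilde{\Psi}_{n})\in\mathcal{V}$ for $n$ large: starting from a point near $\mathcal{M}_{b_{1},b_{2}}\subset\mathcal{V}$, the orbit cannot cross $\partial\mathcal{V}$ without violating the strict inequality $J_{\beta}|_{\partial\mathcal{V}}\ge m+\delta$ for some $\delta>0$ together with the conservation of energy and mass; a continuity-in-$z$ argument then confines $(\Phi_{n}(z_{n}),\Psi_{n}(z_{n}))$, hence also its rescaling, to $\mathcal{V}$. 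The compactness step gives a translation and a subsequence along which $(\widetilde{\Phi}_{n},\widetilde{\Psi}_{n})\to (u,v)\in\mathcal{M}_{b_{1},b_{2}}$ strongly in $H^{1}\times H^{1}$, contradicting the assumed lower bound $\varepsilon_{0}$.

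\emph{Main obstacle.} The delicate point is not the abstract Cazenave-Lions machine but the verification of the barrier $\inf_{\partial\mathcal{V}}J_{\beta}>m$ and the resulting trapping of the flow in $\mathcal{V}$. This must be extracted from the fiber-map analysis already used for Theorem \ref{Th1}: the fiber $t\mapsto \Psi_{u,v}(t)$ reaches a strict local minimum inside $\mathcal{V}$ and a strictly larger mountain-pass value on a set that separates $\mathcal{V}$ from the region where $J_{\beta}\to -\infty$, which forces the required strict gap. Once this gap is in place, the rest is the routine combination of conservation laws, local well-posedness, $L^{2}$-renormalization and the compactness of minimizing sequences already built in Theorem \ref{Th1}.
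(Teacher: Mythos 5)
Your proposal is correct and follows essentially the same route as the paper's proof: a Cazenave--Lions contradiction argument combining conservation of the two masses and of the energy, trapping of the flow inside the local-minimization ball $A_{R_{0}}$ via the barrier $J_{\beta}\geq h(R_{0})=0>m^{+}_{\beta}(b_{1},b_{2})$ on its boundary, and the compactness of minimizing sequences already extracted in the proof of Theorem \ref{Th1}. The only notable difference is technical rather than conceptual: where you renormalize the $L^{2}$-norms of $(\Phi_{n}(z_{n},\cdot),\Psi_{n}(z_{n},\cdot))$ to land exactly on $\mathrm{T}_{b_{1}}\times\mathrm{T}_{b_{2}}$, the paper keeps the perturbed masses $b_{in}\to b_{i}$ and instead uses the monotonicity and continuity of $R_{0}$ and $R_{1}$ in $(b_{1},b_{2})$ (Lemma \ref{LK}) to make the confinement of the orbit in $A_{R_{0}(b_{1n},b_{2n},\rho,\beta)}$ uniform in $n$ -- precisely the ``trapping'' step you correctly single out as the delicate point.
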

\begin{remark}
Theorem \ref{Th5} indicates that the small quadratic interaction term leads to a stabilizations of standing waves corresponding to \eqref{L}. Indeed, when $\beta=0$, T. Bartsch, L. Jeanjean and N. Soave in \cite{BJJN} showed that the associated solitary wave is orbitally unstable by blowing up in finite time. Therefore, by creating a gap in the ground state energy level of the system (from positive to negative), the quadratic coupling term not only makes the ground state solutions stable, but also changes the structure of the energy functional and enriches the solution set.
\end{remark}

Now, we deal with the four dimensional case. In this case, $m_{\beta}(b_{1},b_{2})=-\infty$ for any $b_{1},b_{2}>0. $ Indeed, the leading term is $L^{2}$-critical and Sobolev critical, the energy function $J_{\beta}(u,v)$ is unbounded both from above and from below on $\mathrm{T}_{b_{1}}\times \mathrm{T}_{b_{2}}$.
Denote $\mathcal{S}=\inf_{D^{1,2}(\mathbb{R}^{4})\backslash \{0\}}\frac{\|\nabla u\|^{2}_{L^{2}(\R^4)}}{\|u\|^{2}_{L^{4}(\R^4)}}.$
From \cite{GT96}, we know that $\mathcal{S}$ is attained by the Aubin-Talanti bubbles
\begin{align}\label{a5}
U_{\epsilon}(x):=\frac{2\sqrt{2}\epsilon}{\epsilon^{2}+|x|^{2}},\ \epsilon>0,\ x\in \mathbb{R}^{4}.
\end{align}
Then $U_{\epsilon}$ satisfies $-\Delta u=u^{3}$ and $\int_{\mathbb{R}^{4}}|\nabla U_{\epsilon}|^{2}dx=\int_{\mathbb{R}^{4}}| U_{\epsilon}|^{4}dx=\mathcal{S}^{2}$. On the other hand, if $0<\rho<\min\{\mu_{1},\mu_{2}\}$ or $\rho>\max\{\mu_{1},\mu_{2}\}$(see Lemma \ref{lem2.5}),  $\left(\sqrt{\frac{\rho-\mu_{2}}{\rho^{2}-\mu_{1}\mu_{2}}}U_{\epsilon},\sqrt{\frac{\rho-\mu_{1}}{\rho^{2}-\mu_{1}\mu_{2}}}U_{\epsilon} \right)$ is the least energy solutions to the following elliptic system:
\begin{equation}\label{a6}
\begin{cases}
-\Delta u=\mu_{1}u^{3}+\rho v^{2}u  & \text{in} \ \ \mathbb{R}^{4},\\
-\Delta v= \mu_{2}v^{3}+\rho u^{2}v& \text{in} \ \ \mathbb{R}^{4},\\
u,v\in D^{1,2}(\mathbb{R}^{4}).
\end{cases}
\end{equation}
Define
\begin{equation}\label{a7}
\mathcal{S}_{\mu_{1},\mu_{2},\rho}=\inf_{(u,v)\in[D^{1,2}(\mathbb{R}^{4})]^{2}\backslash \{(0,0)\}}\frac{\int_{\mathbb{R}^{4}}(|\nabla u|^{2}+|\nabla v|^{2})dx}{\left(\int_{\mathbb{R}^{4}}\left(\mu_{1}u^{4}+\mu_{2}v^{4}+2\rho u^{2}v^{2}\right)dx\right)^{\frac{1}{2}}}.
\end{equation}
The main results on this aspect can be stated as follows:

If $\beta<0$, we have the following non-existence results.
\begin{theorem}\label{th1.4}
Let $\mu_i,b_i,\rho>0(i=1,2)$ and $\beta<0$.
\begin{enumerate}
\item
If $N=4$, then problem \eqref{23}-\eqref{24} has no positive solution $(u,v)\in H^1(\mathbb{R}^{4})\times H^1(\mathbb{R}^{4})$.
\item
If $N=5$, problem \eqref{23}-\eqref{24} has no positive solution $(u,v)\in H^1(\mathbb{R}^{5})\times H^1(\mathbb{R}^{5})$ satisfying the additional assumption that $u\in L^p(\R^5)$ for some $p\in (0,\frac{5}{3}]$.
\item
Moreover,
if $N\ge4$, problem \eqref{23}-\eqref{24} has no non-trivial radial solution.
\end{enumerate}
\end{theorem}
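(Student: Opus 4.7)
The plan is to derive a Pohozaev--Nehari identity that pins down the sign of $\lambda_1 b_1^2+\lambda_2 b_2^2$ for any putative positive solution, and then contradict this sign via a spectral obstruction based on the requirement that $u,v$ lie in $L^2(\mathbb{R}^N)$. First I would multiply equation~1 by $u$ and equation~2 by $v$ and integrate to obtain the two Nehari identities, then multiply equation~1 by $x\cdot\nabla u$ and equation~2 by $x\cdot\nabla v$ and integrate by parts to obtain the joint Pohozaev identity. The bookkeeping of cross terms is the only delicate point: the asymmetric $\beta$-coupling produces contributions $\mp\tfrac{\beta}{2}\int u^2(x\cdot\nabla v)$ in the two Pohozaev contributions that cancel in the sum, while the $\rho$-coupling cross terms combine via
\begin{equation*}
\int_{\mathbb{R}^N} u^2v(x\cdot\nabla v)\,dx+\int_{\mathbb{R}^N} uv^2(x\cdot\nabla u)\,dx=\tfrac12\int_{\mathbb{R}^N} x\cdot\nabla(u^2v^2)\,dx=-\tfrac{N}{2}\int_{\mathbb{R}^N} u^2v^2\,dx.
\end{equation*}
Eliminating the quartic integrals between the summed Nehari identity and the Pohozaev identity yields the key formula
\begin{equation*}
\lambda_1 b_1^2+\lambda_2 b_2^2=\frac{4-N}{N}\int_{\mathbb{R}^N}(|\nabla u|^2+|\nabla v|^2)\,dx+\frac{\beta}{2}\int_{\mathbb{R}^N}u^2v\,dx.
\end{equation*}

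For $N\ge 4$, $\beta<0$, and a positive solution $(u,v)$, both terms on the right are non-positive and not simultaneously zero (since $\int(|\nabla u|^2+|\nabla v|^2)>0$ by non-triviality and $\int u^2v>0$ by positivity), so $\lambda_1 b_1^2+\lambda_2 b_2^2<0$, forcing $\min(\lambda_1,\lambda_2)<0$. I would then close the argument by a spectral/decay obstruction: rewriting equation~1 as $(-\Delta-V_1)u=-\lambda_1u$ with the potential $V_1=\beta v+\mu_1u^2+\rho v^2$ vanishing at infinity, a positive $u\in H^1(\mathbb{R}^N)$ is a principal eigenfunction of the self-adjoint operator $-\Delta-V_1$ corresponding to the eigenvalue $-\lambda_1$; since the essential spectrum of $-\Delta-V_1$ is $[0,\infty)$, this eigenvalue must lie strictly below the essential spectrum, giving $\lambda_1>0$, and the same argument gives $\lambda_2>0$. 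This contradicts the negativity derived from the Pohozaev--Nehari identity and proves part~(1).

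Part~(2) uses exactly the same identity and spectral obstruction, but making the spectral step rigorous in $N=5$ requires $V_1$ to belong to an appropriate class (for example, the Kato class) that guarantees the essential spectrum of $-\Delta-V_1$ equals $[0,\infty)$. The Sobolev embedding $H^1(\mathbb{R}^5)\hookrightarrow L^{10/3}(\mathbb{R}^5)$ alone only places $u^2$ in $L^{5/3}$, which is insufficient. The hypothesis $u\in L^p(\mathbb{R}^5)$ with $p\in(0,5/3]$ provides, via interpolation with the Sobolev embedding, precisely the extra decay/integrability needed to place $V_1$ in the required class, thereby closing the spectral step. For part~(3), any non-trivial radial $H^1$ solution automatically satisfies the Strauss decay $|u(r)|\le Cr^{-(N-1)/2}\|u\|_{H^1}$, which supplies all the integrability needed for free; the argument of parts~(1)--(2) then rules out positive radial solutions in $N\ge 4$. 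For sign-changing radial $(u,v)$, the sign of $\int u^2v$ is no longer controlled, so I would instead use the two \emph{individual} Pohozaev identities---whose cross terms $\int u^2(x\cdot\nabla v)$ are tractable thanks to the radial integration by parts in the single variable $r$---combined with the forced positivity of each $\lambda_i$ coming from the Strauss decay to reach $\int(|\nabla u|^2+|\nabla v|^2)=0$.

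I expect the principal obstacle to be the sign-changing radial case of part~(3): the Pohozaev-sum identity no longer pins down the sign of $\lambda_1 b_1^2+\lambda_2 b_2^2$ once $\int u^2v$ can be negative, so one cannot simply quote the argument of parts~(1)--(2). Handling this case requires either a careful analysis of the two separate Pohozaev identities with the cross terms resolved through radial integration by parts, or an alternative Liouville-type argument exploiting the radial ODE structure of the system in dimensions $N\ge 4$; in either route, the bookkeeping of signs in the asymmetric $\beta$-coupling will be the technically subtle step.
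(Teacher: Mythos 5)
Your Pohozaev--Nehari bookkeeping is correct and your identity $\lambda_1b_1^2+\lambda_2b_2^2=\frac{4-N}{N}\int(|\nabla u|^2+|\nabla v|^2)\,dx+\frac{\beta}{2}\int u^2v\,dx$ is equivalent (modulo the Nehari relation) to the one the paper uses, so the first half of parts (1)--(2) matches the paper: for a positive solution with $\beta<0$ and $N\in\{4,5\}$ one gets $\min(\lambda_1,\lambda_2)<0$. Where you diverge is in closing the argument, and this is where the gap sits. Your assertion that a positive $L^2$-eigenfunction of $-\Delta-V_1$ ``must lie strictly below the essential spectrum, giving $\lambda_1>0$'' does not follow from merely locating $\sigma_{ess}=[0,\infty)$: embedded or edge eigenvalues are not excluded by positivity of the eigenfunction alone. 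The statement can be repaired -- the ground-state substitution $\phi=u\psi$ gives $\langle\phi,(-\Delta-V_1+\lambda_1)\phi\rangle=\int u^2|\nabla\psi|^2\ge0$, hence $-\lambda_1\le\inf\sigma\le 0$, i.e.\ $\lambda_i\ge0$, which already contradicts $\lambda_1b_1^2+\lambda_2b_2^2<0$ -- but as written the step is unjustified, and it is not what the paper does. The paper instead shows (after elliptic regularity forces $u,v\to0$ at infinity) that $-\Delta u\ge\frac{-\lambda_1}{2}u>0$ outside a large ball, applies the Hadamard three-spheres theorem to get the lower bound $u(x)\ge c|x|^{2-N}$, and contradicts $u\in L^{N/(N-2)}$; for $N=4$ this is $u\in L^2$, which is free, and for $N=5$ it is exactly the assumed $u\in L^{5/3}$. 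Consequently your account of the role of the $L^p$ hypothesis in $N=5$ is off target: it is not used to put $V_1$ in a Kato-type class (regularity already makes $V_1$ bounded and decaying, and interpolation from $L^p$ with small $p$ cannot improve local integrability anyway), it is used to make the integral of the decay lower bound diverge. Note also that if your spectral route were valid as stated it would remove the $L^p$ hypothesis in $N=5$ entirely, which should have prompted a second look.

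For part (3) you correctly identify the real difficulty -- for sign-changing radial solutions the sign of $\int u^2v$ is uncontrolled, so the summed identity no longer forces some $\lambda_i<0$ -- but your proposed fix is left as a sketch and the phrase ``forced positivity of each $\lambda_i$ coming from the Strauss decay'' is not substantiated: positivity of the eigenfunction is unavailable here, so the ground-state argument does not apply. The tool the paper actually invokes is Kato's theorem on the absence of positive eigenvalues with $L^2$-eigenfunctions for potentials $p(x)=o(|x|^{-1})$, the decay being supplied by the Strauss radial lemma $|u(x)|\le C|x|^{-(N-1)/2}$ for $N\ge4$; this excludes $\lambda_i<0$ even for sign-changing radial eigenfunctions. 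You should be aware that the paper's own write-up of item (3) is terse on precisely the point you flag (it asserts ``similarly, we assume $\lambda_1<0$'' without addressing the sign of $\int u^2v$ for sign-changing solutions), so your instinct about where the subtlety lies is sound; but neither your individual-Pohozaev route nor the alternative Liouville argument is carried out, so part (3) remains unproved in your proposal.
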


Next, we consider the case $\beta>0$. 

\begin{theorem}\label{th1.5}
When $N=4$, let $\mu_i,b_i>0(i=1,2)$, $\beta>0$, and $\rho\in \big(0,\min\{\mu_1,\mu_2\}\big)\cup\big(\max\{\mu_1,\mu_2\},\infty\big)$, then
the following conclusions hold.
\begin{enumerate}
\item
If $0<\beta b_1<\!\frac{3}{2|C_{4,3}|^{3}}$ and $0<\beta b_2<\!\frac{3}{|C_{4,3}|^{3}}$,
then \eqref{23}-\eqref{24} has a positive ground state solution $(u_{b_1,b_{2}},v_{b_{1},b_2})\in \mathrm{T}_{b_{1}}\times \mathrm{T}_{b_{2}} $.

\item
Moreover, there exists $\sigma_1>0$ such that
\begin{equation*}
\big(\sigma_1 u_{b_1,b_{2}}(\sigma_1 x),\sigma_1 v_{b_1,b_{2}}(\sigma_1 x)\big)
\to \Big(\sqrt{\frac{\rho-\mu_2}{\rho^2-\mu_1\mu_2}}U_{\varepsilon_0},\sqrt{\frac{\rho-\mu_1}{\rho^2-\mu_1\mu_2}}U_{\varepsilon_0}\Big)
\end{equation*}
in $D^{1,2}(\R^4)\times D^{1,2}(\R^4)$, for some $\varepsilon_0>0$ as $(b_1,b_2)\to (0,0)$, up to a subsequence.
\end{enumerate}
\end{theorem}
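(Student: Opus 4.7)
The plan is to adapt the Brezis-Nirenberg minimax and concentration-compactness machinery to the Sobolev-critical coupled system, via a Soave-type fiber-map analysis on the product of $L^2$-spheres. For $(u,v)\in\mathrm{T}_{b_1}\times\mathrm{T}_{b_2}$ and $s>0$ I consider the $L^2$-preserving dilation $s\star(u,v):=(s^2u(s\,\cdot),s^2v(s\,\cdot))$ and the fiber map
\[
\Psi_{u,v}(s)=J_\beta(s\star(u,v))=\tfrac{s^2}{2}\bigl(A(u,v)-\beta C(u,v)\bigr)-\tfrac{s^4}{4}B(u,v),
\]
with $A=\int(|\nabla u|^2+|\nabla v|^2)$, $B=\int(\mu_1u^4+\mu_2v^4+2\rho u^2v^2)$ and $C=\int u^2v$. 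In $N=4$ the kinetic and quadratic-coupling terms scale identically as $s^2$ while the quartic part scales as $s^4$; combined with the Gagliardo-Nirenberg inequality \eqref{4} with $p=3$ (so $\gamma_3=2/3$), the smallness assumptions $\beta b_1<\tfrac{3}{2|C_{4,3}|^3}$ and $\beta b_2<\tfrac{3}{|C_{4,3}|^3}$ yield $A-\beta C\ge\delta A$ on $\mathrm{T}_{b_1}\times\mathrm{T}_{b_2}$ for some $\delta>0$. Consequently $\Psi_{u,v}$ has a unique strict maximum at some $s^\ast=s^\ast(u,v)>0$, and the Pohozaev set $\mathcal{P}_{b_1,b_2}:=\{(u,v)\in\mathrm{T}_{b_1}\times\mathrm{T}_{b_2}:\Psi'_{u,v}(1)=0\}$ is a smooth natural constraint.

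Setting $M(b_1,b_2):=\inf_{\mathcal{P}_{b_1,b_2}}J_\beta$, a direct computation gives $M(b_1,b_2)=\inf\frac{(A-\beta C)^2}{4B}$. Restricting to nonnegative, radially symmetric competitors via Schwarz rearrangement and applying Ekeland's principle on $\mathcal{P}_{b_1,b_2}$, I obtain a Palais-Smale sequence $(u_n,v_n)$ for $J_\beta|_{\mathrm{T}_{b_1}\times\mathrm{T}_{b_2}}$ at level $M(b_1,b_2)$ with bounded Lagrange multipliers $(\lambda_{1,n},\lambda_{2,n})$.

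The main step — and the anticipated main obstacle — is the strict energy bound
\[
M(b_1,b_2)<\tfrac14\,\mathcal{S}_{\mu_1,\mu_2,\rho}^{2},
\]
which lies below the energy of a single Sobolev bubble of the critical limit system \eqref{a6}. I plan to prove it by testing $M(b_1,b_2)$ with truncated, $L^2$-renormalized versions of the extremal pair $(\alpha_1 U_\epsilon,\alpha_2 U_\epsilon)$ where $\alpha_1=\sqrt{(\rho-\mu_2)/(\rho^2-\mu_1\mu_2)}$ and $\alpha_2=\sqrt{(\rho-\mu_1)/(\rho^2-\mu_1\mu_2)}$ (both real and positive precisely under the stated hypothesis on $\rho$), projecting them onto $\mathcal{P}_{b_1,b_2}$ via $s^\ast$, and carrying out a Brezis-Nirenberg expansion as $\epsilon\to 0$. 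The dominant correction to $\tfrac14\mathcal{S}_{\mu_1,\mu_2,\rho}^{2}$ comes from $-\tfrac{\beta}{2}\alpha_1^2\alpha_2\int U_\epsilon^3$, which is strictly negative because $\beta>0$, and whose $\epsilon$-order is precisely tuned by the $N=4$ scaling to dominate the Sobolev remainder; this yields the strict inequality for $\epsilon$ small.

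With the strict estimate in hand, compactness follows from a Struwe/Brezis-Lieb profile decomposition adapted to the system: the weak $H^1\times H^1$-limit $(u_{b_1,b_2},v_{b_1,b_2})$ is nontrivial, and the possible bubbling alternative would force $M(b_1,b_2)\ge\tfrac14\mathcal{S}_{\mu_1,\mu_2,\rho}^{2}$, which is excluded; hence $(u_n,v_n)\to(u_{b_1,b_2},v_{b_1,b_2})$ strongly, the limit is a ground state, and positivity follows by running the argument with $(|u_n|,|v_n|)$ and invoking the strong maximum principle on the limit system. For part (2), letting $\sigma_1=\sigma_1(b_1,b_2)$ be the natural concentration scale of $(u_{b_1,b_2},v_{b_1,b_2})$ and defining $(\tilde u,\tilde v)(x):=(\sigma_1 u_{b_1,b_2}(\sigma_1 x),\sigma_1 v_{b_1,b_2}(\sigma_1 x))$, the $L^2$-mass constraint becomes vacuous as $(b_1,b_2)\to(0,0)$, the linear and quadratic lower-order terms rescale away, and — using the strict upper bound above together with the classification of extremizers of $\mathcal{S}_{\mu_1,\mu_2,\rho}$ recalled in Lemma \ref{lem2.5} — the limit is, up to a subsequence, the Aubin-Talanti pair displayed in the theorem, with strong convergence in $D^{1,2}\times D^{1,2}$.
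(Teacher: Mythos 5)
Your proposal follows essentially the same route as the paper: the $L^2$-preserving fiber map and Pohozaev manifold $\mathcal{P}_{b_1,b_2}$ made nondegenerate by the smallness conditions on $\beta b_1,\beta b_2$ (Lemmas \ref{lem3.1}--\ref{lem3.3}), the strict bound $m_\beta(b_1,b_2)<\frac{k_1+k_2}{4}\mathcal{S}^2$ obtained from truncated, $L^2$-normalized Aubin--Talenti bubbles where the negative term $-\frac{\beta}{2}\int \overline{W}_\varepsilon^2\overline{V}_\varepsilon\sim -C|\ln\varepsilon|^{-1/2}$ dominates the $O(\varepsilon^2)$ remainder (Lemma \ref{lem3.4}), compactness of a radial Palais--Smale sequence below that threshold (Lemma \ref{lem3.5}), and the identification of $(u_{b_1,b_2},v_{b_1,b_2})$ as an asymptotically minimizing pair for $\mathcal{S}_{\mu_1,\mu_2,\rho}$ together with the classification of its extremizers for part (2) (Lemma \ref{lem3.6}). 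The steps you leave as claims (the $N=4$ order count with the logarithmic correction $\|W_\varepsilon\|_{L^2}^2=O(\varepsilon^2|\ln\varepsilon|)$, and the profile decomposition) are exactly what the paper carries out, so the plan is sound and matches the published argument.
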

\begin{remark}
If $N=4$, in problem \eqref{23}, $\beta uv$ and $\frac{\beta}{2}u^2$ can be regarded as mass-critical terms. Moreover, the terms $u^3, v^3$ and the coupled terms $u^2v, v^2u$ are mass super-critical and energy critical.
Theorem \ref{th1.5} indicate that problem \eqref{23} with mass critical lower order perturbation term possesses at least one normalized ground state solution, whose two components both converge to the Aubin-Talanti bubble in related Sobolev space by making appropriate scaling, as the masses of two components vanish.
More recently, in \cite{LYZ} jointly with W. Zou, the first and third authors in this present paper considered the equation
\begin{equation}\label{eqA0.1}
\begin{cases}
-\Delta u+\lambda_1u=\alpha_1|u|^{p-2}u+\mu_1u^3+\beta v^2u\quad&\hbox{in}~\R^4,\\
-\Delta v+\lambda_2v=\alpha_2|v|^{p-2}v+\mu_2v^3+\beta u^2v\quad&\hbox{in}~\R^4,\\
\end{cases}
\end{equation}
under the mass constraint $(u,v)\in \mathrm{T}_{b_{1}}\times \mathrm{T}_{b_{2}} $, where $\mu_1,\mu_2,\beta>0$, $\alpha_1,\alpha_2\in \R$, $p\!\in\!(2,4)$ and $\lambda_1,\lambda_2\!\in\!\R$ appear as Lagrange multipliers. We must point out that
compared with \cite{LYZ}, the problem \eqref{23} in $\R^4$ is more delicate due to the uncertainty of sign of the quadratic interaction term and inhomogeneity of the coupling term. Moreover, since system \eqref{23} with mixed couplings is asymmetric, the permissible range of $b_1,b_2$ obtained in Theorem \ref{th1.5} may not be optimal, so it is difficult to prove the non-existence of normalized solutions to \eqref{23} for larger $b_1$ and $b_2$.
\end{remark}

Finally, we give outline of the proofs.
In the one dimension case, the energy functional is bounded from below on the product of $L^2$-spheres, the constrained minimization method developed by L. Jeanjean \cite{GL18} can be used to obtained a normalized ground state, which is obtained by establishing the compactness of the minimizing sequences. When $N=3$, the energy functional is always unbounded on the product of $L^2$-spheres. We use the ideas introduced by N. Soave \cite{Soave1, Soave} to study the related fiber maps $\Psi_{u,v}(t)$(see \eqref{pp}). It is easy to see that the critical point of $\Psi_{u,v}(t)$ allow to project a function on $\mathcal{P}_{b_{1},b_{2}}$(see \eqref{int4}) and the monotonicity and convexity properties of $\Psi_{u,v}(t)$ has a strongly affect the structure of $\mathcal{P}_{b_{1},b_{2}}$ and then intimately related to the minimax structure of $J_{\beta}(u,v)|_{\mathrm{T}_{b_{1}}\times \mathrm{T}_{b_{2}}}$.

However, for the system \eqref{23} we study, due to the uncertainty of the sign of the term $\beta\int_{\mathbb{R}^{N}}u^{2}vdx$ in the corresponding energy functional, the above method cannot be used directly, and we need to introduce additional constrains on the previous Pohozaev manifold. On the new manifold with additional constrains $\mathcal{P}_{b_{1},b_{2}}$(see \eqref{k1}-\eqref{k3}), under suitable condition on $b_1$ and $b_2$,  we can prove that $J_{\beta}(u,v)|_{\mathrm{T}_{b_{1}}\times \mathrm{T}_{b_{2}}}$ admits a convex-concave geometry and the new manifold  $\mathcal{P}_{b_{1},b_{2}}$ is non-empty. To show that $\mathcal{P}_{b_{1},b_{2}}$ is a natural constraint, we use some ideas introduced by F. Clarke in \cite{FHC76} and already used by J. Mederski and J. Schino in \cite{MS21} to deal with minimization problems whose constraints are given by inequalities. Therefore, $J_{\beta}(u,v)|_{\mathrm{T}_{b_{1}}\times \mathrm{T}_{b_{2}}}$ has a local minimizer and a mountain pass critical point. By establish the compactness of minimizing sequence, we obtain a solution which is a local minimizer. The second solution is obtained through a constrained mountain pass. It is worth pointing out that after proving the strongly convergence of minimizing sequences, we need to verify that the limit function $(u,v)$ is still in the allowed set $\mathcal{P}_{b_{1},b_{2}}$. Indeed, if $\beta\int_{\mathbb{R}^{N}}u^{2}vdx\leq0,$ then the local minimum point will disappear, thus we get a contradiction.

When $N=3$, obviously, we obtain that $J_{\beta}(u,v)|_{\mathrm{T}_{b_{1}}\times \mathrm{T}_{b_{2}}}$ has no local minimizer as $\beta\rightarrow 0^+$, i.e. the ground state energy converges to $0$ as $\beta \to 0^+$. The mountain pass solutions $(\widetilde{u}_{\beta},\widetilde{v}_{\beta})$ obtained in Theorem \ref{Th1} depends on $\beta$, we shall analyze the convergence properties of $(\widetilde{u}_{\beta},\widetilde{v}_{\beta})$ as $\beta\to 0^+$. We first deduce that for the function $\beta\in [0,\infty)\mapsto m_{\beta}(b_{1},b_{2})\in\R$ is monotone non-increasing, and $\{(\widetilde{u}_{\beta},\widetilde{v}_{\beta})\}$ is bounded in $H^{1}(\mathbb{R}^{3})\times H^{1}(\mathbb{R}^{3})$.
Then there exists a subsequence, such that $(\widetilde{u}_{\beta},\widetilde{v}_{\beta})$ converge to $(\widetilde{u}_{0},\widetilde{v}_{0})$ strongly in $H^{1}(\mathbb{R}^{3})\times H^{1}(\mathbb{R}^{3})$ as $\beta\to 0^+$, where $(\widetilde{u}_{0},\widetilde{v}_{0})$ is a ground state solution of \eqref{23}-\eqref{24} with $\beta=0$.

In the proof of Theorem \ref{Th2}, the main ingredient is the refined upper bound of $m^{+}(b_1,b_2)$ (see Lemma \ref{LEM}) i.e.
\begin{equation*}
m^{+}_{\beta}(b_{1},b_{2})<-\frac{1}{6}\Big[\frac{4\beta^{4}b_{1}^{\frac{22}{5}}b_{2}^{\frac{8}{5}}}{16^{\frac{4}{5}}\|w\|^{6}_{L^{2}(\mathbb{R}^{3})}}+\frac{\beta^{4}b_{1}^{\frac{16}{5}}b_{2}^{\frac{14}{5}}}{16^{\frac{2}{5}}\|w\|^{6}_{L^{2}(\mathbb{R}^{3})}}\Big]\|\nabla w\|^{2}_{L^{2}(\mathbb{R}^{3})}.
\end{equation*}
This refinement needs to keep the testing functions staying in the admissible set $A_{R_{0}}=\big\{(u,v)\in \mathrm{T}_{b_{1}}\times \mathrm{T}_{b_{2}}:\|\nabla  u\|^{2}_{L^2(\R^3)}+\|\nabla  v\|^{2}_{L^2(\R^3)}< R^2_{0}\big\}$.
We overcome this difficulty by utilising the properties of the unique positive radial ground state solution of \eqref{eqd1}. By accurate estimation and careful analysis, we give a precise description of the asymptotic behavior of solutions as the mass $b_{1},b_{2}$ goes to zero.

To prove item 1 of Theorem  \ref{th1.5}, we follow the approach of \cite{WW21}. We can obtain a bounded minimizing sequence by using the Pohozaev constraint approach(see\cite{Soave,Soave1}). However, it is very difficult to prove the compactness of a minimizing sequence at positive energy levels. Motivated by \cite{WW21}, we drive a better energy estimate on the associated mountain pass energy level (see Lemma \ref{lem3.4} ), i.e $$0<m_{\beta}(b_1,b_2)<\frac{k_1+k_2}{4}\mathcal{S}^2.$$ This is enough to guarantee the compactness of minimizing sequences at the energy level $m_{\beta}(b_1,b_2).$ To prove item 2 of Theorem \ref{th1.5}, we first prove the following energy splitting asymptotic property of the solution
\begin{equation*}
||\nabla u_{b_1,b_{2}}||^2_2+\|\nabla v_{b_1,b_{2}}\|^2_2\to (k_1+k_2)\mathcal{S}^2,\ \ \mu_1\|u_{b_1,b_{2}}\|^{4}_{4}+\mu_2\|v_{b_1,b_{2}}\|^{4}_{4}+2\rho\|u_{b_1,b_{2}}v_{b_1,b_{2}}\|^2_2\to (k_1+k_2)\mathcal{S}^2,
\end{equation*}
as $(b_1,b_2)\to (0,0)$. Then we obtain that $(u_{b_1,b_{2}},v_{b_1,b_{2}})$ is a special minimizing sequence of the minimizing problem \eqref{a7}. We follow some ideas from Lemma 3.5 of \cite{LL}, and by the uniqueness of ground state solution of \eqref{a7}, we obtain the precisely asymptotic behavior of $(u_{b_1,b_{2}},v_{b_1,b_{2}})$  as $(b_{1},b_{2})\rightarrow(0,0)$.


\vskip2mm

Throughout the paper, we shall write $a\backsim b$ if $C_{1}a\leq b\leq C_{2}a$ where $C_{i}, i=1,2$ are constants. $H^{1}_{r}$ denotes the subspace of functions in $H^{1}$ which are radially symmetric with respect to 0, and $\mathrm{T}_{b_{i,r}}=\mathrm{T}_{b_{i}}\cap H^{1}_{r}, i=1,2 $. The rest of this paper is organized as follows.   In section \ref{sec7}, we prove Theorem \ref{Th4}. In section \ref{sec15}, we prove Theorem\ref{Th3}. In section \ref{sec4}, we prove Theorem  \ref{Th1}. In section \ref{sec16}, we prove Theorems \ref{th1.4} and \ref{th1.5}.
\section{Proof of Theorem \ref{Th4}}\label{sec7}
To prove Theorem \ref{Th4}, we use the ideas introduced in \cite{GL16,GL18}. First we recall  the rearrangement results of M. Shibata \cite{Shibata} as presented in \cite{NI14,GL16}. Let $u$ be a Borel measurable function on $\mathbb{R}^{N}$. It is said to vanish at infinity if the level set $|\{x\in \mathbb{R}^{N}: u(x)>t\}|<+\infty$ for every $t>0$. Here $|A|$ stands for the N-dimensional Lebesgue measure of a Lebesgue measurable set $A\subset \mathbb{R}^{N}$. Considering two Borel mesurable functions $u,v$ which vanish at infinity in $\mathbb{R}^{N}$, for $t>0,$ we define $A^{\star}(u,v:t):=\{x\in \mathbb{R}^{N}: |x|<r \}$, where $r>0$ is chosen so that $$B(0,r)=|\{x\in \mathbb{R}^{N}: |u(x)|>t \}|+|\{x\in \mathbb{R}^{N}: |v(x)|>t \}|,$$ and $\{u,v\}^{\star}$ by $$\{u,v\}^{\star}(x):=\int^{\infty}_{0}\chi_{A^{\star}(u,v:t)}(x)dt,$$ where $\chi_{A}(x)$ is a characteristic function of the set $A\subset \mathbb{R}^{N}.$
\begin{lemma}[\cite{NI14} Lemma A.1]\label{LM}
(i) The function $\{u,v\}^{\star}$ is radially symmetric, non-increasing and lower semi-continuous. More, for each $t>0$ there holds $\{x\in \mathbb{R}^{N}:\{u,v\}^{\star}>t\}=A^{\star}(u,v:t)$.

(ii) Let $\Phi:[0,\infty)\rightarrow[0,\infty)$ be non-decreasing lower semi-continuous, continuous at 0 and $\Phi(0)=0.$ Then $\{\Phi(u), \Phi(v)\}^{\star}=\Phi(\{u,v\}^{\star})$.

(iii) $\|\{u,v\}^{\star}\|^{p}_{L^{p}(\mathbb{R}^{N})}=\|u\|^{p}_{L^{p}(\mathbb{R}^{N})}+\|v\|^{p}_{L^{p}(\mathbb{R}^{N})}$ for $1\leq p<\infty.$

(iv) If $u,v\in H^{1}(\mathbb{R}^{N})$, then $\{u,v\}^{\star}\in H^{1}(\mathbb{R}^{N}) $ and $$\|\nabla \{u,v\}^{\star}\|^{2}_{L^{2}(\mathbb{R}^{N})}\leq \|\nabla u\|^{2}_{L^{2}(\mathbb{R}^{N})}+\|\nabla v\|^{2}_{L^{2}(\mathbb{R}^{N})}.$$ In addition, if  $u,v\in (H^{1}(\mathbb{R}^{N})\cap C^{1}(\mathbb{R}^{N}))\setminus\{0\}$ are radially symmetric, positive and non-increasing, then $$\int_{\mathbb{R}^{N}}|\nabla \{u,v\}^{\star}|^{2}dx<\int_{\mathbb{R}^{N}}|\nabla u|^{2}dx+\int_{\mathbb{R}^{N}}|\nabla v|^{2}dx.$$

(v) Let $u_{1},u_{2},v_{1},v_{2}\geq 0$ be Borel measurable functions which vanish at infinity, then $$\int_{\mathbb{R}^{N}}(u_{1}u_{2}+v_{1}v_{2})dx\leq \int_{\mathbb{R}^{N}}\{u_{1},v_{1}\}^{\star}\{u_{2},v_{2}\}^{\star}dx.$$
\end{lemma}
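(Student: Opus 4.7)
\textbf{Proof plan for Lemma \ref{LM}.} The proof is based on the layer-cake representation and analyzes the structure of the defining nested family of balls $\{A^{\star}(u,v:t)\}_{t>0}$.

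First, for (i), observe from the definition $A^{\star}(u,v:t)=B(0,r(t))$ with $r(t)$ determined by $|B(0,r(t))|=|\{|u|>t\}|+|\{|v|>t\}|$. Since both distribution functions $t\mapsto|\{|u|>t\}|$ and $t\mapsto|\{|v|>t\}|$ are non-increasing and right-continuous, so is $t\mapsto r(t)$, hence the family of balls is nested and shrinking. Consequently $\{u,v\}^{\star}(x)=\int_{0}^{\infty}\chi_{B(0,r(t))}(x)\,dt$ is clearly radially symmetric and non-increasing in $|x|$. The identity $\{\{u,v\}^{\star}>t\}=A^{\star}(u,v:t)$ is obtained by noting that $\{u,v\}^{\star}(x)>t$ iff $|x|<r(s)$ for some $s>t$, which, by right-continuity of $r$, is equivalent to $|x|<r(t)$; lower semi-continuity is then automatic from this level-set description.

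Next, (iii) follows immediately from the layer-cake identity $\|f\|_{L^{p}}^{p}=p\int_{0}^{\infty}t^{p-1}|\{|f|>t\}|\,dt$ together with (i): the measure of $\{\{u,v\}^{\star}>t\}=A^{\star}(u,v:t)$ equals $|\{|u|>t\}|+|\{|v|>t\}|$ by construction, so summing gives the identity. For (ii) I would show that the map $\Phi$ commutes with rearrangement by checking, for each $s>0$, that $\{\Phi(u)>s\}=\{u>\Phi^{-1}_{+}(s)\}$ (using monotonicity, lower semi-continuity and continuity of $\Phi$ at $0$ to identify the right-continuous inverse), and similarly for $v$; then $r_{\Phi(u),\Phi(v)}(s)=r_{u,v}(\Phi^{-1}_{+}(s))$ and the layer-cake formula yields $\{\Phi(u),\Phi(v)\}^{\star}=\Phi(\{u,v\}^{\star})$. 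Property (v) is a Hardy–Littlewood type inequality: by layer-cake write $u_{i}u_{j}=\int_{0}^{\infty}\!\!\int_{0}^{\infty}\chi_{\{u_{i}>s\}}\chi_{\{u_{j}>t\}}\,ds\,dt$ and add the analogous expansion for $v_{i}v_{j}$; using that $|\{u_{i}>s\}\cap\{u_{j}>t\}|+|\{v_{i}>s\}\cap\{v_{j}>t\}|\le \min\{|\{u_{i}>s\}|+|\{v_{i}>s\}|,|\{u_{j}>t\}|+|\{v_{j}>t\}|\}$, which is exactly the measure bound ensuring $A^{\star}(u_{i},v_{i}:s)\cap A^{\star}(u_{j},v_{j}:t)$ maximizes intersection among equimeasurable rearrangements, one recovers the desired bound.

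The main obstacle is (iv). The plan is to approximate $u$ and $v$ by smooth compactly supported Morse functions (so that almost every level set is a smooth hypersurface) and apply the co-area formula
\[
\int_{\mathbb{R}^{N}}|\nabla u|^{2}dx+\int_{\mathbb{R}^{N}}|\nabla v|^{2}dx=\int_{0}^{\infty}\!\!\Bigl(\int_{\{|u|=t\}}|\nabla u|\,d\mathcal{H}^{N-1}+\int_{\{|v|=t\}}|\nabla v|\,d\mathcal{H}^{N-1}\Bigr)dt,
\]
together with Cauchy–Schwarz $\int_{\{|f|=t\}}|\nabla f|\cdot \int_{\{|f|=t\}}|\nabla f|^{-1}\ge (\mathcal{H}^{N-1}(\{|f|=t\}))^{2}$, and the isoperimetric inequality applied to $\{|u|>t\}\sqcup \{|v|>t\}$ (the latter realized as a single ball $A^{\star}(u,v:t)$ of equal total volume) which gives $\mathcal{H}^{N-1}(\partial A^{\star}(u,v:t))\le \mathcal{H}^{N-1}(\{|u|=t\})+\mathcal{H}^{N-1}(\{|v|=t\})$. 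Combining these with a careful use of $-\frac{d}{dt}|A^{\star}(u,v:t)|=-\frac{d}{dt}|\{|u|>t\}|-\frac{d}{dt}|\{|v|>t\}|$ and reversing the co-area formula for $\{u,v\}^{\star}$ yields the Dirichlet energy inequality. For the strict inequality when $u,v$ are non-trivial, smooth, radial and strictly decreasing, the isoperimetric step becomes strict because $\{|u|>t\}\cup\{|v|>t\}$ consists of two disjoint centered balls (never equal to a single ball unless one of them is empty), so the perimeter inequality is strict on a set of positive measure of levels $t$, forcing the integrated inequality to be strict. A standard density argument (via truncation and smoothing) extends the weak inequality from smooth functions to general $u,v\in H^{1}(\mathbb{R}^{N})$, completing the proof.
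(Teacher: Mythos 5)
The paper does not prove this lemma at all: it is imported verbatim as Lemma~A.1 of \cite{NI14} (Ikoma's presentation of Shibata's rearrangement), so there is no in-paper argument to compare against. Your sketch essentially reconstructs the standard Shibata-type proof, and the overall plan is sound: parts (i)--(iii) and (v) are correctly reduced to the layer-cake formula, right-continuity of the distribution functions, the nestedness of the concentric balls $A^{\star}(u,v:t)$, and the elementary bound $\min(a,b)+\min(c,d)\leq\min(a+c,b+d)$; part (iv) is correctly organized around the co-area formula, Cauchy--Schwarz on level sets, and an isoperimetric comparison. Two spots deserve tightening. First, in (iv) the step that merges the two level-set contributions into one requires the explicit inequality $\frac{a^{2}}{x}+\frac{b^{2}}{y}\geq\frac{(a+b)^{2}}{x+y}$ applied with $a=\mathcal{H}^{N-1}(\{|u|=t\})$, $b=\mathcal{H}^{N-1}(\{|v|=t\})$, $x=-\mu_{u}'(t)$, $y=-\mu_{v}'(t)$; you gesture at ``combining'' but this is the one non-obvious algebraic ingredient and should be named. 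Second, your justification of strictness via ``two disjoint centered balls'' is garbled: for radial non-increasing $u,v$ the super-level sets are \emph{nested} centered balls, not disjoint ones. The correct source of strictness is that the single ball $A^{\star}(u,v:t)$ of volume $V_{1}+V_{2}$ has strictly smaller perimeter than the sum of the perimeters of balls of volumes $V_{1}$ and $V_{2}$, by strict subadditivity of $V\mapsto V^{(N-1)/N}$ for $N\geq2$ (and by counting boundary points for $N=1$), and this strict gap occurs on the positive-measure set of levels $0<t<\min\{\max u,\max v\}$ where both $V_{1},V_{2}>0$; one must also check that the denominators $-(\mu_{u}+\mu_{v})'(t)$ are finite a.e.\ there, which your $C^{1}$ hypothesis supplies. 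With those repairs the sketch matches the argument in the cited reference.
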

The solution of \eqref{23}--\eqref{24} can be found as a critical point of the following  energy functional
$$
J_{\beta}(u,v)=\frac{1}{2}\int_{\mathbb{R}^{N}}(|\nabla u|^{2}+|\nabla v|^{2})dx-\frac{1}{4}\int_{\mathbb{R}^{N}}(\mu_{1}u^{4}+\mu_{2}v^{4}+2\rho u^{2}v^{2})dx-\frac{\beta}{2}\int_{\mathbb{R}^{N}}u^{2}vdx.
$$

From  Gagliardo-Nirenberg-Sobolev inequality \eqref{4}, we have
\begin{align}\label{LA11}
\frac{\mu_{1}}{4}\int_{\mathbb{R}^{N}}u^{4}dx\leq\frac{\mu_{1}}{4} C^{4}_{N,4}b^{4-N}_{1}\|\nabla u\|^{N}_{L^{2}(\mathbb{R}^{N})},\ \ \frac{\mu_{2}}{4}\int_{\mathbb{R}^{N}}v^{4}dx\leq\frac{\mu_{2}}{4} C^{4}_{N,4}b^{4-N}_{2}\|\nabla v\|^{N}_{L^{2}(\mathbb{R}^{N})},
\end{align}
\begin{align}\label{LA12}
\int_{\mathbb{R}^{N}}u^{2}v^{2}dx&\leq\left(\int_{\mathbb{R}^{N}}u^{4}dx\right)^{\frac{1}{2}}\left(\int_{\mathbb{R}^{N}}v^{4}dx\right)^{\frac{1}{2}}\\\nonumber
&\leq C^{4}_{N,4}b^{\frac{4-N}{2}}_{1}b^{\frac{4-N}{2}}_{2}\|\nabla u\|^{\frac{N}{2}}_{L^{2}(\mathbb{R}^{N})}\|\nabla v\|^{\frac{N}{2}}_{L^{2}(\mathbb{R}^{N})}\\\nonumber
&\leq\frac{1}{2}C^{4}_{N,4}b^{\frac{4-N}{2}}_{1}b^{\frac{4-N}{2}}_{2}\left[\|\nabla u\|^{2}_{L^{2}(\mathbb{R}^{N})}+\|\nabla u\|^{2}_{L^{2}(\mathbb{R}^{N})}\right]^{\frac{N}{2}},
\end{align}
\begin{align}\label{LA13}
\beta\int_{\mathbb{R}^{N}}u^{2}vdx&\leq|\beta|\left(\int_{\mathbb{R}^{N}}|u|^{3}dx\right)^{\frac{2}{3}}\left(\int_{\mathbb{R}^{N}}|v|^{3}dx\right)^{\frac{1}{3}}\\\nonumber
&\leq |\beta|\left[\frac{2}{3}\int_{\mathbb{R}^{N}}|u|^{3}dx+\frac{1}{3}\int_{\mathbb{R}^{N}}|v|^{3}dx\right]\\\nonumber
&\leq|\beta|\left[\frac{2}{3} C^{3}_{N,3}b^{\frac{6-N}{2}}_{1}\|\nabla u\|^{\frac{N}{2}}_{L^{2}(\mathbb{R}^{N})}+\frac{1}{3}C^{3}_{N,3}b^{\frac{6-N}{2}}_{2}\|\nabla v\|^{\frac{N}{2}}_{L^{2}(\mathbb{R}^{N})}\right]\\\nonumber
&\leq|\beta|\left[\left(\frac{2}{3}b^{\frac{6-N}{2}}_{1}+\frac{1}{3}b^{\frac{6-N}{2}}_{2}\right)C^{3}_{N,3}\left[\|\nabla u\|^{2}_{L^{2}(\mathbb{R}^{N})}+\|\nabla v\|^{2}_{L^{2}(\mathbb{R}^{N})}\right]^{\frac{N}{4}}\right].
\end{align}

Next, we show that $m(b_{1},b_{2})<0.$ We now focus on Sobolev subcritical nonlinear Schr\"{o}dinger equation with prescribed $L^{2}$ norm. For fixed $\mu>0,2<p<2+\frac{4}{N}$, we search for $H^{1}$ and $\lambda \in \mathbb{R}$ solving
\begin{equation}\label{KL}
\begin{cases}
-\Delta u+u=\mu |u|^{p-2}u  & \text{in} \ \ \mathbb{R}^{N},\\
\int_{\mathbb{R}^{N}}u^{2}dx=b^{2}.
\end{cases}
\end{equation}
Solutions of \eqref{KL} can be found as critical points of $J_{\mu,p}:H^{1}\rightarrow \mathbb{R}$,
\begin{align}\label{KJ}
J_{\mu,p}=\frac{1}{2}\int_{\mathbb{R}}|\nabla u_{n}|^{2}dx-\frac{\mu}{p}\int_{\mathbb{R}}|u|^{p}dx
\end{align}
constrained on $S_{b}=\{u\in H^{1}|\int_{\mathbb{R}^{N}}u^{2}dx=b^{2}\}$.
\begin{lemma}\label{LA43}
When $N=1$, for any $\rho>0,\beta>0$, we have $$m_{\beta}(b_{1},b_{2})=\inf_{(u,v)\in \mathrm{T}_{b_{1}}\times \mathrm{T}_{b_{2}}}J_{\beta}(u,v)<0.$$
\end{lemma}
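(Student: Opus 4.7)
The plan is a scaling argument. I would start by picking any fixed pair $(u,v) \in \mathrm{T}_{b_1} \times \mathrm{T}_{b_2}$ with $u,v > 0$ on $\mathbb{R}$; this ensures $\int_{\mathbb{R}} u^2 v\, dx > 0$, which is what makes the cubic coupling term an asset rather than a liability. Since $\beta>0$ by hypothesis, I then have $-\frac{\beta}{2}\int_{\mathbb{R}} u^2 v\, dx < 0$.

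Next I would introduce the $L^2$--preserving scaling $u_s(x):=s^{1/2}u(sx)$ and $v_s(x):=s^{1/2}v(sx)$ for $s>0$. In dimension one this keeps $\|u_s\|_{L^2}=\|u\|_{L^2}$ and $\|v_s\|_{L^2}=\|v\|_{L^2}$, so $(u_s,v_s) \in \mathrm{T}_{b_1}\times \mathrm{T}_{b_2}$. A direct change of variables gives
\begin{align*}
\|\nabla u_s\|_{L^2}^2 &= s^2\|\nabla u\|_{L^2}^2, \qquad \|\nabla v_s\|_{L^2}^2 = s^2 \|\nabla v\|_{L^2}^2,\\
\|u_s\|_{L^4}^4 &= s\|u\|_{L^4}^4, \qquad \|v_s\|_{L^4}^4 = s\|v\|_{L^4}^4, \qquad \int_{\mathbb{R}} u_s^2 v_s^2\, dx = s\int_{\mathbb{R}} u^2 v^2\, dx,\\
\int_{\mathbb{R}} u_s^2 v_s\, dx &= s^{1/2}\int_{\mathbb{R}} u^2 v\, dx.
\end{align*}
Substituting into $J_\beta$ yields
\begin{equation*}
J_\beta(u_s,v_s) = \frac{s^2}{2}\bigl(\|\nabla u\|_{L^2}^2+\|\nabla v\|_{L^2}^2\bigr) - \frac{s}{4}\bigl(\mu_1\|u\|_{L^4}^4+\mu_2\|v\|_{L^4}^4+2\rho\|uv\|_{L^2}^2\bigr) - \frac{\beta\, s^{1/2}}{2}\int_{\mathbb{R}} u^2 v\, dx.
\end{equation*}

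As $s \to 0^+$, the three terms vanish at rates $s^2$, $s$, and $s^{1/2}$ respectively, so the negative $s^{1/2}$ term dominates. Concretely, dividing the whole expression by $s^{1/2}$ shows
\[
\lim_{s\to 0^+} \frac{J_\beta(u_s,v_s)}{s^{1/2}} = -\frac{\beta}{2}\int_{\mathbb{R}} u^2 v\, dx < 0,
\]
so there exists $s_0>0$ with $J_\beta(u_{s_0},v_{s_0})<0$, hence $m_\beta(b_1,b_2)\le J_\beta(u_{s_0},v_{s_0})<0$. This completes the argument; the boundedness from below of $m_\beta(b_1,b_2)$ is a separate statement handled by the Gagliardo--Nirenberg estimates \eqref{LA11}--\eqref{LA13} specialised to $N=1$, where all three interaction terms are controlled by powers of the gradient norm strictly below $2$. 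No real obstacle is expected: the one item to be careful about is the sign of $\int u^2 v\, dx$, which is why choosing both components positive from the start is essential.
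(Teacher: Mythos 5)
Your proposal is correct, and it reaches the conclusion by a genuinely different route from the paper. You use the one\mbox{-}dimensional $L^2$-preserving dilation $u_s(x)=s^{1/2}u(sx)$ applied to an arbitrary positive pair in $\mathrm{T}_{b_1}\times\mathrm{T}_{b_2}$ and exploit the fact that the quadratic coupling term scales like $s^{1/2}$ while the kinetic and quartic terms scale like $s^2$ and $s$ respectively, so the negative $-\tfrac{\beta}{2}s^{1/2}\int u^2v$ term dominates as $s\to0^+$; your scaling exponents all check out, and your insistence on $u,v>0$ to guarantee $\int u^2v>0$ is exactly the point that needs care. The paper instead takes $u=u_{\mu_1,4,b_1}$ and $v=u_{\mu_2,4,b_2}$, the positive normalized ground states of the scalar problem \eqref{KL}, decomposes $J_\beta(u,v)=J_{\mu_1,4}(u)+J_{\mu_2,4}(v)-\tfrac{\rho}{2}\int u^2v^2-\tfrac{\beta}{2}\int u^2v$, and invokes the external fact (Lemma 2.1 of \cite{CZ21}) that $J_{\mu_i,4}(u_{\mu_i,4,b_i})<0$, the coupling terms only pushing the energy further down. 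Your argument is more elementary and self-contained (no appeal to the scalar normalized ground state), but it relies essentially on $\beta>0$; the paper's argument has the minor advantage that it would yield $m_\beta(b_1,b_2)<0$ even at $\beta=0$, since the quartic nonlinearity is already mass-subcritical in dimension one. Both treat the lower bound $m_\beta(b_1,b_2)>-\infty$ identically, via the Gagliardo--Nirenberg estimates \eqref{LA11}--\eqref{LA13}, so there is no gap on that side either.
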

\begin{proof}
From \eqref{LA11}, \eqref{LA12} and \eqref{LA13}, we can deduce that
\begin{align*}
J_{\beta}(u,v)&=\frac{1}{2}\int_{\mathbb{R}}(|\nabla u|^{2}+|\nabla v|^{2})dx-\frac{1}{4}\int_{\mathbb{R}}(\mu_{1}u^{4}+\mu_{2}v^{4}+2\rho u^{2}v^{2})dx-\frac{\beta}{2}\int_{\mathbb{R}}u^{2}vdx\\\nonumber
&\geq \frac{1}{2}\int_{\mathbb{R}}(|\nabla u|^{2}+|\nabla v|^{2})dx-\left[\frac{\mu_{1}}{4} C^{4}_{1,4}b^{3}_{1}+\frac{\mu_{2}}{4} C^{4}_{1,4}b^{3}_{2}+\frac{\rho}{4}C^{4}_{1,4}b^{\frac{3}{2}}_{1}b^{\frac{3}{2}}_{2}\right]
\left[\|\nabla u\|^{2}_{L^{2}(\mathbb{R})}+\|\nabla u\|^{2}_{L^{2}(\mathbb{R})}\right]^{\frac{1}{2}}\\
&\quad-\frac{|\beta|}{2}\left[\left(\frac{2}{3}b^{\frac{5}{2}}_{1}+\frac{1}{3}b^{\frac{5}{2}}_{2}\right)C^{3}_{1,3}\left[\|\nabla u\|^{2}_{L^{2}(\mathbb{R})}+\|\nabla v\|^{2}_{L^{2}(\mathbb{R})}\right]^{\frac{1}{4}}\right],
\end{align*}
so $J$ is coercive and in particular $m_{\beta}(b_{1},b_{2})>-\infty.$  Since
\begin{align*}
 J_{\beta}(u,v)&=\frac{1}{2}\int_{\mathbb{R}}(|\nabla u|^{2}+|\nabla v|^{2})dx-\frac{\beta}{2}\int_{\mathbb{R}}u^{2}vdx-\frac{1}{4}\int_{\mathbb{R}}\left(\mu_{1}u^{4}+\mu_{2}v^{4}+2\rho u^{2}v^{2}\right)dx\\
&= J_{\mu_{1},4}(u)+J_{\mu_{2},4}(v)-\frac{1}{2}\int_{\mathbb{R}}\rho u^{2}v^{2}dx-\frac{\beta}{2}\int_{\mathbb{R}}u^{2}vdx,
\end{align*}
where $J_{\mu_{i},4}, i=1,2$ is defined in \eqref{KJ}. Then we choose $u=u_{\mu_{1},p,b_{1}}$ and $v=u_{\mu_{2},4,b_{2}}$, where $u_{\mu_{i},4,b_{i}}(i=1,2)$ is the unique positive solution of \eqref{KL} with $b$ and $\mu$ are replaced by $b_{i}$ and $\mu_{i}$. We have $ J_{\mu_{1},4}(u_{\mu_{i},4,b_{i}})<0,$ (see details in Lemma 2.1 of \cite{CZ21}).
Therefore $$m_{\beta}(b_{1},b_{2})\leq J_{\beta}(u_{\mu_{1},4,b_{1}},u_{\mu_{2},4,b_{2}})<0,$$
so for any $\beta >0$, we obtain $$m_{\beta}(b_{1},b_{2})=\inf_{(u,v)\in \mathrm{T}_{b_{1}}\times \mathrm{T}_{b_{2}}}J_{\beta}(u,v)<0.$$
\end{proof}
\begin{lemma}\label{Lemg}
Let $\{(u_{n},v_{n})\}\subset \mathrm{T}_{b_{1}}\times \mathrm{T}_{b_{2}} $ be a minimizing sequence for $m(b_{1},b_{2})$. Then for $\beta>0$, $\{(|u_{n}|,|v_{n}|)\}$ is also a minimizing sequence.
\begin{proof}
Since $$\int_{\mathbb{R}^{N}}|\nabla |u_{n}||^{2}dx\leq \int_{\mathbb{R}}|\nabla u_{n}|^{2}dx,\ \int_{\mathbb{R}^{N}}|\nabla |v_{n}||^{2}dx\leq \int_{\mathbb{R}}|\nabla v_{n}|^{2}dx,$$$$\int_{\mathbb{R}}u_{n}^{2}v_{n}dx\leq \int_{\mathbb{R}}|u_{n}|^{2}|v_{n}|dx,$$
we have
\begin{align*}
 J_{\beta}(|u_{n}|,|v_{n}|)&=\frac{1}{2}\int_{\mathbb{R}}(|\nabla |u_{n}||^{2}+|\nabla |v_{n}||^{2})dx-\frac{\beta}{2}\int_{\mathbb{R}}|u_{n}|^{2}|v_{n}|dx\\
&\quad-\frac{1}{4}\int_{\mathbb{R}}\left(\mu_{1}|u_{n}|^{4}+\mu_{2}|v_{n}|^{4}+2\rho |u_{n}|^{2}|v_{n}|^{2}\right)dx\\
&\leq \frac{1}{2}\int_{\mathbb{R}}(|\nabla u_{n}|^{2}+|\nabla v_{n}|^{2})dx-\frac{\beta}{2}\int_{\mathbb{R}}u_{n}^{2}v_{n}dx\\
&\quad-\frac{1}{4}\int_{\mathbb{R}}\left(\mu_{1}u_{n}^{4}+\mu_{2}v_{n}^{4}+2\rho u_{n}^{2}v_{n}^{2}\right)dx=J_{\beta}(u_{n},v_{n}).
\end{align*}
Thus, $ J_{\beta}(|u_{n}|,|v_{n}|)\leq J_{\beta}(u_{n},v_{n}).$ Therefore, when $\beta>0$, $\{(|u_{n}|,|v_{n}|)\}$ is also a minimizing sequence.
\end{proof}
\end{lemma}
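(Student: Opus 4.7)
The plan is to verify the claim by a direct termwise comparison of the functional values $J_{\beta}(|u_{n}|,|v_{n}|)$ and $J_{\beta}(u_{n},v_{n})$. First, I would note that the map $u\mapsto |u|$ preserves the $L^{2}$-norm, so $(|u_{n}|,|v_{n}|)$ still lies in the constraint manifold $\mathrm{T}_{b_{1}}\times \mathrm{T}_{b_{2}}$, and, by the standard fact that $|\nabla|u||=|\nabla u|$ a.e.\ for real-valued $H^{1}$ functions, the kinetic part $\tfrac{1}{2}\int(|\nabla u|^{2}+|\nabla v|^{2})dx$ is unchanged when we replace $(u_{n},v_{n})$ with $(|u_{n}|,|v_{n}|)$.

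Next, I would observe that the self-interaction terms $\mu_{1}u^{4}$, $\mu_{2}v^{4}$ and the coupling $2\rho u^{2}v^{2}$ depend only on even powers of $u$ and $v$, so they are identically unchanged. The only term sensitive to signs is the quadratic-linear coupling $-\tfrac{\beta}{2}\int u^{2}v\,dx$. Since $u_{n}^{2}\ge 0$ and $v_{n}\le |v_{n}|$ pointwise,
\begin{equation*}
\int_{\mathbb{R}}u_{n}^{2}v_{n}\,dx\;\le\;\int_{\mathbb{R}}u_{n}^{2}|v_{n}|\,dx\;=\;\int_{\mathbb{R}}|u_{n}|^{2}|v_{n}|\,dx.
\end{equation*}
Because $\beta>0$, multiplying by $-\tfrac{\beta}{2}$ reverses the inequality and yields $-\tfrac{\beta}{2}\int |u_{n}|^{2}|v_{n}|\,dx \le -\tfrac{\beta}{2}\int u_{n}^{2}v_{n}\,dx$. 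Combining this with the equalities for the other terms gives $J_{\beta}(|u_{n}|,|v_{n}|)\le J_{\beta}(u_{n},v_{n})$ for every $n$.

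To finish, I would sandwich: by the definition of the infimum, $J_{\beta}(|u_{n}|,|v_{n}|)\ge m_{\beta}(b_{1},b_{2})$, and on the other hand $J_{\beta}(|u_{n}|,|v_{n}|)\le J_{\beta}(u_{n},v_{n})\to m_{\beta}(b_{1},b_{2})$, so $J_{\beta}(|u_{n}|,|v_{n}|)\to m_{\beta}(b_{1},b_{2})$, which is exactly the claim. There is no real obstacle here; the only conceptual point worth highlighting is that the hypothesis $\beta>0$ is used precisely to ensure that the single odd-power term in $J_{\beta}$ is driven in the favorable direction when passing to absolute values, and it is this lemma that licenses us to restrict attention to nonnegative minimizing sequences when extracting a limit.
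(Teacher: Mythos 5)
Your proof is correct and follows essentially the same route as the paper's: a termwise comparison showing the kinetic and even-power terms are unchanged (or do not increase) under $u\mapsto|u|$, while the single odd term $-\tfrac{\beta}{2}\int u^{2}v$ moves in the favorable direction because $\beta>0$, followed by the sandwich with the infimum. The only cosmetic difference is that you correctly note $|\nabla|u||=|\nabla u|$ a.e.\ gives equality in the kinetic term, whereas the paper only records the inequality, which suffices either way.
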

\begin{lemma}\label{Lema}

(i) If $(d^{n}_{1},d^{n}_{2})$ is such  that $(d^{n}_{1},d^{n}_{2})\rightarrow(d_{1},d_{2})$ as $n\rightarrow +\infty$ with $0\leq d^{n}_{i}\leq b_{i}$ for $i=1,2$, we have $m_{\beta}(d^{n}_{1},d^{n}_{2})\rightarrow m_{\beta}(d_{1},d_{2})$ as $n\rightarrow +\infty$.

(ii) Let $d_{i}\geq 0,\  b_{i}\geq0, i=1,2 $  such that $b^{2}_{1}+d^{2}_{1}=c_{1}^{2},\  b^{2}_{2}+d^{2}_{2}=c_{2}^{2}$, then $m_{\beta}(b_{1},b_{2})+ m_{\beta} (d_{1},d_{2})\geq m_{\beta}(c_{1}, c_{2}).$
\end{lemma}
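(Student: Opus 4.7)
The plan is to prove item (i) by establishing both the upper and lower semicontinuity of $m_\beta$ through a mass-rescaling argument, and to prove item (ii) by gluing near-minimizers for $(b_1,b_2)$ and $(d_1,d_2)$ placed far apart, so that all the quadratic, quartic and mixed integrals in $J_\beta$ split additively.

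For item (i), suppose first that $d_1,d_2>0$. Given $(u,v)\in \mathrm{T}_{d_1}\times \mathrm{T}_{d_2}$, I would set $u_n:=\frac{d_1^n}{d_1}u$ and $v_n:=\frac{d_2^n}{d_2}v$, so $(u_n,v_n)\in \mathrm{T}_{d_1^n}\times \mathrm{T}_{d_2^n}$, and every term appearing in $J_\beta(u_n,v_n)$ depends polynomially on the scaling factors, hence $J_\beta(u_n,v_n)\to J_\beta(u,v)$. Taking infima yields $\limsup_n m_\beta(d_1^n,d_2^n)\le m_\beta(d_1,d_2)$, and in particular a uniform upper bound on $m_\beta(d_1^n,d_2^n)$. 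Combined with the coercivity estimate from the proof of Lemma~\ref{LA43}, which is uniform in $n$ because $d_i^n\le b_i$, this produces a uniform $H^1$-bound on any near-minimizing sequence $(\tilde u_n,\tilde v_n)$ with $J_\beta(\tilde u_n,\tilde v_n)\le m_\beta(d_1^n,d_2^n)+\frac{1}{n}$. Rescaling backwards by the inverse factors $\frac{d_1}{d_1^n},\frac{d_2}{d_2^n}$, which tend to $1$, and using the uniform $H^1$-bound to show that $J_\beta$ changes by $o(1)$, gives $m_\beta(d_1,d_2)\le \liminf_n m_\beta(d_1^n,d_2^n)$. The degenerate case $d_i=0$ is handled by replacing the multiplicative scaling of that component by $u_n:=d_1^n u_0$ for a fixed $u_0\in \mathrm{T}_1$; the uniform $H^1$-bound together with $\|\tilde u_n\|_{L^2}=d_1^n\to 0$ then forces $\tilde u_n\to 0$ in every $L^p$ with $p\ge 2$ via Gagliardo--Nirenberg, so the terms $\int \tilde u_n^4$, $\int \tilde u_n^2\tilde v_n$ and $\int \tilde u_n^2\tilde v_n^2$ all vanish and the argument reduces to its one-component analogue.

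For item (ii), given $\varepsilon>0$ I would pick $(u_1,v_1)\in \mathrm{T}_{b_1}\times \mathrm{T}_{b_2}$ and $(u_2,v_2)\in \mathrm{T}_{d_1}\times \mathrm{T}_{d_2}$ with $J_\beta(u_1,v_1)<m_\beta(b_1,b_2)+\varepsilon$ and $J_\beta(u_2,v_2)<m_\beta(d_1,d_2)+\varepsilon$. Using density of $C_c^\infty$ in $H^1$, I would approximate each component by a compactly supported smooth function and then rescale it by its $L^2$-norm to restore the prescribed mass exactly; by continuity of $J_\beta$ on $H^1\times H^1$ this modification costs only $o(1)$ in $J_\beta$ as the approximation improves. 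Then for $y\in\R$ with $|y|$ large enough that the supports of $(u_1,v_1)$ are disjoint from the supports of $(u_2(\cdot-y),v_2(\cdot-y))$, the pair $(U,V):=(u_1+u_2(\cdot-y),\,v_1+v_2(\cdot-y))$ lies in $\mathrm{T}_{c_1}\times \mathrm{T}_{c_2}$ and each integral appearing in $J_\beta(U,V)$ (the Dirichlet, pure quartic, cross-quartic and quadratic--linear terms) splits exactly as the sum of the corresponding integrals over $(u_1,v_1)$ and $(u_2,v_2)$, yielding $J_\beta(U,V)=J_\beta(u_1,v_1)+J_\beta(u_2,v_2)$. This gives $m_\beta(c_1,c_2)\le m_\beta(b_1,b_2)+m_\beta(d_1,d_2)+3\varepsilon$, and letting $\varepsilon\to 0$ finishes the proof.

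The main obstacle is the boundary case $d_i=0$ in part (i), where the naive scaling factor $d_i/d_i^n$ degenerates and one must instead use an auxiliary unit-mass profile together with Gagliardo--Nirenberg to show that the vanishing-mass component contributes negligibly to the energy. Once this edge case is dealt with, the rest of (i) is a routine continuity computation and the gluing construction in (ii) is entirely standard.
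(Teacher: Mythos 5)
Your proof is correct and follows essentially the same route as the paper's, which is omitted there with a reference to Lemma 3.1 of \cite{GL16}: mass-rescaling combined with the uniform coercivity of $J_\beta$ for the continuity statement, and the standard disjoint-support gluing of near-minimizers for the subadditivity. The one point requiring care, the degenerate case $d_i=0$ where the rescaling factor degenerates, is handled correctly by your use of the Gagliardo--Nirenberg inequality to show the vanishing-mass component contributes nothing to the energy in the limit.
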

\begin{proof}
The proof is similar to that of Lemma 3.1 in \cite{GL16}, we omit the details here.
\end{proof}
\begin{lemma}\label{Lem12}
For any $\rho>0,\beta>0$, let $(u_{n},v_{n})\subset H^{1}(\mathbb{R})\times H^{1}(\mathbb{R})$ be a sequence such  that $$J_{\beta}(u_{n},v_{n})\rightarrow m_{\beta}(b_{1},b_{2})\ \text{and}\ \int_{\mathbb{R}}u_{n}^{2}dx= b^{2}_{1}, \ \int_{\mathbb{R}^{N}}v_{n}^{2}dx= b^{2}_{2}, $$
then $\{(u_{n},v_{n})\}$ is relatively compact in $H^{1}(\mathbb{R})\times H^{1}(\mathbb{R})$ up to translations, that is there exists a subsequence $(u_{n_{k}},v_{n_{k}})$, a sequence of points $\{y_{k}\}\subset \mathbb{R}^{N}$ and a function $(\widetilde{u},\widetilde{v})\in \mathrm{T}_{b_{1}}\times \mathrm{T}_{b_{2}}\times H^{1}(\mathbb{R})\times H^{1}(\mathbb{R})$ such that $(u_{n_{k}}(\cdot+y_{k}),v_{n_{k}}(\cdot+y_{k}) )\rightarrow (\widetilde{u},\widetilde{v})$ strongly in $H^{1}(\mathbb{R})\times H^{1}(\mathbb{R}). $
\end{lemma}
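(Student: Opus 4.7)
The plan is a concentration--compactness argument on the minimizing sequence after first symmetrizing it. Since the coercivity bound established in the proof of Lemma \ref{LA43} shows that $J_\beta$ is coercive on $\mathrm{T}_{b_{1}}\times \mathrm{T}_{b_{2}}$ when $N=1$, the sequence $\{(u_n,v_n)\}$ is bounded in $H^1(\mathbb{R})\times H^1(\mathbb{R})$. By Lemma \ref{Lemg}, I may additionally assume $u_n,v_n\ge 0$.

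The first step is to rule out vanishing. Set $\delta_n:=\sup_{y\in\mathbb{R}}\int_{B(y,1)}(u_n^2+v_n^2)\,dx$. If $\delta_n\to 0$, Lions' lemma gives $u_n,v_n\to 0$ in $L^p(\mathbb{R})$ for every $p\in(2,\infty)$, whence $J_\beta(u_n,v_n)\ge\frac12(\|\nabla u_n\|_{L^2}^2+\|\nabla v_n\|_{L^2}^2)+o(1)\ge o(1)$, contradicting $m_\beta(b_1,b_2)<0$ from Lemma \ref{LA43}. Hence, up to a subsequence, there exist $\delta_0>0$ and $y_n\in\mathbb{R}$ with $\int_{B(y_n,1)}(u_n^2+v_n^2)\,dx\ge \delta_0$. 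Setting $\tilde u_n(\cdot):=u_n(\cdot+y_n)$ and $\tilde v_n(\cdot):=v_n(\cdot+y_n)$, boundedness together with the local compact embedding yield, along a further subsequence, $\tilde u_n\rightharpoonup \tilde u$ and $\tilde v_n\rightharpoonup \tilde v$ weakly in $H^1(\mathbb{R})$, a.e.\ on $\mathbb{R}$, and strongly in $L^p_{\mathrm{loc}}$ for every $p\in[1,\infty)$, with $(\tilde u,\tilde v)\ne(0,0)$.

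Define $c_1:=\|\tilde u\|_{L^2}$, $c_2:=\|\tilde v\|_{L^2}$ and $d_i:=\sqrt{b_i^2-c_i^2}\ge 0$. The Brezis--Lieb lemma, together with a term-by-term expansion of the quartic and cubic integrals (each mixed piece vanishes because one factor tends to $0$ a.e.\ while the others remain bounded in every $L^p$, $p\ge 2$, via the one-dimensional embedding $H^1(\mathbb{R})\hookrightarrow L^\infty(\mathbb{R})$), provides the energy splitting $J_\beta(\tilde u_n,\tilde v_n)=J_\beta(\tilde u,\tilde v)+J_\beta(\tilde u_n-\tilde u,\tilde v_n-\tilde v)+o(1)$ with $\|\tilde u_n-\tilde u\|_{L^2}^2\to d_1^2$ and $\|\tilde v_n-\tilde v\|_{L^2}^2\to d_2^2$. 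Combining with the continuity from Lemma \ref{Lema}(i) and $J_\beta(\tilde u,\tilde v)\ge m_\beta(c_1,c_2)$, I obtain $m_\beta(b_1,b_2)=\lim_n J_\beta(\tilde u_n,\tilde v_n)\ge m_\beta(c_1,c_2)+m_\beta(d_1,d_2)$, while Lemma \ref{Lema}(ii) furnishes the reverse inequality; thus equality holds.

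The main obstacle is proving \emph{strict} subadditivity, namely $m_\beta(c_1,c_2)+m_\beta(d_1,d_2)>m_\beta(b_1,b_2)$ whenever $(d_1,d_2)\ne(0,0)$, which then forces $c_i=b_i$. I would derive this by constructing competitors via the $L^2$-preserving subcritical scaling $(u,v)\mapsto(s^{1/2}u(s\cdot),s^{1/2}v(s\cdot))$ combined with the mass-rescaling $(u,v)\mapsto (\theta u,\theta v)\in\mathrm{T}_{\theta b_1}\times \mathrm{T}_{\theta b_2}$, following the template of \cite{GL16,nw1}; the sign condition $m_\beta<0$ and the strict monotonicity of $m_\beta$ with respect to the mass parameters are the key inputs. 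Once $c_i=b_i$ is established, $\tilde u_n\to \tilde u$ and $\tilde v_n\to \tilde v$ strongly in $L^2(\mathbb{R})$, and interpolation against the $H^1$ bound upgrades this to strong convergence in every $L^p$, $p\in[2,\infty)$. The convergence $J_\beta(\tilde u_n,\tilde v_n)\to J_\beta(\tilde u,\tilde v)$ combined with weak $H^1$-convergence and lower semicontinuity of the Dirichlet energy then forces $\|\nabla \tilde u_n\|_{L^2}^2+\|\nabla \tilde v_n\|_{L^2}^2\to \|\nabla \tilde u\|_{L^2}^2+\|\nabla \tilde v\|_{L^2}^2$, yielding strong convergence in $H^1(\mathbb{R})\times H^1(\mathbb{R})$ with $(\tilde u,\tilde v)\in \mathrm{T}_{b_{1}}\times \mathrm{T}_{b_{2}}$, which completes the proof.
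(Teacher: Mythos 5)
Your overall skeleton (coercivity and boundedness, exclusion of vanishing via Lions' lemma and $m_{\beta}(b_{1},b_{2})<0$, profile extraction, Brezis--Lieb splitting, and a subadditivity argument to prevent mass loss) matches the paper's strategy, and the vanishing and splitting steps are carried out correctly. However, there is a genuine gap at the decisive step: the strict subadditivity $m_{\beta}(c_{1},c_{2})+m_{\beta}(d_{1},d_{2})>m_{\beta}(b_{1},b_{2})$ for $(d_{1},d_{2})\neq(0,0)$ is exactly the hard part of the lemma, and you do not prove it -- you defer it to ``the template of \cite{GL16,nw1}'' via an $L^{2}$-preserving scaling combined with a mass rescaling. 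That scaling device is the classical route for a \emph{scalar} constrained problem; for a system with two independent mass constraints it only lets you compare $m_{\beta}(\theta b_{1},\theta b_{2})$ with $m_{\beta}(b_{1},b_{2})$ along rays where both masses are dilated by the same factor, and it says nothing about an arbitrary Pythagorean splitting $c_{i}^{2}+d_{i}^{2}=b_{i}^{2}$ in which the two components may split in different proportions. The ``strict monotonicity of $m_{\beta}$ in the mass parameters'' that you invoke as a key input is likewise not established anywhere and is not an obvious fact here. Only the weak inequality of Lemma \ref{Lema}(ii) is available.

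The paper circumvents this precisely because the scaling route does not work: it first shows (using only the weak subadditivity of Lemma \ref{Lema}(ii)) that in a dichotomy scenario each escaping profile must itself be a minimizer for its own masses, hence a solution of \eqref{23}, smooth and (after Schwarz symmetrization) radially decreasing; it then applies the Shibata rearrangement $\{u,v\}^{\star}$ of Lemma \ref{LM}, whose item (iv) gives a \emph{strict} decrease of the combined gradient energy while items (iii) and (v) preserve the $L^{p}$ masses and do not decrease the potential terms. This produces the strict inequality $J_{\beta}(w,\sigma)+J_{\beta}(u,v)>m_{\beta}(\sqrt{\|w\|_{L^2}^{2}+\|u\|_{L^2}^{2}},\sqrt{\|\sigma\|_{L^2}^{2}+\|v\|_{L^2}^{2}})$ that rules out two separated bubbles. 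To repair your argument you would need either to reproduce this rearrangement step (for which the regularity and positivity of the limit profiles, obtained from their minimality, are essential), or to supply a genuinely different proof of strict binding for both mass parameters simultaneously; as written, the proposal leaves the core of the lemma unproved.
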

\begin{proof}
We use the ideas introduced in \cite{GL16,GL18}. Assume that $(u^{n}_{1},u^{n}_{2})$ is a minimizing sequence associated to the functional $J$ on $\mathrm{T}_{b_{1}}\times \mathrm{T}_{b_{2}}$, from \eqref{Lag} and the coerciveness of functional $J$ on $\mathrm{T}_{b_{1}}\times \mathrm{T}_{b_{2}}$, the sequence $(u^{n}_{1},u^{n}_{2})$ is bounded in $H^{1}(\mathbb{R}^{N})\times H^{1}(\mathbb{R}).$ If $$\sup_{y\in \mathbb{R}}\int_{B(y,R)}(v_{n}^{2}+v_{n}^{2})dx=o(1),\ \text{for some}\ R>0,$$ then by  Vanishing lemma (see Lemma I.1 in \cite{PLLL84}), we have that $(u_{n},v_{n})\rightarrow(0,0)$ in $2<p<2^{\ast}$, contrary  with the fact $m(b_{1},b_{2})<0$, therefore there exists a $\beta_{0}>0$ and a sequence $\{y_{n}\}\subset \mathbb{R}$ such that $$\int_{B(y_{n},R)}(v_{n}^{2}+v_{n}^{2})dx\geq \beta_{0}>0.$$
Since $(u^{n}_{1},u^{n}_{2})$ is bounded in $H^{1}(\mathbb{R})\times H^{1}(\mathbb{R})$, $(u^{n}_{1},u^{n}_{2})$ is weakly convergence in  $H^{1}(\mathbb{R})\times H^{1}(\mathbb{R})$  and local compactness in $L^{2}(\mathbb{R})\times L^{2}(\mathbb{R})$, so $$(u_{n}(x-y_{n}),v_{n}(x-y_{n}))\rightharpoonup (u,v)\neq (0,0)\ \text{in}\  H^{1}(\mathbb{R})\times H^{1}(\mathbb{R}).$$ Let $$w_{n}(x)=u_{n}(x)-u(x+y_{n}),\ \sigma_{n}(x)=v_{n}(x)-v(x+y_{n}),$$ we show that $w_{n}(x)\rightarrow 0 ,\sigma_{n}(x)\rightarrow 0$ in $L^{p}(\mathbb{R})$ for $2<p<2^{\star}$, suppose by contradiction that there exists a sequence $\{z_{n}\}\subset \mathbb{R}^{N}$ such that $$(w_{n}(x-z_{n}), \sigma_{n}(x-z_{n}))\rightharpoonup (w,\sigma)\neq (0,0)\ \text{in}\  H^{1}(\mathbb{R})\times H^{1}(\mathbb{R}).$$ By the Brezis-Lieb lemma and Lemma 2.4 in \cite{GL18}, we have
\begin{align}\label{Lah}
J_{\beta}(u_{n},v_{n})&=J_{\beta}(u_{n}(x-y_{n}),v_{n}(x-y_{n}))\\\nonumber
&=J_{\beta}(w_{n}(x-y_{n}),\sigma_{n}(x-y_{n}))+J_{\beta}(u,v)+o(1)\\\nonumber
&=J_{\beta}(w_{n}(x-z_{n})-w,\sigma_{n}(x-z_{n})-\sigma)+J_{\beta}(w,\sigma)+J_{\beta}(u,v)+o(1),\\\nonumber
\end{align}
\begin{align*}
\|u_{n}(x-y_{n})\|^{2}_{L^{2}(\mathbb{R})}&=\|w_{n}(x-z_{n})\|^{2}_{L^{2}(\mathbb{R})}+\|u\|^{2}_{L^{2}(\mathbb{R})}+o(1)\\\nonumber
&=\|w_{n}(x-z_{n})-w\|^{2}_{L^{2}(\mathbb{R})}+\|w\|^{2}_{L^{2}(\mathbb{R})}+\|u\|^{2}_{L^{2}(\mathbb{R})}+o(1),\\\nonumber
\end{align*}
and
\begin{align*}
\|v_{n}(x-y_{n})\|^{2}_{L^{2}(\mathbb{R})}&=\|\sigma_{n}(x-z_{n})\|^{2}_{L^{2}(\mathbb{R})}+\|v\|^{2}_{L^{2}(\mathbb{R})}+o(1)\\\nonumber
&=\|\sigma_{n}(x-z_{n})-\sigma\|^{2}_{L^{2}(\mathbb{R})}+\|\sigma\|^{2}_{L^{2}(\mathbb{R})}+\|v\|^{2}_{L^{2}(\mathbb{R})}+o(1).\\\nonumber
\end{align*}
So, we obtain
\begin{align}\label{Laj}
\|w_{n}(x-z_{n})-w\|^{2}_{L^{2}(\mathbb{R})}
&=b^{2}_{1}-\|w\|^{2}_{L^{2}(\mathbb{R})}-\|u\|^{2}_{L^{2}(\mathbb{R})}+o(1)\\\nonumber
&:=d^{2}_{1}+o(1),
\end{align}
\begin{align}\label{Lak}
\|\sigma_{n}(x-z_{n})-\sigma\|^{2}_{L^{2}(\mathbb{R})}
&=b^{2}_{2}-\|\sigma\|^{2}_{L^{2}(\mathbb{R})}-\|v\|^{2}_{L^{2}(\mathbb{R})}+o(1)\\\nonumber
&:=d^{2}_{2}+o(1).
\end{align}
By \eqref{Lah}-\eqref{Lak}, (i) of Lemma \ref{Lema} and $J_{\beta}(u_{n},v_{n})\rightarrow m_{\beta}(b_{1},b_{2})$, we obtain
\begin{align}\label{LA35}
m_{\beta}(b_{1},b_{2})\geq m_{\beta}(d_{1},d_{2})+J_{\beta}(w,\sigma)+J_{\beta}(u,v).
\end{align}
If $J_{\beta}(w,\sigma)>m_{\beta}(\|w\|_{L^{2}(\mathbb{R})},\|\sigma\|_{L^{2}(\mathbb{R})} )$ or $J_{\beta}(u,v)>m_{\beta}(\|u\|_{L^{2}(\mathbb{R})},\|v\|_{L^{2}(\mathbb{R})} )$, then by \eqref{Laj}-\eqref{LA35} and (ii) of Lemma \ref{Lema}, we have
\begin{align*}
m_{\beta}(b_{1},b_{2})&> m_{\beta}(d_{1},d_{2})+m_{\beta}(\|w\|_{L^{2}(\mathbb{R})},\|\sigma\|_{L^{2}(\mathbb{R})} )+m_{\beta}(\|u\|_{L^{2}(\mathbb{R})},\|v\|_{L^{2}(\mathbb{R})} )\\
&> m_{\beta}(d_{1},d_{2})+m_{\beta}(\sqrt{b^{2}_{1}-d^{2}_{1}},\sqrt{b^{2}_{2}-d^{2}_{2}} )\geq m_{\beta}(b_{1},b_{2})
\end{align*}
which is impossible. So, $$J_{\beta}(w,\sigma)=m_{\beta}(\|w\|_{L^{2}(\mathbb{R})},\|\sigma\|_{L^{2}(\mathbb{R})} ),\ J_{\beta}(u,v)=m_{\beta}(\|u\|_{L^{2}(\mathbb{R})},\|v\|_{L^{2}(\mathbb{R})} ).$$
Let $\widetilde{w}, \widetilde{\sigma}, \widetilde{u},\widetilde{v}$ be the Schwarz symmetric-decreasing rearrangement of $w,\sigma,u,v$. From Section 3.3 in \cite{Lieb01}, we know that $\widetilde{w}, \widetilde{\sigma}, \widetilde{u},\widetilde{v}$ are nonnegative.  Since $$\|\widetilde{w}\|^{2}_{L^{2}(\mathbb{R})}=\|w\|^{2}_{L^{2}(\mathbb{R})},\ \|\widetilde{\sigma}\|^{2}_{L^{2}(\mathbb{R})}=\|\sigma\|^{2}_{L^{2}(\mathbb{R})},$$
$$\|\widetilde{u}\|^{2}_{L^{2}(\mathbb{R})}=\|u\|^{2}_{L^{2}(\mathbb{R})},\ \|\widetilde{v}\|^{2}_{L^{2}(\mathbb{R})}=\|v\|^{2}_{L^{2}(\mathbb{R})},$$
$$J_{\beta}(\widetilde{w},\widetilde{\sigma})\leq J_{\beta}(w,\sigma),\ J_{\beta}(\widetilde{u},\widetilde{v})\leq J_{\beta}(u,v).$$
By the standard argument as \cite{Lieb01}, we can deduce that $$J_{\beta}(\widetilde{w},\widetilde{\sigma})=m_{\beta}(\|w\|_{L^{2}(\mathbb{R})},\|\sigma\|_{L^{2}(\mathbb{R})} ),\ J_{\beta}(\widetilde{u},\widetilde{v})=m_{\beta}(\|u\|_{L^{2}(\mathbb{R})},\|v\|_{L^{2}(\mathbb{R})} ).$$
Therefore, $(\widetilde{w},\widetilde{\sigma}),\ (\widetilde{u},\widetilde{v}) $ are solutions of system \eqref{23}, by the standard regularity results, we can get $\widetilde{w},\widetilde{\sigma},\widetilde{u},\widetilde{v}\in C^{2}(\mathbb{R}^{2})$. Without restriction, we may assume $w\neq 0$. We divide into two cases.

{\bf Case 1}: $w\neq0$ and $u\neq0. $

From (ii), (iv), (v) of Lemma \ref{LM}, we have $$\int_{\mathbb{R}}|\nabla \{\widetilde{w},\widetilde{u}\}^{\star}|^{2}dx<\int_{\mathbb{R}}|\nabla \widetilde{w}|^{2}dx+\int_{\mathbb{R}}|\nabla \widetilde{u}|^{2}dx\leq\int_{\mathbb{R}}|\nabla w|^{2}dx+\int_{\mathbb{R}}|\nabla u|^{2}dx,$$
\begin{align*}
\int_{\mathbb{R}}| \{\widetilde{w},\widetilde{u}\}^{\star}|^{2}| \{\widetilde{\sigma},\widetilde{v}\}^{\star}|^{2}dx&=\int_{\mathbb{R}}\{|\widetilde{w}|^{2},|\widetilde{u}|^{2}\}^{\star}\{|\widetilde{\sigma}|^{2},|\widetilde{v}|^{2}\}^{\star}dx\\
&\geq \int_{\mathbb{R}}|\widetilde{w}|^{2}|\widetilde{\sigma}|^{2}+|\widetilde{u}|^{2}|\widetilde{v}|^{2}dx\\
&=\int_{\mathbb{R}}\widetilde{(|w|^{2})}\widetilde{(|\sigma|^{2})}+\widetilde{(|u|^{2})}\widetilde{(|v|^{2})}dx\\
&\geq \int_{\mathbb{R}}|w|^{2}|\sigma|^{2}+|u|^{2}|v|^{2}dx,
\end{align*}
\begin{align*}
\int_{\mathbb{R}}| \{\widetilde{w},\widetilde{u}\}^{\star}|^{2} \{\widetilde{\sigma},\widetilde{v}\}^{\star}dx&=\int_{\mathbb{R}}\{|\widetilde{w}|^{2},|\widetilde{u}|^{2}\}^{\star}\{\widetilde{\sigma},\widetilde{v}\}^{\star}dx\\
&\geq \int_{\mathbb{R}}|\widetilde{w}|^{2}\widetilde{\sigma}+|\widetilde{u}|^{2}\widetilde{v}dx\\
&=\int_{\mathbb{R}}\widetilde{(|w|^{2})}\widetilde{(|\sigma|)}+\widetilde{(|u|^{2})}\widetilde{(|v|)}dx\\
&\geq \int_{\mathbb{R}}|w|^{2}|\sigma|+|u|^{2}|v|dx,
\end{align*}
so
\begin{align}\label{LA36}
J_{\beta}(w,\sigma)+J_{\beta}(u,v)>J_{\beta}(\{\widetilde{w},\widetilde{u}\}^{\star},\{\widetilde{\sigma},\widetilde{v}\}^{\star}).
\end{align}
By (iii) of Lemma \ref{LM}, we get
\begin{align}\label{LA37}
\int_{\mathbb{R}}|\{\widetilde{w},\widetilde{u}\}^{\star}|^{2}dx=\int_{\mathbb{R}}(|\widetilde{w}|^{2}+|\widetilde{u}|^{2})dx=\int_{\mathbb{R}}(|w|^{2}+|u|^{2})dx,
\end{align}
\begin{align}\label{LA38}
\int_{\mathbb{R}}|\{\widetilde{\sigma},\widetilde{v}\}^{\star}|^{2}dx=\int_{\mathbb{R}}(|\widetilde{\sigma}|^{2}+|\widetilde{v}|^{2})dx=\int_{\mathbb{R}}(|\sigma|^{2}+|v|^{2})dx.
\end{align}
By \eqref{Laj}-\eqref{LA38},  (iii) of Lemma \ref{LM} and (ii) of Lemma \ref{Lema}, we obtain
\begin{align*}
m_{\beta}(b_{1},b_{2})> m_{\beta}(d_{1},d_{2})+m_{\beta}(\sqrt{b^{2}_{1}-d^{2}_{1}},\sqrt{b^{2}_{2}-d^{2}_{2}} )\geq m_{\beta}(b_{1},b_{2}),
\end{align*}
a contradiction.

{\bf Case 2}: $w \neq0,\ u=0$ and $v \neq 0.$

If $\sigma\neq 0$, we only need to  replaced  $w,u$ by  $\sigma,v$ in Case 1, by the same argument as Case 1, we can get a contradiction.   Thus, we suppose that $\sigma= 0$. By (ii)-(v) of Lemma \ref{LM}, we have
\begin{align}\label{LA39}
J_{\beta}(\{\widetilde{w},0\}^{\star},\{\widetilde{v},0\}^{\star})&
\leq \frac{1}{2}\int_{\mathbb{R}}(|\nabla \widetilde{w}|^{2}+|\nabla \widetilde{v}|^{2})dx-\frac{\mu_{2}}{4}\int_{\mathbb{R}^{N}}|\widetilde{w}|^{4}dx-\frac{\mu_{1}}{4}\int_{\mathbb{R}}|\widetilde{v}|^{4}dx\\\nonumber
&\quad-\frac{\rho}{2}\int_{\mathbb{R}}|\widetilde{w}|^{2}|\widetilde{v}|^{2}dx-\frac{\beta}{2}\int_{\mathbb{R}}|\widetilde{w}|^{2}\widetilde{v}dx\\\nonumber
&\leq \frac{1}{2}\int_{\mathbb{R}}(|\nabla \widetilde{w}|^{2}+|\nabla \widetilde{v}|^{2})dx-\frac{\mu_{2}}{4}\int_{\mathbb{R}}|\widetilde{w}|^{4}dx-\frac{\mu_{1}}{4}\int_{\mathbb{R}}|\widetilde{v}|^{4}dx\\\nonumber
&=J_{\beta}(\widetilde{w},0)+J_{\beta}(0,\widetilde{v})\leq J_{\beta}(w,0)+J_{\beta}(0,v).
\end{align}
By (iii) of Lemma \ref{LM}, we get
\begin{align}\label{LA40}
\int_{\mathbb{R}}|\{\widetilde{w},0\}^{\star}|^{2}dx=\int_{\mathbb{R}}|\widetilde{w}|^{2}dx=\int_{\mathbb{R}}|w|^{2}dx,
\end{align}
\begin{align}\label{LA41}
\int_{\mathbb{R}}|\{\widetilde{v},0\}^{\star}|^{2}dx=\int_{\mathbb{R}}|\widetilde{v}|^{2}dx=\int_{\mathbb{R}}|v|^{2}dx.
\end{align}
By \eqref{Laj}-\eqref{Lak}, \eqref{LA35},\eqref{LA39},\eqref{LA40}\eqref{LA41},  (iii) of Lemma \ref{LM} and (ii) of Lemma \ref{Lema}, we obtain
\begin{align*}
m_{\beta}(b_{1},b_{2})> m_{\beta}(d_{1},d_{2})+m_{\beta}(\sqrt{b^{2}_{1}-d^{2}_{1}},\sqrt{b^{2}_{2}-d^{2}_{2}} )\geq m_{\beta}(b_{1},b_{2}),
\end{align*}
a contradiction. The contradictions obtained in Case 1 and Case 2 indicate that $w_{n}(x)=u_{n}(x)-u(x+y_{n})\rightarrow 0,\ \sigma_{n}(x)=v_{n}(x)-v(x+y_{n})\rightarrow 0$ in $L^{p}(\mathbb{R})$ for $2<p<2^{\ast}$.
\end{proof}
\begin{proof}[{\bf Proof of Theorem \ref{Th4}}]
Let $\{w_{n},\sigma_{n}\}$ be any minimizing sequence for the functional $J$ on $\mathrm{T}_{b_{1}}\times \mathrm{T}_{b_{2}}$. By Lemma \ref{Lem12}, we know that there exists $\{y_{n}\}\subset \mathbb{R}$ such that $(w_{n},\sigma_{n})\rightharpoonup (w,\sigma)$ in $H^{1}( \mathbb{R})\times H^{1}( \mathbb{R})$ and $(w_{n},\sigma_{n})\rightarrow (w, \sigma)$ in $L^{p}(\mathbb{R})\times L^{p}(\mathbb{R}) $ for $2<p<2^{\ast}$. Hence, by the weakly lower semi-continuity of the norm, we have
\begin{align}\label{AK}
J_{\beta}(w,\sigma)\leq m_{\beta}(b_{1},b_{2})<0.
\end{align}
 To show the compactness of $\{w_{n},\sigma_{n}\}$ in $H^{1}( \mathbb{R})\times H^{1}( \mathbb{R})$, it  suffices to prove that $(w,\sigma)\in \mathrm{T}_{b_{1}}\times \mathrm{T}_{b_{2}}$. If $\int_{\mathbb{R}}|w|^{2}dx=b^{2}_{1}$ and  $\ \int_{\mathbb{R}^{N}}|\sigma|^{2}dx=b^{2}_{2}$, we are done.  Assume by contradiction that there exists $\overline{b}_{1}<b_{1}$ and $\overline{b}_{2}<b_{2} $ such that $\int_{\mathbb{R}}|w|^{2}dx=\overline{b}^{2}_{1}<b^{2}_{1},$ or $\ \int_{\mathbb{R}}|\sigma|^{2}dx=\overline{b}^{2}_{2}<b^{2}_{2}$. Then, by the definition of $m_{\beta}(b_{1},b_{2})$,  we have $$m_{\beta}(\overline{b}_{1},\overline{b}_{2})\leq J_{\beta}(w,\sigma).$$

{\bf Case 1} If $\int_{\mathbb{R}}|w|^{2}dx=\overline{b}^{2}_{1}<b^{2}_{1}$ and $\ \int_{\mathbb{R}}|\sigma|^{2}dx=\overline{b}^{2}_{2}<b^{2}_{2}$.
 By   Lemma \ref{LA43} and (ii) of Lemma \ref{Lema}, we have $$J_{\beta}(w,\sigma)\leq m_{\beta}(b_{1},b_{2})\leq m_{\beta}(\overline{b}_{1},\overline{b}_{2})+m_{\beta}\big(\sqrt{b^{2}_{1}-\overline{b}^{2}_{1}},\sqrt{b^{2}_{2}-\overline{b}^{2}_{2}} \big)<m_{\beta}(\overline{b}_{1},\overline{b}_{2})\leq J_{\beta}(w,\sigma),$$
a contradiction.

{\bf Case 2} If $\int_{\mathbb{R}}|w|^{2}dx=b^{2}_{1}$ and $\ \int_{\mathbb{R}}|\sigma|^{2}dx=\overline{b}^{2}_{2}<b^{2}_{2}$.
By   Lemma \ref{LA43} and (ii) of Lemma \ref{Lema} and $m(0,\sqrt{b^{2}_{2}-\overline{b}^{2}_{2}} )<0,$ we have $$J_{\beta}(w,\sigma)\leq m_{\beta}(b_{1},b_{2})\leq m_{\beta}(b_{1},\overline{b}_{2})+m_{\beta}\big(0,\sqrt{b^{2}_{2}-\overline{b}^{2}_{2}} \big)<m_{\beta}(b_{1},\overline{b}_{2})\leq J_{\beta}(w,\sigma),$$
a contradiction.

{\bf Case 3} If $\int_{\mathbb{R}}|w|^{2}dx=\overline{b}^{2}_{1}<b^{2}_{1}$ and $\ \int_{\mathbb{R}}|\sigma|^{2}dx=b^{2}_{2}$.
By   Lemma \ref{LA43} and (ii) of Lemma \ref{Lema} and $m(\sqrt{b^{2}_{1}-\overline{b}^{2}_{1}},0 )<0,$ we have $$J_{\beta}(w,\sigma)\leq m_{\beta}(b_{1},b_{2})\leq m_{\beta}(\overline{b}_{1},b_{2})+m_{\beta}\big(\sqrt{b^{2}_{1}-\overline{b}^{2}_{1}},0 \big)<m_{\beta}(\overline{b}_{1},b_{2})\leq J_{\beta}(w,\sigma),$$
a contradiction.

Therefore, $$(w,\sigma)\in \mathrm{T}_{b_{1}}\times \mathrm{T}_{b_{2}}.$$
\end{proof}

\section{Proof of Theorem \ref{Th3}}\label{sec15}
To show our results, we first define following constant $A$,
\begin{align}\label{A}
 A=\inf_{(u,v)\in \mathrm{T}_{b_{1}}\times \mathrm{T}_{b_{2}}}\frac{\int_{\mathbb{R}^{2}}(|\nabla u|^{2}+|\nabla v|^{2})dx}{\int_{\mathbb{R}^{2}}\left(\mu_{1}u^{4}+\mu_{2}v^{4}+2\rho u^{2}v^{2}\right)dx}.
 \end{align}
 From  Gagliardo-Nirenberg-Sobolev inequality in \cite{MIW},
\begin{equation*}
\|u\|^{4}_{L^{4}(\mathbb{R}^{2})}\leq \frac{2}{\|Q(x)\|^{2}_{L^{2}}}\|\nabla u\|_{L^{2}(\mathbb{R}^{2})}^{2}\| u\|_{L^{2}(\mathbb{R}^{2})}^{2}\ \ \text{for all} \ \ u\in H^{1}(\mathbb{R}^{2}),
\end{equation*}
 where $Q(x)$ is the unique positive solution of $$-\Delta u +u =u^{3}, \ u \in H^{1}(\mathbb{R}^{2}),$$
 and the identity is achieved at $u(x)=Q(|x|)$. It is easy to see that $Q(|x|)$ satisfies that $$\|\nabla Q\|_{L^{2}(\mathbb{R}^{2})}^{2}=\| Q\|_{L^{2}(\mathbb{R}^{2})}^{2}=\frac{1}{2}\|Q\|^{4}_{L^{4}(\mathbb{R}^{2})}.$$
From above Gagliardo-Nirenberg-Sobolev inequality, we have
$$\int_{\mathbb{R}^{2}}(\mu_{1}u^{4}+\mu_{2}v^{4}+2\rho u^{2}v^{2})dx\leq \frac{2}{\|Q(x)\|^{2}_{L^{2}}}\left[\mu_{1} b^{2}_{1}+\mu_{2} b^{2}_{2}+2\rho b_{1}b_{2}\right]\int_{\mathbb{R}^{2}}(|\nabla u|^{2}+|\nabla v|^{2})dx,$$
 so
 \begin{align}
 A\geq \frac{1}{C}>0.
 \end{align}
 Next, we prove that
 \begin{align}\label{LEG}
\frac{\| Q\|_{L^{2}(\mathbb{R}^{2})}^{2}}{2\max\{(\mu_{1}+\rho)b^{2}_{1}, (\mu_{2}+\rho)b^{2}_{2}\}}\leq A\leq\frac{\frac{1}{2}(b^{2}_{1}+b^{2}_{2})\| Q\|_{L^{2}(\mathbb{R}^{2})}^{2}}{\mu_{1}b^{4}_{1}+\mu_{2}b^{4}_{2}+2\rho b^{2}_{1}b^{2}_{2}}.
 \end{align}
 It is easy to see that  $(\frac{b_{1}Q(x)}{\|Q(x)\|_{L^{2}}},\frac{b_{2}Q(x)}{\|Q(x)\|_{L^{2}}}) \in \mathrm{T}_{b_{1}}\times \mathrm{T}_{b_{2}}. $
 Thus,
 \begin{align*}
 A\leq\frac{b^{2}_{1}+b^{2}_{2}}{\mu_{1}b^{4}_{1}+\mu_{2}b^{4}_{2}+2\rho b^{2}_{1}b^{2}_{2}}\frac{\|Q(x)\|^{2}_{L^{2}}\int_{\mathbb{R}^{2}}|\nabla Q|^{2}dx}{\int_{\mathbb{R}^{2}}Q^{4}dx}=\frac{\frac{1}{2}(b^{2}_{1}+b^{2}_{2})\| Q\|_{L^{2}(\mathbb{R}^{2})}^{2}}{\mu_{1}b^{4}_{1}+\mu_{2}b^{4}_{2}+2\rho b^{2}_{1}b^{2}_{2}}.
 \end{align*}
  On the other hand, for any $(u,v)\in H^{1}(\mathbb{R}^{2})\times H^{1}(\mathbb{R}^{2}) $ and $\int_{\mathbb{R}^{2}}u^{2}dx=b^{2}_{1},\ \ \int_{\mathbb{R}^{2}}v^{2}dx=b^{2}_{2}.$ Then from Gagliardo-Nirenberg-Sobolev inequality, we have
\begin{align*}
\frac{\| Q\|_{L^{2}(\mathbb{R}^{2})}^{2}}{2\max\{(\mu_{1}+\rho)b^{2}_{1}, (\mu_{2}+\rho)b^{2}_{2}\}}&\leq\frac{\int_{\mathbb{R}^{2}}(|\nabla u|^{2}+|\nabla v|^{2})dx}{\int_{\mathbb{R}^{2}}(\mu_{1}+\rho )u^{4}dx+\int_{\mathbb{R}^{2}}(\mu_{2}+\rho )v^{4}dx}\\
&\leq\frac{\int_{\mathbb{R}^{2}}(|\nabla u|^{2}+|\nabla v|^{2})dx}{\int_{\mathbb{R}^{2}}\left(\mu_{1}u^{4}+\mu_{2}v^{4}+2\rho u^{2}v^{2}\right)dx}.\\
\end{align*}
From \eqref{LA11}, \eqref{LA12} and the definition of $A$, we have
\begin{align}
J_{0}(u,v)&=\frac{1}{2}\int_{\mathbb{R}^{2}}(|\nabla u|^{2}+|\nabla v|^{2})dx-\frac{1}{4}\int_{\mathbb{R}^{2}}(\mu_{1}u^{4}+\mu_{2}v^{4}+2\rho u^{2}v^{2})dx\\\nonumber
&\geq\left[\frac{1}{2}-\frac{1}{4A}\right]\int_{\mathbb{R}^{2}}(|\nabla u|^{2}+|\nabla v|^{2})dx,
\end{align}
so $J_{0}(u,v)$ is bounded from below on $\mathrm{T}_{b_{1}}\times \mathrm{T}_{b_{2}} $ provided that
\begin{align}\label{AS}
A>\frac{1}{2}.
\end{align}
Let $u\in \mathrm{T}_{b}$  and $t\star u=e^{\frac{Nt}{2}}u(e^{t}x)$, then $t\star u\in \mathrm{T}_{b}$, we define
 \begin{equation}\label{k}
 t\star(u,v)=(t\star u,t\star v).
  \end{equation}
Under the condition of $\max\{(\mu_{1}+\rho)b^{2}_{1}, (\mu_{2}+\rho)b^{2}_{2}\}<\| Q\|_{L^{2}(\mathbb{R}^{2})}^{2}$, we have $$A>\frac{1}{2},$$ so $$\frac{1}{2}\int_{\mathbb{R}^{2}}(|\nabla u|^{2}+|\nabla v|^{2})dx-\frac{1}{4}\int_{\mathbb{R}^{2}}\left(\mu_{1}u^{4}+\mu_{2}v^{4}+2\rho u^{2}v^{2}\right)dx>0.$$ Therefore
\begin{align*}
\frac{1}{2}\beta\int_{\mathbb{R}^{2}}u^{2}vdx=\frac{1}{2}\int_{\mathbb{R}^{2}}(|\nabla u|^{2}+|\nabla v|^{2})dx-\frac{1}{4}\int_{\mathbb{R}^{2}}\left(\mu_{1}u^{4}+\mu_{2}v^{4}+2\rho u^{2}v^{2}\right)dx>0.
\end{align*}
Since  \begin{align*}
\Psi_{u,v}(t)=J_{\beta}(t\star (u,v))&=\frac{e^{2t}}{2}\bigg[\int_{\mathbb{R}^{2}}(|\nabla u|^{2}+|\nabla v|^{2})dx-\frac{1}{2}\int_{\mathbb{R}^{2}}\left(\mu_{1}u^{4}+\mu_{2}v^{4}+2\rho u^{2}v^{2}\right)dx\bigg]\\\nonumber
&-\frac{\beta}{2}e^{t}\int_{\mathbb{R}^{2}}u^{2}vdx.
\end{align*}




$$
J_{\beta}(u,v)=\frac{1}{2}\int_{\mathbb{R}^{N}}(|\nabla u|^{2}+|\nabla v|^{2})dx-\frac{1}{4}\int_{\mathbb{R}^{N}}(\mu_{1}u^{4}+\mu_{2}v^{4}+2\rho u^{2}v^{2})dx-\frac{\beta}{2}\int_{\mathbb{R}^{N}}u^{2}vdx.
$$
\begin{lemma}\label{LL}
Under the condition of $\max\{(\mu_{1}+\rho)b^{2}_{1}, (\mu_{2}+\rho)b^{2}_{2}\}<\| Q\|_{L^{2}(\mathbb{R}^{2})}^{2}$, we have
$$-\infty <m_{\beta}(b_{1},b_{2})=\inf_{(u,v)\in \mathrm{T}_{b_{1}}\times \mathrm{T}_{b_{2}}}J_{\beta}(u,v)<0.$$
\end{lemma}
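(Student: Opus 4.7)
The plan is to prove the two bounds separately. The hypothesis $\max\{(\mu_1+\rho)b_1^2, (\mu_2+\rho)b_2^2\} < \|Q\|_{L^2(\mathbb{R}^2)}^2$ enters both directions through the lower estimate on $A$ in \eqref{LEG}: it forces $A > \tfrac{1}{2}$, which in turn produces a constant $c_0 > 0$ with
\[ \tfrac{1}{2}(\|\nabla u\|_{L^2}^2+\|\nabla v\|_{L^2}^2) - \tfrac{1}{4}\int_{\mathbb{R}^2}(\mu_1 u^4+\mu_2 v^4+2\rho u^2 v^2)\,dx \;\geq\; c_0\,(\|\nabla u\|_{L^2}^2+\|\nabla v\|_{L^2}^2) \]
for every $(u,v)\in \mathrm{T}_{b_1}\times \mathrm{T}_{b_2}$. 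For the lower bound $m_\beta > -\infty$, I would combine this with the Gagliardo--Nirenberg estimate \eqref{LA13} specialized to $N=2$, which gives $|\beta\int u^2 v\,dx| \leq C_0(\beta,b_1,b_2)\,(\|\nabla u\|_{L^2}^2+\|\nabla v\|_{L^2}^2)^{1/2}$. Setting $X:=\|\nabla u\|_{L^2}^2+\|\nabla v\|_{L^2}^2$, we obtain $J_\beta(u,v) \geq c_0 X - C_0 X^{1/2}$, which is bounded below by $-C_0^2/(4c_0)$.

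For the upper bound $m_\beta < 0$, I would exploit the $L^2$-preserving dilation $t\star(u,v):=(e^t u(e^t\cdot),\,e^t v(e^t\cdot))$, which sends $\mathrm{T}_{b_1}\times \mathrm{T}_{b_2}$ into itself in dimension two. Fix any smooth $(u_0,v_0)\in \mathrm{T}_{b_1}\times \mathrm{T}_{b_2}$ with $u_0>0$, and choose the sign of $v_0$ so that $Q_0 := \tfrac{\beta}{2}\int u_0^2 v_0\,dx > 0$. The scaling identities $\int|\nabla(t\star u)|^2=e^{2t}\int|\nabla u|^2$, $\int(t\star u)^4=e^{2t}\int u^4$, and crucially $\int(t\star u)^2(t\star v)=e^{t}\int u^2 v$ (valid precisely because $N=2$) then convert the fiber map into
\[ \Psi(t) := J_\beta(t\star u_0,\,t\star v_0) \;=\; e^{2t}P_0 \,-\, e^{t}Q_0, \]
where $P_0 := \tfrac{1}{2}(\|\nabla u_0\|_{L^2}^2+\|\nabla v_0\|_{L^2}^2)-\tfrac{1}{4}\int(\mu_1 u_0^4+\mu_2 v_0^4+2\rho u_0^2 v_0^2)\,dx > 0$ by the first paragraph. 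The scalar map $s\mapsto s^2 P_0 - sQ_0$ attains a strictly negative minimum $-Q_0^2/(4P_0)$ at $s=Q_0/(2P_0)>0$; hence $m_\beta\leq \Psi(\log(Q_0/(2P_0)))=-Q_0^2/(4P_0)<0$.

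The main, mild, difficulty is matching the sign of the quadratic coupling $\int u_0^2 v_0$ to that of $\beta$: one must take $v_0>0$ when $\beta>0$ and flip the sign when $\beta<0$, which is always possible. This is also why the strict inequality $m_\beta<0$ implicitly requires $\beta\neq 0$: if $\beta=0$ the fiber map reduces to $\Psi(t)=e^{2t}P_0\geq 0$ and no scaling can drive it below zero, consistent with the fact that the lemma is invoked in Theorem~\ref{Th3}(i)(b) under the assumption $\beta>0$. Apart from this sign bookkeeping, both conclusions follow directly from $A>\tfrac{1}{2}$ together with the Gagliardo--Nirenberg estimates already recorded earlier in the excerpt.
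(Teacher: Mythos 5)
Your proposal is correct and follows essentially the same route as the paper: the lower bound comes from the coercivity estimate $J_\beta \geq \left[\tfrac{1}{2}-\tfrac{1}{4A}\right]X - C_0 X^{1/2}$ with $A>\tfrac12$ forced by the hypothesis, and the upper bound comes from the $L^2$-preserving dilation, choosing the sign of $v_0$ according to the sign of $\beta$ so that the $e^t$-order coupling term dominates the $e^{2t}$-order terms. The only (harmless) difference is that you locate the exact minimum $-Q_0^2/(4P_0)$ of the fiber map, while the paper simply observes negativity for $t\ll -1$.
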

\begin{proof}

If $ \frac{\| Q\|_{L^{2}(\mathbb{R}^{2})}^{2}}{\max\{(\mu_{1}+\rho)b^{2}_{1}, (\mu_{2}+\rho)b^{2}_{2}\}}> 1$ and $\beta>0$, we have $A>\frac{1}{2}$, thus
\begin{align}\label{Lag}
J_{\beta}(u,v)&=\frac{1}{2}\int_{\mathbb{R}^{2}}(|\nabla u|^{2}+|\nabla v|^{2})dx-\frac{1}{4}\int_{\mathbb{R}^{2}}(\mu_{1}u^{4}+\mu_{2}v^{4}+2\rho u^{2}v^{2})dx
-\frac{\beta}{2}\int_{\mathbb{R}^{2}}u^{2}vdx\\\nonumber
&\geq\left[\frac{1}{2}-\frac{1}{4A}\right]\int_{\mathbb{R}^{2}}(|\nabla u|^{2}+|\nabla v|^{2})dx\\\nonumber
&\quad-\frac{\beta}{2}\left[\left(\frac{2}{3}b^{2}_{1}+\frac{1}{3}b^{2}_{2}\right)C^{3}_{2,3}\left[\|\nabla u\|^{2}_{L^{2}(\mathbb{R}^{2})}+\|\nabla v\|^{2}_{L^{2}(\mathbb{R}^{2})}\right]^{\frac{1}{2}}\right],
\end{align}
so $J_{\beta}(u,v)$ is coercive on $\mathrm{T}_{b_{1}}\times \mathrm{T}_{b_{2}}$ and $$m_{\beta}(b_{1},b_{2})=\inf_{(u,v)\in \mathrm{T}_{b_{1}}\times \mathrm{T}_{b_{2}}}J_{\beta}(u,v)>-\infty.$$
Next, we prove $$m_{\beta}(b_{1},b_{2})=\inf_{(u,v)\in \mathrm{T}_{b_{1}}\times \mathrm{T}_{b_{2}}}J_{\beta}(u,v)<0.$$
It is easy to see that
\begin{align*}
J_{\beta}(t\star (u,v))&=\frac{e^{2t}}{2}\int_{\mathbb{R}^{2}}(|\nabla u|^{2}+|\nabla v|^{2})dx-\frac{\beta}{2}e^{t}\int_{\mathbb{R}^{2}}u^{2}vdx\\\nonumber
&\quad-\frac{e^{2t}}{4}\int_{\mathbb{R}^{2}}\left(\mu_{1}u^{4}+\mu_{2}v^{4}+2\rho u^{2}v^{2}\right)dx,
\end{align*}
when $\beta>0$ we choose $u>0,v>0$ and  $(u,v)\in \mathrm{T}_{b_{1}}\times \mathrm{T}_{b_{2}} $,  it is easy to see $(t\star u, t\star v)\in \mathrm{T}_{b_{1}}\times \mathrm{T}_{b_{2}}$  and $$\lim_{t\rightarrow-\infty}J_{\beta}(t\star (u,v))<0,$$
when $\beta<0$ we choose $u>0,v<0$ and  $(u,v)\in \mathrm{T}_{b_{1}}\times \mathrm{T}_{b_{2}} $,  it is easy to see $(t\star u, t\star v)\in \mathrm{T}_{b_{1}}\times \mathrm{T}_{b_{2}}$  and $$\lim_{t\rightarrow-\infty}J_{\beta}(t\star (u,v))<0.$$
Thus for any $\beta \in \mathbb{R}\setminus \{0\}$, we obtain
 $$m_{\beta}(b_{1},b_{2})=\inf_{(u,v)\in \mathrm{T}_{b_{1}}\times \mathrm{T}_{b_{2}}}J_{\beta}(u,v)<0.$$
\end{proof}
\begin{proof}[{\bf Proof of Theorem \ref{Th3}}]
When $\max\{(\mu_{1}+\rho)b^{2}_{1}, (\mu_{2}+\rho)b^{2}_{2}\}<\| Q\|_{L^{2}(\mathbb{R}^{2})}^{2} $, by the definition of $A$, we have $A>\frac{1}{2}$ . Thus, if $\beta<0$  and there exists a positive solution $(u,v)$ to  \eqref{23}-\eqref{24}, we have
\begin{align*}
\frac{1}{2}\beta\int_{\mathbb{R}^{2}}u^{2}vdx&=\int_{\mathbb{R}^{2}}(|\nabla u|^{2}+|\nabla v|^{2})dx-\frac{1}{2}\int_{\mathbb{R}^{2}}\left(\mu_{1}u^{4}+\mu_{2}v^{4}+2\rho u^{2}v^{2}\right)dx\geq 0,
\end{align*}
a contradiction.

Next, we consider $\beta>0$. When $\max\{(\mu_{1}+\rho)b^{2}_{1}, (\mu_{2}+\rho)b^{2}_{2}\}<\| Q\|_{L^{2}(\mathbb{R}^{2})}^{2}$, from Lemma \ref{LL}, we get
$J_{\beta}(u,v)$ is coercive on $\mathrm{T}_{b_{1}}\times \mathrm{T}_{b_{2}}$ and $$m_{\beta}(b_{1},b_{2})=\inf_{(u,v)\in \mathrm{T}_{b_{1}}\times \mathrm{T}_{b_{2}}}J_{\beta}(u,v)>-\infty.$$  Let $\{(u_{n},v_{n})\}\subset\mathrm{T}_{b_{1}}\times \mathrm{T}_{b_{2}} $  be any minimizing sequence for $m_{\beta}(b_{1},b_{2})$. By Lemma \ref{Lemg}, we know that $\{|u_{n}|,|v_{n}|\}$ is also a minimizing sequence. Then by taking $\{(|u_{n}|,|v_{n}|)\}$ and adapting the Schwarz symmetrization to$\{(|u_{n}|,|v_{n}|)\}$ if necessary, we can obtain a new minimizing sequence (up to a subsequence), such that$\{(u_{n},v_{n})\}$ are all real valued, nonnegative, radially symmetric. Since $\{(u_{n},v_{n})\}$ is bounded in $H_{r}^{1}(\mathbb{R}^{2})\times H_{r}^{1}(\mathbb{R}^{2})$. By the Sobolev embedding theorem, we have $H_{r}^{1}(\R^2)\hookrightarrow\hookrightarrow L^{p}_r(\mathbb{R}^{2})$ for $2<p<+\infty$, thus there exists a $(u,v)\in H_{r}^{1}(\mathbb{R}^{2})\times H_{r}^{1}(\mathbb{R}^{2}) $ such that $(u_{n},v_{n})\rightharpoonup (u,v)$ in $H_{r}^{1}(\mathbb{R}^{2})\times H_{r}^{1}(\mathbb{R}^{2})$, $(u_{n},v_{n})\rightarrow (u,v)$ in $L^{p}_r(\mathbb{R}^{2})\times L^{p}_r(\mathbb{R}^{2})\ \ \text{for }\  2<p<+\infty$ and $(u_{n},v_{n})\rightarrow (u,v)$ a.e in $\mathbb{R}^{2}$. Hence $u,v\geq0$ are radial functions.

 {\bf Step 1: Prove $\lambda_{1}>0$ and $\lambda_{2}>0$}. By Ekeland's variational principle yields in a standard way the existence of a new minimizing sequence, which is also a Palais-Smale sequence for $J_{\beta}$ on $\mathrm{T}_{b_{1}}\times \mathrm{T}_{b_{2}}.$
 So $J_{\beta}'\mid_{\mathrm{T}_{b_{1}}\times \mathrm{T}_{b_{2}}}(u_{n},v_{n})\rightarrow 0$, by the Lagrange multipliers rule, we know that there exists a sequence
$(\lambda_{1,n},\lambda_{2,n})\in \mathbb{R}^{2}$ such that
\begin{equation}\label{LAA1}
\begin{aligned}
&\int_{\mathbb{R}^{2}}(\nabla u_{n}\nabla \varphi+\nabla v_{n}\nabla \psi) dx +\int_{\mathbb{R}^{2}}(\lambda _{1,n}u_{n}\varphi+\lambda_{2,n}v_{n}\psi)dx\\
&-\int_{\mathbb{R}^{2}}(\mu_{1}u_{n}^{3}\varphi+\mu_{2}v_{n}^{3}\psi+\rho v_{n}^{2}u_{n}\varphi+\rho u_{n}^{2}v_{n}\psi)dx\\
 &-\beta\int_{\mathbb{R}^{2}} u_{n}\varphi v_{n}-\frac{\beta}{2}\int_{\mathbb{R}^{2}} u_{n}^{2}\psi=o(1)\|(\psi,\varphi)\|_{H^{1}(\mathbb{R}^{2})\times H^{1}(\mathbb{R}^{2})} \ \  \text{in} \ \ \mathbb{R}^{2},
\end{aligned}
\end{equation}
for every $(\varphi,\psi)\in H^{1}(\mathbb{R}^{2})\times H^{1}(\mathbb{R}^{2})$. We claim both $\lambda_{1,n}$ and $\lambda_{2,n}$ are bounded sequence, and at least one of them is converging, up to a subsequence, to a strictly negative value. Indeed, we can using $(u_{n},0) $ and $(0,v_{n}) $ as text function in \eqref{LAA1}, we have $$\int_{\mathbb{R}^{2}}\lambda _{1,n}u^{2}_{n}dx=-\int_{\mathbb{R}^{2}}|\nabla u_{n}|^{2} dx +
\int_{\mathbb{R}^{2}}(\mu_{1}u_{n}^{4}+\rho v_{n}^{2}u^{2}_{n})dx +\beta\int_{\mathbb{R}^{2}} u^{2}_{n} v_{n}+o(1)\|\varphi\|_{H^{1}(\mathbb{R}^{2})} ,$$
$$ \int_{\mathbb{R}^{2}}\lambda _{2,n}v^{2}_{n}dx=-\int_{\mathbb{R}^{2}}|\nabla v_{n}|^{2} dx
+\int_{\mathbb{R}^{2}}(\mu_{2}v_{n}^{4}+\rho v_{n}^{2}u^{2}_{n})dx +\frac{\beta}{2}\int_{\mathbb{R}^{2}} u^{2}_{n} v_{n}+o(1)\|\psi\|_{H^{1}(\mathbb{R}^{2})} ,$$
so
\begin{equation}\label{LAA4}
\begin{aligned}
\int_{\mathbb{R}^{2}}(\lambda _{1,n}u^{2}_{n}+\lambda _{2,n}v^{2}_{n})dx &=-\int_{\mathbb{R}^{2}}(|\nabla u_{n}|^{2} +|\nabla v_{n}|^{2})dx+\int_{\mathbb{R}^{2}}(\mu_{1}u_{n}^{4}+\mu_{2}v_{n}^{4}+2\rho v_{n}^{2}u^{2}_{n})dx\\
&\quad+\frac{3}{2}\beta\int_{\mathbb{R}^{2}}u_{n}^{2}v_{n}dx.
\end{aligned}
\end{equation}
By   \eqref{LA11}-\eqref{LA13} and  the boundedness of $\{(u_{n},v_{n})\}$, we can deduce that $(\lambda_{1,n},\lambda_{2,n})$ is bounded, hence up to a subsequence $(\lambda_{1,n},\lambda_{2,n})\rightarrow(\lambda_{1},\lambda_{2})\in \mathbb{R}^{2}$, passing to limits in \eqref{LAA1}, we can deduce that $(u,v)$ is a nonnegative solutions of  \eqref{23} .  Therefore
 \begin{align*}
\int_{\mathbb{R}^{2}}(|\nabla u|^{2}+|\nabla v|^{2})dx+\int_{\mathbb{R}^{2}}(\lambda_{1} u^{2}+\lambda_{2} v^{2})dx
=\int_{\mathbb{R}^{2}}\left(\mu_{1}u^{4}+\mu_{2}v^{4}+2\rho u^{2}v^{2}\right)dx+\frac{3}{2}\beta\int_{\mathbb{R}^{2}}u^{2}vdx.
\end{align*}
 On the one hand, we multiply the first equation of \eqref{23} by $x\cdot\nabla u$ and the second equation of \eqref{23} by $x\cdot\nabla v$, integrate  by parts, we can get
\begin{align}\label{LAA2}
\int_{\mathbb{R}^{2}}(\lambda_{1} u^{2}+\lambda_{2} v^{2})dx=\frac{1}{2}\int_{\mathbb{R}^{2}}\left(\mu_{1}u^{4}+\mu_{2}v^{4}+2\rho u^{2}v^{2}\right)dx+\beta\int_{\mathbb{R}^{2}}u^{2}vdx.
\end{align}
Since $P_{b_{1},b_{2}}(u_{n},v_{n})\rightarrow 0 $, which implies that
\begin{equation}\label{LAA3}
\begin{aligned}
\int_{\mathbb{R}^{2}}(|\nabla u_{n}|^{2}+|\nabla v_{n}|^{2})dx
-\frac{1}{2}\int_{\mathbb{R}^{2}}\left(\mu_{1}u_{n}^{4}+\mu_{2}v_{n}^{4}+2\rho u_{n}^{2}v_{n}^{2}\right)dx
-\frac{1}{2}\beta\int_{\mathbb{R}^{2}}u_{n}^{2}v_{n}dx=o_{n}(1).
\end{aligned}
\end{equation}
Together \eqref{LAA4} with \eqref{LAA3}, we can get
\begin{align}\label{LAA5}
\lambda_{1,n} b^{2}_{1}+\lambda_{2,n} b^{2}_{2}
=\frac{1}{2}\int_{\mathbb{R}^{2}}\left(\mu_{1}u_{n}^{4}+\mu_{2}v_{n}^{4}+2\rho u_{n}^{2}v_{n}^{2}\right)dx+\beta\int_{\mathbb{R}^{2}}u_{n}^{2}v_{n}dx,
\end{align}
when $\beta>0$, it is easy to see that  at least one sequence of $(\lambda_{i,n})$ is positive and bounded away from 0.
Let $n\rightarrow+\infty$ in \eqref{LAA5}, we have
\begin{align}\label{LAA51}
\lambda_{1} b^{2}_{1}+\lambda_{2} b^{2}_{2}
=\frac{1}{2}\int_{\mathbb{R}^{2}}\left(\mu_{1}u^{4}+\mu_{2}v^{4}+2\rho u^{2}v^{2}\right)dx+\beta\int_{\mathbb{R}^{2}}u^{2}vdx.
\end{align}
We claim that if $\lambda_{1}>0$(resp.$\lambda_{2}>0)$, then $\lambda_{2}>0$(resp.$\lambda_{1}>0$). Indeed, we know that at least one sequence of $(\lambda_{i})$ is positive and bounded away from 0. If $\lambda_{2}>0$, now we argue by contradiction and assume that $\lambda_{1}\leq0$, then $$-\Delta u=-\lambda_{1}u+\beta uv+\mu_{1}u^{3}+\rho v^{2}u\geq0.$$
Using a Liouville type theorem[\cite{NI14}, Lemma A.2], we can deduce that $u=0$. So, $v$ satisfies that
\begin{equation*}
\begin{cases}

-\Delta v+\lambda_{2}v= \mu_{2}v^{3},& \text{in} \ \ \mathbb{R}^{2},\\
v>0,&\text{in} \ \ \mathbb{R}^{2},\\
\int_{\mathbb{R}^{3}}v^{2}dx=b^{2}_{2},& \text{in} \ \ \mathbb{R}^{2}.\\
\end{cases}
\end{equation*}
Therefore, when $0<b_{2}\leq \| Q\|_{L^{2}(\mathbb{R}^{2})}$ we have
\begin{align*}
m_{\beta}(b_{1},b_{2})&=\lim_{n\rightarrow+\infty}J_{\beta}(u_{n},v_{n})=\lim_{n\rightarrow+\infty}J_{\beta}(u_{n},v_{n}-v)+J(0,v)\\
&=\lim_{n\rightarrow+\infty}\bigg[\frac{1}{2}\int_{\mathbb{R}^{2}}(|\nabla u_{n} |^{2}+|\nabla (v_{n}-v)|^{2})dx-\frac{1}{4}\int_{\mathbb{R}^{2}}\Big(\mu_{1}u_{n}^{4}+\mu_{2}(v_{n}-v)^{4}+2\rho u_{n}^{2}(v_{n}-v)^{2}\Big)dx\\
&\quad-\frac{1}{2}\beta\int_{\mathbb{R}^{2}}u_{n}^{2}(v_{n}-v)dx\bigg]+J(0,v)\geq\lim_{n\rightarrow+\infty}\frac{1}{2}\int_{\mathbb{R}^{2}}(|\nabla (u_{n}-u)|^{2}+|\nabla v_{n}|^{2})dx+m_{\beta}(0,b_{2})\geq0,
\end{align*}
in contradiction with $m_{\beta}(b_{1},b_{2})<0$, where we use the fact that $m_{\beta}(0,b_{2})=0$ for $0<b_{2}\leq \| Q\|_{L^{2}(\mathbb{R}^{2})}$, (see Theorem 1.2 in \cite{LZ20}). Thus, $\lambda_{1}>0,\ \lambda_{2}>0.$

If $\lambda_{1}>0$, now we argue by contradiction and assume that $\lambda_{2}\leq0$, then $$-\Delta v=-\lambda_{2}v \frac{\beta}{2}u^{2}+\mu_{2}v^{3}+\rho u^{2}v\geq0.$$
Using a Liouville type theorem[\cite{NI14}, Lemma A.2], we can deduce that $v=0$. So, by the structure of system \eqref{23}, we get $u=0$,  which is impossible.

{\bf Step 2: Prove the $L^{2}$ convergence}. Indeed, it is easy to see that $\int_{\mathbb{R}^{2}}u^{2}\leq b^{2}_{1},\ \int_{\mathbb{R}^{2}}v^{2}\leq b^{2}_{2}.$  From \eqref{LAA2} and \eqref{LAA5}, we have $$\lambda_{1}\left(b^{2}_{1}-\int_{\mathbb{R}^{2}} u^{2}dx\right)+\lambda_{2}\left(b^{2}_{2}-\int_{\mathbb{R}^{2}} v^{2}dx\right)=0,$$
so
\begin{align*}
\int_{\mathbb{R}^{2}}u^{2}= b^{2}_{1},\ \int_{\mathbb{R}^{2}}v^{2}= b^{2}_{2}.
\end{align*}

{\bf Step 3: Prove the $H^{1}$ convergence}.
 It is easy to see that
\begin{align*}
m_{\beta}(b_{1},b_{2})&=\lim_{n\rightarrow +\infty}J_{\beta}(u_{n},v_{n})\\
&=\lim_{n\rightarrow +\infty}\bigg[\frac{1}{2}\int_{\mathbb{R}^{2}}(|\nabla u_{n}|^{2}+|\nabla v_{n}|^{2})dx-\frac{1}{4}\int_{\mathbb{R}^{2}}\left(\mu_{1}u_{n}^{4}+\mu_{2}v_{n}^{4}+2\rho u_{n}^{2}v_{n}^{2}\right)dx-\frac{1}{2}\beta\int_{\mathbb{R}^{2}}u_{n}^{2}v_{n}dx\bigg].
\end{align*}

{\bf Case 1} If $u=0,v=0$, by compact Sobolev embedding,  we have
\begin{align*}
m_{\beta}(b_{1},b_{2})=\lim_{n\rightarrow +\infty}J_{\beta}(u_{n},v_{n})\geq\lim_{n\rightarrow +\infty}\frac{1}{2}\int_{\mathbb{R}^{2}}(|\nabla u_{n}|^{2}+|\nabla v_{n}|^{2})dx\geq0.
\end{align*}
contradict with Lemma \ref{LL}.

{\bf Case 2} If $u\neq0,v=0$, indeed, if $v=0$, by the structure of system \eqref{23}, we get $u=0$, so Case 2 doesn't happen.

{\bf Case 3} If $u=0,\ v\neq0$, let $$\overline{u}_{n}=u_{n},\ \overline{v}_{n}=v_{n}-v,$$
from  \cite[Lemma 2.4]{GL18}, we get
\begin{equation}\label{Bag3}
\int_{\mathbb{R}^{2}}|\overline{u}_{n}|^{2}|\overline{v}_{n}|^{2}dx=\int_{\mathbb{R}^{2}}|u_{n}|^{2}|v_{n}|^{2}dx-\int_{\mathbb{R}^{2}}|u|^{2}|v|^{2}dx+o(1).
\end{equation}
By Brezis Lieb Lemma in \cite{WM}, we can obtain that
\begin{equation}\label{Bag}
\int_{\mathbb{R}^{2}}|\overline{u}_{n}|^{2}\overline{v}_{n}dx=\int_{\mathbb{R}^{2}}|u_{n}|^{2}v_{n}dx-\int_{\mathbb{R}^{2}}|u|^{2}vdx+o(1).
\end{equation}
So, from \eqref{Bag3} and  \eqref{Bag}, we have
\begin{align*}
J_{\beta}(u_{n},v_{n}) =J_{\beta}(\overline{u}_{n},\overline{v}_{n})+J_{\beta}(u,v)+o(1).
\end{align*}
Therefore
\begin{align*}
m_{\beta}(b_{1},b_{2})&=\lim_{n\rightarrow +\infty}J_{\beta}(u_{n},v_{n})=\lim_{n\rightarrow +\infty}J_{\beta}(\overline{u}_{n},\overline{v}_{n})+J_{\beta}(0,v)\\
&=\lim_{n\rightarrow +\infty}\bigg[\frac{1}{2}\int_{\mathbb{R}^{2}}(|\nabla \overline{u}_{n}|^{2}+|\nabla \overline{v}_{n}|^{2})dx-\frac{1}{4}\int_{\mathbb{R}^{2}}\left(\mu_{1}\overline{u}_{n}^{4}+\mu_{2}\overline{v}_{n}^{4}+2\rho \overline{u}_{n}^{2}\overline{v}_{n}^{2}\right)dx\\
&\quad-\frac{1}{2}\beta\int_{\mathbb{R}^{2}}\overline{u}_{n}^{2}\overline{v}_{n}dx\bigg]+J_{\beta}(0,v)\\
&\geq\lim_{n\rightarrow +\infty}\frac{1}{2}\int_{\mathbb{R}^{2}}(|\nabla \overline{u}_{n}|^{2}+|\nabla \overline{v}_{n}|^{2})dx+J_{\beta}(0,v)\geq m_{\beta}(0,b_{2})=0.
\end{align*}
contradict with Lemma \ref{LL}, where we use the fact that $m_{\beta}(0,b_{2})=0$ for $0<b_{2}\leq \| Q\|_{L^{2}(\mathbb{R}^{2})}$.

{\bf Case 4}  If $u\neq0,v\neq0$, let $$\widetilde{u}_{n}=u_{n}-u,\ \widetilde{v}_{n}=v_{n}-v,$$
from \cite[Lemma 2.1]{PH}, we get
\begin{equation}\label{BAg3}
\int_{\mathbb{R}^{2}}|\widetilde{u}_{n}|^{2}|\widetilde{v}_{n}|^{2}dx=\int_{\mathbb{R}^{2}}|u_{n}|^{2}|v_{n}|^{2}dx-\int_{\mathbb{R}^{2}}|u|^{2}|v|^{2}dx+o(1).
\end{equation}
By Brezis Lieb Lemma in \cite{WM}, we can obtain
\begin{equation}\label{BAg}
\int_{\mathbb{R}^{2}}|\widetilde{u}_{n}|^{2}\widetilde{v}_{n}dx=\int_{\mathbb{R}^{2}}|u_{n}|^{2}v_{n}dx-\int_{\mathbb{R}^{2}}|u|^{2}vdx+o(1).
\end{equation}
So, from \eqref{BAg3} and \eqref{BAg}, we have
\begin{align*}
J_{\beta}(u_{n},v_{n}) =J_{\beta}(\widetilde{u}_{n},\widetilde{v}_{n})+J_{\beta}(u,v)+o(1).
\end{align*}
Therefore
\begin{align*}
m_{\beta}(b_{1},b_{2})&=\lim_{n\rightarrow +\infty}J_{\beta}(u_{n},v_{n})=\lim_{n\rightarrow +\infty}J_{\beta}(\widetilde{u}_{n},\widetilde{v}_{n})+J_{\beta}(u,v)\\
&=\lim_{n\rightarrow +\infty}\bigg[\frac{1}{2}\int_{\mathbb{R}^{2}}(|\nabla \widetilde{u}_{n}|^{2}+|\nabla \widetilde{v}_{n}|^{2})dx\\
&\quad-\frac{1}{4}\int_{\mathbb{R}^{2}}\left(\mu_{1}\widetilde{u}_{n}^{4}+\mu_{2}\widetilde{v}_{n}^{4}+2\rho \widetilde{u}_{n}^{2}\widetilde{v}_{n}^{2}\right)dx
-\frac{1}{2}\beta\int_{\mathbb{R}^{2}}\widetilde{u}_{n}^{2}\widetilde{v}_{n}dx\bigg]+J_{\beta}(u,v)\\
&\geq\lim_{n\rightarrow +\infty}\frac{1}{2}\int_{\mathbb{R}^{2}}(|\nabla \widetilde{u}_{n}|^{2}+|\nabla \widetilde{v}_{n}|^{2})dx+J_{\beta}(u,v)\geq m_{\beta}(b_{1},b_{2}).
\end{align*}
Thus, $J_{\beta}(u,v)=m_{\beta}(b_{1},b_{2})\ \text{and}\ (u_{n},v_{n})\rightarrow (u,v)$ in $H^{1}(\mathbb{R}^{2})\times H^{1}(\mathbb{R}^{2}) .$

Next, we prove the second part of Theorem \ref{Th3}. It is easy to see that
\begin{align*}
J_{0}(t\star (u,v))&=\frac{e^{2t}}{2}\int_{\mathbb{R}^{2}}(|\nabla u|^{2}+|\nabla v|^{2})dx-\frac{e^{2t}}{4}\int_{\mathbb{R}^{2}}\left(\mu_{1}u^{4}+\mu_{2}v^{4}+2\rho u^{2}v^{2}\right)dx.
\end{align*}
If
\begin{align}\label{Lac}
\frac{1}{2}\int_{\mathbb{R}^{2}}(|\nabla u|^{2}+|\nabla v|^{2})dx<\frac{1}{4}\int_{\mathbb{R}^{2}}\left(\mu_{1}u^{4}+\mu_{2}v^{4}+2\rho u^{2}v^{2}\right)dx,
\end{align}
then
$$J_{0}(t\star (u,v))\rightarrow -\infty\  \text{as}\ t \rightarrow +\infty.$$
It is easy to see
 \begin{align}\label{Lad}
J_{\beta}(t\star (u,v))&=\frac{e^{2t}}{2}\int_{\mathbb{R}^{2}}(|\nabla u|^{2}+|\nabla v|^{2})dx-\frac{\beta}{2}e^{t}\int_{\mathbb{R}^{2}}u^{2}vdx-\frac{e^{2t}}{4}\int_{\mathbb{R}^{2}}\left(\mu_{1}u^{4}+\mu_{2}v^{4}+2\rho u^{2}v^{2}\right)dx\\\nonumber
&=e^{2t}J_{0}(u,v)-\frac{\beta}{2}e^{t}\int_{\mathbb{R}^{2}}u^{2}vdx.
\end{align}
If \eqref{Lac} holds, there exists $(u,v)\in \mathrm{T}_{b_{1}}\times \mathrm{T}_{b_{2}}$ such  that $J_{0}(u,v)<0,$ by \eqref{Lad} we can deduce  $$\inf_{(u,v)\in \mathrm{T}_{b_{1}}\times \mathrm{T}_{b_{2}}}J_{\beta}(u,v)=-\infty.$$
\end{proof}

\section{Proof of Theorem \ref{Th1}}\label{sec4}
Define the set
\begin{equation}\label{int4}
\begin{aligned}
\mathcal{P}_{b_{1},b_{2}}(u,v):=\Big\{&(u,v)\in\mathrm{T}_{b_{1}}\times \mathrm{T}_{b_{2}} :P_{b_{1},b_{2}}(u,v)=0,\ \beta\int_{\mathbb{R}^{3}}u^{2}vdx>0\Big\},
\end{aligned}
\end{equation}
where
\begin{align*}
P_{b_{1},b_{2}}(u,v)&=\int_{\mathbb{R}^{3}}(|\nabla u|^{2}+|\nabla v|^{2})dx-\frac{3}{4}\int_{\mathbb{R}^{3}}\left(\mu_{1}u^{4}+\mu_{2}v^{4}+2\rho u^{2}v^{2}\right)dx-\frac{3}{4}\beta\int_{\mathbb{R}^{3}}u^{2}vdx,
\end{align*}
and
\begin{align}\label{pp}
\Psi_{u,v}(t)=J_{\beta}(t\star (u,v))=&\frac{e^{2t}}{2}\int_{\mathbb{R}^{3}}(|\nabla u|^{2}+|\nabla v|^{2})dx-\frac{\beta}{2}\int_{\mathbb{R}^{3}}e^{\frac{3t}{2}}u^{2}vdx\\\nonumber
&-\frac{e^{3t}}{4}\int_{\mathbb{R}^{3}}\left(\mu_{1}u^{4}+\mu_{2}v^{4}+2\rho u^{2}v^{2}\right)dx.
\end{align}
It is easy to check
\begin{align*}
\Psi'_{u,v}(t)=e^{2t}\int_{\mathbb{R}^{3}}(|\nabla u|^{2}+|\nabla v|^{2})dx-\frac{3\beta}{4}\int_{\mathbb{R}^{3}}e^{\frac{3t}{2}}u^{2}vdx-\frac{3e^{3t}}{4}\int_{\mathbb{R}^{3}}\left(\mu_{1}u^{4}+\mu_{2}v^{4}+2\rho u^{2}v^{2}\right)dx,
\end{align*}
and $$\Psi'_{u,v}(t)=\mathcal{P}_{b_{1},b_{2}}(t\star u,t\star v),\ \ \mathcal{P}_{b_{1},b_{2}}(u, v)=\Big\{(u,v)\in \mathrm{T}_{b_{1}}\times \mathrm{T}_{b_{2}}:\Psi'_{u,v}(0)=0,\ \beta\int_{\mathbb{R}^{3}}u^{2}vdx>0 \Big\}.$$
We decompose $ \mathcal{P}_{b_{1},b_{2}}(u, v)$ into three disjoint unions $$\mathcal{P}_{b_{1},b_{2}}(u, v)=\mathcal{P}^{+}_{b_{1},b_{2}}(u, v)\cup \mathcal{P}^{0}_{b_{1},b_{2}}(u, v)\cup \mathcal{P}^{-}_{b_{1},b_{2}}(u, v),$$ where
\begin{align}\label{k1}
\mathcal{P}^{+}_{b_{1},b_{2}}(u, v):=\Big\{(u,v)\in \mathrm{T}_{b_{1}}\times \mathrm{T}_{b_{2}},\  (u,v)\in\mathcal{P}_{b_{1},b_{2}}, \  \beta\int_{\mathbb{R}^{3}}u^{2}vdx>0 :\Psi''_{u,v}(0)>0 \Big\},
\end{align}
\begin{align}\label{k2}
&\mathcal{P}^{0}_{b_{1},b_{2}}(u, v):=\Big\{(u,v)\in \mathrm{T}_{b_{1}}\times \mathrm{T}_{b_{2}},\ (u,v)\in\mathcal{P}_{b_{1},b_{2}},\  \beta\int_{\mathbb{R}^{3}}u^{2}vdx>0 :\Psi''_{u,v}(0)=0 \Big\},
\end{align}
\begin{align}\label{k3}
\mathcal{P}^{-}_{b_{1},b_{2}}(u, v):=\Big\{(u,v)\in \mathrm{T}_{b_{1}}\times \mathrm{T}_{b_{2}},\ (u,v)\in\mathcal{P}_{b_{1},b_{2}},\  \beta\int_{\mathbb{R}^{3}}u^{2}vdx>0 :\Psi''_{u,v}(0)<0 \Big\}.
\end{align}

To prove Theorem \ref{Th1}, we first give some lemmas. The following auxiliary result shows the role of $\mathcal{P}_{b_{1},b_{2}}(u,v)$.
\begin{lemma}\label{Lemp}
If $(u,v)$ is a solution of problem \eqref{23}--\eqref{24} for some $\lambda_{1},\lambda_{2}\in \mathbb{R}$, then $(u,v)\in \mathcal{P}_{b_{1},b_{2}}(u,v)$.
\end{lemma}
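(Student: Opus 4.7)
The claim has two parts: the algebraic identity $P_{b_1,b_2}(u,v) = 0$, and the sign condition $\beta\int_{\mathbb{R}^3} u^2 v\,dx > 0$.

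The quickest route to the first part exploits the $L^2$-preserving scaling $(t\star w)(x) := e^{3t/2} w(e^t x)$ already used in \eqref{pp}. Since $\|t\star u\|_{L^2}^2 = \|u\|_{L^2}^2$ and $\|t\star v\|_{L^2}^2 = \|v\|_{L^2}^2$ for every $t\in\mathbb{R}$, the curve $t\mapsto(t\star u, t\star v)$ remains on $\mathrm{T}_{b_1}\times \mathrm{T}_{b_2}$ for all $t$. A solution of \eqref{23}-\eqref{24} with Lagrange multipliers $\lambda_1,\lambda_2$ is exactly a constrained critical point of $J_\beta$ on $\mathrm{T}_{b_1}\times \mathrm{T}_{b_2}$, so the derivative of $\Psi_{u,v}(t) = J_\beta(t\star u, t\star v)$ along this tangent curve vanishes at $t=0$:
$$0 \;=\; \frac{d}{dt}\bigg|_{t=0} J_\beta(t\star u, t\star v) \;=\; \Psi'_{u,v}(0) \;=\; P_{b_1,b_2}(u,v),$$
where the final equality uses the explicit formula for $\Psi'_{u,v}(t)$ recorded just after \eqref{pp} evaluated at $t=0$.

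A self-contained proof bypassing the scaling remark proceeds by combining two classical identities: the Nehari identity, obtained by testing \eqref{23} against $(u,v)$, and the Pohozaev identity, obtained by testing against $(x\cdot\nabla u, x\cdot\nabla v)$. The crucial integration-by-parts ingredients are $\int_{\mathbb{R}^N}(-\Delta u)(x\cdot\nabla u)\,dx = \tfrac{2-N}{2}\|\nabla u\|_{L^2}^2$ together with the two symmetric cancellations
$$\beta\!\int_{\mathbb{R}^N}\! uv(x\cdot\nabla u)\,dx + \tfrac{\beta}{2}\!\int_{\mathbb{R}^N}\! u^2(x\cdot\nabla v)\,dx \;=\; -\tfrac{N\beta}{2}\!\int_{\mathbb{R}^N}\! u^2 v\,dx,$$
$$\rho\!\int_{\mathbb{R}^N}\!\bigl[v^2 u(x\cdot\nabla u) + u^2 v(x\cdot\nabla v)\bigr]dx \;=\; -\tfrac{N\rho}{2}\!\int_{\mathbb{R}^N}\! u^2 v^2\,dx,$$
both arising by rewriting the mixed terms as $\tfrac{1}{2}\int x\cdot\nabla(\cdot)$ and integrating by parts. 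Eliminating the combination $\lambda_1 b_1^2 + \lambda_2 b_2^2$ between the resulting Pohozaev and Nehari identities yields $P_{b_1,b_2}(u,v) = 0$ for $N=3$.

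For the sign condition, Theorem \ref{Th1} targets positive solutions with $\beta > 0$, so $\int_{\mathbb{R}^3} u^2 v\,dx > 0$ and the inequality $\beta\int_{\mathbb{R}^3} u^2 v\,dx > 0$ is immediate. I expect the only real point of care to be precisely this: the second inequality in the definition of $\mathcal{P}_{b_1,b_2}$ is not automatic for an arbitrary critical point of $J_\beta|_{\mathrm{T}_{b_1}\times \mathrm{T}_{b_2}}$ and must be read as part of the positive-solution framework operative in this section.
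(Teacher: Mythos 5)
Your second, self-contained argument is exactly the paper's proof: test \eqref{23} against $(u,v)$ to get the Nehari identity and against $(x\cdot\nabla u, x\cdot\nabla v)$ to get the Pohozaev identity, then eliminate $\lambda_1 b_1^2+\lambda_2 b_2^2$ to obtain $P_{b_1,b_2}(u,v)=0$; your cancellation identities and constants are correct for $N=3$, and the scaling route via $\Psi_{u,v}'(0)=0$ is a standard equivalent repackaging. Your closing remark is also well taken: the paper's own proof only establishes $P_{b_1,b_2}(u,v)=0$ and is silent on the constraint $\beta\int_{\mathbb{R}^3}u^2v\,dx>0$ appearing in the definition \eqref{int4}, which indeed only holds within the positive-solution, $\beta>0$ framework of the section and not for an arbitrary solution.
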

\begin{proof}
On the one hand, we multiply the first equation of \eqref{23} by $x\cdot\nabla u$ and the second equation of \eqref{23} by $x\cdot\nabla v$, integrate  by parts, we can get
\begin{align}\label{LA111}
&\frac{1}{2}\int_{\mathbb{R}^{3}}(|\nabla u|^{2}+|\nabla v|^{2})dx+\frac{ 3}{2}\int_{\mathbb{R}^{3}}(\lambda_{1} u^{2}+\lambda_{2} v^{2})dx\\\nonumber
&=\frac{3}{4}\int_{\mathbb{R}^{3}}\left(\mu_{1}u^{4}+\mu_{2}v^{4}+2\rho u^{2}v^{2}\right)dx+\frac{3}{2}\beta\int_{\mathbb{R}^{3}}u^{2}vdx.
\end{align}
On the other hand, we multiply the first equation of \eqref{23} by $ u$ and the second equation of \eqref{23} by $ v$, integrate  by parts, we have
\begin{align}\label{LAg}
\int_{\mathbb{R}^{3}}(|\nabla u|^{2}+|\nabla v|^{2})dx+\int_{\mathbb{R}^{3}}(\lambda_{1} u^{2}+\lambda_{2} v^{2})dx=\int_{\mathbb{R}^{3}}\left(\mu_{1}u^{4}+\mu_{2}v^{4}+2\rho u^{2}v^{2}\right)dx+\frac{3}{2}\beta\int_{\mathbb{R}^{3}}u^{2}vdx.
\end{align}
Together \eqref{LA111} with \eqref{LAg}, we obtain
\begin{align}\label{LA3}
\int_{\mathbb{R}^{3}}(|\nabla u|^{2}+|\nabla v|^{2})dx-\frac{3}{4}\int_{\mathbb{R}^{3}}\left(\mu_{1}u^{4}+\mu_{2}v^{4}+2\rho u^{2}v^{2}\right)dx-\frac{3}{4}\beta\int_{\mathbb{R}^{3}}u^{2}vdx=0.
\end{align}
\end{proof}
To show that the energy functional has a concave-convex structure, by \eqref{LA11}-\eqref{LA13}, we introduce $$\mathcal{D}_{1}=\mu_{1} C^{4}_{3,4}b_{1}, \  \mathcal{D}_{2}=\mu_{2}C^{4}_{4,4}b_{2},\ \mathcal{D}_{3}=C^{4}_{3,4}b^{\frac{1}{2}}_{1}b^{\frac{1}{2}}_{2},\ \ \mathcal{D}_{4}=\big(\frac{2}{3}b^{\frac{3}{2}}_{1}+\frac{1}{3}b^{\frac{3}{2}}_{2}\big)C^{3}_{3,3}.$$ If  $\rho>0$
\begin{align}\label{LA15}
J_{\beta}(u,v)&\geq\frac{1}{2}\int_{\mathbb{R}^{3}}(|\nabla u|^{2}+|\nabla v|^{2})dx-\frac{1}{4}\mathcal{D}_{1}\big(\int_{\mathbb{R}^{3}}|\nabla u|^{2}dx\big)^{\frac{3}{2}}-\frac{1}{4}\mathcal{D}_{2}\big(\int_{\mathbb{R}^{3}}|\nabla v|^{2}dx\big)^{\frac{3}{2}}\\\nonumber
&\quad-\frac{1}{4}\rho\mathcal{D}_{3}\big(\int_{\mathbb{R}^{3}}(|\nabla u|^{2}+|\nabla v|^{2})dx\big)^{\frac{3}{2}}-\frac{1}{2}|\beta|\mathcal{D}_{4}\big(\int_{\mathbb{R}^{3}}(|\nabla u|^{2}+|\nabla v|^{2})dx\big)^{\frac{3}{4}}\\\nonumber
&\geq h\Big(\big(\int_{\mathbb{R}^{3}}(|\nabla u|^{2}+|\nabla v|^{2})dx\big)^{\frac{1}{2}}\Big),
\end{align}
where $h(t):(0,+\infty)\rightarrow \mathbb{R}$ defined by $$h(t)=\frac{1}{2}t^{2}-\frac{1}{4}(\mathcal{D}_{1}+\mathcal{D}_{2}+\rho\mathcal{D}_{3})t^{3}-\frac{1}{2}|\beta|\mathcal{D}_{4}t^{\frac{3}{2}}.$$
\begin{lemma}\label{LAA}
When $|\beta|\left(2b^{\frac{3}{2}}_{1}+b^{\frac{3}{2}}_{2}\right)C^{3}_{3,3}C^{2}_{3,4}\sqrt{\mu_{1}b_{1}+\mu_{2}b_{2}+\rho b^{\frac{1}{2}}_{1}b^{\frac{1}{2}}_{2}}<\frac{2\sqrt{6}}{3}$ holds,  then $h(t)$ has exactly two critical points, one is a local minimum at negative level, the other one is a global maximum at positive level. Further, there exists $0<R_{0}<R_{1}$ such that $h(R_{0})=h(R_{1})=0$, $h(t)>0$ if and only if
$t\in (R_{0},R_{1})$.
\end{lemma}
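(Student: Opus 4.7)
The plan is to study $h$ on $(0,\infty)$ by analyzing its derivative, reducing $h'(t)=0$ to a cubic polynomial in $u=\sqrt{t}$ whose root structure is governed by a sharp discriminant-type inequality. Direct computation gives
\begin{equation*}
h'(t)=t-\tfrac{3}{4}(\mathcal{D}_1+\mathcal{D}_2+\rho\mathcal{D}_3)\,t^2-\tfrac{3}{4}|\beta|\mathcal{D}_4\,t^{1/2},
\end{equation*}
so substituting $u=\sqrt{t}$ converts $h'(t)=0$ into $f(u):=Au^3-u+B=0$, with $A:=\tfrac{3}{4}(\mathcal{D}_1+\mathcal{D}_2+\rho\mathcal{D}_3)>0$ and $B:=\tfrac{3}{4}|\beta|\mathcal{D}_4>0$. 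Since $f'(u)=3Au^2-1$ vanishes on $(0,\infty)$ only at $u_\ast=1/\sqrt{3A}$, the polynomial $f$ is strictly decreasing on $(0,u_\ast)$ and strictly increasing on $(u_\ast,\infty)$, with minimum value $f(u_\ast)=B-\tfrac{2}{3\sqrt{3A}}$.

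The next step is to translate the lemma's hypothesis into the clean algebraic form $8AB^2<1$: unwinding the definitions of $A,B,\mathcal{D}_1,\ldots,\mathcal{D}_4$ and squaring shows that the stated bound is equivalent to $\tfrac{27}{8}\beta^2\mathcal{D}_4^2(\mathcal{D}_1+\mathcal{D}_2+\rho\mathcal{D}_3)<1$. Because $\tfrac{1}{8}<\tfrac{4}{27}$, this is strictly stronger than $f(u_\ast)<0$; combined with $f(0)=B>0$ and $f(u)\to+\infty$, the intermediate value theorem produces exactly two positive roots $0<u_1<u_\ast<u_2$ of $f$, hence exactly two critical points $t_1:=u_1^2<t_2:=u_2^2$ of $h$ on $(0,\infty)$. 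Since $h'(t)<0$ for small $t>0$ (the term $-\tfrac{3}{4}|\beta|\mathcal{D}_4 t^{1/2}$ dominates), the sign pattern of $h'$ makes $h$ decrease on $(0,t_1)$, increase on $(t_1,t_2)$, and decrease on $(t_2,\infty)$; thus $t_1$ is a local minimum and $t_2$ is the global maximum.

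To pin down the levels, at any critical point $t_0$ I plan to use $\tfrac{3}{4}(\mathcal{D}_1+\mathcal{D}_2+\rho\mathcal{D}_3)\,t_0^2=t_0-\tfrac{3}{4}|\beta|\mathcal{D}_4\,t_0^{1/2}$ to eliminate the cubic term in $h(t_0)$, obtaining
\begin{equation*}
h(t_0)=\tfrac{1}{12}\,t_0^{3/2}\bigl(2\sqrt{t_0}-3|\beta|\mathcal{D}_4\bigr)=\tfrac{1}{6}\,t_0^{3/2}(u_0-2B),\qquad u_0:=t_0^{1/2}.
\end{equation*}
Hence the sign of $h(t_0)$ agrees with the sign of $u_0-2B$. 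Evaluating $f$ at $u=2B$ gives $f(2B)=B(8AB^2-1)<0$ under the hypothesis, so $u_1<2B<u_2$, which yields $h(t_1)<0$ and $h(t_2)>0$.

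Putting these pieces together with $h(0)=0$ and $h(t)\to-\infty$ as $t\to\infty$, the monotonicity intervals above and the intermediate value theorem produce unique $R_0\in(t_1,t_2)$ and $R_1\in(t_2,\infty)$ with $h(R_0)=h(R_1)=0$, and $\{t>0:h(t)>0\}=(R_0,R_1)$. I do not anticipate any serious obstacle; the main bookkeeping is the algebraic identification of the stated bound $|\beta|(2b_1^{3/2}+b_2^{3/2})C_{3,3}^3C_{3,4}^2\sqrt{\mu_1 b_1+\mu_2 b_2+\rho b_1^{1/2}b_2^{1/2}}<\tfrac{2\sqrt{6}}{3}$ with $8AB^2<1$, which is precisely the sharp threshold guaranteeing that the global maximum of $h$ sits at a positive level.
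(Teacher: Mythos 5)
Your proof is correct and follows essentially the same route as the paper: both reduce the analysis of $h$ and $h'$ to elementary one-variable calculus in the variable $\sqrt{t}$, with the hypothesis entering as the condition that the relevant auxiliary function is negative at its interior extremum (your $8AB^{2}<1$ is exactly the paper's $\varphi(\widetilde{t})>\frac{1}{2}|\beta|\mathcal{D}_{4}$, and your observation that this is strictly stronger than $f(u_{\ast})<0$ matches the paper's remark that \eqref{LA4} implies \eqref{LA5}). The only cosmetic difference is that you obtain the signs $h(t_{1})<0<h(t_{2})$ from the critical-point identity $h(t_{0})=\frac{1}{6}t_{0}^{3/2}\big(\sqrt{t_{0}}-2B\big)$ together with $f(2B)<0$, whereas the paper reads off the positivity interval directly from the factorization $h(t)=t^{3/2}\big(\varphi(t)-\frac{1}{2}|\beta|\mathcal{D}_{4}\big)$.
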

\begin{proof}
Since
$$h(t)=t^{\frac{3}{2}}\Big[\frac{1}{2}t^{\frac{1}{2}}-\frac{1}{4}C^{4}_{3,4}(\mu_{1} b_{1}+\mu_{2} b_{2}+\rho b^{\frac{1}{2}}_{1}b^{\frac{1}{2}}_{2})t^{\frac{3}{2}}-\frac{1}{2}|\beta|\big(\frac{2}{3}b^{\frac{3}{2}}_{1}+\frac{1}{3}b^{\frac{3}{2}}_{2}\big)C^{3}_{3,3}\Big]$$
it is easy to see that $h(t)>0$ if and only if $\varphi(t)>\frac{1}{2}|\beta|\big(\frac{2}{3}b^{\frac{3}{2}}_{1}+\frac{1}{3}b^{\frac{3}{2}}_{2}\big)C^{3}_{3,3}$, where $\varphi(t)=\frac{1}{2}t^{\frac{1}{2}}-\frac{1}{4}C^{4}_{3,4}(\mu_{1} b_{1}+\mu_{2} b_{2}+\rho b^{\frac{1}{2}}_{1}b^{\frac{1}{2}}_{2})t^{\frac{3}{2}}$, since $$\varphi'(t)=\frac{1}{4}t^{-\frac{1}{2}}-\frac{3}{8}C^{4}_{3,4}(\mu_{1} b_{1}+\mu_{2} b_{2}+\rho b^{\frac{1}{2}}_{1}b^{\frac{1}{2}}_{2})t^{\frac{1}{2}},$$
it is easy to see that $\varphi(t)$ has a unique global maximum point at positive level in $\widetilde{t}=\frac{2}{3C^{4}_{3,4}(\mu_{1} b_{1}+\mu_{2} b_{1}+\rho b^{\frac{1}{2}}_{1}b^{\frac{1}{2}}_{2})}$ and $$\varphi(\widetilde{t})=\frac{\sqrt{6}}{9}\frac{1}{\sqrt{C^{4}_{3,4}(\mu_{1} b_{1}+\mu_{2} b_{2}+\rho b^{\frac{1}{2}}_{1}b^{\frac{1}{2}}_{2})}},$$
 therefore $h(t)$ is positive on an open interval $(R_{0},R_{1})$ if and only if $\varphi(\widetilde{t})>\frac{1}{2}|\beta|\left(\frac{2}{3}b^{\frac{3}{2}}_{1}+\frac{1}{3}b^{\frac{3}{2}}_{2}\right)C^{3}_{3,3}$, so
\begin{align}\label{LA4}
|\beta|\big(2b^{\frac{3}{2}}_{1}+b^{\frac{3}{2}}_{2}\big)C^{3}_{3,3}C^{2}_{3,4}\sqrt{\mu_{1}b_{1}+\mu_{2}b_{2}+\rho b^{\frac{1}{2}}_{1}b^{\frac{1}{2}}_{2}}<\frac{2\sqrt{6}}{3}.
\end{align}
Since
\begin{align*}
h'(t)&=t^{\frac{1}{2}}\Big[t^{\frac{1}{2}}-\frac{3}{4}C^{4}_{3,4}(\mu_{1} b_{1}+\mu_{2} b_{2}+\rho b^{\frac{1}{2}}_{1}b^{\frac{1}{2}}_{2})t^{\frac{3}{2}}-\frac{3}{4}|\beta|\big(\frac{2}{3}b^{\frac{3}{2}}_{1}+\frac{1}{3}b^{\frac{3}{2}}_{2}\big)C^{3}_{3,3}\Big]=t^{\frac{1}{2}}g(t),
\end{align*}
 we can deduce that when
\begin{equation}\label{LA5}
\begin{aligned}
g(\frac{4}{9C^{4}_{3,4}(\mu_{1} b_{1}+\mu_{2} b_{1}+\rho b^{\frac{1}{2}}_{1}b^{\frac{1}{2}}_{2})})
=\frac{4}{9\sqrt{C^{4}_{3,4}(\mu_{1} b_{1}+\mu_{2} b_{1}+\rho b^{\frac{1}{2}}_{1}b^{\frac{1}{2}}_{2})}}-\frac{3}{4}|\beta|\big(\frac{2}{3}b^{\frac{3}{2}}_{1}+\frac{1}{3}b^{\frac{3}{2}}_{2}\big)C^{3}_{3,3}>0,
\end{aligned}
\end{equation}
then $h(t)$ has exactly two critical points, one is a local minimum at negative level, the other one is a global maximum at positive level. It is easy to see that when \eqref{LA4} holds, then  \eqref{LA5} also holds.
\end{proof}
Let
\begin{equation*}
N:=\Big\{(u,v)\in H^1(\mathbb{R}^{4})\times H^1(\mathbb{R}^{4})~ |~ \beta\int_{\R^3}u^2vdx>0\Big\},
\end{equation*}
and for simplicity, we still denote
\begin{equation*}
\mathcal{P}_{b_1,b_2}=\Big\{(u,v)\in (\mathrm{T}_{b_{1}}\times \mathrm{T}_{b_{2}})\cap N ~|~ P_{b_1,b_2}(u,v)=0\Big\}.
\end{equation*}

\begin{lemma}\label{lem4.3}
Under the assumption \eqref{LA4}, then $\mathcal{P}^{0}_{b_{1},b_{2}}=\emptyset$ and $\mathcal{P}_{b_{1},b_{2}}$ is a $C^{1}$ submanifold in $H^{1}(\mathbb{R}^{3})\times H^{1}(\mathbb{R}^{3})$ with codimension 3.
\end{lemma}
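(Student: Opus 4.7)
The plan is to split the proof into two steps: first establish that $\mathcal{P}^0_{b_1,b_2}=\emptyset$ under \eqref{LA4}, and then deduce the submanifold structure via the implicit function theorem. For Step 1, set $A=\int_{\R^3}(|\nabla u|^2+|\nabla v|^2)dx$, $B=\int_{\R^3}u^2v\,dx$, $C=\int_{\R^3}(\mu_1u^4+\mu_2v^4+2\rho u^2v^2)dx$. If $(u,v)\in\mathcal{P}^0_{b_1,b_2}$, the pair of conditions $\Psi'_{u,v}(0)=\Psi''_{u,v}(0)=0$ is a $2\times 2$ linear system in $(C,\beta B)$ whose unique solution is $\beta B=\tfrac{8A}{9}$ and $C=\tfrac{4A}{9}$; in particular $A,C,\beta B>0$. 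The Gagliardo--Nirenberg estimates \eqref{LA11}--\eqref{LA13} with $N=3$ give $C\leq KA^{3/2}$ with $K:=C^4_{3,4}(\mu_1b_1+\mu_2b_2+\rho b_1^{1/2}b_2^{1/2})$ and $\beta B\leq|\beta|\mathcal{D}_4 A^{3/4}$. Combined with the forced relations these yield $A^{1/2}\geq\tfrac{4}{9K}$ and $A^{1/2}\leq\bigl(\tfrac{9|\beta|\mathcal{D}_4}{8}\bigr)^{2}$, hence $\beta^2\mathcal{D}_4^2 K\geq\tfrac{256}{729}$. On the other hand, squaring \eqref{LA4} and using $\mathcal{D}_4=\tfrac{1}{3}(2b_1^{3/2}+b_2^{3/2})C^3_{3,3}$ gives $\beta^2\mathcal{D}_4^2 K<\tfrac{1}{9}\cdot\tfrac{8}{3}=\tfrac{216}{729}$, contradicting the lower bound.

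For Step 2, define $F:H^1(\R^3)\times H^1(\R^3)\to\R^3$ by
\[
F(u,v):=\bigl(\|u\|_2^2-b_1^2,\;\|v\|_2^2-b_2^2,\;P_{b_1,b_2}(u,v)\bigr);
\]
then $F$ is $C^1$ and $\mathcal{P}_{b_1,b_2}=F^{-1}(0)\cap N$ with $N$ open, so it suffices to show $F'(u,v)$ is surjective at each $(u,v)\in\mathcal{P}_{b_1,b_2}$. Assume $\alpha F_1'+\gamma F_2'+\delta F_3'\equiv 0$ on $H^1\times H^1$; collecting the $\varphi$- and $\psi$-coefficients gives the weak coupled system
\[
-\delta\Delta u+\alpha u=\tfrac{3\delta}{2}\bigl[\mu_1u^3+\rho uv^2+\tfrac{\beta}{2}uv\bigr],\quad -\delta\Delta v+\gamma v=\tfrac{3\delta}{2}\bigl[\mu_2v^3+\rho u^2v+\tfrac{\beta}{4}u^2\bigr].
\]
If $\delta=0$, testing against $(u,0)$ and $(0,v)$ gives $\alpha b_1^2=\gamma b_2^2=0$, so $\alpha=\gamma=0$. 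If $\delta\neq 0$, I combine the sum of the two $L^2$-identities (obtained by multiplying the equations by $u$ and $v$ and integrating) with the Pohozaev identity for the associated Hamiltonian $G=\tfrac{3\delta}{8}(\mu_1u^4+\mu_2v^4)+\tfrac{3\delta\rho}{4}u^2v^2+\tfrac{3\delta\beta}{8}u^2v-\tfrac{\alpha}{2}u^2-\tfrac{\gamma}{2}v^2$ to eliminate $\alpha b_1^2+\gamma b_2^2$. After using $P_{b_1,b_2}(u,v)=0$ to replace $A$ by $\tfrac{3}{4}(C+\beta B)$, a short computation forces $\beta B=2C$, which is exactly $\Psi''_{u,v}(0)=\tfrac{3}{8}(\beta B-2C)=0$, contradicting Step 1. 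Hence $F'(u,v)$ is surjective and the implicit function theorem yields the claimed $C^1$ codimension-three submanifold structure.

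The main difficulty is the sharp numerical inequality $\tfrac{216}{729}<\tfrac{256}{729}$ in Step 1: the Gagliardo--Nirenberg constants $C^3_{3,3}$ and $C^4_{3,4}$ must be carried through the whole chain exactly, since any looseness (for instance in estimating $\int u^2v^2$ by Cauchy--Schwarz, or in combining $\|\nabla u\|^3+\|\nabla v\|^3\leq A^{3/2}$) would shrink the gap and dissolve the contradiction with \eqref{LA4}. Step 2 is then essentially mechanical, because the algebraic identity $\beta B=2C$ obstructing surjectivity coincides exactly with the defining condition of $\mathcal{P}^0_{b_1,b_2}$.
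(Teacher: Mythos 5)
Your proof is correct, and while Step 1 is essentially the paper's argument in reorganized form, Step 2 takes a genuinely different and in fact cleaner route. In Step 1 you solve the pair $\Psi'_{u,v}(0)=\Psi''_{u,v}(0)=0$ for $C=\frac{4A}{9}$ and $\beta B=\frac{8A}{9}$ and then insert the Gagliardo--Nirenberg bounds \eqref{LA11}--\eqref{LA13}, whereas the paper first extracts $\beta\int_{\mathbb{R}^3}u^2v\,dx=2\int_{\mathbb{R}^3}(\mu_1u^4+\mu_2v^4+2\rho u^2v^2)\,dx$ and then bounds $A$ from two sides; after squaring, your comparison $\tfrac{216}{729}<\tfrac{256}{729}$ is exactly the paper's $\tfrac{2\sqrt6}{3}<\tfrac{16}{9}$, so the two are numerically identical. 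In Step 2 the paper treats the sign condition as a fourth constraint $E(u,v)=\beta\int u^2v$, performs its linear-independence computation at points of $E^{-1}(0)$ (where the Pohozaev identity collapses to $A=\tfrac98 C$ against $A=\tfrac34 C$), and then invokes Clarke's multiplier rule via Mederski--Schino to argue that the open condition does not disturb the manifold. You instead use that $N=\{\beta\int u^2v>0\}$ is open, so only the submersion property of the three equality constraints at points of $\mathcal{P}_{b_1,b_2}$ itself is needed, and your elimination of $\alpha b_1^2+\gamma b_2^2$ between the $L^2$-identities and the Pohozaev identity correctly forces $\beta B=2C$, i.e. $\Psi''_{u,v}(0)=\tfrac38(\beta B-2C)=0$, contradicting $\mathcal{P}^0_{b_1,b_2}=\emptyset$ from Step 1. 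This buys you two things: the surjectivity is verified on the set where it is actually needed (the paper's computation lives on $E^{-1}(0)$, which is disjoint from $\mathcal{P}_{b_1,b_2}$), and the Clarke/Mederski--Schino machinery becomes unnecessary for the submanifold claim (it is still needed elsewhere to show the constraint is natural). The only implicit ingredient, the validity of the Pohozaev identity for $H^1$ solutions of the auxiliary system, is used equally and without comment by the paper.
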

\begin{proof}
Assume by contradiction that there exists a $(u,v)\in \mathcal{P}^{0}_{b_{1},b_{2}}$ such that
\begin{align}\label{LA6}
\int_{\mathbb{R}^{3}}(|\nabla u|^{2}+|\nabla v|^{2})dx
=\frac{3}{4}\int_{\mathbb{R}^{3}}(\mu_{1}u^{4}+\mu_{2}v^{4}+2\rho u^{2}v^{2})dx
+\frac{3}{4}\beta\int_{\mathbb{R}^{3}}u^{2}vdx
\end{align}
and
\begin{align}\label{LA7}
2\int_{\mathbb{R}^{3}}(|\nabla u|^{2}+|\nabla v|^{2})dx-\frac{9\beta}{8}\int_{\mathbb{R}^{3}}u^{2}vdx
-\frac{9}{4}\int_{\mathbb{R}^{3}}\left(\mu_{1}u^{4}+\mu_{2}v^{4}+2\rho u^{2}v^{2}\right)dx=0.
\end{align}
From  \eqref{LA6} and \eqref{LA7}, we obtain
\begin{align}\label{LA8}
\beta\int_{\mathbb{R}^{3}}u^{2}vdx=
2\int_{\mathbb{R}^{3}}\left(\mu_{1}u^{4}+\mu_{2}v^{4}+2\rho u^{2}v^{2}\right)dx.
\end{align}
Thus, from \eqref{LA11}-\eqref{LA13}, we have
\begin{align}\label{LA9}
\int_{\mathbb{R}^{3}}(|\nabla u|^{2}+|\nabla v|^{2})dx&=
\frac{9}{4}\int_{\mathbb{R}^{3}}\left(\mu_{1}u^{4}+\mu_{2}v^{4}+2\rho u^{2}v^{2}\right)dx\\\nonumber
&\leq\frac{9\mu_{1}}{4} C^{4}_{3,4}b_{1}\|\nabla u\|^{3}_{L^{2}(\mathbb{R}^{3})}+\frac{9\mu_{2}}{4} C^{4}_{3,4}b_{2}\|\nabla v\|^{3}_{L^{2}(\mathbb{R}^{3})}\\\nonumber
&\quad+\frac{9}{4}\rho C^{4}_{3,4}b^{\frac{1}{2}}_{1}b^{\frac{1}{2}}_{2}\left[\|\nabla u\|^{2}_{L^{2}(\mathbb{R}^{3})}+\|\nabla v\|^{2}_{L^{2}(\mathbb{R}^{3})}\right]^{\frac{3}{2}}\\\nonumber
&\leq \frac{9}{4}C^{4}_{3,4}(\mu_{1} b_{1}+\mu_{2} b_{2}+\rho b^{\frac{1}{2}}_{1}b^{\frac{1}{2}}_{2})\left[\|\nabla u\|^{2}_{L^{2}(\mathbb{R}^{3})}+\|\nabla v\|^{2}_{L^{2}(\mathbb{R}^{3})}\right]^{\frac{3}{2}},
\end{align}
\begin{align}\label{LA10}
\int_{\mathbb{R}^{3}}(|\nabla u|^{2}+|\nabla v|^{2})dx&=
\frac{9}{8}\beta\int_{\mathbb{R}^{3}}u^{2}vdx\\\nonumber
& \leq\frac{9}{8}|\beta|\left[\left(\frac{2}{3}b^{\frac{3}{2}}_{1}+\frac{1}{3}b^{\frac{3}{2}}_{2}\right)C^{3}_{3,3}\left[\|\nabla u\|^{2}_{L^{2}(\mathbb{R}^{3})}+\|\nabla v\|^{2}_{L^{2}(\mathbb{R}^{3})}\right]^{\frac{3}{4}}\right].
\end{align}
So, by \eqref{LA9} and \eqref{LA10}, we get $$\frac{1}{3\sqrt{C^{4}_{3,4}(\mu_{1} b_{1}+\mu_{2} b_{1}+\rho b^{\frac{1}{2}}_{1}b^{\frac{1}{2}}_{2})}}\leq\frac{9}{16}|\beta|\mathcal{D}_{4},$$
therefore
\begin{align}\label{LA14}
|\beta|\left(2b^{\frac{3}{2}}_{1}+b^{\frac{3}{2}}_{2}\right)C^{3}_{3,3}C^{2}_{3,4}\sqrt{\mu_{1}b_{1}+\mu_{2}b_{2}+\rho b^{\frac{1}{2}}_{1}b^{\frac{1}{2}}_{2}}\geq\frac{16}{9},
\end{align}
contract  the assumption \eqref{LA4}, which implies that $\mathcal{P}^{0}_{b_{1},b_{2}}=\emptyset$. Next, we show that $\mathcal{P}_{b_{1},b_{2}}$ is a smooth manifold of codimension 3 on $H^{1}(\mathbb{R}^{3})\times H^{1}(\mathbb{R}^{3})$. It is easy to see that  $\mathcal{P}_{b_{1},b_{2}}$ is defined by $P_{b_{1},b_{2}}(u,v)=0$, $G(u)=0$, $F(u)=0$ and $E(u,v)>0$, where
\begin{equation*}
G(u)=\int_{\mathbb{R}^{3}}u^{2}dx-b^{2}_{1},\ \ F(v)=\int_{\mathbb{R}^{3}}v^{2}dx-b^{2}_{2} \ \  \text{and} \ \ E(u,v)=\beta\int_{\mathbb{R}^{3}}u^2v.
\end{equation*}
Since $P_{b_{1},b_{2}}(u,v)$, $G(u)$, $F(u)$ and $E(u,v)$ are class of $C^{1}$, we only need to check that $$d(P_{b_{1},b_{2}}(u,v),G(u),F(v),E(u,v)): H\rightarrow \mathbb{R}^{3}\ \text{ is surjective},$$
for every $(u,v)\in (G^{-1}(0)\times F^{-1}(0))\cap E^{-1}(0)\cap P^{-1}_{b_1,b_2}(0)$.
If this is not true, $dP_{b_{1},b_{2}}(u,v)$ has to be linearly dependent from $dG(u)$, $dF(v)$ and $dE(u,v)$ i.e. there exist $\nu_{1},\nu_{2},\nu_{3}\in \mathbb{R}$  such that
\begin{equation*}
\begin{cases}

2\int_{\mathbb{R}^{3}}\nabla u\nabla \varphi +\nu_{1}u\varphi+\nu_3 \beta uv\varphi
=3\int_{\mathbb{R}^{3}}(\mu_{1}u^{3}\varphi+\rho v^{2}u\varphi) +\frac{3\beta}{2}\int_{\mathbb{R}^{3}} u\varphi v & \text{in} \ \ \mathbb{R}^{3},\\

2\int_{\mathbb{R}^{3}}\nabla v\nabla \psi +\nu_{2}v\psi+\frac{\nu_3}{2} \beta u^2\psi
=3\int_{\mathbb{R}^{3}}(\mu_{2}v^{3}\psi+\rho u^{2}v\psi) +\frac{3\beta}{4}\int_{\mathbb{R}^{3}} u^{2}\psi & \text{in} \ \ \mathbb{R}^{3},
\end{cases}
\end{equation*}
for every $(\varphi,\psi)\in H^{1}(\mathbb{R}^{3})\times H^{1}(\mathbb{R}^{3})$, so
\begin{equation*}
\begin{cases}

-2 [\Delta u  +\nu_{1}u+\nu_3\beta uv]=3(\mu_{1}u^{3}+\rho v^{2}u) +\frac{3\beta}{2} uv & \text{in} \ \ \mathbb{R}^{3},\\

-2 [\Delta v  +\nu_{2}v+\frac{\nu_3}{2}\beta u^2]=3(\mu_{2}v^{3}+\rho u^{2}v) +\frac{3\beta}{4}u^{2} & \text{in} \ \ \mathbb{R}^{3}.
\end{cases}
\end{equation*}
The  Pohozaev identity for above system is
\begin{align*}
\int_{\mathbb{R}^{3}}(|\nabla u|^{2}+|\nabla v|^{2})dx-\frac{9}{8}\int_{\mathbb{R}^{3}}\left(\mu_{1}u^{4}+\mu_{2}v^{4}+2\rho u^{2}v^{2}\right)dx-\frac{3}{2}\big(\frac{3}{8}-\frac{\nu_3}{2}\big)\beta\int_{\mathbb{R}^{3}}u^{2}vdx=0.
\end{align*}
Then $\int_{\mathbb{R}^{3}}(|\nabla u|^{2}+|\nabla v|^{2})dx=\frac{9}{8}\int_{\mathbb{R}^{3}}\left(\mu_{1}u^{4}+\mu_{2}v^{4}+2\rho u^{2}v^{2}\right)dx$.
Since $(u,v)\in (G^{-1}(0)\times F^{-1}(0))\cap E^{-1}(0)\cap P^{-1}_{b_1,b_2}(0)$, we have
$\int_{\mathbb{R}^{3}}(|\nabla u|^{2}+|\nabla v|^{2})dx=\frac{3}{4}\int_{\mathbb{R}^{3}}\left(\mu_{1}u^{4}+\mu_{2}v^{4}+2\rho u^{2}v^{2}\right)dx$, this is a contradiction. By Proposition A.1 of \cite{MS21}, we get that if $m_\beta:=\inf\limits_{(u,v)\in \mathcal{P}_{b_1,b_2}}J_{\beta}=J_{\beta}(\bar{u},\bar{v})$, then there exist $\lambda_i\in \R(i=4)$, such that
\begin{align*}
&-(1+\lambda_4)\Delta \bar{u}  +\lambda_1\nu_{1}\bar{u}+\lambda_3\beta \bar{u}\bar{v}=(1+\frac{3\lambda_4}{2})(\mu_{1}\bar{u}^{3}+\rho \bar{v}^{2}u) +(1+\frac{3\lambda_4}{4})\beta \bar{u}\bar{v},\\
&-(1+\lambda_4)\Delta \bar{v}  +\lambda_2 \bar{v}+\frac{\lambda_3}{2}\beta \bar{u}^2=(1+\frac{3\lambda_4}{2}\lambda_4)(\mu_{2}\bar{v}^{3}+\rho \bar{u}^{2}\bar{v}) +(1+\frac{3\lambda_4}{4})\frac{\beta}{2}\bar{u}^{2}.
\end{align*}
Since $E(\bar{u},\bar{v})>0$, from Theorem 1 of \cite{FHC76} or the proof of Lemma 2.11 in \cite{MS21}, we have $\lambda_3=0$. Therefore, we obtain that the restricted set $N$ does not change the structure of the manifold $\mathcal{P}_{b_1,b_2}$. Then, we have that $\mathcal{P}_{b_{1},b_{2}}$ is a smooth manifold of codimension 3 on $\mathrm{T}_{b_{1}}\times \mathrm{T}_{b_{2}}$.
\end{proof}

\begin{lemma}\label{Lem4}
When $\beta\int_{\mathbb{R}^{3}}u^{2}vdx>0$. For every $(u,v)\in \mathrm{T}_{b_{1}}\times \mathrm{T}_{b_{2}}$, the function $\Psi_{u,v}(t)$ has exactly two critical points $s_{u,v}<t_{u,v}\in \mathbb{R}$ and two zeros $c_{u,v}<d_{u,v}\in \mathbb{R}$ with $s_{u,v}<c_{u,v}<t_{u,v}<d_{u,v}$. Moreover,
\begin{enumerate}
\item[$(1)$] $s_{u,v}\star (u,v)\in \mathcal{P}^{+}_{b_{1},b_{2}}(u, v)$ and $t_{u,v}\star (u,v)\in \mathcal{P}^{-}_{b_{1},b_{2}}(u, v)$, and if $t\star (u,v)\in \mathcal{P}_{b_{1},b_{2}}(u, v)$, then either $t=s_{u,v}$ or $t=t_{u,v}.$

\item[$(2)$] $(\int_{\mathbb{R}^{3}}(|\nabla (t\star u)|^{2}+|\nabla (t\star v)|^{2})dx)^{\frac{1}{2}}\leq R_{0}$ for every $t\leq c_{u,v},$ and
\begin{align*}
&J_{\beta}(u,v)(s_{u,v}\star (u,v))\\
&=\min\bigg\{J_{\beta}(t\star (u,v):t\in \mathbb{R}\ \text{and}\ (\int_{\mathbb{R}^{3}}(|\nabla (t\star u)|^{2}+|\nabla (t\star v)|^{2})dx)^{\frac{1}{2}}< R_{0} \bigg\}<0,
\end{align*}
where $R_{0}$ is defined in Lemma \ref{LAA}.
\item[$(3)$] We have $$J_{\beta}(t_{u,v}\star (u,v))=\max\{J_{\beta}(t\star (u,v)):t\in \mathbb{R} \}>0$$ and $\Psi_{u,v}(t)$ is strictly decreasing and concave on $(t_{u,v},+\infty)$. In particular, if $t_{u,v}<0$, then $P_{b_{1},b_{2}}(u,v)<0.$

\item[$(4)$] The maps $(u,v)\in \mathrm{T}_{b_{1}}\times \mathrm{T}_{b_{2}}: s_{u,v} \in \mathbb{R} $ and $(u,v)\in \mathrm{T}_{b_{1}}\times \mathrm{T}_{b_{2}}: t_{u,v} \in \mathbb{R} $ are of class $C^{1}$.
\end{enumerate}
\end{lemma}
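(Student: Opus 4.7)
Fix $(u,v)\in\mathrm{T}_{b_1}\times\mathrm{T}_{b_2}$ with $\beta\int u^2v\,dx>0$, and write $A:=\|\nabla u\|_2^2+\|\nabla v\|_2^2>0$, $B:=\int u^2v\,dx$ (so $\beta B>0$), $C:=\int(\mu_1u^4+\mu_2v^4+2\rho u^2v^2)\,dx>0$. Then the fiber map reads $\Psi_{u,v}(t)=\tfrac{A}{2}e^{2t}-\tfrac{\beta B}{2}e^{3t/2}-\tfrac{C}{4}e^{3t}$, and the substitution $\sigma:=e^{t/2}>0$ yields $\Psi'_{u,v}(t)=\sigma^{3}\psi(\sigma)$ with the cubic $\psi(\sigma):=A\sigma-\tfrac{3\beta B}{4}-\tfrac{3C}{4}\sigma^{3}$. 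Critical points of $\Psi_{u,v}$ correspond bijectively to positive roots of $\psi$, and at any such root $\psi(\sigma_0)=0$ one has $\Psi''_{u,v}=\tfrac{\sigma_0^{4}}{2}\psi'(\sigma_0)$.

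\textbf{Critical points and shape (items (1) and most of (3)).} Since $\psi(0)=-\tfrac{3\beta B}{4}<0$ and $\psi(\sigma)\to-\infty$, the function $\psi$ attains a unique maximum at $\sigma^\ast=\tfrac{2}{3}\sqrt{A/C}$ with $\psi(\sigma^\ast)=\tfrac{4A^{3/2}}{9\sqrt C}-\tfrac{3\beta B}{4}$. Controlling $B\le\tfrac{1}{3}(2b_1^{3/2}+b_2^{3/2})C_{3,3}^{3}A^{3/4}$ and $\sqrt C\le C_{3,4}^{2}\sqrt{\mu_1b_1+\mu_2b_2+\rho\sqrt{b_1b_2}}\,A^{3/4}$ as in (1.LA11)--(1.LA13), the smallness hypothesis on $\beta$ forces $\psi(\sigma^\ast)>0$; hence $\psi$ has exactly two positive zeros $\sigma_1<\sigma^\ast<\sigma_2$. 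Setting $s_{u,v}:=2\ln\sigma_1<t_{u,v}:=2\ln\sigma_2$, the sign of $\psi'$ at each zero gives $\Psi''_{u,v}(s_{u,v})>0$ and $\Psi''_{u,v}(t_{u,v})<0$, proving (1). Moreover $-\tfrac{\beta B}{2}e^{3t/2}$ dominates at $-\infty$, so $\Psi_{u,v}(t)\to0^{-}$ as $t\to-\infty$, while $\Psi_{u,v}(t)\to-\infty$ as $t\to+\infty$. With only two critical points, $\Psi_{u,v}$ is strictly decreasing on $(-\infty,s_{u,v})$, strictly increasing on $(s_{u,v},t_{u,v})$, and strictly decreasing on $(t_{u,v},+\infty)$; in particular $\Psi_{u,v}(s_{u,v})<0$, $\Psi_{u,v}(t_{u,v})=\max_t\Psi_{u,v}$, and $t_{u,v}<0$ implies $P_{b_1,b_2}(u,v)=\Psi'_{u,v}(0)<0$.

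\textbf{Zeros of $\Psi_{u,v}$, the admissible-set minimum, and concavity (item (2) and the rest of (3)).} The inequality (1.LA15) yields $\Psi_{u,v}(t)\ge h(e^{t}\sqrt A)$, and by Lemma \ref{LAA} we have $h>0$ on $(R_0,R_1)$. Choosing $t_\ast,t^{\ast\ast}$ with $e^{t_\ast}\sqrt A=R_0$ and $e^{t^{\ast\ast}}\sqrt A=R_1$ gives $\Psi_{u,v}>0$ on $(t_\ast,t^{\ast\ast})$; combining with the monotonicity pattern forces $\Psi_{u,v}(t_{u,v})>0$, the existence of the two zeros $s_{u,v}<c_{u,v}\le t_\ast<t^{\ast\ast}\le d_{u,v}$, and $e^{t}\sqrt A\le R_0$ for every $t\le c_{u,v}$. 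Since $s_{u,v}$ is the unique (local and hence global) minimizer of $\Psi_{u,v}$ on $(-\infty,t_{u,v}]$ and $\Psi_{u,v}\ge 0>\Psi_{u,v}(s_{u,v})$ on $[c_{u,v},t^{\ast\ast})$, the admissible-set minimum claim in (2) follows. For concavity on $(t_{u,v},+\infty)$, differentiate to get $\Psi''_{u,v}(t)=\tfrac{\sigma^{3}}{2}\phi(\sigma)$ with $\phi(\sigma):=4A\sigma-\tfrac{9\beta B}{4}-\tfrac{9C}{2}\sigma^{3}$; its unique positive critical point $\sigma^{\ast\ast}=\sqrt{8A/(27C)}$ satisfies $\sigma^{\ast\ast}<\sigma^\ast<\sigma_2$, and $\phi$ is strictly decreasing on $[\sigma^{\ast\ast},+\infty)$. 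Since $\phi(\sigma_2)<0$ (equivalent to $\Psi''_{u,v}(t_{u,v})<0$), this yields $\phi(\sigma)<0$ for every $\sigma\ge\sigma_2$, i.e.\ strict concavity of $\Psi_{u,v}$ on $(t_{u,v},+\infty)$.

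\textbf{$C^{1}$-regularity (item (4)) and main obstacle.} Apply the implicit function theorem to $F(t,u,v):=\Psi'_{u,v}(t)$ at $(s_{u,v},u,v)$ and $(t_{u,v},u,v)$: by Lemma \ref{lem4.3} ($\mathcal{P}^{0}_{b_1,b_2}=\emptyset$), $\partial_{t}F=\Psi''_{u,v}$ is nonzero at each critical point, so the maps $(u,v)\mapsto s_{u,v}$ and $(u,v)\mapsto t_{u,v}$ are of class $C^{1}$. The main analytic obstacle is verifying $\psi(\sigma^\ast)>0$ with the sharp Gagliardo--Nirenberg threshold from Theorem \ref{Th1}; the second delicate point is the global concavity on $(t_{u,v},+\infty)$, which relies on the strict ordering $\sigma^{\ast\ast}<\sigma^\ast<\sigma_2$ between the inflection critical point of $\psi$ and its two positive roots.
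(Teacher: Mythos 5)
Your proof is correct and follows essentially the same route as the paper's: reduce $\Psi'_{u,v}(t)=0$ to a unimodal cubic equation in $\sigma=e^{t/2}$ (hence at most two critical points), use the lower bound $\Psi_{u,v}(t)\ge h\big(e^{t}(\|\nabla u\|_{2}^{2}+\|\nabla v\|_{2}^{2})^{1/2}\big)$ together with the limits $\Psi_{u,v}(-\infty)=0^{-}$ and $\Psi_{u,v}(+\infty)=-\infty$ to locate the two zeros and prove item $(2)$, and apply the implicit function theorem for item $(4)$. The two places where you depart from (and improve on) the paper's write-up are: the existence of the two roots, which you obtain self-containedly from the explicit maximum $\psi(\sigma^{*})=\tfrac{4A^{3/2}}{9\sqrt{C}}-\tfrac{3\beta B}{4}$ plus the Gagliardo--Nirenberg bounds (your computation needs only $\beta(2b_1^{3/2}+b_2^{3/2})C_{3,3}^{3}C_{3,4}^{2}\sqrt{\mu_1b_1+\mu_2b_2+\rho\sqrt{b_1b_2}}<\tfrac{16}{9}$, which the hypothesis $<\tfrac{2\sqrt{6}}{3}$ implies), whereas the paper infers existence from the positivity of $h$ on $(R_0,R_1)$; and the strict concavity of $\Psi_{u,v}$ on $(t_{u,v},+\infty)$, which the paper merely asserts via ``exactly two inflection points'' but which your monotonicity argument for $\phi$ past $\sigma^{**}<\sigma^{*}<\sigma_{2}$, combined with $\phi(\sigma_2)<0$, actually proves.
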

\begin{proof}
Let $(u,v)\in \mathrm{T}_{b_{1}}\times \mathrm{T}_{b_{2}}$, since $t\star (u,v)\in \mathcal{P}_{b_{1},b_{2}}(u, v) $  if and only if $(\Psi_{u,v})'(t)=0$. Thus, we first show that $\Psi_{u,v}(t)$ has at least two critical points. From \eqref{LA15}, we have
\begin{align*}
\Psi_{u,v}(t)&=J_{\beta}(t\star (u,v))\geq h\Big(\big[\int_{\mathbb{R}^{3}}(|\nabla (t\star u)|^{2}+|\nabla (t\star v)|^{2})dx\big]^{\frac{1}{2}}\Big)\\
&=h\Big(e^{t}\big[\int_{\mathbb{R}^{3}}(|\nabla  u|^{2}+|\nabla  v|^{2})dx\big])^{\frac{1}{2}}\Big).
\end{align*}
Thus, the $C^{2}$ function $\Psi_{u,v}(t)$ is positive on $$\Big(\ln\frac{R_{0}}{\big[\int_{\mathbb{R}^{3}}(|\nabla  u|^{2}+|\nabla  v|^{2})dx\big]^{\frac{1}{2}}}, \ln\frac{R_{1}}{\big[\int_{\mathbb{R}^{3}}(|\nabla  u|^{2}+|\nabla  v|^{2})dx\big]^{\frac{1}{2}}}\Big),$$ $\Psi_{u,v}(+\infty)=-\infty$  and $\Psi_{u,v}(-\infty)=0^{-}$.

Indeed, $(\Psi_{u,v})'(t)=0$ implies that  $g(t)=\frac{3\beta}{4}\int_{\mathbb{R}^{3}}u^{2}vdx$ where $$ g(t)=e^{\frac{t}{2}}\int_{\mathbb{R}^{3}}(|\nabla u|^{2}+|\nabla v|^{2})dx-\frac{3e^{\frac{3t}{2}}}{4}\int_{\mathbb{R}^{3}}\left(\mu_{1}u^{4}+\mu_{2}v^{4}+2\rho u^{2}v^{2}\right)dx.$$ It is easy to see that $g(t)$ has a unique maximum point, thus the above equation has at most two solutions.
If $\frac{3\beta}{4}\int_{\mathbb{R}^{3}}u^{2}vdx>0$. So, $\Psi_{u,v}(-\infty)=0^{-}$, $\Psi_{u,v}(+\infty)=-\infty$.
 It is easy to see that $\Psi_{u,v}(t)$ has a local minimum point $s_{u,v}$ at negative level in $(0,\ln\frac{R_{0}}{\left(\int_{\mathbb{R}^{3}}(|\nabla  u|^{2}+|\nabla  v|^{2})dx\right)^{\frac{1}{2}}})$ and has a global maximum point $t_{u,v}$  at positive level in $\Big(\ln\frac{R_{0}}{\big[\int_{\mathbb{R}^{3}}(|\nabla  u|^{2}+|\nabla  v|^{2})dx\big]^{\frac{1}{2}}}, \ln\frac{R_{1}}{\big[\int_{\mathbb{R}^{3}}(|\nabla  u|^{2}+|\nabla  v|^{2})dx\big]^{\frac{1}{2}}}\Big).$ Next, we show that $\Psi_{u,v}(t)$ has no other critical points.  From $(u,v)\in \mathrm{T}_{b_{1}}\times \mathrm{T}_{b_{2}},$ $t\in\mathbb{R}$ is a critical point of $\Psi_{u,v}(t)$ if and only if $t\star (u,v)\in \mathcal{P}_{b_{1},b_{2}} ,$ we have $s_{u,v}\star (u,v),\ \  t_{u,v}\star (u,v)\in \mathcal{P}_{b_{1}\times b_{2}} $ and $t\star (u,v) \in \mathcal{P}_{b_{1}\times b_{2}}$ if and only if $t=s_{u,v}$ or $t=t_{u,v}$. Since $s_{u,v}$ is a local minimum point of $\Psi_{u,v}(t)$, we know that $(\Psi_{u,v})''(s_{u,v})\geq0$, from $\mathcal{P}^{0}_{b_{1},b_{2}}=\emptyset$, we know that $(\Psi_{u,v})''(s_{u,v})\neq0$, thus $(\Psi_{u,v})''(s_{u,v})>0$, therefore $s_{u,v}\star (u,v)\in \mathcal{P}^{+}_{b_{1},b_{2}}.$ Similarly, we have $t_{u,v}\star (u,v)\in \mathcal{P}^{-}_{b_{1},b_{2}}.$ By the monotonicity and the behavior at infinity of $\Psi_{u,v}(t)$, we know that $\Psi_{u,v}(t)$ has exactly two zeros $c_{u,v}<d_{u,v}$ with $s_{u,v}<c_{u,v}<t_{u,v}<d_{u,v}$ and $\Psi_{u,v}(t)$ has exactly two inflection points, in particular, $\Psi_{u,v}(t)$ is concave on $[t_{u,v},+\infty)$ and hence if $t_{u,v}<0$, then $P_{b_{1},b_{2}}(u,v)=(\Psi_{u,v})'(0)<0.$ Finally, we prove that $(u,v)\in \mathrm{T}_{b_{1}}\times \mathrm{T}_{b_{2}}\mapsto  s_{u,v} \in \mathbb{R} $ and $(u,v)\in \mathrm{T}_{b_{1}}\times \mathrm{T}_{b_{2}}\mapsto  t_{u,v} \in \mathbb{R} $ are of class $C^{1}$. Indeed, we can apply the implicit function theorem on the $C^{1}$ function $\Phi(t,u,v)=(\Psi_{u,v})'(t)$, then $\Phi(s_{u,v},u,v)=(\Psi_{u,v})'(s_{u,v})=0,  \partial_{s}\Phi(s_{u,v},u,v)=(\Psi_{u,v})''(s_{u,v})<0$, thus $(u,v)\in \mathrm{T}_{b_{1}}\times \mathrm{T}_{b_{2}}\mapsto  s_{u,v} \in \mathbb{R} $  is class of $C^{1}$. Similarly, we can prove that $(u,v)\in \mathrm{T}_{b_{1}}\times \mathrm{T}_{b_{2}}\mapsto t_{u,v} \in \mathbb{R} $  is class of $C^{1}$.
 \end{proof}

\subsection{Existence of a local minimizer}

For $k>0$,  set
$$A_{k}=\Big\{(u,v)\in \mathrm{T}_{b_{1}}\times \mathrm{T}_{b_{2}}:\big(\int_{\mathbb{R}^{3}}(|\nabla  u|^{2}+|\nabla  v|^{2})dx\big)^{\frac{1}{2}}<k\Big\},$$ and
$$m^{+}_{\beta}(b_{1},b_{2})=\inf_{(u,v)\in A_{R_{0}}}J_{\beta}(u,v).$$
From Lemma \ref{Lem4}, we have following corollary
\begin{corollary}\label{Lem6}
The set $\mathcal{P}^{+}_{b_{1},b_{2}}$ is contained in $$A_{R_{0}}=\Big\{(u,v)\in \mathrm{T}_{b_{1}}\times \mathrm{T}_{b_{2}}: \big(\int_{\mathbb{R}^{3}}(|\nabla  u|^{2}+|\nabla  v|^{2})dx\big)^{\frac{1}{2}}< R_{0}\Big\}$$ and $$\sup_{\mathcal{P}^{+}_{b_{1},b_{2}}}J_{\beta}(u,v)\leq0\leq \inf_{\mathcal{P}^{-}_{b_{1},b_{2}}}J_{\beta}(u,v).$$
\end{corollary}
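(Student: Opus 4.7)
The plan is to read both claims directly off Lemma \ref{Lem4}, once the identity $P_{b_1,b_2}(u,v)=\Psi'_{u,v}(0)$ is noted. For any $(u,v)\in\mathcal{P}_{b_1,b_2}$, the Pohozaev condition says precisely that $t=0$ is a critical point of the fiber map $\Psi_{u,v}$. Since Lemma \ref{Lem4}(1) tells us that $\Psi_{u,v}$ possesses exactly two critical points $s_{u,v}<t_{u,v}$, we must have $0\in\{s_{u,v},t_{u,v}\}$, and the sign of $\Psi''_{u,v}(0)$ decides which one.

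I would first handle $\mathcal{P}^+_{b_1,b_2}$. By definition $\Psi''_{u,v}(0)>0$ there, while $\Psi''_{u,v}(t_{u,v})\le 0$ as $t_{u,v}$ is a global maximum; the emptiness of $\mathcal{P}^0_{b_1,b_2}$ established in Lemma \ref{lem4.3} upgrades the latter to the strict inequality $\Psi''_{u,v}(t_{u,v})<0$. Hence the only consistent possibility is $s_{u,v}=0$, so $(u,v)=s_{u,v}\star(u,v)$, and both conclusions follow from the variational characterisation in Lemma \ref{Lem4}(2): the minimum is attained at $s_{u,v}$ over the open set where the gradient norm is $<R_0$ (so $(u,v)\in A_{R_0}$), with strictly negative value (so $J_\beta(u,v)<0$, and consequently $\sup_{\mathcal{P}^+_{b_1,b_2}}J_\beta\le 0$). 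The fact that the minimizer lies strictly inside $A_{R_0}$ and not on $\partial A_{R_0}$ is consistent with the bound $J_\beta\ge h(R_0)=0$ on the boundary, coming from \eqref{LA15} and Lemma \ref{LAA}.

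The argument for $\mathcal{P}^-_{b_1,b_2}$ is perfectly symmetric. Now $\Psi''_{u,v}(0)<0$, while $\Psi''_{u,v}(s_{u,v})>0$ since $s_{u,v}$ is a strict local minimum (again using $\mathcal{P}^0_{b_1,b_2}=\emptyset$), so $t_{u,v}=0$ and $(u,v)=t_{u,v}\star(u,v)$. Lemma \ref{Lem4}(3) then yields $J_\beta(u,v)=\max_{t\in\mathbb{R}}\Psi_{u,v}(t)>0$, whence $\inf_{\mathcal{P}^-_{b_1,b_2}}J_\beta\ge 0$.

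I expect no real obstacle here: the structural work has already been invested in Lemmas \ref{LAA}, \ref{lem4.3}, and \ref{Lem4}, and the corollary is essentially a re-reading of the fiber-map geometry at the distinguished parameter $t=0$. The only point that deserves careful statement is the identification of $(u,v)$ with either $s_{u,v}\star(u,v)$ or $t_{u,v}\star(u,v)$ through second-derivative signs, which is exactly where the non-degeneracy provided by $\mathcal{P}^0_{b_1,b_2}=\emptyset$ is used.
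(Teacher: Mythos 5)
Your argument is correct and is exactly the intended derivation: the paper states this corollary without proof as an immediate consequence of Lemma \ref{Lem4}, and your identification of $0$ with $s_{u,v}$ (resp.\ $t_{u,v}$) on $\mathcal{P}^{+}_{b_{1},b_{2}}$ (resp.\ $\mathcal{P}^{-}_{b_{1},b_{2}}$) via the sign of $\Psi''_{u,v}(0)$ and the non-degeneracy $\mathcal{P}^{0}_{b_{1},b_{2}}=\emptyset$ is precisely the reading the authors intend. The conclusions then follow from parts (2) and (3) of Lemma \ref{Lem4} as you describe.
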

\begin{lemma}\label{Lem7}
We have $m^{+}_{\beta}(b_{1},b_{2})\in (-\infty,0)$ that $$m^{+}_{\beta}(b_{1},b_{2})=\inf_{\mathcal{P}_{b_{1},b_{2}}}J_{\beta}(u,v)=\inf_{\mathcal{P}^{+}_{b_{1},b_{2}}}J_{\beta}(u,v)\ \text{and}\ m^{+}_{\beta}(b_{1},b_{2})< \inf_{\overline{A_{R_{0}}}\setminus A_{R_{0}-\rho}}J_{\beta}(u,v).$$
\end{lemma}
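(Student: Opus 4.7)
My plan is to establish the three assertions — (i) $m^{+}_{\beta}(b_1,b_2)\in(-\infty,0)$, (ii) the equality of the three infima, and (iii) the strict gap from the boundary $\partial A_{R_0}$ — using the auxiliary function $h$ from Lemma~\ref{LAA} together with the fiber-map analysis of Lemma~\ref{Lem4}. Throughout, I interpret the symbol $\rho$ appearing in ``$A_{R_0-\rho}$'' as an unrelated small positive constant $\delta$, independent of the coupling parameter $\rho$ of the system.

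For assertion (i): On $A_{R_0}$ the bound \eqref{LA15} yields $J_{\beta}(u,v)\ge h\bigl((\|\nabla u\|_{2}^{2}+\|\nabla v\|_{2}^{2})^{1/2}\bigr)$, and $h$ is continuous on the compact interval $[0,R_0]$, so $m^{+}_{\beta}(b_1,b_2)>-\infty$. For strict negativity, pick any nonnegative $(u_0,v_0)\in\mathrm{T}_{b_1}\times\mathrm{T}_{b_2}$, so that $\beta\int u_0^{2}v_0\,dx>0$. Lemma~\ref{Lem4}(1)--(2) then provides $s_{u_0,v_0}\star(u_0,v_0)\in\mathcal{P}^{+}_{b_1,b_2}\subset A_{R_0}$ (the inclusion coming from Corollary~\ref{Lem6}) with $J_{\beta}(s_{u_0,v_0}\star(u_0,v_0))<0$, and therefore $m^{+}_{\beta}(b_1,b_2)<0$.

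For assertion (ii): The inclusion $\mathcal{P}^{+}_{b_1,b_2}\subset A_{R_0}$ gives at once $m^{+}_{\beta}(b_1,b_2)\le\inf_{\mathcal{P}^{+}_{b_1,b_2}}J_{\beta}$. For the reverse inequality, given $(u,v)\in A_{R_0}$ I replace it with $(|u|,|v|)$, which still lies in $A_{R_0}\cap(\mathrm{T}_{b_1}\times\mathrm{T}_{b_2})$ and, by the computation used in Lemma~\ref{Lemg}, satisfies $J_{\beta}(|u|,|v|)\le J_{\beta}(u,v)$ for $\beta>0$; moreover $\beta\int|u|^{2}|v|\,dx>0$, so Lemma~\ref{Lem4} yields $s_{|u|,|v|}\star(|u|,|v|)\in\mathcal{P}^{+}_{b_1,b_2}$ with
\begin{equation*}
J_{\beta}\bigl(s_{|u|,|v|}\star(|u|,|v|)\bigr)\le J_{\beta}(|u|,|v|)\le J_{\beta}(u,v).
\end{equation*}
Taking the infimum over $A_{R_0}$ gives $\inf_{\mathcal{P}^{+}_{b_1,b_2}}J_{\beta}\le m^{+}_{\beta}(b_1,b_2)$. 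Since $\mathcal{P}^{0}_{b_1,b_2}=\emptyset$ by Lemma~\ref{lem4.3} and $\inf_{\mathcal{P}^{-}_{b_1,b_2}}J_{\beta}\ge 0>m^{+}_{\beta}(b_1,b_2)$ by Corollary~\ref{Lem6}, the infimum over all of $\mathcal{P}_{b_1,b_2}$ coincides with that over $\mathcal{P}^{+}_{b_1,b_2}$.

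For assertion (iii): Using the continuity of $h$ together with $h(R_0)=0$, I choose $\delta>0$ small enough that $h(t)>\tfrac{1}{2}m^{+}_{\beta}(b_1,b_2)$ for all $t\in[R_0-\delta,R_0]$ (possible because $m^{+}_{\beta}(b_1,b_2)<0$). Then every $(u,v)\in\overline{A_{R_0}}\setminus A_{R_0-\delta}$ satisfies $J_{\beta}(u,v)\ge h\bigl((\|\nabla u\|_{2}^{2}+\|\nabla v\|_{2}^{2})^{1/2}\bigr)>\tfrac{1}{2}m^{+}_{\beta}(b_1,b_2)>m^{+}_{\beta}(b_1,b_2)$. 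The only delicate point in the whole argument is the projection step in (ii): the sign condition $\beta\int u^{2}v\,dx>0$ is not automatic for a generic $(u,v)\in A_{R_0}$, but the symmetrization $(u,v)\mapsto(|u|,|v|)$ sidesteps this cleanly when $\beta>0$.
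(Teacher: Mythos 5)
Your proof is correct and follows essentially the same route as the paper: coercivity and strict negativity of $m^{+}_{\beta}(b_{1},b_{2})$ via the auxiliary function $h$, identification of the three infima via the projection $s_{u,v}\star(u,v)\in \mathcal{P}^{+}_{b_{1},b_{2}}$ from Lemma \ref{Lem4} together with $\mathcal{P}^{0}_{b_{1},b_{2}}=\emptyset$ and $\inf_{\mathcal{P}^{-}_{b_{1},b_{2}}}J_{\beta}\geq 0$, and the boundary gap via continuity of $h$ near $R_{0}$. Your symmetrization step $(u,v)\mapsto(|u|,|v|)$ is a worthwhile refinement rather than a different method: the paper applies Lemma \ref{Lem4} to an arbitrary $(u,v)\in A_{R_{0}}$ without verifying its hypothesis $\beta\int_{\mathbb{R}^{3}}u^{2}v\,dx>0$, and your device supplies exactly that (up to the degenerate case $u^{2}v\equiv 0$, which is easily excluded).
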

\begin{proof}
For $(u,v)\in A_{R_{0}}$, we have
$$J_{\beta}(u,v)\geq h\Big(\big(\int_{\mathbb{R}^{3}}(|\nabla  u|^{2}+|\nabla  v|^{2})dx\big)^{\frac{1}{2}}\Big)\geq \min_{t\in [0,R_{0}]}h(t)>-\infty.$$
Therefore, $m^{+}_{\beta}(b_{1},b_{2})>-\infty.$ Moreover, there exists $(u,v)\in \mathrm{T}_{b_{1}}\times \mathrm{T}_{b_{2}}$, such that $$\Big(\int_{\mathbb{R}^{3}}(|\nabla  (t\star u)|^{2}+|\nabla  (t\star v)|^{2})dx\Big)^{\frac{1}{2}}< R_{0}$$ and $J_{\beta}(t\star (u,v))<0$ for $t\ll -1$. Hence $m^{+}_{\beta}(b_{1},b_{2})<0.$ Since $\mathcal{P}^{+}_{b_{1},b_{2}}\subset A_{R_{0}}$, we know that $m^{+}_{\beta}(b_{1},b_{2})\leq \inf_{\mathcal{P}^{+}_{b_{1},b_{2}}}J_{\beta}(u,v) .$ On the other hand, if $(u,v)\in A_{R_{0}} $, then $s_{u,v}\star (u,v) \in \mathcal{P}^{+}_{b_{1},b_{2}}\subset A_{R_{0}}$ and
\begin{align*}
J_{\beta}(u,v)(s_{u,v}\star (u,v))=\min\Big\{J_{\beta}&(t\star (u,v)):t\in \mathbb{R}\ \text{and}\ \big(\int_{\mathbb{R}^{3}}(|\nabla  (t\star u)|^{2}+|\nabla  (t\star v)|^{2})dx\big)^{\frac{1}{2}}< R_{0} \Big\}\\
\leq J_{\beta}(u,v),
\end{align*}
where $R_{0}$ is defined in Lemma \ref{LAA}, so $\inf_{\mathcal{P}^{+}_{b_{1},b_{2}}}J_{\beta}(u,v)\leq m^{+}_{\beta}(b_{1},b_{2})$. Since $J_{\beta}(u,v)>0$ on $\mathcal{P}^{-}_{b_{1},b_{2}}$, we know that $\inf_{\mathcal{P}^{+}_{b_{1},b_{2}}}J_{\beta}(u,v)=\inf_{\mathcal{P}_{b_{1},b_{2}}}J_{\beta}(u,v).$ Finally, by the continuity of $h$, there exists $\rho>0$ such that $h(t)\geq\frac{m^{+}_{\beta}(b_{1},b_{2})}{2}$ if $t\in [R_{0}-\rho,R_{0}]$.
Therefore,
$$J_{\beta}(u,v)\geq h\Big(\big(\int_{\mathbb{R}^{3}}(|\nabla  u|^{2}+|\nabla  v|^{2})dx\big)^{\frac{1}{2}}\Big)\geq\frac{m^{+}_{\beta}(b_{1},b_{2})}{2}>m^{+}_{\beta}(b_{1},b_{2})$$ for every $(u,v)\in \mathrm{T}_{b_{1}}\times \mathrm{T}_{b_{2}}$ with $R_{0}-\rho\leq\left(\int_{\mathbb{R}^{3}}(|\nabla  u|^{2}+|\nabla  v|^{2})dx\right)^{\frac{1}{2}}\leq R_{0}$. This completes the proof.
\end{proof}
\begin{lemma}\label{LeM}
Under the assumption \eqref{LA4} holds,  we have $$m^{+}(b_{1},b_{2})<\min\{m^{+}(b_{1},0),m^{+}(0,b_{2})\}.$$
\end{lemma}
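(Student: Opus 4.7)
The plan is to show that both of the comparison quantities $m^{+}(b_{1},0)$ and $m^{+}(0,b_{2})$ equal zero, so that the asserted strict inequality reduces to the already-proved fact $m^{+}_{\beta}(b_{1},b_{2})<0$ from Lemma \ref{Lem7}. Here I interpret $m^{+}(b_{1},0)$ as $\inf\{J_{\beta}(u,0):u\in \mathrm{T}_{b_{1}},\,\|\nabla u\|_{L^{2}}<R_{0}\}$, the local infimum taken on the slice $v\equiv 0$ (and symmetrically for $m^{+}(0,b_{2})$).

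First I would obtain the lower bound $m^{+}(b_{1},0)\geq 0$. On the slice $v\equiv 0$ the cubic-cubic coupling and the quadratic term $\beta u^{2}v$ both vanish, so $J_{\beta}(u,0)=\tfrac{1}{2}\|\nabla u\|_{L^{2}}^{2}-\tfrac{\mu_{1}}{4}\|u\|_{L^{4}}^{4}$. Applying the Gagliardo-Nirenberg estimate \eqref{LA11} gives $J_{\beta}(u,0)\geq h_{1}(\|\nabla u\|_{L^{2}})$ with $h_{1}(t):=\tfrac{1}{2}t^{2}-\tfrac{\mathcal{D}_{1}}{4}t^{3}$, a cubic which is strictly positive on $(0,2/\mathcal{D}_{1})$. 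The key remaining point is that $R_{0}<2/\mathcal{D}_{1}$: since $h(t)=h_{1}(t)-\tfrac{\mathcal{D}_{2}+\rho\mathcal{D}_{3}}{4}t^{3}-\tfrac{\beta\mathcal{D}_{4}}{2}t^{3/2}<h_{1}(t)$ for all $t>0$, the identity $h(R_{0})=0$ forces $h_{1}(R_{0})>0$, so $R_{0}$ lies inside the positivity interval of $h_{1}$. Thus $J_{\beta}(u,0)>0$ for every $u\in \mathrm{T}_{b_{1}}$ with $0<\|\nabla u\|_{L^{2}}<R_{0}$.

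Next I would obtain the matching upper bound by testing with the $L^{2}$-preserving spread-out family $u_{\epsilon}(x)=b_{1}\epsilon^{3/2}\phi(\epsilon x)$, where $\phi\in C_{c}^{\infty}(\mathbb{R}^{3})$ is fixed with $\|\phi\|_{L^{2}}=1$. Then $\|u_{\epsilon}\|_{L^{2}}=b_{1}$, $\|\nabla u_{\epsilon}\|_{L^{2}}^{2}=b_{1}^{2}\epsilon^{2}\|\nabla\phi\|_{L^{2}}^{2}\to 0$ (so $u_{\epsilon}$ sits in the admissible region once $\epsilon$ is small enough), and
\begin{equation*}
J_{\beta}(u_{\epsilon},0)=\tfrac{b_{1}^{2}}{2}\epsilon^{2}\|\nabla\phi\|_{L^{2}}^{2}-\tfrac{\mu_{1}b_{1}^{4}}{4}\epsilon^{3}\|\phi\|_{L^{4}}^{4}\;\longrightarrow\;0^{+}
\end{equation*}
as $\epsilon\to 0^{+}$. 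Combining the two bounds gives $m^{+}(b_{1},0)=0$, and an identical computation with the roles of $u,v$ exchanged yields $m^{+}(0,b_{2})=0$; no asymmetry appears because the term $-\tfrac{\beta}{2}\int u^{2}v$ vanishes on either slice.

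Finally, Lemma \ref{Lem7} gives $m^{+}_{\beta}(b_{1},b_{2})<0$, which combines with the previous step to produce the desired strict inequality
\begin{equation*}
m^{+}(b_{1},b_{2})<0=\min\{m^{+}(b_{1},0),\,m^{+}(0,b_{2})\}.
\end{equation*}
The only delicate point in the argument is the comparison of radii $R_{0}<2/\mathcal{D}_{1}$ (and the analogous $R_{0}<2/\mathcal{D}_{2}$ on the other slice), but this is handled by the pointwise monotone comparison $h<h_{1}$ together with the characterization of $R_{0}$ from Lemma \ref{LAA}; everything else is direct substitution and Gagliardo-Nirenberg bookkeeping.
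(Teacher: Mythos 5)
Your proof is correct under the natural reading of the (nowhere formally defined) symbols $m^{+}(b_{1},0)$ and $m^{+}(0,b_{2})$ as the infima of $J_{\beta}$ over the single-component slices of $A_{R_{0}}$, and it takes a genuinely different route from the paper. The paper instead invokes a minimizer $v^{\ast}$ of $m^{+}(0,b_{2})$ (citing \cite{BJJN}) and compares $J_{\beta}(t\star u, v^{\ast})$ with $J_{\beta}(0,v^{\ast})$ for $t\ll -1$; this requires the added contribution $\frac{e^{2t}}{2}\|\nabla u\|_{L^{2}}^{2}-\frac{e^{3t}}{4}\mu_{1}\|u\|_{L^{4}}^{4}-\frac{\rho}{2}\int|t\star u|^{2}(v^{\ast})^{2}-\frac{\beta}{2}\int|t\star u|^{2}v^{\ast}$ to be negative, which is delicate because the positive kinetic term is of order $e^{2t}$ while the negative coupling terms are of order $e^{3t}$ (or $e^{(3-2m)t}$ with $m<\frac12$) and hence subordinate as $t\to-\infty$; moreover, for the single mass-supercritical cubic equation in $\mathbb{R}^{3}$ the fiber map has no local minimum, so the existence of such a $v^{\ast}$ at the local-minimum level is itself doubtful. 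Your argument sidesteps both issues: the radius comparison $R_{0}<2/\mathcal{D}_{1}$ (from $h<h_{1}$ pointwise together with $h(R_{0})=0$) combined with Gagliardo--Nirenberg pins the one-component infima from below by $0$, the spreading family supplies the matching upper bound, and Lemma \ref{Lem7} does the rest. One caveat worth recording: if the authors actually intended $m^{+}(0,b_{2})$ to denote the (strictly positive) energy of the nontrivial bound state of the scalar equation --- as their use of the lemma inside Lemma \ref{Lem55} suggests --- then your computation evaluates a different quantity; but since your chain gives $m^{+}_{\beta}(b_{1},b_{2})<0$ while both candidate interpretations of $\min\{m^{+}(b_{1},0),m^{+}(0,b_{2})\}$ are $\geq 0$, the asserted strict inequality follows a fortiori under either reading, so the argument is robust.
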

\begin{proof}
From \cite{BJJN}, we know that $m^{+}(0,b_{2})$ can be achieved by $v^{\ast}\in \mathrm{T}_{b_{2}}$ and $v^{\ast}$ is radially symmetric and decreasing. We choose aproper test function $u\in\mathrm{T}_{b_{1}}$ such that $(t\star u,v^{\ast})\in \mathrm{T}_{b_{1}}\times \mathrm{T}_{b_{2}}$. From Lemma \ref{Lem4}, we obtain
\begin{align}\label{p4}
h(t)<h_{1}(t)=\frac{1}{2}t^{2}-\frac{1}{4}\mu_{2} C^{4}_{3,4}b_{1}t^{3}-\frac{1}{2}|\beta|\frac{2}{3}b^{\frac{3}{2}}_{2}C^{3}_{3,3}t^{\frac{3}{2}}.
\end{align}
By direct calculations, there exists $0<t^{\ast}<R_{0}$ such that $<h_{1}(t^{\ast})=0$. From \cite{Soave}, we have $$m^{+}(0,b_{2})=\inf_{v\in \mathcal{P}^{+}_{0,b_{2}}}J_{\beta}(0,v)=\inf_{v\in \mathrm{T}_{b_{2}}\cap B(t^{\ast})}J_{\beta}(0,v).$$ Therefore, from the analysis in Lemma \ref{LAA}, we have $$\|\nabla v^{\ast}\|_{L^{2}}\leq t^{\ast}<R_{0}<\widetilde{t}=\frac{2}{3C^{4}_{3,4}(\mu_{1} b_{1}+\mu_{2} b_{1}+\rho b^{\frac{1}{2}}_{1}b^{\frac{1}{2}}_{2})}. $$ Since $h(R_{0})=h(R_{1})=0$ and the monotonicity of $h(t)$, we deduce that $(t\star u,v^{\ast})\in \mathrm{T}_{b_{1}}\times \mathrm{T}_{b_{2}}\cap A_{R_{0}}$ for $t\ll -1$, therefore,
\begin{align*}
m^{+}(b_{1},b_{2})&=\inf_{(u,v)\in \mathrm{T}_{b_{1}}\times \mathrm{T}_{b_{2}}\cap A_{R_{0}}}J(u,v)\leq J(t\star u,v^{\ast})\\
&=\frac{1}{2}\int_{\mathbb{R}^{3}}|\nabla v^{\ast}|^{2}dx-\frac{1}{4}\mu_{2}\int_{\mathbb{R}^{N}}|v^{\ast}|^{4}dx-\frac{\beta}{2}\int_{\mathbb{R}^{3}}|t\star u|^{2}(v^{\ast})dx\\
&\quad +\frac{e^{2t}}{2}\int_{\mathbb{R}^{3}}|\nabla u|^{2}dx-\frac{e^{3t}}{4}\int_{\mathbb{R}^{N}}\mu_{2}u^{4}dx-\frac{1}{2}\int_{\mathbb{R}^{N}}\rho |t\star u|^{2}(v^{\ast})^{2}dx\\
&< J(0,v^{\ast})=m^{+}(0,b_{2}).
\end{align*}
Similarly, we have $$m^{+}(b_{1},b_{2})<m^{+}(b_{1},0).$$ Hence, the proof is completed.
\end{proof}
\begin{lemma}\label{Lem5}
 Let $\{(u_{n},v_{n})\}\subset H^{1}(\mathbb{R}^{3})\times H^{1}(\mathbb{R}^{3})$ be a minimizing sequence for $J_{\beta}(u,v)|_{\mathrm{T}_{b_{1}}\times \mathrm{T}_{b_{2}}}$ at level $m^{+}_{\beta}(b_{1},b_{2})$. Then $\{(u_{n},v_{n})\}$ is bounded in $H^{1}(\mathbb{R}^{3})\times H^{1}(\mathbb{R}^{3})$.
\end{lemma}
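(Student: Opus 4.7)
Since the $L^2$-constraint fixes $\|u_n\|_{L^2(\mathbb{R}^3)}=b_1$ and $\|v_n\|_{L^2(\mathbb{R}^3)}=b_2$, the claimed $H^1(\mathbb{R}^3)\times H^1(\mathbb{R}^3)$-boundedness reduces to a uniform bound on
\[\int_{\mathbb{R}^3}\bigl(|\nabla u_n|^2+|\nabla v_n|^2\bigr)\,dx.\]
My plan is to read off this bound directly from the gap estimate in Lemma~\ref{Lem7}, with essentially no computation.

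First, recall that $m^+_\beta(b_1,b_2)$ is by definition the infimum of $J_\beta$ over $A_{R_0}$, so a minimizing sequence can be chosen inside $A_{R_0}$; from the very definition of $A_{R_0}$ this already yields $\int_{\mathbb{R}^3}(|\nabla u_n|^2+|\nabla v_n|^2)\,dx<R_0^2$ for every $n$, which together with the fixed $L^2$-masses gives the $H^1\times H^1$-boundedness asserted in the lemma.

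Second, to upgrade this to a quantitative bound that will be needed in the subsequent compactness step (to ensure that the weak limit still belongs to the admissible set $A_{R_0}$), I would couple the previous step with Lemma~\ref{Lem7}, which furnishes some $\rho>0$ with
\[m^+_\beta(b_1,b_2)<\inf_{\overline{A_{R_0}}\setminus A_{R_0-\rho}}J_\beta.\]
Since $J_\beta(u_n,v_n)\to m^+_\beta(b_1,b_2)$, for all $n$ large enough we get $J_\beta(u_n,v_n)<\inf_{\overline{A_{R_0}}\setminus A_{R_0-\rho}}J_\beta$, which rules out $(u_n,v_n)\in\overline{A_{R_0}}\setminus A_{R_0-\rho}$. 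Combined with $(u_n,v_n)\in A_{R_0}$, this forces $(u_n,v_n)\in A_{R_0-\rho}$, so that $\int_{\mathbb{R}^3}(|\nabla u_n|^2+|\nabla v_n|^2)\,dx<(R_0-\rho)^2$ eventually.

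There is no genuine obstacle: the only substantive input is the energy-level separation of Lemma~\ref{Lem7}, which has already been established, and everything else is immediate from the definitions. The point worth emphasizing is not the boundedness itself but the refinement to the open set $A_{R_0-\rho}$, since this is exactly the ingredient that later prevents the limit of the minimizing sequence from escaping to the boundary $\partial A_{R_0}$.
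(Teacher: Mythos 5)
Your first step does not prove the lemma that is actually used downstream. You replace the given minimizing sequence by one ``chosen inside $A_{R_0}$'', for which the gradient bound $\int_{\mathbb{R}^3}(|\nabla u_n|^2+|\nabla v_n|^2)\,dx<R_0^2$ is immediate from the definition of $A_{R_0}$; but that only shows that \emph{some} minimizing sequence is bounded, whereas the lemma asserts boundedness of an \emph{arbitrary} one, and it is invoked (inside Lemma \ref{Lem55}) for sequences that are only known to satisfy $J_{\beta}(u_n,v_n)\to m^{+}_{\beta}(b_1,b_2)$ and $P_{b_1,b_2}(u_n,v_n)\to 0$, with no a priori membership in $A_{R_0}$. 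The gap is not cosmetic: without some extra structural hypothesis the statement is false as literally written. Since $N=3$, $J_{\beta}$ is unbounded from below on $\mathrm{T}_{b_1}\times\mathrm{T}_{b_2}$, every fiber $\Psi_{u,v}$ crosses the negative level $m^{+}_{\beta}(b_1,b_2)$ on its descending branch at a point whose gradient norm exceeds $R_1^2$, and by varying the profile (e.g.\ many widely separated bumps, which keep $\|\nabla u\|_2^2$ of order one while making $\|u\|_4^4$ small) one obtains sequences on the exact level set $\{J_{\beta}=m^{+}_{\beta}(b_1,b_2)\}$ with $\|\nabla u_n\|_2^2+\|\nabla v_n\|_2^2\to\infty$. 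So the bound cannot be ``read off the definition of $A_{R_0}$''; some additional information about the sequence must enter.

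The paper's proof supplies exactly that information: it assumes (implicitly, as in Lemma \ref{Lem55}) that $P_{b_1,b_2}(u_n,v_n)\to 0$, substitutes this into $J_{\beta}$ to obtain $J_{\beta}(u_n,v_n)=\tfrac16\int_{\mathbb{R}^3}(|\nabla u_n|^2+|\nabla v_n|^2)\,dx-\tfrac{\beta}{4}\int_{\mathbb{R}^3}u_n^2v_n\,dx+o_n(1)$, and then controls the quadratic term by the Gagliardo--Nirenberg estimate \eqref{LA13}, which gives $\big|\beta\int_{\mathbb{R}^3}u_n^2v_n\,dx\big|\le C\,T_n^{3/4}$ with $T_n=\|\nabla u_n\|_{L^2}^2+\|\nabla v_n\|_{L^2}^2$. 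Combining with $J_{\beta}(u_n,v_n)\le m^{+}_{\beta}(b_1,b_2)+1$ yields $\tfrac16T_n\le C\,T_n^{3/4}+C'$, and since $3/4<1$ this forces $T_n$ to be bounded. Your second paragraph --- the refinement to $A_{R_0-\rho}$ via the gap in Lemma \ref{Lem7} --- is a correct and useful observation once one knows the sequence lies in $\overline{A_{R_0}}$, and it is indeed the mechanism that later keeps the limit away from $\partial A_{R_0}$; but it presupposes precisely the localization that your argument has not established for a general minimizing sequence, so it cannot substitute for the Pohozaev-plus-Gagliardo--Nirenberg step.
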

\begin{proof}
Since $P_{b_{1},b_{2}}(u_{n},v_{n})\rightarrow 0 $, we have
\begin{align*}
P_{b_{1},b_{2}}(u_{n},v_{n})=\int_{\mathbb{R}^{3}}(|\nabla u_{n}|^{2}+|\nabla v_{n}|^{2})dx
-\frac{3}{4}\int_{\mathbb{R}^{3}}\left(\mu_{1}u_{n}^{4}+\mu_{2}v_{n}^{4}+2\rho u_{n}^{2}v_{n}^{2}\right)dx-\frac{3}{4}\beta\int_{\mathbb{R}^{3}}u_{n}^{2}v_{n}dx=o_{n}(1).
\end{align*}
Thus, from  \eqref{LA13}, we have
\begin{align*}
J_{\beta}(u_{n},v_{n})&=\frac{1}{6}\int_{\mathbb{R}^{3}}(|\nabla u_{n}|^{2}+|\nabla v_{n}|^{2})dx-\frac{\beta}{4}\int_{\mathbb{R}^{3}}u_{n}^{2}v_{n}dx+o_{n}(1)\\
&\geq\frac{1}{6}\int_{\mathbb{R}^{3}}(|\nabla u_{n}|^{2}+|\nabla v_{n}|^{2})dx\\
&\quad-\frac{|\beta|}{4}\Big[\big(\frac{2}{3}b^{\frac{3}{2}}_{1}+\frac{1}{3}b^{\frac{3}{2}}_{2}\big)C^{3}_{3,3}\big[\|\nabla u_{n}\|^{2}_{L^{2}(\mathbb{R}^{3})}+\|\nabla v_{n}\|^{2}_{L^{2}(\mathbb{R}^{3})}\big]^{\frac{3}{4}}\Big].
\end{align*}
Since $\{(u_{n},v_{n})\}$ is  a minimizer sequence for $J_{\beta}(u,v)|_{\mathrm{T}_{b_{1}}\times \mathrm{T}_{b_{2}}}$ at level $m^{+}_{\beta}(b_{1},b_{2})$, we have $J_{\beta}(u_{n},v_{n})\leq m^{+}_{\beta}(b_{1},b_{2})+1$ for $n$ large. Hence
\begin{align*}
&\frac{1}{6}\int_{\mathbb{R}^{3}}(|\nabla u_{n}|^{2}+|\nabla v_{n}|^{2})dx\\
&\leq \frac{|\beta|}{4}\Big[\big(\frac{2}{3}b^{\frac{3}{2}}_{1}+\frac{1}{3}b^{\frac{3}{2}}_{2}\big)C^{3}_{3,3}\big[\|\nabla u_{n}\|^{2}_{L^{2}(\mathbb{R}^{3})}+\|\nabla v_{n}\|^{2}_{L^{2}(\mathbb{R}^{3})}\big]^{\frac{3}{4}}\Big]+m^{+}_{\beta}(b_{1},b_{2})+2 ,
 \end{align*}
 so $\{(u_{n},v_{n})\}$ is bounded in $H^{1}(\mathbb{R}^{3})\times H^{1}(\mathbb{R}^{3})$.
This completes the proof.
\end{proof}

\begin{lemma}\label{Lem55}
 Let $\{(u_{n},v_{n})\}\subset  \mathrm{T}_{b_{1,r}}\times \mathrm{T}_{b_{2,r}}$ be a nonnegative minimizing sequence for $J_{\beta}(u,v)|_{\mathrm{T}_{b_{1}}\times \mathrm{T}_{b_{2}}}$ at level $m^{+}_{\beta}(b_{1},b_{2})$ with additional properties $P_{b_{1},b_{2}}(u_{n},v_{n}) \rightarrow 0$  and $u^{-}_{n},v^{-}_{n}\rightarrow 0$ a.e. in $\mathbb{R}^{3}$, then up to a subsequence $(u_{n},v_{n})\rightarrow(u,v)$  in $H^{1}(\mathbb{R}^{3})\times H^{1}(\mathbb{R}^{3})$, where $(u,v)$ is a positive solution of \eqref{23} for some $\lambda_{1},\lambda_{2}>0.$
\end{lemma}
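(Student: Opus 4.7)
The plan is to argue in three stages: extract a weak limit that solves the limiting system with bounded Lagrange multipliers; rule out mass loss and upgrade weak to strong convergence; and verify positivity of both components and of the frequencies. By Lemma \ref{Lem5} the sequence is bounded in $H^1_r(\R^3)\times H^1_r(\R^3)$, so the compact embedding $H^1_r(\R^3)\hookrightarrow L^p(\R^3)$ for $p\in(2,6)$ yields, along a subsequence, $(u_n,v_n)\rightharpoonup(u,v)$ in $H^1\times H^1$, $(u_n,v_n)\to(u,v)$ in $L^p\times L^p$ for $p\in(2,6)$, and a.e. Ekeland's variational principle lets us assume $\{(u_n,v_n)\}$ is a Palais--Smale sequence for $J_\beta|_{\mathrm{T}_{b_1}\times\mathrm{T}_{b_2}}$, so the Lagrange multiplier rule yields $(\lambda_{1,n},\lambda_{2,n})$ making the system hold up to $o(1)$ in $H^{-1}$. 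Testing against $(u_n,0)$ and $(0,v_n)$ and invoking the Gagliardo--Nirenberg estimates \eqref{LA11}--\eqref{LA13} shows that $(\lambda_{1,n},\lambda_{2,n})$ is bounded; passing to the limit along a further subsequence gives $(\lambda_{1,n},\lambda_{2,n})\to(\lambda_1,\lambda_2)$ with $(u,v)$ solving \eqref{23}.

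For strong convergence, we first exclude $(u,v)\equiv(0,0)$: otherwise strong $L^p$ convergence for $p>2$ would force all nonlinear contributions to $J_\beta$ to vanish, yielding $\liminf J_\beta(u_n,v_n)\geq 0$, contradicting $m^+_\beta(b_1,b_2)<0$ from Lemma \ref{Lem7}. Setting $w_n=u_n-u$ and $\sigma_n=v_n-v$, the Brezis--Lieb lemma together with strong $L^3$-convergence of $u_n^2v_n$ gives
\begin{align*}
\|u_n\|_2^2=\|u\|_2^2+\|w_n\|_2^2+o(1),\quad \|v_n\|_2^2=\|v\|_2^2+\|\sigma_n\|_2^2+o(1),
\end{align*}
and $J_\beta(u_n,v_n)=J_\beta(u,v)+\tfrac{1}{2}\|\nabla w_n\|_2^2+\tfrac{1}{2}\|\nabla\sigma_n\|_2^2+o(1)$. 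Writing $\tilde b_1=\|u\|_2\leq b_1$ and $\tilde b_2=\|v\|_2\leq b_2$, a strict inequality in either coordinate would, by continuity of $m^+_\beta$ in the masses (analogue of Lemma \ref{Lema}) and the strict gap in Lemma \ref{LeM}, give the contradiction $m^+_\beta(b_1,b_2)\geq m^+_\beta(\tilde b_1,\tilde b_2)>m^+_\beta(b_1,b_2)$. Hence $\|u\|_2=b_1$, $\|v\|_2=b_2$, producing strong $L^2$ convergence; the energy identity then forces $\|\nabla w_n\|_2,\|\nabla\sigma_n\|_2\to 0$ and yields strong $H^1$ convergence.

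For positivity, $u_n,v_n\geq 0$ forces $u,v\geq 0$. If $\lambda_1\leq 0$ then $-\Delta u\geq 0$ on $\R^3$ with $u\in H^1_r$, so a Liouville-type argument (as invoked in the proof of Theorem \ref{Th3}, cf. \cite{NI14}) gives $u\equiv 0$; then $v$ solves a scalar mass-constrained problem, and combining with $m^+_\beta(b_1,b_2)<m^+_\beta(0,b_2)$ from Lemma \ref{LeM} gives a contradiction. The case $\lambda_2\leq 0$ is symmetric. The strong maximum principle applied to each equation of \eqref{23} then upgrades $u,v\geq 0$ to $u,v>0$, and strong $L^3$-convergence preserves $\beta\int u^2v\,dx>0$ in the limit, so $(u,v)\in\mathcal{P}^+_{b_1,b_2}$.

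\textbf{Main obstacle.} The decisive step is the second stage: ruling out mass loss in the weak limit. Radiality and Strauss compactness handle vanishing at infinity, but we still must prevent a positive fraction of the $L^2$-mass from escaping weakly. Here the combination of the Brezis--Lieb energy splitting with the strict inequality of Lemma \ref{LeM} is essential, and a case analysis in $(\tilde b_1,\tilde b_2)$ (one or both masses deficient) is needed, as in the three cases treated in the proof of Theorem \ref{Th4}. A secondary subtlety is preserving the open constraint $\beta\int u^2v\,dx>0$ in the limit so that $(u,v)$ lies in $\mathcal{P}^+_{b_1,b_2}$; this sign is stable under strong $L^3$-convergence, so it is automatic once Stage 2 is complete.
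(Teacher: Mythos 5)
Your overall architecture (radial compactness, bounded Lagrange multipliers, Brezis--Lieb splitting, Liouville argument for the sign of the $\lambda_i$) matches the paper, but your decisive second stage --- ruling out mass loss --- rests on an unproven claim. You write that a mass deficit $\tilde b_i<b_i$ would contradict ``continuity of $m^{+}_{\beta}$ in the masses (analogue of Lemma \ref{Lema}) and the strict gap in Lemma \ref{LeM},'' yielding $m^{+}_{\beta}(b_{1},b_{2})\geq m^{+}_{\beta}(\tilde b_{1},\tilde b_{2})>m^{+}_{\beta}(b_{1},b_{2})$. Neither inequality is available. Lemma \ref{Lema} is proved only for the one--dimensional \emph{global} minimization, where $J_{\beta}$ is coercive; here $m^{+}_{\beta}$ is a \emph{local} infimum over the set $A_{R_{0}}$ whose radius $R_{0}=R_{0}(b_{1},b_{2},\rho,\beta)$ itself depends on the masses (and is monotone increasing in them, cf.\ Lemma \ref{LK}). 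In particular the weak limit $(u,v)\in A_{R_{0}(b_{1},b_{2})}$ need not lie in $A_{R_{0}(\tilde b_{1},\tilde b_{2})}$, so even $J_{\beta}(u,v)\geq m^{+}_{\beta}(\tilde b_{1},\tilde b_{2})$ fails a priori; and strict monotonicity of $m^{+}_{\beta}$ in intermediate masses is nowhere established (Lemma \ref{LeM} only compares with the degenerate masses $(b_{1},0)$ and $(0,b_{2})$). Proving a subadditivity statement for this constrained local infimum would be a substantial new task, not a routine analogue.

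The paper closes this gap by a different and much cheaper mechanism, for which you already have every ingredient: once you know $(u,v)$ solves \eqref{23} with $\lambda_{1},\lambda_{2}>0$, combine the limit of the multiplier relation $\lambda_{1,n}b_{1}^{2}+\lambda_{2,n}b_{2}^{2}=\frac{1}{4}\int(\mu_{1}u_{n}^{4}+\mu_{2}v_{n}^{4}+2\rho u_{n}^{2}v_{n}^{2})+\frac{3}{4}\beta\int u_{n}^{2}v_{n}$ (obtained from testing with $(u_{n},0)$, $(0,v_{n})$ and using $P_{b_{1},b_{2}}(u_{n},v_{n})\to0$) with the Nehari--Pohozaev identity of the limit function, which gives $\lambda_{1}\|u\|_{2}^{2}+\lambda_{2}\|v\|_{2}^{2}$ equal to the same right-hand side. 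Subtracting yields $\lambda_{1}(b_{1}^{2}-\|u\|_{2}^{2})+\lambda_{2}(b_{2}^{2}-\|v\|_{2}^{2})=0$, and since both multipliers are positive and both parentheses nonnegative, the masses are conserved. Note this forces you to establish $\lambda_{1},\lambda_{2}>0$ \emph{before} the mass step, not after as in your ordering; your Liouville argument for that (using Lemma \ref{LeM} to exclude the semitrivial limit) is exactly the paper's and is fine. The remainder of your proof --- Brezis--Lieb energy splitting to upgrade to strong $H^{1}$ convergence, and stability of $\beta\int u^{2}v>0$ under strong $L^{3}$ convergence --- is correct as written.
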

\begin{proof}
 From Lemma \ref{Lem5}, we known that $\{(u_{n},v_{n})\}$ is bounded in $H_{r}^{1}(\mathbb{R}^{3})\times H_{r}^{1}(\mathbb{R}^{3})$, thus, by the Sobolev embedding theorem, we have $H_{r}^{1}(\R^3)\hookrightarrow\hookrightarrow L^{p}_r(\mathbb{R}^{3})$ for $2<p<6$, thus there exists a $(u,v)\in H_{r}^{1}(\mathbb{R}^{3})\times H_{r}^{1}(\mathbb{R}^{3}) $ such that $(u_{n},v_{n})\rightharpoonup (u,v)$ in $H_{r}^{1}(\mathbb{R}^{3})\times H_{r}^{1}(\mathbb{R}^{3})$, $(u_{n},v_{n})\rightarrow (u,v)$ in $L^{p}_r(\mathbb{R}^{3})\times L^{p}_r(\mathbb{R}^{3})\ \ \text{for }\  2<p<6$ and $(u_{n},v_{n})\rightarrow (u,v)$ a.e in $\mathbb{R}^{3}$. Hence $u,v\geq0$ are radial functions. Since $J'\mid_{\mathrm{T}_{b_{1}}\times \mathrm{T}_{b_{2}}}(u_{n},v_{n})\rightarrow 0$, by the Lagrange multipliers rule, we know that there exists a sequence
$(\lambda_{1,n},\lambda_{2,n})\in \mathbb{R}^{2}$ such that
\begin{equation}\label{LAg1}
\begin{aligned}
&\int_{\mathbb{R}^{3}}(\nabla u_{n}\nabla \varphi+\nabla v_{n}\nabla \psi) dx +\int_{\mathbb{R}^{3}}(\lambda _{1,n}u_{n}\varphi+\lambda_{2,n}v_{n}\psi)dx\\
&-\int_{\mathbb{R}^{3}}(\mu_{1}u_{n}^{3}\varphi+\mu_{2}v_{n}^{3}\psi+\rho v_{n}^{2}u_{n}\varphi+\rho u_{n}^{2}v_{n}\psi)dx\\
 &-\beta\int_{\mathbb{R}^{3}} u_{n}\varphi v_{n}-\frac{\beta}{2}\int_{\mathbb{R}^{3}} u_{n}^{2}\psi=o(1)\|(\psi,\varphi)\|_{H^{1}(\mathbb{R}^{3})\times H^{1}(\mathbb{R}^{3})} \ \  \text{in} \ \ \mathbb{R}^{3},
\end{aligned}
\end{equation}
for every $(\varphi,\psi)\in H^{1}(\mathbb{R}^{3})\times H^{1}(\mathbb{R}^{3})$. We claim both $\lambda_{1,n}$ and $\lambda_{2,n}$ are bounded sequence, and at least one of them is converging, up to a subsequence, to a strictly negative value. Indeed, we can using $(u_{n},0) $ and $(0,v_{n}) $ as text function in \eqref{LAg1}, we have $$\int_{\mathbb{R}^{3}}\lambda _{1,n}u^{2}_{n}dx=-\int_{\mathbb{R}^{3}}|\nabla u_{n}|^{2} dx +
\int_{\mathbb{R}^{3}}(\mu_{1}u_{n}^{4}+\rho v_{n}^{2}u^{2}_{n})dx +\beta\int_{\mathbb{R}^{3}} u^{2}_{n} v_{n}+o(1)\|\varphi\|_{H^{1}(\mathbb{R}^{3})} ,$$
$$ \int_{\mathbb{R}^{3}}\lambda _{2,n}v^{2}_{n}dx=-\int_{\mathbb{R}^{3}}|\nabla v_{n}|^{2} dx
+\int_{\mathbb{R}^{3}}(\mu_{2}v_{n}^{4}+\rho v_{n}^{2}u^{2}_{n})dx +\frac{\beta}{2}\int_{\mathbb{R}^{3}} u^{2}_{n} v_{n}+o(1)\|\psi\|_{H^{1}(\mathbb{R}^{3})} ,$$
so
\begin{equation}\label{LA199}
\begin{aligned}
\int_{\mathbb{R}^{3}}(\lambda _{1,n}u^{2}_{n}+\lambda _{2,n}v^{2}_{n})dx &=-\int_{\mathbb{R}^{3}}(|\nabla u_{n}|^{2} +|\nabla v_{n}|^{2})dx+\int_{\mathbb{R}^{3}}(\mu_{1}u_{n}^{4}+\mu_{2}v_{n}^{4}+2\rho v_{n}^{2}u^{2}_{n})dx\\
&\quad+\frac{3}{2}\beta\int_{\mathbb{R}^{3}}u_{n}^{2}v_{n}dx.
\end{aligned}
\end{equation}
By   \eqref{LA11}-\eqref{LA13} and  the boundedness of $\{(u_{n},v_{n})\}$, we can deduce that $(\lambda_{1,n},\lambda_{2,n})$ is bounded, hence up to a subsequence $(\lambda_{1,n},\lambda_{2,n})\rightarrow(\lambda_{1},\lambda_{2})\in \mathbb{R}^{2}$, passing to limits in \eqref{LAg1}, we can deduce that $(u,v)$ is a nonnegative solutions of  \eqref{23} .  Therefore
 \begin{align*}
\int_{\mathbb{R}^{3}}(|\nabla u|^{2}+|\nabla v|^{2})dx+\int_{\mathbb{R}^{3}}(\lambda_{1} u^{2}+\lambda_{2} v^{2})dx
=\int_{\mathbb{R}^{3}}\left(\mu_{1}u^{4}+\mu_{2}v^{4}+2\rho u^{2}v^{2}\right)dx+\frac{3}{2}\beta\int_{\mathbb{R}^{3}}u^{2}vdx.
\end{align*}
 From \eqref{LA11},  we can get
\begin{align*}
\frac{1}{2}\int_{\mathbb{R}^{3}}(|\nabla u|^{2}+|\nabla v|^{2})dx+\frac{ 3}{2}\int_{\mathbb{R}^{3}}(\lambda_{1} u^{2}+\lambda_{2} v^{2})dx
=\frac{3}{4}\int_{\mathbb{R}^{3}}\left(\mu_{1}u^{4}+\mu_{2}v^{4}+2\rho u^{2}v^{2}\right)dx+\frac{3}{2}\beta\int_{\mathbb{R}^{3}}u^{2}vdx.
\end{align*}
Thus we obtain
\begin{align}\label{LA188}
\int_{\mathbb{R}^{3}}(\lambda_{1} u^{2}+\lambda_{2} v^{2})dx
=\frac{1}{4}\int_{\mathbb{R}^{3}}\left(\mu_{1}u^{4}+\mu_{2}v^{4}+2\rho u^{2}v^{2}\right)dx+\frac{3}{4}\beta\int_{\mathbb{R}^{3}}u^{2}vdx.
\end{align}
From $P_{b_{1},b_{2}}(u_{n},v_{n})\rightarrow 0 $, we have
\begin{equation}\label{LA200}
\begin{aligned}
\int_{\mathbb{R}^{3}}(|\nabla u_{n}|^{2}+|\nabla v_{n}|^{2})dx
-\frac{3}{4}\int_{\mathbb{R}^{3}}\left(\mu_{1}u_{n}^{4}+\mu_{2}v_{n}^{4}+2\rho u_{n}^{2}v_{n}^{2}\right)dx
-\frac{3}{4}\beta\int_{\mathbb{R}^{3}}u_{n}^{2}v_{n}dx=o_{n}(1).
\end{aligned}
\end{equation}
Together \eqref{LA199} with \eqref{LA200}, we can get
\begin{align}\label{LAg2}
\lambda_{1,n} b^{2}_{1}+\lambda_{2,n} b^{2}_{2}
=\frac{1}{4}\int_{\mathbb{R}^{3}}\left(\mu_{1}u_{n}^{4}+\mu_{2}v_{n}^{4}+2\rho u_{n}^{2}v_{n}^{2}\right)dx+\frac{3}{4}\beta\int_{\mathbb{R}^{3}}u_{n}^{2}v_{n}dx.
\end{align}
When $\beta>0$, it is easy to see that  at least one sequence of $(\lambda_{i,n})$ is positive and bounded away from 0.
Let $n\rightarrow+\infty$ in \eqref{LAg2}, we have
\begin{align}\label{LA222}
\lambda_{1} b^{2}_{1}+\lambda_{2} b^{2}_{2}
=\frac{1}{4}\int_{\mathbb{R}^{3}}\left(\mu_{1}u^{4}+\mu_{2}v^{4}+2\rho u^{2}v^{2}\right)dx+\frac{3}{4}\beta\int_{\mathbb{R}^{3}}u^{2}vdx.
\end{align}
We claim that if $\lambda_{1}>0$(resp.$\lambda_{2}>0)$, then $\lambda_{2}>0$(resp.$\lambda_{1}>0$). Indeed, we know that at least one sequence of $(\lambda_{i})$ is positive and bounded away from 0. If $\lambda_{2}>0$, now we argue by contradiction and assume that $\lambda_{1}\leq0$, then $$-\Delta u=-\lambda_{1}u+\beta uv+\mu_{1}u^{3}+\rho v^{2}u\geq0.$$
Using a Liouville type theorem[\cite{NI14}, Lemma A.2], we can deduce that $u=0$. So, $v$ satisfies that
\begin{equation*}
\begin{cases}

-\Delta v+\lambda_{2}v= \mu_{2}v^{3},& \text{in} \ \ \mathbb{R}^{3},\\
v>0,&\text{in} \ \ \mathbb{R}^{3},\\
\int_{\mathbb{R}^{3}}v^{2}dx=b^{2}_{2},& \text{in} \ \ \mathbb{R}^{3}.\\
\end{cases}
\end{equation*}
Therefore
\begin{align*}
m^{+}_{\beta}(b_{1},b_{2})=\lim_{n\rightarrow+\infty}J_{\beta}(u_{n},v_{n})&=\lim_{n\rightarrow+\infty}\left[\frac{1}{8}\int_{\mathbb{R}^{3}}(\mu_{1}u_{n}^{4}+\mu_{2}v_{n}^{4}+2\rho u_{n}^{2}v_{n}^{2})dx-\frac{\beta}{8}\int_{\mathbb{R}^{3}}u_{n}^{2}v_{n}dx\right]\\
&=\frac{\mu_{1}}{8}\int_{\mathbb{R}^{3}}v^{4}dx=m^{+}_{\beta}(0,b_{2}),
\end{align*}
in contradiction with Lemma \ref{LeM}. Thus,   $\lambda_{1}>0,\ \lambda_{2}>0.$

If $\lambda_{1}>0$, now we argue by contradiction and assume that $\lambda_{2}\leq0$, then $$-\Delta v=-\lambda_{2}v \frac{\beta}{2}u^{2}+\mu_{2}v^{3}+\rho u^{2}v\geq0.$$
Using a Liouville type theorem[\cite{NI14}, Lemma A.2], we can deduce that $v=0$. So, by the structure of system \eqref{23}, we get $u=0$,  which is impossible.

It is easy to see that
\begin{align*}
m^{+}_{\beta}(b_{1},b_{2})=\lim_{n\rightarrow +\infty}J_{\beta}(u_{n},v_{n})=\lim_{n\rightarrow +\infty}\left[\frac{1}{6}\int_{\mathbb{R}^{3}}(|\nabla u_{n}|^{2}+|\nabla v_{n}|^{2})dx-\frac{\beta}{4}\int_{\mathbb{R}^{3}}u_{n}^{2}v_{n}dx\right].
\end{align*}
{\bf Case 1} If $u=0,v=0$, by compact Sobolev embedding,
\begin{align*}
m^{+}_{\beta}(b_{1},b_{2})=\lim_{n\rightarrow +\infty}J_{\beta}(u_{n},v_{n})\geq\lim_{n\rightarrow +\infty}\frac{1}{6}\int_{\mathbb{R}^{3}}(|\nabla u_{n}|^{2}+|\nabla v_{n}|^{2})dx\geq0.
\end{align*}
However, from Lemma \ref{Lem4}, we know that  $m^{+}_{\beta}(b_{1},b_{2})<0$, which is  a contraction.

{\bf Case 2} If $u\neq0,v=0$, indeed, if $v=0$, by the structure of system \eqref{23}, we get $u=0$, so Case 2 doesn't happen.

{\bf Case 3} If $u=0,\ v\neq0$, we have
\begin{align*}
m^{+}_{\beta}(b_{1},b_{2})=\lim_{n\rightarrow +\infty}J_{\beta}(u_{n},v_{n})\geq\lim_{n\rightarrow +\infty}\frac{1}{6}\int_{\mathbb{R}^{3}}(|\nabla u_{n}|^{2}+|\nabla v_{n}|^{2})dx\geq0,
\end{align*}
contradicts with $m^{+}_{\beta}(b_{1},b_{2})<0$.

{\bf Case 4}  if $u\neq0,v\neq0$, let $$\widetilde{u}_{n}=u_{n}-u,\ \widetilde{v}_{n}=v_{n}-v,$$
from  \cite[Lemma 2.4]{GL18}, we get
\begin{equation}\label{bag3}
\int_{\mathbb{R}^{3}}|\widetilde{u}_{n}|^{2}|\widetilde{v}_{n}|^{2}dx=\int_{\mathbb{R}^{3}}|u_{n}|^{2}|v_{n}|^{2}dx-\int_{\mathbb{R}^{3}}|u|^{2}|v|^{2}dx+o(1).
\end{equation}
By Brezis Lieb Lemma in \cite{WM}, we can obtain that
\begin{equation}\label{bag}
\int_{\mathbb{R}^{3}}|\widetilde{u}_{n}|^{2}\widetilde{v}_{n}dx=\int_{\mathbb{R}^{3}}|u_{n}|^{2}v_{n}dx-\int_{\mathbb{R}^{3}}|u|^{2}vdx+o(1).
\end{equation}
So, from \eqref{bag3} and \eqref{bag}, we have
\begin{align*}
0&=P_{b_{1},b_{2}}(u_{n},v_{n})+o(1) =P_{b_{1},b_{2}}(\widetilde{u}_{n},\widetilde{v}_{n})+P_{b_{1},b_{2}}(u,v)+o(1)\\
&=\int_{\mathbb{R}^{3}}(|\nabla \widetilde{u}_{n}|^{2}+|\nabla \widetilde{v}_{n}|^{2})dx
-\frac{3}{4}\int_{\mathbb{R}^{3}}\left(\mu_{1}\widetilde{u}_{n}^{4}+\mu_{2}\widetilde{v}_{n}^{4}+2\rho \widetilde{u}_{n}^{2}\widetilde{v}_{n}^{2}\right)dx-\frac{3}{4}\beta\int_{\mathbb{R}^{3}}\widetilde{u}_{n}^{2}\widetilde{v}_{n}dx+o(1).
\end{align*}
It is easy to see that $\int_{\mathbb{R}^{3}}u^{2}\leq b^{2}_{1},\ \int_{\mathbb{R}^{3}}v^{2}\leq b^{2}_{2}.$  From \eqref{LA188} and \eqref{LA222}, we have $$\lambda_{1}\left(b^{2}_{1}-\int_{\mathbb{R}^{3}} u^{2}dx\right)+\lambda_{2}\left(b^{2}_{2}-\int_{\mathbb{R}^{3}} v^{2}dx\right)=0,$$
so
\begin{align}\label{LA31}
\int_{\mathbb{R}^{3}}u^{2}= b^{2}_{1},\ \int_{\mathbb{R}^{3}}v^{2}= b^{2}_{2}.
\end{align}
Therefore, we know that
\begin{align*}
m^{+}_{\beta}(b_{1},b_{2})&=\lim_{n\rightarrow +\infty}J_{\beta}(u_{n},v_{n})=\lim_{n\rightarrow +\infty}J_{\beta}(\widetilde{u}_{n},\widetilde{v}_{n})+J_{\beta}(u,v)\\
&=\lim_{n\rightarrow +\infty}\left[\frac{1}{6}\int_{\mathbb{R}^{3}}(|\nabla \widetilde{u}_{n}|^{2}+|\nabla \widetilde{v}_{n}|^{2})dx-\frac{\beta}{4}\int_{\mathbb{R}^{3}}\widetilde{u}_{n}^{2}\widetilde{v}_{n}dx\right]+J_{\beta}(u,v)\\
& \geq\lim_{n\rightarrow +\infty}\bigg[\frac{1}{6}\int_{\mathbb{R}^{3}}(|\nabla \widetilde{u}_{n}|^{2}+|\nabla \widetilde{v}_{n}|^{2})dx\\
&\quad-\frac{|\beta|}{4}\left(\int_{\mathbb{R}^{3}}|\widetilde{u}_{n}|^{3}dx\right)^{\frac{2}{3}}\left(\int_{\mathbb{R}^{3}}|\widetilde{v}_{n}|^{3}dx\right)^{\frac{1}{3}}\bigg]+J_{\beta}(u,v)\\
&\geq\lim_{n\rightarrow +\infty}\frac{1}{6}\int_{\mathbb{R}^{3}}(|\nabla \widetilde{u}_{n}|^{2}+|\nabla \widetilde{v}_{n}|^{2})dx+J_{\beta}(u,v)\geq m^{+}_{\beta}(b_{1},b_{2}).
\end{align*}
Thus, $J_{\beta}(u,v)=m^{+}_{\beta}(b_{1},b_{2})\ \text{and}\ (u_{n},v_{n})\rightarrow (u,v)$ in $H^{1}(\mathbb{R}^{3})\times H^{1}(\mathbb{R}^{3}) .$
\end{proof}
\subsection{Existence of a second critical point of mountain pass type}
Next, we prove the existence of second critical point of mountain pass type for $J_{\beta}(u,v)\mid_{\mathrm{T}_{b_{1}}\times \mathrm{T}_{b_{2}}}$.
\begin{lemma}\label{Lem8}
Suppose that $J_{\beta}(u,v)<m^{+}_{\beta}(b_{1},b_{2})$. Then the value $t_{u,v}$ defined by Lemma \ref{Lem4} is negative.
\end{lemma}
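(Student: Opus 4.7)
The plan is to argue by contradiction. Assume that $J_\beta(u,v) < m^+_\beta(b_1,b_2)$ and $t_{u,v} \geq 0$ hold simultaneously. The first thing I would note is that $(u,v)$ must lie outside $A_{R_0}$: indeed, if $(u,v) \in A_{R_0}$, then the very definition of $m^+_\beta(b_1,b_2)$ as an infimum over $A_{R_0}$ would give $J_\beta(u,v) \geq m^+_\beta(b_1,b_2)$, contradicting the hypothesis. Consequently $\|\nabla u\|_{L^2(\mathbb{R}^3)}^2 + \|\nabla v\|_{L^2(\mathbb{R}^3)}^2 \geq R_0^2$.

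Next I would locate $s_{u,v}$ relative to $0$. By Corollary \ref{Lem6} we have $\mathcal{P}^+_{b_1,b_2} \subset A_{R_0}$, and by Lemma \ref{Lem4}(1) the projection $s_{u,v}\star(u,v)$ lies in $\mathcal{P}^+_{b_1,b_2}$. Using the scaling identity $\|\nabla(t\star w)\|_{L^2}^2 = e^{2t}\|\nabla w\|_{L^2}^2$, this inclusion reads
$$e^{2s_{u,v}}\bigl(\|\nabla u\|_{L^2(\mathbb{R}^3)}^2 + \|\nabla v\|_{L^2(\mathbb{R}^3)}^2\bigr) < R_0^2,$$
so combined with the gradient lower bound from the previous step we obtain $e^{2s_{u,v}} < 1$, that is, $s_{u,v} < 0$.

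Finally, since $s_{u,v} < 0 \leq t_{u,v}$ and $\Psi_{u,v}$ is strictly increasing on $(s_{u,v},t_{u,v})$ by Lemma \ref{Lem4}, one has $\Psi_{u,v}(0) \geq \Psi_{u,v}(s_{u,v})$; equivalently, $J_\beta(u,v) \geq J_\beta(s_{u,v}\star(u,v)) \geq m^+_\beta(b_1,b_2)$ where the last inequality uses Lemma \ref{Lem7}. This contradicts the hypothesis and proves $t_{u,v} < 0$. I do not foresee a genuine obstacle here: the argument is short and hinges entirely on the interplay between the scaling relation, the inclusion $\mathcal{P}^+_{b_1,b_2} \subset A_{R_0}$, and the monotonicity of $\Psi_{u,v}$ between its two critical points.
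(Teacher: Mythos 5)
Your proof is correct, but it takes a different route to the final contradiction than the paper does. Both arguments end with the same chain $m^{+}_{\beta}(b_{1},b_{2})>J_{\beta}(u,v)=\Psi_{u,v}(0)\geq \Psi_{u,v}(s_{u,v})\geq m^{+}_{\beta}(b_{1},b_{2})$, but they justify the middle inequality $\Psi_{u,v}(0)\geq\Psi_{u,v}(s_{u,v})$ differently. The paper works with the zeros $c_{u,v}<d_{u,v}$ of $\Psi_{u,v}$: it first uses $m^{+}_{\beta}(b_{1},b_{2})<0$ (Lemma \ref{Lem7}) to exclude $0\in(c_{u,v},d_{u,v})$ (where $\Psi_{u,v}>0$), concludes $c_{u,v}\geq 0$, and then invokes Lemma \ref{Lem4}(2), which identifies $\Psi_{u,v}(s_{u,v})$ as the minimum of $\Psi_{u,v}$ over $(-\infty,c_{u,v}]$. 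You instead locate $s_{u,v}$ and $t_{u,v}$ relative to $0$ directly: the observation that $(u,v)\notin A_{R_{0}}$ (forced by the definition of $m^{+}_{\beta}$ as an infimum over $A_{R_{0}}$), combined with the strict inclusion $\mathcal{P}^{+}_{b_{1},b_{2}}\subset A_{R_{0}}$ from Corollary \ref{Lem6} and the scaling identity $\|\nabla(t\star w)\|^{2}_{L^{2}}=e^{2t}\|\nabla w\|^{2}_{L^{2}}$, yields $s_{u,v}<0$, and then monotonicity of $\Psi_{u,v}$ between its only two critical points finishes the job. Your version has the mild advantage of never using the sign of $m^{+}_{\beta}(b_{1},b_{2})$; its only unstated step is that $\Psi_{u,v}$ is strictly increasing on $(s_{u,v},t_{u,v})$, which is not listed verbatim among the conclusions of Lemma \ref{Lem4} but follows immediately from the fact that $s_{u,v}$ and $t_{u,v}$ are the only critical points, with $s_{u,v}$ a local minimum at negative level and $t_{u,v}$ the global maximum at positive level. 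Both proofs are equally rigorous; yours is arguably a bit more self-contained.
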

\begin{proof}
From Lemma \ref{Lem4}, we know that the function $\Psi_{u,v}(t)$ has exactly two critical points $s_{u,v}<t_{u,v}\in \mathbb{R}$ and two zeros $c_{u,v}<d_{u,v}\in \mathbb{R}$ with $s_{u,v}<c_{u,v}<t_{u,v}<d_{u,v}$. If $d_{u,v}\leq0$, then $t_{u,v}<0.$ Assume by contradiction that $d_{u,v}>0$, if $0\in(c_{u,v}, t_{u,v})$, then $J_{\beta}(u,v)=\Psi_{u,v}(0)>0$ contract with $J_{\beta}(u,v)<m^{+}_{\beta}(b_{1},b_{2})<0$. Therefore, $c_{u,v}>0.$ By Lemma \ref{Lem4}-(2), we know that
\begin{align*}
&m^{+}_{\beta}(b_{1},b_{2})>J_{\beta}(u,v)=\Psi_{u,v}(0)\geq \inf_{t\in (-\infty,c_{u,v}]}\Psi_{u,v}(t)\geq J_{\beta}(u,v)(s_{u,v}\star (u,v))\\
&=\min\left\{J_{\beta}(t\star (u,v):t\in \mathbb{R}\ \text{and}\ (\int_{\mathbb{R}^{3}}(|\nabla (t\star u)|^{2}+|\nabla (t\star v)|^{2})dx)^{\frac{1}{2}}< R_{0} \right\}\\
&=J_{\beta}(s_{u,v}\star (u,v)\geq m^{+}_{\beta}(b_{1},b_{2}),
\end{align*}
which is a contradiction.
\end{proof}

\begin{lemma}\label{Lem9}
We have $$m^{-}_{\beta}(b_{1},b_{2}):=\inf_{(u,v)\in \mathcal{P}^{-}_{b_{1},b_{2}} }J_{\beta}(u,v)>0.$$
\end{lemma}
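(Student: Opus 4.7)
\textbf{Proof plan for Lemma \ref{Lem9}.}
The plan is to exploit two facts already proved: the min-max characterisation of $\mathcal{P}^-_{b_1,b_2}$ from Lemma \ref{Lem4} and the coercive lower bound $J_\beta(u,v)\ge h\bigl((\|\nabla u\|_{L^2}^{2}+\|\nabla v\|_{L^2}^{2})^{1/2}\bigr)$ from \eqref{LA15}, where the one-variable function $h$ has a strictly positive global maximum under the smallness assumption \eqref{LA4} (Lemma \ref{LAA}). The key observation is that for every $(u,v)\in\mathcal{P}^-_{b_1,b_2}$ one has $t_{u,v}=0$, so $t=0$ is the global maximum of the fibre map $\Psi_{u,v}$, and therefore $J_\beta(u,v)=\max_{s\in\mathbb{R}}\Psi_{u,v}(s)$.

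First I would verify the min-max characterisation. By definition, $(u,v)\in\mathcal{P}^-_{b_1,b_2}$ satisfies $\Psi'_{u,v}(0)=0$ and $\Psi''_{u,v}(0)<0$. Since Lemma \ref{Lem4} shows that $\Psi_{u,v}$ has exactly two critical points $s_{u,v}<t_{u,v}$ with $\Psi''_{u,v}(s_{u,v})>0$ and $\Psi''_{u,v}(t_{u,v})<0$, the condition $\Psi''_{u,v}(0)<0$ forces $0=t_{u,v}$, and Lemma \ref{Lem4}(3) then gives
\[
J_\beta(u,v)\;=\;\Psi_{u,v}(0)\;=\;\max_{s\in\mathbb{R}}\Psi_{u,v}(s).
\]

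Next I would optimise the coercive bound over $s$. Since $\|\nabla(s\star u)\|_{L^2}^2+\|\nabla(s\star v)\|_{L^2}^2=e^{2s}(\|\nabla u\|_{L^2}^2+\|\nabla v\|_{L^2}^2)$ in dimension $N=3$, applying \eqref{LA15} to $(s\star u,s\star v)$ yields
\[
\Psi_{u,v}(s)\;=\;J_\beta(s\star u,s\star v)\;\ge\; h\bigl(e^{s}(\|\nabla u\|_{L^2}^{2}+\|\nabla v\|_{L^2}^{2})^{1/2}\bigr).
\]
As $s$ ranges over $\mathbb{R}$, the argument $e^{s}(\|\nabla u\|_{L^2}^{2}+\|\nabla v\|_{L^2}^{2})^{1/2}$ ranges over the whole of $(0,+\infty)$, so taking the supremum over $s$ gives
\[
J_\beta(u,v)\;=\;\max_{s\in\mathbb{R}}\Psi_{u,v}(s)\;\ge\;\sup_{r>0}h(r)\;=\;h(\widetilde{t}),
\]
where $\widetilde{t}$ is the global maximiser produced in Lemma \ref{LAA}.

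Finally, under \eqref{LA4} Lemma \ref{LAA} guarantees $h(\widetilde{t})>0$, and this bound is uniform in $(u,v)\in\mathcal{P}^-_{b_1,b_2}$, so taking the infimum gives $m^{-}_\beta(b_1,b_2)\ge h(\widetilde{t})>0$. I do not anticipate a serious obstacle, since the two ingredients — the identification $t_{u,v}=0$ on $\mathcal{P}^-$ and the existence of a positive maximum of $h$ — have already been established. The only delicate point is the scaling calculation $\|\nabla(s\star u)\|_{L^2}^2=e^{2s}\|\nabla u\|_{L^2}^2$, which must be consistent with the $L^2$-preserving dilation $s\star u=e^{3s/2}u(e^{s}\cdot)$ used throughout Section \ref{sec4}; this is a routine verification.
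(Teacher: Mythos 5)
Your argument is correct and follows essentially the same route as the paper: both identify $t=0$ as the global maximum of the fibre map $\Psi_{u,v}$ on $\mathcal{P}^{-}_{b_{1},b_{2}}$ via Lemma \ref{Lem4}, and then bound $J_{\beta}(u,v)=\max_{s}\Psi_{u,v}(s)$ from below by the positive maximum of $h$ (the paper picks the specific $\tau_{u,v}$ realising $t_{max}$, you take the supremum over all $s$, which is the same thing). The only cosmetic slip is calling the maximiser of $h$ ``$\widetilde{t}$'' --- in Lemma \ref{LAA} that symbol denotes the maximiser of $\varphi$, not of $h$ --- but Lemma \ref{LAA} does supply a global maximum of $h$ at positive level, which is all you use.
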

\begin{proof}
From Lemma \ref{LAA}, there exists $0<R_{0}<R_{1}$ such that $h(R_{0})=h(R_{1})=0$, $h(t)>0$ if and only if $t\in (R_{0},R_{1})$. Let $t_{max}$ be the  maximum point of the function $h(t)$ at the positive level $(R_{0},R_{1})$. For every $(u,v)\in \mathcal{P}^{-}_{b_{1},b_{2}}$, there exists $\tau_{u,v}\in \mathbb{R}$ such that $$\big(\int_{\mathbb{R}^{3}}(|\nabla (\tau_{u,v}\star u)|^{2}+|\nabla (\tau_{u,v}\star v)|^{2})dx\big)^{\frac{1}{2}}=t_{max},$$
and from Lemma \ref{Lem4} we know that $t_{u,v}\star (u,v)\in \mathcal{P}^{-}_{b_{1},b_{2}}$ then $t=t_{u,v},$  which implies that 0 is the unique strict maximum of $\Psi_{u,v}(t)$. Therefore,
\begin{align*}
J_{\beta}(u,v)&=\Psi_{u,v}(0)\geq \Psi_{u,v}(\tau_{u,v})=J_{\beta}(\tau_{u,v}\star (u,v))\\
&\geq h\Big(\big(\int_{\mathbb{R}^{3}}(|\nabla (\tau_{u,v}\star u)|^{2}+|\nabla (\tau_{u,v}\star v)|^{2})dx\big)^{\frac{1}{2}}\Big)=h(t_{max})>0,
\end{align*}
so for any $(u,v)\in \mathcal{P}^{-}_{b_{1},b_{2}} $, we have $$m^{-}_{\beta}(b_{1},b_{2}):=\inf_{(u,v)\in \mathcal{P}^{-}_{b_{1},b_{2}} }J_{\beta}(u,v)>0.$$
\end{proof}
Next, we use the idea introduced by Li and Zou \cite{LIZOU} to give the energy estimate.
We observe that $w_{\lambda,\mu}=\Big(\frac{\|w\|^{2}_{L^{2}(\mathbb{R}^{3})}}{\mu b^{2}}\Big)^{2}\frac{\|w\|_{L^{2}(\mathbb{R}^{3})}^{2}}{\mu^{\frac{3}{2}}b^{2}}w\Big(\big(\frac{\|w\|^{2}_{L^{2}(\mathbb{R}^{3})}}{\mu b^{2}}\big)x\Big)$ is a solution of the problem
\begin{equation}\label{intg}
\begin{cases}

-\Delta u+ \lambda u=\mu u^{3} & \text{in} \ \ \mathbb{R}^{3},\\
u>0 \ \ \ & \text{in}\ \ \mathbb{R}^{3},\\
 \int_{\mathbb{R}^{3}}u^{2}dx=b^{2},
\end{cases}
\end{equation}
where $w$ is the unique positive solution of $-\Delta u+  u= u^{3} \ \text{in} \ \ \mathbb{R}^{3}$.
\begin{lemma}\label{Lem1}
For any $b_{1},b_{2}>0,$ $\rho>0,\beta>0$, we have $$m^{-}_{\beta}(b_{1},b_{2})<\min\{m^{-}_{\beta}(0,b_{2}),m^{-}_{\beta}(b_{1},0)\}.$$
\end{lemma}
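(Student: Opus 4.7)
The plan is to prove $m^-_\beta(b_1,b_2) < m^-_\beta(b_1,0)$; the companion inequality $m^-_\beta(b_1,b_2) < m^-_\beta(0,b_2)$ requires a parallel but modified argument because of an important asymmetry explained below. By Lemma~\ref{Lem4}, for any positive $(u,v)\in\mathrm{T}_{b_1}\times\mathrm{T}_{b_2}$ (which automatically has $\beta\int u^2 v>0$), projecting onto $\mathcal{P}^-_{b_1,b_2}$ via the scaling $t\star$ gives $m^-_\beta(b_1,b_2) \le \max_{t\in\mathbb{R}}\Psi_{u,v}(t)$, so the task reduces to exhibiting test functions that beat each of the two right-hand sides.

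Let $u^\ast\in\mathrm{T}_{b_{1,r}}$ realise $m^-_\beta(b_1,0)$ via the single-component Pohozaev identity $\|\nabla u^\ast\|_{L^2}^2 = \tfrac{3\mu_1}{4}\|u^\ast\|_{L^4}^4$, so that $m^-_\beta(b_1,0) = \tfrac{1}{8}\mu_1\|u^\ast\|_{L^4}^4$; similarly let $v^\ast\in\mathrm{T}_{b_{2,r}}$ realise $m^-_\beta(0,b_2)$. For the first inequality, take the one-parameter family $(u_\eta,v^\ast) := (\eta\star u^\ast,\,v^\ast)$ and write the fibre as $\Psi_{u_\eta,v^\ast}(t) = \tfrac{\tilde A}{2}e^{2t} - \tfrac{\tilde B}{2}e^{3t/2} - \tfrac{\tilde C}{4}e^{3t}$ with the usual coefficients. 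In the concentration limit $\eta\to+\infty$, $\tilde A \sim e^{2\eta}\|\nabla u^\ast\|_{L^2}^2$ and $\tilde C \sim \mu_1 e^{3\eta}\|u^\ast\|_{L^4}^4$ dominate, the uncoupled ($\tilde B=0$) projected value $\tfrac{8\tilde A^3}{27\tilde C^2}$ equals $m^-_\beta(b_1,0) + O(e^{-2\eta})$ with positive correction coming from the fixed $v^\ast$-terms, while $\tilde B = \beta\int u_\eta^2 v^\ast \to \beta v^\ast(0) b_1^2$ remains bounded below and the maximiser satisfies $e^{t_\eta^\ast} \sim e^{-\eta}$, producing a strict decrease of order $\tilde B(e^{t_\eta^\ast})^{3/2} \sim e^{-3\eta/2}$. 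Since $e^{-3\eta/2} \gg e^{-2\eta}$ for $\eta$ large, this strict decrease strictly outweighs the positive $O(e^{-2\eta})$ deviation, giving $\max_t\Psi_{u_\eta,v^\ast}(t) < m^-_\beta(b_1,0)$ for all sufficiently large $\eta$ and hence the first inequality.

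For the companion inequality $m^-_\beta(b_1,b_2) < m^-_\beta(0,b_2)$ the simple analogue of concentrating $v$ fails, because $\beta\int u^2 v$ is linear in $v$ and quadratic in $u$: concentrating $v$ alone shrinks $\tilde B$ like $e^{-3\eta/2}$, making the $\beta$-correction $O(e^{-3\eta})$, which no longer beats the $O(e^{-2\eta})$ deficit between the uncoupled projected value and $m^-_\beta(0,b_2)$. To close this case I would follow the Li--Zou recipe from the paragraph preceding the lemma, built from the quadratic-interaction ground state $w$ of \eqref{eqd2}: take the explicit test function $(u,v) = (L_1 w(\theta\cdot),L_2 w(\theta\cdot))$ with $L_i = b_i\theta^{3/2}/\|w\|_{L^2}$ so that both mass constraints hold, and optimise over $\theta>0$. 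Because $w$ satisfies the identities $\|\nabla w\|_{L^2}^2 = \|w\|_{L^2}^2$ and $\int w^3 = 2\|w\|_{L^2}^2$ (direct consequences of its equation and Pohozaev), $J_\beta$ on this family becomes the explicit polynomial $J_\beta(\theta) = A\theta^2 - B\theta^{3/2} - C\theta^3$ with $A = (b_1^2+b_2^2)/2$, $B = \beta b_1^2 b_2/\|w\|_{L^2}$ and $C = (\mu_1 b_1^4+\mu_2 b_2^4+2\rho b_1^2 b_2^2)\|w\|_{L^4}^4/(4\|w\|_{L^2}^4)$, whose maximum $\tfrac{A(s^\ast)^4}{3} - \tfrac{B(s^\ast)^3}{2}$ (with $s^\ast$ the largest positive root of $3Cs^3 - 2As + \tfrac{3B}{2} = 0$) can be compared in closed form with $\tfrac{1}{8}\mu_2\|v^\ast\|_{L^4}^4$.

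The main obstacle is the very last step: verifying that this explicit maximum stays strictly below $m^-_\beta(0,b_2)$ for every admissible parameter range. This reduces to a concrete algebraic inequality between the Gagliardo--Nirenberg constants $C_{3,3},C_{3,4}$, the coupling constants $\mu_1,\mu_2,\rho,\beta$, and the masses $b_1,b_2$; it is precisely the specific choice of the test function tied to the explicit solvable limit \eqref{eqd1}, where the quadratic coupling $\beta uv$ becomes the dominant nonlinearity, that makes the comparison go through. A purely soft perturbation of $v^\ast$, as the failure in the previous paragraph illustrates, cannot close the gap because the sign of the net perturbation depends critically on how the $\beta$- and $\rho$-couplings balance against the gradient and cubic self-interaction terms.
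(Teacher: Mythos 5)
Your argument for $m^{-}_{\beta}(b_{1},b_{2})<m^{-}_{\beta}(b_{1},0)$ is correct and genuinely different from the paper's: you concentrate the first component ($\eta\star u^{\ast}$, $\eta\to+\infty$) so that $\beta\int u_{\eta}^{2}v^{\ast}\to\beta b_{1}^{2}v^{\ast}(0)>0$ stays bounded away from zero while the fibre maximiser sits at $e^{t^{\ast}_{\eta}}\backsim e^{-\eta}$, and the resulting gain of order $e^{-3\eta/2}$ beats the $O(e^{-2\eta})$ deficit created by the frozen $v^{\ast}$-terms. (To make this airtight, record the lower bound $e^{t^{\ast}_{\eta}}\geq c\,e^{-\eta}$: otherwise $\Psi(t^{\ast}_{\eta})\leq\frac{\tilde A}{2}e^{2t^{\ast}_{\eta}}$ would drop below $\max_{t}\Psi\geq m^{-}_{\beta}(b_{1},0)-o(1)$, a contradiction.) The paper proves the companion inequality $m^{-}_{\beta}(b_{1},b_{2})<m^{-}_{\beta}(0,b_{2})$ in detail and dismisses yours with a ``similarly''; its mechanism is dual to yours: it pairs the optimiser $v$ of $m^{-}_{\beta}(0,b_{2})$ with a singular profile $u=c\varphi(x)|x|^{-m}$, $0<m<\frac{1}{2}$, dilated to zero ($t\to-\infty$), so that $\int|t\star u|^{2}v$ and $\int|t\star u|^{2}v^{2}$ decay only like $e^{(3-2m)t}$.

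The genuine gap is that you never establish $m^{-}_{\beta}(b_{1},b_{2})<m^{-}_{\beta}(0,b_{2})$, and that is the inequality the paper actually needs: the semitrivial weak limit to be excluded in Lemma \ref{Lem2} is of the form $(0,v)$, since the second equation forces $v\neq0$ whenever $u\neq0$. Your diagnosis of why concentrating $v$ fails is accurate --- the asymmetry of $\int u^{2}v$ makes the coupling gain $O(e^{-3\eta})$ against a deficit $O(e^{-2\eta})$ --- but the replacement you sketch (the family $(L_{1}w(\theta\cdot),L_{2}w(\theta\cdot))$ built from the quadratic-interaction ground state, followed by ``a concrete algebraic inequality'') is not carried out, and there is no reason it should close over the full parameter range claimed: for small $\beta$ the $w$-based pair is a poor competitor and the explicit maximum need not lie below $\frac{1}{8}\mu_{2}\|v^{\ast}\|_{L^{4}}^{4}$. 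The route you should compare against is the paper's singular-profile construction; note, though, that even there the bookkeeping is delicate, because $u\in H^{1}(\mathbb{R}^{3})$ forces $m<\frac{1}{2}$, hence $3-2m>2$, so the positive cost $\frac{e^{2t}}{2}\|\nabla u\|_{L^{2}}^{2}$ decays \emph{more slowly} than the coupling gains of order $e^{(3-2m)t}$ as $t\to-\infty$; the paper's final display simply drops the $\|\nabla(t\star u)\|_{L^{2}}^{2}$ term, and a genuine comparison must be made at a fixed dilation scale, which is where largeness assumptions on the couplings enter in \cite{BJJN}. In short: your first half is sound and arguably cleaner than the paper's, but the second, harder half is missing, and the fallback you propose is unlikely to supply it in the stated generality.
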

\begin{proof}
Let $(v,\lambda_{0})\in \mathrm{T}_{b_{2}}\times \mathbb{R}^{+}$ be the unique solution of equation \eqref{intg} with parameters $\mu_{2}$ and $b_{2}$. Let $u(x)=c\frac{\varphi(x)}{|x|^{m}}, \ m< \frac{1}{2}$ $$\varphi(x)\in C^{\infty}_{0}(B_{2}(0)),\  0\leq\varphi(x)\leq1,\ \varphi (x)=1 \ \text{in}\ B_{1}(0).$$
Then $u\in H^{1}(\mathbb{R}^{3})$  and $u\in \mathrm{T}_{b_{1}} $ for some suitable c. Therefore, $(t\star u, v)\in \mathrm{T}_{b_{1}}\times \mathrm{T}_{b_{2}}$ for any $t\in \mathbb{R}$.  So, $$\int_{\mathbb{R}^{3}}|t\star u|^{2}v^{2}dx=C_{0}e^{(3-2m)t}\int_{\mathbb{R}^{3}}\frac{\varphi^{2}(e^{t}(x))}{|x|^{2m}}v^{2}(x)dx,$$
$$\int_{\mathbb{R}^{3}}|t\star u|^{2}vdx=C_{0}e^{(3-2m)t}\int_{\mathbb{R}^{3}}\frac{\varphi^{2}(e^{t}(x))}{|x|^{2m}}v(x)dx.$$
From the decay properties of \cite{LINI} that $v$ decays exponentially$$v(x)=O(|x|^{-\frac{1}{2}}e^{-\lambda^{\frac{1}{2}}_{0}|x|})\ \text{and}\  |u(x)|\leq M\  \text{in} \ \mathbb{R}^{3}.$$  Thus, $$0<\int_{\mathbb{R}^{3}}\frac{v^{2}(x)}{|x|^{2m}}dx\leq C\Big(\int_{B_{R}(0)}\frac{1}{|x|^{2m}}+\int_{\mathbb{R}^{3}\setminus B_{R}(0)}|x|^{-1-2m}e^{-\lambda^{\frac{1}{2}}_{0}2|x|}\Big)<+\infty,$$
$$0<\int_{\mathbb{R}^{3}}\frac{v(x)}{|x|^{2m}}dx\leq C\Big(\int_{B_{R}(0)}\frac{1}{|x|^{2m}}+\int_{\mathbb{R}^{3}\setminus B_{R}(0)}|x|^{-\frac{1}{2}-2m}e^{-\lambda^{\frac{1}{2}}_{0}|x|}\Big)<+\infty.$$
By the Dominated Convergence Theorem, we obtain that $$\lim_{t\rightarrow -\infty}\int_{\mathbb{R}^{3}}\frac{\varphi^{2}(e^{t}(x))}{|x|^{2m}}v^{2}(x)dx=\int_{\mathbb{R}^{3}}\frac{v^{2}(x)}{|x|^{2m}}dx=C_{1}\in(0,+\infty),$$
$$\lim_{t\rightarrow -\infty}\int_{\mathbb{R}^{3}}\frac{\varphi^{2}(e^{t}(x))}{|x|^{2m}}v(x)dx=\int_{\mathbb{R}^{3}}\frac{v(x)}{|x|^{2m}}dx=C_{2}\in(0,+\infty),$$
so
\begin{align}\label{LA33}
\int_{\mathbb{R}^{3}}|t\star u|^{2}v^{2}dx=C_{0}e^{(3-2m)t}\int_{\mathbb{R}^{3}}\frac{\varphi^{2}(e^{t}(x))}{|x|^{2m}}v^{2}(x)dx&=C_{0}e^{(3-2m)t}(C_{1}+o(1))\geq \frac{C_{0}C_{1}}{2}e^{(3-2m)t},
\end{align}
as $t\rightarrow-\infty$.
\begin{align}\label{LA34}
\int_{\mathbb{R}^{3}}|t\star u|^{2}vdx&=C_{0}e^{(3-2m)t}\int_{\mathbb{R}^{3}}\frac{\varphi^{2}(e^{t}(x))}{|x|^{2m}}v(x)dx\\\nonumber
&=C_{0}e^{(3-2m)t}(C_{2}+o(1))\geq \frac{C_{0}C_{2}}{2}e^{(3-2m)t}\ \text{as}\ t\rightarrow-\infty.
\end{align}
From Lemma \ref{Lem4}, we know that $t_{\ast}=t_{(t\star u,v)}$ is a local maximum of $\Psi_{u,v}(t)$ and $t_{(t\star u,v)}\star (t\star u,v)\in \mathcal{P}^{-}_{b_{1},b_{2}}$, so
\begin{equation}\label{LA32}
\begin{aligned}
0=P(t_{\star}\star (t\star u,v))&=e^{2(t+t_{\star})}\int_{\mathbb{R}^{3}}|\nabla u|^{2}dx+e^{2t_{\star}}\int_{\mathbb{R}^{3}}|\nabla v|^{2}dx-\frac{3e^{3(t+t_{\star})}}{4}\mu_{1}\int_{\mathbb{R}^{3}}u^{4}dx\\\nonumber
&\quad-\frac{3e^{3t_{\star}}}{4}\mu_{2}\int_{\mathbb{R}^{3}}v^{4}dx-\frac{3e^{3t_{\star}}}{2}\rho\int_{\mathbb{R}^{3}}|t\star u|^{2}v^{2}dx-\frac{3\beta}{4}e^{\frac{3t_{\star}}{2}}\int_{\mathbb{R}^{3}}|t\star u|^{2}vdx.
\end{aligned}
\end{equation}
From \eqref{LA33},\ \eqref{LA34}, \eqref{LA32} and  let $t\rightarrow-\infty$, we have $$e^{2t_{\star}}\int_{\mathbb{R}^{3}}|\nabla v|^{2}dx=\frac{3e^{3t_{\star}}}{4}\mu_{2}\int_{\mathbb{R}^{3}}v^{4}dx.$$
Since $(v,\lambda_{0})\in \mathrm{T}_{b_{2}}\times \mathbb{R}^{+}$ be the unique solution of equation \eqref{intg} with parameters $\mu_{2}$ and $b_{2}$, by the Pohozaev identity for equation \eqref{intg} with parameters $\mu_{2}$ and $b_{2}$, we can get $e^{3t_{\star}}\rightarrow1$ as $t\rightarrow -\infty. $ Therefore, from Lemma 2.1 in \cite{LIZOU}, we have $$e^{2t_{\star}}\int_{\mathbb{R}^{3}}|\nabla v|^{2}dx-\frac{3e^{3t_{\star}}}{4}\mu_{2}\int_{\mathbb{R}^{3}}v^{4}dx<m_{\beta}(0,b_{2}),$$ therefore
\begin{align*}
m^{-}_{\beta}(b_{1},b_{2})&\leq J_{\beta}(t_{\star}\star(t\star u,v))\\
&=\frac{e^{2(t+t_{\star})}}{2}\int_{\mathbb{R}^{3}}|\nabla u|^{2}dx+\frac{e^{2t_{\star}}}{2}\int_{\mathbb{R}^{3}}|\nabla v|^{2}dx-\frac{e^{3(t+t_{\star})}}{4}\mu_{1}\int_{\mathbb{R}^{3}}u^{4}dx\\\nonumber
&\quad-\frac{e^{3t_{\star}}}{4}\mu_{2}\int_{\mathbb{R}^{3}}v^{4}dx-\frac{e^{3t_{\star}}}{2}\rho\int_{\mathbb{R}^{3}}|t\star u|^{2}v^{2}dx-\frac{\beta}{2}e^{\frac{3t_{\star}}{2}}\int_{\mathbb{R}^{3}}|t\star u|^{2}vdx\\
&=\frac{e^{2t_{\star}}}{2}\int_{\mathbb{R}^{3}}|\nabla v|^{2}dx-\frac{e^{3t_{\star}}}{4}\mu_{2}\int_{\mathbb{R}^{3}}v^{4}dx-\frac{C_{0}C_{1}}{2}e^{(3-2m)t}-\beta\frac{C_{0}C_{2}}{2}e^{(3-2m)t}\\
&<m^{-}_{\beta}(0,b_{2})-\frac{C_{0}C_{1}}{2}e^{(3-2m)t}-\beta\frac{C_{0}C_{2}}{2}e^{(3-2m)t},
\end{align*}
from which, we see that for sufficiently small $t\ll -1$, there holds $m^{-}_{\beta}(b_{1},b_{2})<m^{-}_{\beta}(0,b_{2}).$ Similarly, we can deduce that $$m^{-}_{\beta}(b_{1},b_{2})<m^{-}_{\beta}(b_{1},0).$$
\end{proof}
\begin{lemma}\label{Lem2}
There is a radial symmetric Palais-Smale sequence of $J|_{\mathrm{T}_{b_{1}}\times \mathrm{T}_{b_{2}}}$ at level $m^{-}_{\beta}(b_{1},b_{2})$ with the additional properties $P(u_{n},v_{n})\rightarrow0$ and $u^{-}_{n},v^{-}_{n}\rightarrow 0$ a.e. in $\mathbb{R}^{3}.$ Then up to a subsequence $(u_{n},v_{n})\rightarrow (u,v)$ in $H^{1}(\mathbb{R}^{3})\times H^{1}(\mathbb{R}^{3})$, where $(u,v)$ is a positive solution of \eqref{23} for some $\lambda_{1},\lambda_{2}>0.$
\end{lemma}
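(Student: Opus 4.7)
The plan is to combine a constrained mountain-pass construction on the augmented functional with a Brezis–Lieb compactness analysis built on the energy strict inequality of Lemma \ref{Lem1}. The minimax approach should mirror the one used by Bartsch–Soave and N.~Soave, but adapted to the restricted manifold $\mathcal{P}_{b_1,b_2}$, which carries the extra open constraint $\beta\int u^{2}v\,dx>0$.

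First, I would introduce the auxiliary functional $\tilde J_\beta:\mathbb{R}\times \mathrm{T}_{b_1,r}\times \mathrm{T}_{b_2,r}\to\mathbb{R}$, $\tilde J_\beta(t,u,v):=J_\beta(t\star(u,v))$, and build a minimax class $\Gamma$ of continuous paths $\gamma(\tau)=(\alpha(\tau),u(\tau),v(\tau))$ joining a base point $(0,u_0,v_0)$ with $(u_0,v_0)\in A_{R_0-\rho}$ and $J_\beta(u_0,v_0)<m^+_\beta(b_1,b_2)$ to an endpoint $(T,u_0,v_0)$ with $T\gg 1$ chosen so that $J_\beta(T\star(u_0,v_0))<m^+_\beta(b_1,b_2)$ (possible by $\Psi_{u,v}(+\infty)=-\infty$, Lemma \ref{Lem4}). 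By Lemma \ref{Lem4}, every such path must cross $\mathcal{P}^-_{b_1,b_2}$ at the unique fiber maximum, and by Lemma \ref{Lem8} the endpoints lie on the ``left'' of the Pohozaev barrier, so the minimax level
\[
\sigma:=\inf_{\gamma\in\Gamma}\max_{\tau\in[0,1]}\tilde J_\beta(\gamma(\tau))
\]
satisfies $\sigma\ge m^-_\beta(b_1,b_2)$. Using the natural path $\tau\mapsto\bigl(\tau T,u_0,v_0\bigr)$ through any fixed $(u_0,v_0)\in\mathcal{P}^-_{b_1,b_2}$, the reverse inequality $\sigma\le m^-_\beta(b_1,b_2)$ also holds, so $\sigma=m^-_\beta(b_1,b_2)$.

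Next, I would apply Ghoussoub's minimax principle (with $\mathcal{P}^-_{b_1,b_2}$ as the dual set) to obtain a PS sequence $(t_n,\tilde u_n,\tilde v_n)$ for $\tilde J_\beta|_{\mathbb{R}\times\mathrm{T}_{b_1,r}\times\mathrm{T}_{b_2,r}}$ at level $m^-_\beta(b_1,b_2)$ concentrating on $\mathcal{P}^-_{b_1,b_2}$. Setting $(u_n,v_n):=t_n\star(\tilde u_n,\tilde v_n)$, the $t$-derivative equation $\partial_t\tilde J_\beta\to 0$ translates exactly into $P_{b_1,b_2}(u_n,v_n)\to 0$, while the remaining derivatives give $J_\beta'|_{\mathrm{T}_{b_1,r}\times\mathrm{T}_{b_2,r}}(u_n,v_n)\to 0$. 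The sign condition $u_n^-,v_n^-\to 0$ a.e. is obtained by restricting $\Gamma$ to paths whose spatial components are built from $(|u|,|v|)$ — since $\beta>0$, $J_\beta(|u|,|v|)\le J_\beta(u,v)$ and $P_{b_1,b_2}$ also improves, so the minimax level is unchanged; a standard deformation argument inside the positive cone of $H^1_r\times H^1_r$ then provides the required sign property.

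For the convergence, the boundedness of $(u_n,v_n)$ in $H^1_r\times H^1_r$ follows exactly as in Lemma \ref{Lem5} from $P_{b_1,b_2}(u_n,v_n)\to 0$ and the control of $J_\beta(u_n,v_n)$. Extracting a weak limit $(u_n,v_n)\rightharpoonup(u,v)$ in $H^1_r\times H^1_r$, the compact embedding $H^1_r\hookrightarrow L^p$ for $2<p<6$ yields strong convergence in $L^3$ and $L^4$, so in particular in the cubic and mixed nonlinear terms. The bounded Lagrange multiplier pair $(\lambda_{1,n},\lambda_{2,n})$ then converges to some $(\lambda_1,\lambda_2)$, and $(u,v)$ solves the limiting system; the strict inequality of Lemma \ref{Lem1}, together with the Liouville-type argument of Lemma \ref{Lem55}, rules out semi-trivial limits and forces $\lambda_1,\lambda_2>0$. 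The $L^2$-preservation $\|u\|_2^2=b_1^2$, $\|v\|_2^2=b_2^2$ follows from the Pohozaev identity and the tested equations as in \eqref{LA188}–\eqref{LA222}. A Brezis–Lieb splitting applied to $J_\beta$ and to $P_{b_1,b_2}$, combined with the residual energy being nonnegative but bounded above by $m^-_\beta(b_1,b_2)-J_\beta(u,v)\le 0$, forces $\|\nabla(u_n-u)\|_2^2+\|\nabla(v_n-v)\|_2^2\to 0$. Positivity of $(u,v)$ follows from passing $u_n^-,v_n^-\to 0$ to the weak limit and applying the strong maximum principle.

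The main obstacle I expect is the simultaneous realization of the three asymptotic properties: $J_\beta'\to 0$, $P_{b_1,b_2}\to 0$, and $u_n^-,v_n^-\to 0$. The first two are standard from the $(t,u,v)$-PS construction, but the open constraint $\beta\int u^{2}v>0$ entering $\mathcal{P}_{b_1,b_2}$ and the sign-preservation need to be woven into the minimax class from the start; otherwise Schwarz symmetrization or taking absolute values may fail to preserve the PS property. A secondary difficulty is confirming that the weak limit $(u,v)$ stays in the set $\{\beta\int u^2 v>0\}$: this is what prevents it from collapsing onto a $\beta=0$ solution, and it is secured precisely by the strict gap in Lemma \ref{Lem1}.
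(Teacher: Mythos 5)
Your overall strategy is the one the paper actually uses: the augmented functional $\widetilde J_\beta(t,u,v)=J_\beta(t\star(u,v))$, a minimax class whose paths are forced to cross $\mathcal P^-_{b_1,b_2}$ via the continuity of $(u,v)\mapsto t_{u,v}$, Ghoussoub's minimax principle to produce a Palais--Smale sequence with $\partial_t\widetilde J_\beta\to 0$ (hence $P_{b_1,b_2}\to 0$) and with the sign property inherited from nonnegative paths, followed by Lagrange multipliers, the Liouville argument, and a Brezis--Lieb splitting in which the strict inequality $m^-_\beta(b_1,b_2)<\min\{m^-_\beta(0,b_2),m^-_\beta(b_1,0)\}$ of Lemma \ref{Lem1} excludes semitrivial limits while $m^-_\beta(b_1,b_2)>0$ excludes vanishing.

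There is, however, one concrete defect in your construction of the minimax class. You anchor the paths at a base point $(0,u_0,v_0)$ with $(u_0,v_0)\in A_{R_0-\rho}$ and $J_\beta(u_0,v_0)<m^+_\beta(b_1,b_2)$. Since $m^+_\beta(b_1,b_2)=\inf_{A_{R_0}}J_\beta$ and $A_{R_0-\rho}\subset A_{R_0}$, no such point exists, so the class is empty as written. Worse, if one could arrange $J_\beta(u_0,v_0)<m^+_\beta(b_1,b_2)$ at the base point, then Lemma \ref{Lem8} would give $t_{u_0,v_0}<0$, i.e.\ the base point would sit on the \emph{same} side of the Pohozaev barrier as the endpoint $T\star(u_0,v_0)$, and the crossing of $\mathcal P^-_{b_1,b_2}$ (hence the inequality $\sigma\ge m^-_\beta(b_1,b_2)$) would no longer be forced. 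The fix is what the paper does: take $\gamma(0)\in\{0\}\times\mathcal P^+_{b_1,b_2}$, for which $t_{\gamma(0)}>s_{\gamma(0)}=0$, and $\gamma(1)\in\{0\}\times J_\beta^{2m_\beta(b_1,b_2)}$, for which Lemma \ref{Lem8} gives $t_{\gamma(1)}<0$; then the intermediate value argument applies. With that correction the rest of your argument goes through as in Lemmas \ref{Lem5} and \ref{Lem55}.
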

\begin{proof}
From \eqref{pp}, we define the functional $\widetilde{J}:\mathbb{R}\times H^{1}(\mathbb{R}^{3})\times H^{1}(\mathbb{R}^{3})\rightarrow \mathbb{R}$  as following
\begin{align*}
\widetilde{J}_{\beta}(t,u,v)=J_{\beta}(t\star (u,v))&=\frac{e^{2t}}{2}\int_{\mathbb{R}^{3}}(|\nabla u|^{2}+|\nabla v|^{2})dx-\frac{\beta}{2}\int_{\mathbb{R}^{3}}e^{\frac{3t}{2}}u^{2}vdx\\\nonumber
&\quad-\frac{e^{3t}}{4}\int_{\mathbb{R}^{3}}\left(\mu_{1}u^{4}+\mu_{2}v^{4}+2\rho u^{2}v^{2}\right)dx.
\end{align*}
It is easy to see that $\widetilde{J}$ is of class $C^{1}$. 
Denoting $J_{\beta}^{c}=\{(u,v)\in\mathrm{T}_{b_{1}}\times \mathrm{T}_{b_{2}}:J_{\beta}(u,v)\leq c \}$ and introduce the minimax class
\begin{align}\label{LA23}
\Gamma=\big\{\gamma=(\alpha,\varphi_{1},\varphi_{2})\in C([0,1],\mathbb{R}\times \mathrm{T}_{b_{1},r}\times \mathrm{T}_{b_{2},r}):\gamma(0)\in \{0\}\times \mathcal{P}^{+}_{b_{1},b_{2}}, \gamma(1)\in \{0\}\times J_{\beta}^{2m(b_{1},b_{2})}\big\},
\end{align}
with associated minimax level
$$\sigma_{\beta}(b_{1},b_{2})=\inf_{\gamma\in \Gamma}\max_{t\in [0,1]}\widetilde{J}_{\beta}(\gamma(t)).$$
Let $(u,v)\in \mathrm{T}_{b_{1},r}\times \mathrm{T}_{b_{2},r}$, it is easy to see
$$\int_{\mathbb{R}^{3}}\big(|\nabla (t \star  u)|^{2}+|\nabla (t \star  v)|^{2}\big)dx\rightarrow 0\ \text{as}\ t\rightarrow-\infty,$$
$$J_{\beta}(t \star  u,t \star  v)\rightarrow -\infty\ \text{as}\ t\rightarrow+\infty.$$
From Lemma \ref{Lem4}, there exists  $t_{0}\ll-1$ and $t_{1}\gg1$ such that $$t_{0}\star (u,v)\in \mathcal{P}^{+}_{b_{1},b_{2}},\ \ t_{1}\star (u,v)\in J_{\beta}^{2m_{\beta}(b_{1},b_{2})}$$
and
\begin{equation}\label{LA25}
\gamma_{u,v}(t):t\in [0,1]\mapsto(0,((1-t )t_{0})+tt_{1})\star (u,v))\in \mathbb{R}\times \mathrm{T}_{b_{1},r}\times \mathrm{T}_{b_{2},r}
\end{equation}
is a path in $\Gamma$. Thus, $\sigma(b_{1},b_{2})$ is a real number. Next, we prove that for every
\begin{equation}\label{LA24}
  \gamma=(\alpha,\varphi_{1},\varphi_{2})\in \Gamma\  \text{there exists}\ t_{\gamma}\in (0,1)\  \text{such that}\ \alpha(t_{\gamma})\star (\varphi_{1}(t_{\gamma}),\varphi_{2}(t_{\gamma}))\in \mathcal{P}^{-}_{b_{1},b_{2}}.
\end{equation}
Indeed, $\gamma(0)=(0,\varphi_{1}(0),\varphi_{2}(0))\in \{0\}\times \mathcal{P}^{+}_{b_{1},b_{2}}$, by Lemma \ref{Lem4} and the fact that  $t\star (u,v)\in \mathcal{P}_{b_{1},b_{2}} $  if and only if $(\Psi_{u,v})'(t)=0$, we have $$t_{\alpha(0)\star (\varphi_{1}(0),\varphi_{2}(0))}=t_{(\varphi_{1}(0),\varphi_{2}(0))}>s_{(\varphi_{1}(0),\varphi_{2}(0))}=0.$$  Since $J_{\beta}(\alpha(1)\star (\varphi_{1}(1),\varphi_{2}(1)))=\widetilde{J}_{\beta}(\gamma(1))\leq 2m_{\beta}(b_{1},b_{2})$, from Lemma \ref{Lem8}, we have that $$t_{(\varphi_{1}(1), \varphi_{2}(1))}<0.$$ From Lemma \ref{Lem4}, the map $t_{\alpha(t)\star(\varphi_{1}(t),\varphi_{2}(t))}$ is continuous in $t,$ so there exists \\ $t_{\gamma}\in (0,1)$ such that $t_{\alpha(t_{\gamma})\star(\varphi_{1}(t_{\gamma}),\varphi_{2}(t_{\gamma}))}=0$, so $\text{for every }\  \gamma=(\alpha,\varphi_{1},\varphi_{2})\in \Gamma\  \text{there exists}\ t_{\gamma}\in (0,1)\  \text{such that}\ \alpha(t_{\gamma})\star (\varphi_{1}(t_{\gamma}),\varphi_{2}(t_{\gamma}))\in \mathcal{P}^{-}_{b_{1},b_{2}}. $ This implies that $$\max_{\gamma([0,1])}\widetilde{J}_{\beta}\geq \widetilde{J}_{\beta}(\gamma(t_{\gamma}))=J_{\beta}(\alpha(t_{\gamma})\star (\varphi_{1}(t_{\gamma}),\varphi_{2}(t_{\gamma})\geq \inf_{\mathcal{P}^{-}_{b_{1},b_{2}}\cap \mathrm{T}_{b_{1},r}\times \mathrm{T}_{b_{2},r}}J_{\beta}.$$ So $$\sigma_{\beta}(b_{1},b_{2})\geq \inf_{\mathcal{P}^{-}_{b_{1},b_{2}}\cap \mathrm{T}_{b_{1},r}\times \mathrm{T}_{b_{2},r}}J_{\beta}.$$ On the other hand, if $(u,v)\in \mathcal{P}^{-}_{b_{1},b_{2}}\cap \mathrm{T}_{b_{1},r}\times \mathrm{T}_{b_{2},r}$, then $\gamma_{u,v}$ defined in \eqref{LA25} is a path in $\Gamma$ with $$J_{\beta}(u,v)=\widetilde{J}_{\beta}(t,u,v)=\max_{\gamma_{u,v}([0,1])}\widetilde{J}_{\beta}\geq \sigma_{\beta}(b_{1},b_{2}),$$ so $$\inf_{\mathcal{P}^{-}_{b_{1},b_{2}}\cap \mathrm{T}_{b_{1},r}\times \mathrm{T}_{b_{2},r}}J_{\beta}\geq \sigma_{\beta}(b_{1},b_{2}).$$ Thus, $$\inf_{\mathcal{P}^{-}_{b_{1},b_{2}}\cap \mathrm{T}_{b_{1},r}\times \mathrm{T}_{b_{2},r}}J_{\beta}= \sigma_{\beta}(b_{1},b_{2}).$$ From Lemma \ref{Lem9}, we know that
\begin{align}\label{LA26}
\sigma_{\beta}(b_{1},b_{2})&=\inf_{\mathcal{P}^{-}_{b_{1},b_{2}}\cap \mathrm{T}_{b_{1},r}\times \mathrm{T}_{b_{2},r}}J_{\beta}>0\geq\sup_{\mathcal{P}^{+}_{b_{1},b_{2}}\cup J_{\beta}^{2m(b_{1},b_{2})}\cap \mathrm{T}_{b_{1},r}\times \mathrm{T}_{b_{2},r}}J_{\beta}\\\nonumber
&=\sup_{\{0\}\times \mathcal{P}^{+}_{b_{1},b_{2}}\cup \{0\}\times J^{2m(b_{1},b_{2})}\cap \mathrm{T}_{b_{1},r}\times \mathrm{T}_{b_{2},r}}\widetilde{J}_{\beta}.
\end{align}
Let $\mathcal{F}=\{\gamma([0,1]):\gamma\in \Gamma\}$. By the terminology in [\cite{NGB}, section 5], which implies that $\{\gamma([0,1]):\gamma\in \Gamma\}$ is a homotopy stable family of compact subset of $\mathbb{R}\times \mathrm{T}_{b_{1},r}\times \mathrm{T}_{b_{2},r} $ with extended closed boundary $\{0\}\cup \times \mathcal{P}^{+}_{b_{1},b_{2}}\times \{0\}\times J^{2m(b_{1},b_{2})}_\beta$, and the superlevel set$\{\widetilde{J}_{\beta}\geq\sigma\}$ is a dual set for $\mathcal{F}$, which implies that the assumption in [\cite{NGB}, Theorem 5.2] are satisfied. Therefore, taking any minimizing sequence $\{\gamma_{n}([0,1]), \gamma_{n}=(\alpha_{n}, \varphi_{1,n},\varphi_{2,n})\}$ for $\sigma$ with the property that $\alpha(t)=0,\ \varphi_{1,n}\geq0,\ \varphi_{2,n}\geq 0$ for every $t\in [0,1]$, there exists a sequence $(t_{n},u_{n},v_{n})\subset \mathbb{R}\times \mathrm{T}_{b_{1},r}\times \mathrm{T}_{b_{2},r}$ such that $\widetilde{J}_{\beta}(t_{n},u_{n},v_{n})\rightarrow\sigma(b_{1},b_{2})$ and
\begin{equation}\label{LA27}
\partial_{t}\widetilde{J}_{\beta}(t_{n},u_{n},v_{n})\rightarrow0, \ \|\partial_{(u,v)}\widetilde{J}_{\beta}(t_{n},u_{n},v_{n})\|_{T_{u_{n}\mathrm{T}_{b_{1},r}}\times T_{v_{n}\mathrm{T}_{b_{2},r}}}\rightarrow0
\end{equation}
\begin{equation}\label{LA28}
|t_{n}|+dist((u_{n},v_{n}),(\varphi_{1,n}([0,1]),\varphi_{2,n}([0,1])))\rightarrow 0.
\end{equation}
Let $(\widetilde{u}_{n},\widetilde{v}_{n})=t_{n}\star(u_{n},v_{n})\in \mathrm{T}_{b_{1},r}\times \mathrm{T}_{b_{2},r}$. By \eqref{LA28}, we know that $t_{n}$ is bounded and $\widetilde{u}^{-}_{n},\widetilde{v}^{-}_{n}\rightarrow 0$ a.e. in $\mathbb{R}^{3}$. From \eqref{LA27}, we can get $$P(\widetilde{u}_{n},\widetilde{v}_{n})=\partial_{t}\widetilde{J}_{\beta}(t_{n},u_{n},v_{n})\rightarrow0,$$ and
\begin{align*}
J_{\beta}'(\widetilde{u}_{n},\widetilde{v}_{n})[\phi,\psi]=\partial_{(u,v)}\widetilde{J}_{\beta}(t_{n}, u_{n},v_{n})[(-t_{n})\star(\phi,\psi)]&=o(1)\|(-t_{n})\star(\phi,\psi)\|_{H^{1}(\mathbb{R}^{3})\times H^{1}(\mathbb{R}^{3})}\\
&=o(1)\|(\phi,\psi)\|_{H^{1}(\mathbb{R}^{3})\times H^{1}(\mathbb{R}^{3})},
\end{align*}
 therefore $(\widetilde{u}_{n},\widetilde{v}_{n})$ is a radial Palais-Smale sequence of $\widetilde{J}_{\beta}|_{\mathrm{T}_{b_{1},r}\times \mathrm{T}_{b_{2},r}}$ and hence a radial symmetric Palais-Smale sequence of $\widetilde{J}_{\beta}|_{\mathrm{T}_{b_{1}}\times \mathrm{T}_{b_{2}}}$ at level $\sigma_{\beta}(b_{1},b_{2}).$
 By the same arguments as Lemma \ref{Lem55}, we can prove prove the $H^{1}$ convergence of the Palais-Smale sequence. We should point out that there are several differences form the proof of Lemma
 \ref{Lem55}. First, to eliminate the disappearance of the solutions, we use $m^{-}(b_{1},b_{2})>0$ to get a contradiction. To eliminate the semi-trivial solution, we use  Br\'ezis-Lieb lemma and Lemma \ref{Lem1} to get a contradiction.

Hence,
\begin{align*}
m^{-}_{\beta}(b_{1},b_{2})&=\lim_{n\rightarrow +\infty}J_{\beta}(\widetilde{u}_{n},\widetilde{v}_{n})=\lim_{n\rightarrow +\infty}J_{\beta}(w_{n},\sigma_{n})+J_{\beta}(u,v)\\
&=\lim_{n\rightarrow +\infty}\left[\frac{1}{6}\int_{\mathbb{R}^{3}}(|\nabla w_{n}|^{2}+|\nabla \sigma_{n}|^{2})dx-\frac{\beta}{4}\int_{\mathbb{R}^{3}}w_{n}^{2}\sigma_{n}dx\right]+J_{\beta}(u,v)\\
& \geq\lim_{n\rightarrow +\infty}\bigg[\frac{1}{6}\int_{\mathbb{R}^{3}}(|\nabla w_{n}|^{2}+|\nabla \sigma_{n}|^{2})dx-\frac{|\beta|}{4}\left(\int_{\mathbb{R}^{3}}|w_{n}|^{3}dx\right)^{\frac{2}{3}}\left(\int_{\mathbb{R}^{3}}|\sigma_{n}|^{3}dx\right)^{\frac{1}{3}}\bigg]+J_{\beta}(u,v)\\
&\geq\lim_{n\rightarrow +\infty}\frac{1}{6}\int_{\mathbb{R}^{3}}(|\nabla w_{n}|^{2}+|\nabla \sigma_{n}|^{2})dx+J_{\beta}(u,v)\geq m^{-}_{\beta}(b_{1},b_{2}).
\end{align*}
Thus, $J_{\beta}(u,v)=m^{-}_{\beta}(b_{1},b_{2})\ \text{and}\ (\widetilde{u}_{n},\widetilde{v}_{n})\rightarrow (u,v)$ in $H^{1}(\mathbb{R}^{3})\times H^{1}(\mathbb{R}^{3}) .$
\end{proof}

\begin{lemma}\label{Lem3}
We have
$$m^{-}_{\beta}(b_{1},b_{2})=\inf_{(u,v)\in \mathrm{T}_{b_{1},r}\times \mathrm{T}_{b_{2},r} }\max_{t\in \mathbb{R}}J_{\beta}(t\star(u,v)).$$
\end{lemma}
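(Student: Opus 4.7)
Set
\[
I(b_1,b_2):=\inf_{(u,v)\in \mathrm{T}_{b_{1},r}\times \mathrm{T}_{b_{2},r}}\max_{t\in\mathbb{R}}J_{\beta}(t\star(u,v)).
\]
The plan is to establish the two inequalities $I(b_1,b_2)\le m^-_\beta(b_1,b_2)$ and $m^-_\beta(b_1,b_2)\le I(b_1,b_2)$, each built on Lemma \ref{Lem4} together with a Schwarz rearrangement comparison.

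\emph{Upper bound $I\le m^-_\beta$.} Given $(u,v)\in\mathcal{P}^-_{b_1,b_2}$, pass first to $(|u|,|v|)$ and then to the symmetric decreasing rearrangement $(u^\star,v^\star)\in\mathrm{T}_{b_{1},r}\times \mathrm{T}_{b_{2},r}$. The estimates $\|\nabla w^\star\|_2\le\|\nabla w\|_2$, the $L^4$-invariance of $|w|^\star$, and the Riesz rearrangement inequalities $\int(u^\star)^2(v^\star)^2\ge\int u^2v^2$, $\int(u^\star)^2 v^\star\ge\int u^2|v|\ge\int u^2v$, combined with $\rho,\beta>0$, imply
\[
J_\beta(t\star(u^\star,v^\star))\le J_\beta(t\star(u,v))\qquad\text{for every }t\in\mathbb{R}.
\]
Since $(u,v)\in\mathcal{P}^-_{b_1,b_2}$, part (3) of Lemma \ref{Lem4} identifies $J_\beta(u,v)=\max_{t}J_\beta(t\star(u,v))$, whence
\[
I(b_1,b_2)\le \max_{t}J_\beta(t\star(u^\star,v^\star))\le\max_{t}J_\beta(t\star(u,v))=J_\beta(u,v).
\]
Infimizing over $(u,v)\in\mathcal{P}^-_{b_1,b_2}$ gives $I(b_1,b_2)\le m^-_\beta(b_1,b_2)$.

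\emph{Lower bound $I\ge m^-_\beta$.} For any $(u,v)\in\mathrm{T}_{b_{1},r}\times \mathrm{T}_{b_{2},r}$, the same comparison shows $J_\beta(t\star(|u|,|v|))\le J_\beta(t\star(u,v))$ for every $t$, so we may assume $u,v\ge 0$. When $\int u^2v>0$, Lemma \ref{Lem4}(1) provides $t_{u,v}\in\mathbb{R}$ with $t_{u,v}\star(u,v)\in\mathcal{P}^-_{b_1,b_2}$, and part (3) yields
\[
\max_{t\in\mathbb{R}}J_\beta(t\star(u,v))=J_\beta(t_{u,v}\star(u,v))\ge m^-_\beta(b_1,b_2).
\]
Infimizing over $(u,v)$ gives $m^-_\beta(b_1,b_2)\le I(b_1,b_2)$.

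\emph{Main obstacle.} The only subtle point is the degenerate case $\int u^2v=0$ for a radial pair $u,v\ge 0$ (which forces essentially disjoint supports and, in particular, $\int u^2v^2=0$), where Lemma \ref{Lem4} is not directly applicable because the sign condition $\beta\int u^2v>0$ defining $\mathcal{P}_{b_1,b_2}$ fails. We dispatch it by an approximation: choose radial nonnegative $u_\varepsilon\in\mathrm{T}_{b_{1},r}$ (for instance a convex combination of $u$ with an auxiliary radial function whose support overlaps that of $v$) so that $\int u_\varepsilon^2 v>0$ and $u_\varepsilon\to u$ in $H^1(\mathbb{R}^3)$, apply the non-degenerate case to $(u_\varepsilon,v)$, and pass to the limit using the $C^1$-dependence of the maximizer $t_{u,v}$ from Lemma \ref{Lem4}(4) and the uniform convergence of $\Psi_{u_\varepsilon,v}$ to $\Psi_{u,v}$ on compact $t$-intervals to conclude $\max_t J_\beta(t\star(u_\varepsilon,v))\to\max_t J_\beta(t\star(u,v))$.
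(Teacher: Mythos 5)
Your proof is correct, and its skeleton---two inequalities mediated by the fiber--map Lemma \ref{Lem4}---is the same as the paper's; the differences lie in how each side is closed. For the inequality $\inf_{(u,v)\in \mathrm{T}_{b_{1},r}\times \mathrm{T}_{b_{2},r}}\max_{t}J_{\beta}(t\star(u,v))\le m^{-}_{\beta}(b_{1},b_{2})$ the paper simply evaluates $\max_t$ at the radial minimizer $(\widetilde u,\widetilde v)$ of $J_\beta$ on $\mathcal{P}^{-}_{b_{1},b_{2}}$ coming from Lemma \ref{Lem2}, whereas you symmetrize an arbitrary competitor in $\mathcal{P}^{-}_{b_{1},b_{2}}$ and compare fiber maps via P\'olya--Szeg\H{o} and Hardy--Littlewood/Riesz. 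Your route costs the rearrangement inequalities but buys independence from the attainment of $m^{-}_{\beta}$, and it actually supplies the identification $\inf_{\mathcal{P}^{-}_{b_{1},b_{2}}}J_\beta=\inf_{\mathcal{P}^{-}_{b_{1},b_{2}}\cap(\mathrm{T}_{b_{1},r}\times \mathrm{T}_{b_{2},r})}J_\beta$ that the paper uses only implicitly when passing between $\sigma_\beta(b_1,b_2)$ and $m^{-}_{\beta}(b_1,b_2)$. For the reverse inequality both arguments project $(u,v)$ onto $\mathcal{P}^{-}_{b_{1},b_{2}}$ through $t_{u,v}$; the paper does this for every radial pair without comment, even though Lemma \ref{Lem4} is stated only under $\beta\int_{\mathbb{R}^{3}}u^{2}v\,dx>0$ and $\mathcal{P}_{b_{1},b_{2}}$ carries that sign constraint. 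You correctly isolate the genuinely possible degenerate radial pairs with $\int u^{2}v\le 0$ (after reduction to $(|u|,|v|)$, those with essentially disjoint supports) and handle them by perturbation; since each $\Psi_{u_\varepsilon,v}$ is a fixed polynomial in $e^{t/2}$ whose coefficients converge, and the maximizers stay in a common compact interval, the limit passage is routine. One cosmetic point: a convex combination of $u$ with an auxiliary function does not remain on $\mathrm{T}_{b_{1}}$, so the perturbed function must be renormalized in $L^{2}$ before letting $\varepsilon\to0$; this does not affect the argument.
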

\begin{proof}
On the one hand
$$m^{-}_{\beta}(b_{1},b_{2})=\inf_{(u,v)\in \mathcal{P}^{-}_{b_{1},b_{2}} }J_{\beta}(u,v)=J_{\beta}(\widetilde{u},\widetilde{v}).$$ Then, by Lemma \ref{Lem4}, we have $$m^{-}_{\beta}(b_{1},b_{2})=J_{\beta}(\widetilde{u},\widetilde{v})=\max_{t\in \mathbb{R}} J_{\beta}(t\star(\widetilde{u},\widetilde{v}))\geq \inf_{(u,v)\in \mathrm{T}_{b_{1},r}\times \mathrm{T}_{b_{2},r} }\max_{t\in \mathbb{R}}J_{\beta}(t\star(u,v)).$$
On the other hand, for any $(u,v)\in \mathrm{T}_{b_{1},r}\times \mathrm{T}_{b_{2},r} ,$ we have $t_{u,v}\star (u,v)\in \mathcal{P}^{-}_{b_{1},b_{2}}$ and hence $$\max_{t\in \mathbb{R}}J_{\beta}(t\star(u,v))=J_{\beta}(t_{u,v}\star (u,v))\geq m^{-}_{\beta}(b_{1},b_{2}).$$
\end{proof}
Before giving the estimate of $m^{-}_{\beta}(b_{1},b_{2})$, we would like to study the dependence of $m^{-}_{\beta}(b_{1},b_{2})$ on $\beta$.
\begin{lemma}\label{Lem10}
For any $0<\beta_{1}<\beta_{2}$, then $m^{-}_{\beta_{2}}(b_{1},b_{2})\leq m^{-}_{\beta_{1}}(b_{1},b_{2}) \leq m^{-}_{0}(b_{1},b_{2}).$
\end{lemma}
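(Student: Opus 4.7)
The plan is to exploit the variational characterization
\[
m^{-}_{\beta}(b_1,b_2)=\inf_{(u,v)\in \mathrm{T}_{b_{1},r}\times \mathrm{T}_{b_{2},r}}\max_{t\in\mathbb{R}}J_{\beta}(t\star(u,v))
\]
established in Lemma \ref{Lem3}, together with the elementary observation that at fixed $(u,v)$, the fiber map $t\mapsto J_\beta(t\star(u,v))$ is monotone decreasing in $\beta$ whenever $\int_{\mathbb{R}^3}u^{2}v\,dx>0$. Indeed, from the explicit formula
\[
J_{\beta_2}(t\star(u,v))-J_{\beta_1}(t\star(u,v))=-\frac{\beta_2-\beta_1}{2}\,e^{\frac{3t}{2}}\int_{\mathbb{R}^3}u^2 v\,dx,
\]
one has $J_{\beta_2}(t\star(u,v))\leq J_{\beta_1}(t\star(u,v))$ for every $t\in\mathbb{R}$, hence $\max_{t\in\mathbb{R}} J_{\beta_2}(t\star(u,v))\leq \max_{t\in\mathbb{R}} J_{\beta_1}(t\star(u,v))$.

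For the first inequality $m^{-}_{\beta_2}(b_1,b_2)\leq m^{-}_{\beta_1}(b_1,b_2)$ with $0<\beta_1<\beta_2$, I would take any minimizing sequence $(u_n,v_n)\subset \mathrm{T}_{b_{1},r}\times \mathrm{T}_{b_{2},r}$ for $m^{-}_{\beta_1}$. Since $m^{-}_{\beta_1}>0$ by Lemma \ref{Lem9} and the variational level for $\beta_1>0$ is only attained on fibers with $\int u_n^2 v_n\,dx>0$ (otherwise the fiber map would not match the geometry of Lemma \ref{Lem4}), the pointwise inequality above applies to each $(u_n,v_n)$. Therefore
\[
m^{-}_{\beta_2}(b_1,b_2)\leq \max_{t\in\mathbb{R}}J_{\beta_2}(t\star(u_n,v_n))\leq \max_{t\in\mathbb{R}}J_{\beta_1}(t\star(u_n,v_n))\to m^{-}_{\beta_1}(b_1,b_2).
\]

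For the second inequality $m^{-}_{\beta_1}(b_1,b_2)\leq m^{-}_{0}(b_1,b_2)$, the same pointwise comparison works provided we choose test functions with $\int u^2 v\,dx>0$. Thus I would select a minimizing sequence $(u_n,v_n)$ for $m^{-}_0$ consisting of nonnegative radial functions; this is admissible since the $\beta=0$ functional is invariant under $(u,v)\mapsto(|u|,|v|)$ and Schwarz rearrangement, which is essentially the argument already used in Lemma \ref{Lemg}. Positivity of both components forces $\int u_n^2 v_n\,dx>0$, so by the pointwise decreasing property in $\beta$,
\[
m^{-}_{\beta_1}(b_1,b_2)\leq \max_{t\in\mathbb{R}}J_{\beta_1}(t\star(u_n,v_n))\leq \max_{t\in\mathbb{R}}J_{0}(t\star(u_n,v_n))\to m^{-}_{0}(b_1,b_2).
\]

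The main subtlety, rather than a serious obstacle, is justifying that the minimization for $m^{-}_{0}$ can indeed be restricted to nonnegative (in particular positive) radial pairs so that $\int u_n^2 v_n\,dx>0$. This is standard and follows from Schwarz symmetrization applied componentwise to the $\beta=0$ functional, which decreases the kinetic energy while preserving the $L^2$ and $L^4$ norms and increasing the cross term $\int u^2 v^2\,dx$; hence the rearranged pair is still admissible at a no-higher energy level. Everything else reduces to the monotone comparison of fiber maps, so the lemma follows.
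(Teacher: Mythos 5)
Your argument is correct and follows essentially the same route as the paper: both rest on the inf--max characterization of Lemma \ref{Lem3} together with the pointwise monotonicity of the fiber map $t\mapsto J_{\beta}(t\star(u,v))$ in $\beta$ when $\int_{\mathbb{R}^{3}}u^{2}v\,dx\geq 0$. The only difference is that the paper simply plugs the already-constructed positive minimizers $(\widetilde{u}_{\beta_{1}},\widetilde{v}_{\beta_{1}})$ and $(\widetilde{u}_{0},\widetilde{v}_{0})$ into the comparison (so positivity of $\int u^{2}v$ is automatic), whereas you work with minimizing sequences and justify nonnegativity by symmetrization; both are fine.
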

\begin{proof}
By Lemma \ref{Lem3}
\begin{align*}
m^{-}_{\beta_{2}}(b_{1},b_{2})\leq \max_{t\in \mathbb{R}}J_{\beta_{2}}(t\star(\widetilde{u}_{\beta_{1}},\widetilde{v}_{\beta_{1}}))&\leq \max_{t\in \mathbb{R}}J_{\beta_{1}}(t\star(\widetilde{u}_{\beta_{1}},\widetilde{v}_{\beta_{1}}))\\
&=J_{\beta_{1}}(\widetilde{u}_{\beta_{1}},\widetilde{v}_{\beta_{1}})=m^{-}_{\beta_{1}}(b_{1},b_{2}),
\end{align*}
$$m^{-}_{\beta_{1}}(b_{1},b_{2})\leq \max_{t\in \mathbb{R}}J_{\beta_{1}}(t\star(\widetilde{u}_{0},\widetilde{v}_{0}))\leq \max_{t\in \mathbb{R}}J_{0}(t\star(\widetilde{u}_{0},\widetilde{v}_{0}))=J_{0}(\widetilde{u}_{0},\widetilde{v}_{0})=m^{-}_{0}(b_{1},b_{2}).$$
\end{proof}

\begin{proof}[\bf Proof of  Theorem \ref{Th1}]
Let us consider a minimizing sequence $(u_{n},v_{n})$ for $J_{\beta}(u,v)\mid_{A_{R_{0}}}$ , we assume that $(u_{n},v_{n})\in \mathrm{T}_{b_{1}}\times \mathrm{T}_{b_{2}} $ is radial decreasing for every $n$. Furthermore, by Lemma \ref{Lem4}, for every $n$ we can take  $s_{u_{n},v_{n}}\ast(u_{n},v_{n})\in \mathcal{P}^{+}_{b_{1},b_{2}}$ such that $(\int_{\mathbb{R}^{3}}(|\nabla (s_{u_{n},v_{n}}\star u_{n})|^{2}+|\nabla (s_{u_{n},v_{n}}\star v_{n})|^{2})dx)^{\frac{1}{2}}\leq R_{0}$  and \begin{align*}
J_{\beta}(s_{u,v}\star (u_{n},v_{n}))&=\min\bigg\{J_{\beta}(t\star (u_{n},v_{n}):t\in \mathbb{R}\ \text{and}\left(\int_{\mathbb{R}^{3}}(|\nabla (t\star u_{n})|^{2}+|\nabla (t\star v_{n})|^{2})dx\right)^{\frac{1}{2}}< R_{0} \bigg\}\\
&<J_{\beta}(u_{n},v_{n}).
\end{align*}
Thus, we obtain a new minimizing sequence $\{w_{n},\sigma_{n}\}=\{s_{u_{n},u_{n}}\star v_{n},s_{u_{n},v_{n}}\star v_{n}\}$ with $(w_{n},\sigma_{n})\in \mathrm{T}_{b_{1,r}}\times \mathrm{T}_{b_{2,r}}\cap\mathcal{P}^{+}_{a,\mu} $ radially decreasing for every $n$. By Lemma \ref{Lem7}, we have $\left(\int_{\mathbb{R}^{3}}(|\nabla  w_{n}|^{2}+|\nabla  \sigma_{n}|^{2})dx\right)^{\frac{1}{2}}\leq R_{0}$ for every $n$ and hence by Ekeland's variational principle in a standard way, we know the existence of a new minimizing sequence for $\{u_{n},v_{n}\}\subset A_{R_{0}}$ for $m_{\beta}^{+}(b_{1},b_{2})$ with $\|(u_{n},v_{n})-(w_{n},\sigma_{n})\|\rightarrow 0$ as $n\rightarrow +\infty $, which is also a Palais-Smale sequence for $J_{\beta}(u,v)$ on $\mathrm{T}_{b_{1}}\times \mathrm{T}_{b_{2}}$. By the boundedness of $\{(w_{n},\sigma_{n})\}$, $\|(u_{n},v_{n})-(w_{n},\sigma_{n})\|\rightarrow 0$,  Br\'ezis-Lieb lemma and Sobolev embedding theorem, we have $$P_{b_{1},b_{2}}(u_{n},v_{n})=P_{b_{1},b_{2}}(w_{n},\sigma_{n})+o(1)\rightarrow 0\ \ \text{and}\ u^{-}_{n},v^{-}_{n}\rightarrow 0 \ a.e. \text{in}\ \mathbb{R}^{3}  .$$

Hence $(u_{n},v_{n})$ satisfies all the assumptions of Lemma \ref{Lem55}, as a consequence, up to a subsequence $(u_{n},v_{n})\rightarrow (u,v)$ strongly in $H^{1}(\mathbb{R}^{3})\times H^{1}(\mathbb{R}^{3})$ and $(u,v)$ is an interior local minimizer for $J_{\beta}(u,v)|_{A_{R_{0}}}$.
Finally, we prove that  $(u,v)$ is in $\mathcal{P}^{+}_{b_{1},b_{2}}(u, v)$. i.e. $\beta\int_{\mathbb{R}^{3}}u^{2}vdx>0$. Indeed, from  $(u_{n},v_{n})\in \mathcal{P}^{+}_{b_{1},b_{2}}(u, v)$, we have $\beta\int_{\mathbb{R}^{3}}u^{2}vdx\geq0$. Assume $\beta\int_{\mathbb{R}^{3}}u^{2}vdx=0$, then from Pohozaev identity, we have
\begin{align*}
\int_{\mathbb{R}^{3}}(|\nabla u|^{2}+|\nabla v|^{2})dx
=\frac{3}{4}\int_{\mathbb{R}^{3}}(\mu_{1}u^{4}+\mu_{2}v^{4}+2\rho u^{2}v^{2})dx.
\end{align*}
Thus, from \eqref{LA11}-\eqref{LA13}, we have
\begin{align*}
\int_{\mathbb{R}^{3}}(|\nabla u|^{2}+|\nabla v|^{2})dx&=
\frac{3}{4}\int_{\mathbb{R}^{3}}\left(\mu_{1}u^{4}+\mu_{2}v^{4}+2\rho u^{2}v^{2}\right)dx\\\nonumber
&\leq\frac{3\mu_{1}}{4} C^{4}_{3,4}b_{1}\|\nabla u\|^{3}_{L^{2}(\mathbb{R}^{3})}+\frac{3\mu_{2}}{4} C^{4}_{3,4}b_{2}\|\nabla v\|^{3}_{L^{2}(\mathbb{R}^{3})}\\\nonumber
&\quad+\frac{3}{4}\rho C^{4}_{3,4}b^{\frac{1}{2}}_{1}b^{\frac{1}{2}}_{2}\left[\|\nabla u\|^{2}_{L^{2}(\mathbb{R}^{3})}+\|\nabla v\|^{2}_{L^{2}(\mathbb{R}^{3})}\right]^{\frac{3}{2}}\\\nonumber
&\leq \frac{3}{4}C^{4}_{3,4}(\mu_{1} b_{1}+\mu_{2} b_{2}+\rho b^{\frac{1}{2}}_{1}b^{\frac{1}{2}}_{2})\left[\|\nabla u\|^{2}_{L^{2}(\mathbb{R}^{3})}+\|\nabla v\|^{2}_{L^{2}(\mathbb{R}^{3})}\right]^{\frac{3}{2}},
\end{align*}
so,
\begin{align}\label{U}
\left[\|\nabla u\|^{2}_{L^{2}(\mathbb{R}^{3})}+\|\nabla v\|^{2}_{L^{2}(\mathbb{R}^{3})}\right]^{\frac{1}{2}} \geq \frac{4}{3C^{4}_{3,4}(\mu_{1} b_{1}+\mu_{2} b_{2}+\rho b^{\frac{1}{2}}_{1}b^{\frac{1}{2}}_{2})}.
\end{align}
From Corollary \ref{Lem6} and the definition of $R_{0}$ in Lemma \ref{LAA}, we have
\begin{align}\label{U1}
\left[\|\nabla u\|^{2}_{L^{2}(\mathbb{R}^{3})}+\|\nabla v\|^{2}_{L^{2}(\mathbb{R}^{3})}\right]^{\frac{1}{2}}<R_{0}\leq\widetilde{t}=\frac{2}{3C^{4}_{3,4}(\mu_{1} b_{1}+\mu_{2} b_{1}+\rho b^{\frac{1}{2}}_{1}b^{\frac{1}{2}}_{2})}.
\end{align}
a contradiction. Thus $\beta\int_{\mathbb{R}^{3}}u^{2}vdx>0$ and $(u,v)\in\mathcal{P}^{+}_{b_{1},b_{2}}(u, v)$. From Lemma \ref{Lem7}, we know that $(u,v)$ is a ground normalized solution. From Lemma \ref{Lem2}, we get  a second critical point of mountain pass type for  $J_{\beta}(u,v)\mid_{\mathrm{T}_{b_{1}}\times \mathrm{T}_{b_{2}}}$.

{\bf Next, we prove the second part of Theorem \ref{Th1}}, that is  the limit behavior of the ground state solution as $\beta\rightarrow0.$

For $b_{1}, b_{2}>0$ fixed, from the proof Lemma \ref{LAA}, we can deduce that when $\beta\rightarrow0$, then $R_{0}(b_{1},b_{2},\rho,\beta)\rightarrow0$. By corollary \ref{Lem6}, when $(\widehat{u},\widehat{v})$ is the ground normalized solution obtained in Theorem\  \ref{Th1},\ then $$\big(\int_{\mathbb{R}^{3}}(|\nabla  u|^{2}+|\nabla  v|^{2})dx\big)^{\frac{1}{2}}<R_{0}(b_{1},b_{2},\rho,\beta)\rightarrow0,$$ and
\begin{align*}
0>&m^{+}_{\beta}(b_{1},b_{2})=J_{\beta}(u,v)\\
&\geq\frac{1}{2}\int_{\mathbb{R}^{3}}(|\nabla u|^{2}+|\nabla v|^{2})dx-\mathcal{D}_{1}\big(\int_{\mathbb{R}^{3}}|\nabla u|^{2}dx\big)^{\frac{3}{2}}-\mathcal{D}_{2}\big(\int_{\mathbb{R}^{3}}|\nabla v|^{2}dx\big)^{\frac{3}{2}}\\\nonumber
&\quad-\rho\mathcal{D}_{3}\big(\int_{\mathbb{R}^{3}}(|\nabla u|^{2}+|\nabla v|^{2})dx\big)^{\frac{3}{2}}-|\beta|\mathcal{D}_{4}\big(\int_{\mathbb{R}^{3}}(|\nabla u|^{2}+|\nabla v|^{2})dx\big)^{\frac{3}{4}}\\\nonumber
&\geq h\Big(\big(\int_{\mathbb{R}^{3}}(|\nabla u|^{2}+|\nabla v|^{2})dx\big)^{\frac{1}{2}}\Big)\rightarrow 0,
\end{align*}
thus $$m^{+}_{\beta}(b_{1},b_{2})\rightarrow0.$$

Denote $\{(\widetilde{u}_{\beta},\widetilde{v}_{\beta}):0<\beta<\overline{\beta}\}$ with $\overline{\beta}$ small enough. By the same arguments as Lemma \ref{Lem55}, we can obtain the $H^{1}$ convergence. First, to eliminate the disappearance of the solutions, we use Lemma \ref{Lem10} and  $0<m^{-}_{\beta}(b_{1},b_{2})<m^{-}_{0}(b_{1},b_{2})$ to get a contradiction. To eliminate the semi-trivial solution, we use  Br\'ezis-Lieb lemma and energy comparison to get a contradiction.

Hence,
\begin{align*}
m^{-}_{0}(b_{1},b_{2})&=\lim_{\beta\rightarrow 0}J_{\beta}(\widetilde{u}_{\beta},\widetilde{v}_{\beta})=\lim_{\beta\rightarrow 0}J_{\beta}(w_{\beta},\sigma_{\beta})+\lim_{\beta \rightarrow 0}J_{\beta}(\widetilde{u},\widetilde{v})\\
&=\lim_{\beta \rightarrow 0}\left[\frac{1}{6}\int_{\mathbb{R}^{3}}(|\nabla w_{\beta}|^{2}+|\nabla \sigma_{\beta}|^{2})dx-\frac{\beta}{4}\int_{\mathbb{R}^{3}}w_{\beta}^{2}\sigma_{\beta}dx\right]+\lim_{\beta \rightarrow 0}J_{\beta}(\widetilde{u},\widetilde{v})\\
&\geq\lim_{\beta\rightarrow 0}\frac{1}{6}\int_{\mathbb{R}^{3}}(|\nabla w_{\beta}|^{2}+|\nabla \sigma_{\beta}|^{2})dx+\lim_{\beta \rightarrow 0}J_{\beta}(\widetilde{u},\widetilde{v})\geq m^{-}_{0}(b_{1},b_{2}).
\end{align*}
Thus, $\lim_{\beta\rightarrow 0}J(\widetilde{u}_{\beta},\widetilde{v}_{\beta})=m^{-}_{0}(b_{1},b_{2})\ \text{and}\ (\widetilde{u}_{\beta},\widetilde{v}_{\beta})\rightarrow (\widetilde{u},\widetilde{v})$ in $H^{1}(\mathbb{R}^{3})\times H^{1}(\mathbb{R}^{3}) .$
\end{proof}
\section{Proof of Theorem \ref{Th2}}\label{sec10}
In this section, we give a refined upper of $m^{+}_{\beta}(b_{1},b_{2})$ and search for $(\lambda_{1,b_{1},b_{2}},\lambda_{2,b_{1},b_{2}}, u_{b_{1},b_{2}},v_{b_{1},b_{2}})$ solving
\begin{equation}\label{LA1}
\begin{cases}

-\Delta u+\lambda_{1}u=\beta uv  & \text{in} \ \ \mathbb{R}^{3},\\

-\Delta v+\lambda_{2}v= \frac{\beta}{2}u^{2}& \text{in} \ \ \mathbb{R}^{3},
\end{cases}
\end{equation}
satisfying the additional conditions
\begin{align}\label{LA2}
\int_{\mathbb{R}^{3}}u^{2}dx=b^{2}_{1}\ \text{and} \ \int_{\mathbb{R}^{3}}v^{2}dx=b^{2}_{2}.
\end{align}

Denote $$
J_{0}(u,v)=\frac{1}{2}\int_{\mathbb{R}^{3}}(|\nabla u|^{2}+|\nabla v|^{2})dx-\frac{\beta}{2}\int_{\mathbb{R}^{3}}u^{2}vdx,
$$
on the constraint $\mathrm{T}_{b_{1}}\times \mathrm{T}_{b_{2}} $.
 \begin{align*}
\mathcal{P}^{0}_{b_{1},b_{2}}(u,v):=\Big\{&(u,v)\in\mathrm{T}_{b_{1}}\times \mathrm{T}_{b_{2}} :P^{0}_{b_{1},b_{2}}(u,v)=0\Big\},
\end{align*}
where $$P^{0}_{b_{1},b_{2}}(u,v)=\int_{\mathbb{R}^{3}}(|\nabla u|^{2}+|\nabla v|^{2})dx-\frac{3}{4}\beta\int_{\mathbb{R}^{3}}u^{2}vdx.$$Then, the solution of \eqref{LA1}-\eqref{LA2} can be found as minimizers of $$m^{+}_{0}(b_{1},b_{2})=\inf_{\mathrm{T}_{b_{1}}\times \mathrm{T}_{b_{2}}}J_{0}(u,v)>-\infty.$$
If $(u_0,v_0)$ is the unique positive solution of \eqref{eqd1}, then $(u_{0},v_{0})=(\sqrt{2}\beta^{-1} w, \beta^{-1}w)$, and $w$ is the unique positive solution of \eqref{eqd2}.




Let$$\theta_{1}=\frac{2\beta^{2}b_{1}^{\frac{6}{5}}b_{2}^{\frac{4}{5}}}{16^{\frac{2}{5}}\|w\|^{2}_{L^{2}(\mathbb{R}^{3})}},\ \theta_{2}=\frac{\beta^{2}b_{1}^{\frac{8}{5}}b_{2}^{\frac{2}{5}}}{16^{\frac{1}{5}}\|w\|^{2}_{L^{2}(\mathbb{R}^{3})}},\ L_{1}=\frac{2\beta^{4}b_{1}^{\frac{14}{5}}b_{2}^{\frac{6}{5}}}{16^{\frac{3}{5}}\|w\|^{4}_{L^{2}(\mathbb{R}^{3})}},\ \ L_{2}=\frac{4\beta^{4}b_{1}^{\frac{12}{5}}b_{2}^{\frac{8}{5}}}{16^{\frac{4}{5}}\|w\|^{4}_{L^{2}(\mathbb{R}^{3})}}, $$
$$\lambda_{1,b_{1},b_{2}}=\frac{4\beta^{4}b_{1}^{\frac{12}{5}}b_{2}^{\frac{8}{5}}}{16^{\frac{4}{5}}\|w\|^{4}_{L^{2}(\mathbb{R}^{3})}},\ \ \ \lambda_{2,b_{1},b_{2}}=\frac{\beta^{4}b_{1}^{\frac{16}{5}}b_{2}^{\frac{4}{5}}}{16^{\frac{2}{5}}\|w\|^{4}_{L^{2}(\mathbb{R}^{3})}} ,$$
we have following lemma.
\begin{lemma}\label{LE}
\eqref{LA1}-\eqref{LA2} has a unique positive solution
$$\big(\lambda_{1,b_{1},b_{2}},\lambda_{2,b_{1},b_{2}}, L_{1}u_{0}(\theta_{1}x), L_{2}v_{0}(\theta_{2}x)\big).$$
Moreover,
\begin{align*}
&m^{+}_{0}(b_{1},b_{2})=J_{0}(L_{1}u_{0}(\theta_{1}x), L_{2}v_{0}(\theta_{2}x))=-\frac{1}{6}\Big[\frac{4\beta^{4}b_{1}^{\frac{22}{5}}b_{2}^{\frac{8}{5}}}{16^{\frac{4}{5}}\|w\|^{6}_{L^{2}(\mathbb{R}^{3})}}+\frac{\beta^{4}b_{1}^{\frac{16}{5}}b_{2}^{\frac{14}{5}}}{16^{\frac{2}{5}}\|w\|^{6}_{L^{2}(\mathbb{R}^{3})}}\Big]\|\nabla w\|^{2}_{L^{2}(\mathbb{R}^{3})}.
\end{align*}
\end{lemma}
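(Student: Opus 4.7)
The strategy is a three-stage variational analysis of $m^{+}_{0}(b_{1},b_{2})$: establish existence of a minimizer for the constrained problem; exploit the Pohozaev--Nehari identities to express the minimum energy purely through the Lagrange multipliers; and finally identify the minimizer explicitly by a scaling reduction to the base system \eqref{eqd1}, whose unique positive solution is provided by Theorem 1.2 of \cite{ZZS15}.

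\textbf{Existence and positivity.} First I would show $J_{0}$ is coercive and bounded below on $\mathrm{T}_{b_{1}}\times\mathrm{T}_{b_{2}}$: the three-dimensional Gagliardo--Nirenberg inequality gives $|\int u^{2}v|\leq\|u\|_{L^{3}}^{2}\|v\|_{L^{3}}\leq C b_{1}b_{2}^{1/2}\|\nabla u\|_{L^{2}}\|\nabla v\|_{L^{2}}^{1/2}$, which is sub-quadratic in the gradient norms and therefore dominated by $\tfrac{1}{2}(\|\nabla u\|_{L^{2}}^{2}+\|\nabla v\|_{L^{2}}^{2})$ at large gradient. Symmetrizing a minimizing sequence via the rearrangement $\{\cdot,\cdot\}^{\star}$ of Lemma \ref{LM} reduces to radial nonnegative functions, and the compact radial embedding $H^{1}_{r}(\mathbb{R}^{3})\hookrightarrow L^{3}(\mathbb{R}^{3})$ produces a weak limit in $H^{1}\times H^{1}$. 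A strict sub-additivity argument analogous to Lemmas \ref{Lema}--\ref{Lem12} rules out mass loss. By the Lagrange multiplier theorem the minimizer $(u,v)$ satisfies \eqref{LA1} for some $\lambda_{1},\lambda_{2}\in\mathbb{R}$, and a Liouville-type argument as in Step 1 of Lemma \ref{Lem55} forces $\lambda_{1},\lambda_{2}>0$; the strong maximum principle then gives $u,v>0$.

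\textbf{Pohozaev--Nehari formula.} Testing \eqref{LA1} against $u,v$ and against $x\cdot\nabla u, x\cdot\nabla v$ produces
\begin{align*}
\|\nabla u\|_{L^{2}}^{2}+\lambda_{1}b_{1}^{2}&=\beta\!\int u^{2}v,\qquad \|\nabla v\|_{L^{2}}^{2}+\lambda_{2}b_{2}^{2}=\tfrac{\beta}{2}\!\int u^{2}v,\\
\tfrac{1}{2}\big(\|\nabla u\|_{L^{2}}^{2}+\|\nabla v\|_{L^{2}}^{2}\big)+\tfrac{3}{2}(\lambda_{1}b_{1}^{2}+\lambda_{2}b_{2}^{2})&=\tfrac{3\beta}{2}\!\int u^{2}v.
\end{align*}
Eliminating $\beta\int u^{2}v$ yields $\|\nabla u\|_{L^{2}}^{2}+\|\nabla v\|_{L^{2}}^{2}=\lambda_{1}b_{1}^{2}+\lambda_{2}b_{2}^{2}$ and $\beta\int u^{2}v=\tfrac{4}{3}(\lambda_{1}b_{1}^{2}+\lambda_{2}b_{2}^{2})$. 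Substituting into $J_{0}$ collapses the expression to the compact identity
\[
m^{+}_{0}(b_{1},b_{2})=-\tfrac{1}{6}\big(\lambda_{1,b_{1},b_{2}}\,b_{1}^{2}+\lambda_{2,b_{1},b_{2}}\,b_{2}^{2}\big).
\]

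\textbf{Identification and explicit formulas.} By \cite[Theorem 1.2]{ZZS15} the base system \eqref{eqd1} admits the unique positive solution $(u_{0},v_{0})=(\sqrt{2}\beta^{-1}w,\beta^{-1}w)$. I would then seek the minimizer in the form $u(x)=L_{1}u_{0}(\theta_{1}x)$, $v(x)=L_{2}v_{0}(\theta_{2}x)$: the mass constraints $\|u\|_{L^{2}}^{2}=b_{1}^{2}$, $\|v\|_{L^{2}}^{2}=b_{2}^{2}$ supply two algebraic relations among $(L_{i},\theta_{i})$, and matching the $w$- and $w^{2}$-coefficients against $-\Delta w+w=w^{2}$ after substitution into \eqref{LA1} supplies two further relations that also pin down $\lambda_{1},\lambda_{2}$. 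Solving this $4\times4$ nonlinear system in closed form yields the stated expressions for $\theta_{1},\theta_{2},L_{1},L_{2}$ together with
\[
\lambda_{1,b_{1},b_{2}}=\frac{4\beta^{4}b_{1}^{12/5}b_{2}^{8/5}}{16^{4/5}\|w\|_{L^{2}(\mathbb{R}^{3})}^{4}},\qquad \lambda_{2,b_{1},b_{2}}=\frac{\beta^{4}b_{1}^{16/5}b_{2}^{4/5}}{16^{2/5}\|w\|_{L^{2}(\mathbb{R}^{3})}^{4}}.
\]
Uniqueness of the positive solution follows from the uniqueness of $(u_{0},v_{0})$ combined with the uniqueness of the scaling parameters matching the two masses. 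Inserting these $\lambda_{i}$ into the formula from the Pohozaev--Nehari step and using the scalar identity $\|\nabla w\|_{L^{2}}^{2}=\|w\|_{L^{2}}^{2}$ converts $\|w\|^{-4}$ into $\|w\|^{-6}\|\nabla w\|^{2}$ and produces the claimed closed form for $m^{+}_{0}(b_{1},b_{2})$.

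The delicate step is the algebraic closure in the third stage: the ansatz carries four free parameters $(L_{1},L_{2},\theta_{1},\theta_{2})$ while the underlying scaling invariance of \eqref{eqd1} is only one-dimensional, so the four constraints (two from masses, two from PDE compatibility) must combine consistently. This tight matching is what forces the somewhat unexpected rational exponents $\tfrac{6}{5},\tfrac{8}{5},\tfrac{12}{5},\ldots$ appearing throughout the formulas and ultimately fixes the explicit value of $m^{+}_{0}(b_{1},b_{2})$.
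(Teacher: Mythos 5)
Your stage-two reduction is correct and is in substance the same device the paper uses: testing \eqref{LA1} with $(u,v)$ and combining with the Pohozaev identity gives $\|\nabla u\|_{L^{2}}^{2}+\|\nabla v\|_{L^{2}}^{2}=\lambda_{1}b_{1}^{2}+\lambda_{2}b_{2}^{2}$ and $\beta\int u^{2}v=\tfrac{4}{3}(\lambda_{1}b_{1}^{2}+\lambda_{2}b_{2}^{2})$, hence $J_{0}=-\tfrac{1}{6}(\lambda_{1}b_{1}^{2}+\lambda_{2}b_{2}^{2})=-\tfrac{1}{6}\big(\|\nabla u\|_{L^{2}}^{2}+\|\nabla v\|_{L^{2}}^{2}\big)$; the paper states the kinetic-energy form and evaluates it on the rescaled profile, and with $\|\nabla w\|_{L^{2}}^{2}=\|w\|_{L^{2}}^{2}$ your version reproduces the stated constant. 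The existence stage is a reasonable supplement (the paper only asserts attainment).

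The genuine gap is in your third stage, precisely at the point you flag as delicate. The separable ansatz $u=L_{1}u_{0}(\theta_{1}x)$, $v=L_{2}v_{0}(\theta_{2}x)$ with $u_{0}=\sqrt{2}\beta^{-1}w$, $v_{0}=\beta^{-1}w$ does not close when $\theta_{1}\neq\theta_{2}$: substituting into the first equation of \eqref{LA1} and using $-\Delta w+w=w^{2}$, then dividing by $\sqrt{2}\beta^{-1}L_{1}w(\theta_{1}x)$, yields
$$\theta_{1}^{2}\,w(\theta_{1}x)+(\lambda_{1}-\theta_{1}^{2})=L_{2}\,w(\theta_{2}x)\qquad\text{for all }x,$$
which (letting $|x|\to\infty$ and then comparing the non-constant profiles) forces $\lambda_{1}=\theta_{1}^{2}=L_{2}$ and $\theta_{1}=\theta_{2}$; the second equation likewise forces $\lambda_{2}=\theta_{2}^{2}$, $L_{1}^{2}=L_{2}\theta_{2}^{2}$ and again $\theta_{1}=\theta_{2}$. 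Once $\theta_{1}=\theta_{2}=\theta$ the family collapses to the one-parameter scaling orbit $\big(\sqrt{2}\beta^{-1}\theta^{2}w(\theta x),\beta^{-1}\theta^{2}w(\theta x)\big)$, whose masses are locked in the ratio $\|u\|_{L^{2}}^{2}=2\|v\|_{L^{2}}^{2}$, so the two constraints \eqref{LA2} can only both be met when $b_{1}^{2}=2b_{2}^{2}$. Your $4\times4$ system is therefore overdetermined for general $(b_{1},b_{2})$: the stated $(L_{i},\theta_{i})$ do satisfy the two mass constraints and the algebraic relations above (this is where the exponents $\tfrac{6}{5},\tfrac{8}{5},\dots$ come from), but the resulting pair fails the cross-coupling terms $\beta uv$ and $\tfrac{\beta}{2}u^{2}$ whenever $\theta_{1}\neq\theta_{2}$, so "solving the system in closed form" cannot be carried out as described. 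A second, smaller gap: \cite{ZZS15} gives uniqueness of positive solutions only in the synchronized case $\lambda_{1}=\lambda_{2}$ (which is how the paper itself invokes it in the proof of Theorem \ref{Th2}), so uniqueness of the scaling parameters does not exclude non-separable positive solutions of \eqref{LA1} with $\lambda_{1}\neq\lambda_{2}$. Be aware that the paper's own proof is a one-line "by elementary calculation" followed by the same energy evaluation, so it supplies no mechanism for repairing this step; a correct argument would have to restrict to $b_{1}=\sqrt{2}\,b_{2}$ or abandon the two-scale ansatz.
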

\begin{proof}
By elementary calculation, we have  $$(\lambda_{1,b_{1},b_{2}},\lambda_{2,b_{1},b_{2}}, L_{1}u_{0}(\theta_{1}x), L_{2}v_{0}(\theta_{2}x))$$
is the unique positive solution of \eqref{LA1}-\eqref{LA2}. Furthermore, we have
\begin{align*}
m^{+}_{0}(b_{1},b_{2})&=J_{0}(L_{1}u_{0}(\theta_{1}x), L_{2}v_{0}(\theta_{2}x))\\
&=-\frac{1}{6}\Big[\frac{L^{2}_{1}}{\theta_{1}}\int_{\mathbb{R}^{3}}|\nabla u_{0}|^{2}dx+\frac{L^{2}_{2}}{\theta_{2}}\int_{\mathbb{R}^{3}}|\nabla v_{0}|^{2}dx\Big]\\
&=-\frac{1}{6}\Big[2\frac{L^{2}_{1}}{\theta_{1}}\beta^{-2}+\frac{L^{2}_{2}}{\theta_{2}}\beta^{-2}\Big]\|\nabla w\|^{2}_{L^{2}(\mathbb{R}^{3})}\\
&=-\frac{1}{6}\Big[\frac{4\beta^{4}b_{1}^{\frac{22}{5}}b_{2}^{\frac{8}{5}}}{16^{\frac{4}{5}}\|w\|^{6}_{L^{2}(\mathbb{R}^{3})}}+\frac{\beta^{4}b_{1}^{\frac{16}{5}}b_{2}^{\frac{14}{5}}}{16^{\frac{2}{5}}\|w\|^{6}_{L^{2}(\mathbb{R}^{3})}}\Big]\|\nabla w\|^{2}_{L^{2}(\mathbb{R}^{3})}.
\end{align*}
\end{proof}
\begin{lemma}\label{LEM}
We have
  $$m^{-}_{\beta}(b_{1},b_{2})< m^{+}_{0}(b_{1},b_{2})=-\frac{1}{6}\Big[\frac{4\beta^{4}b_{1}^{\frac{22}{5}}b_{2}^{\frac{8}{5}}}{16^{\frac{4}{5}}\|w\|^{6}_{L^{2}(\mathbb{R}^{3})}}+\frac{\beta^{4}b_{1}^{\frac{16}{5}}b_{2}^{\frac{14}{5}}}{16^{\frac{2}{5}}\|w\|^{6}_{L^{2}(\mathbb{R}^{3})}}\Big]\|\nabla w\|^{2}_{L^{2}(\mathbb{R}^{3})}.$$
\end{lemma}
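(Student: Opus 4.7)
A direct reading reveals an inconsistency with the earlier development: by Lemma~\ref{Lem9} the mountain-pass level satisfies $m^{-}_{\beta}(b_{1},b_{2})>0$, while by Lemma~\ref{LE} the right-hand side $m^{+}_{0}(b_{1},b_{2})$ is the strictly negative quantity displayed. The strict inequality $m^{-}_{\beta}<m^{+}_{0}$ therefore cannot hold with the definitions as given, and the summary paragraph preceding Theorem~\ref{Th2} in the Introduction states the bound with the local-minimum level $m^{+}_{\beta}$ on the left. The intended (and provable) statement is
\[
m^{+}_{\beta}(b_{1},b_{2})<m^{+}_{0}(b_{1},b_{2})=-\frac{1}{6}\Bigl[\frac{4\beta^{4}b_{1}^{22/5}b_{2}^{8/5}}{16^{4/5}\|w\|^{6}_{L^{2}(\R^{3})}}+\frac{\beta^{4}b_{1}^{16/5}b_{2}^{14/5}}{16^{2/5}\|w\|^{6}_{L^{2}(\R^{3})}}\Bigr]\|\nabla w\|^{2}_{L^{2}(\R^{3})},
\]
which is what the mass-collapse analysis of Theorem~\ref{Th2} actually requires. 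The plan below addresses this corrected statement.

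The strategy is to employ the unique positive solution $(U,V):=\bigl(L_{1}u_{0}(\theta_{1}\,\cdot\,),L_{2}v_{0}(\theta_{2}\,\cdot\,)\bigr)$ of the auxiliary quadratic system \eqref{LA1}-\eqref{LA2} from Lemma~\ref{LE} as a trial pair in $\mathrm{T}_{b_{1}}\times\mathrm{T}_{b_{2}}$. Since $U,V>0$, a pointwise decomposition gives
\[
J_{\beta}(U,V)=J_{0}(U,V)-\tfrac{1}{4}\int_{\R^{3}}\bigl(\mu_{1}U^{4}+\mu_{2}V^{4}+2\rho U^{2}V^{2}\bigr)dx<J_{0}(U,V)=m^{+}_{0}(b_{1},b_{2}).
\]
Provided $(U,V)$ lies in the admissible set $A_{R_{0}}=\{(u,v)\in\mathrm{T}_{b_{1}}\times\mathrm{T}_{b_{2}}:\|\nabla u\|_{L^{2}}^{2}+\|\nabla v\|_{L^{2}}^{2}<R_{0}^{2}\}$, Lemma~\ref{Lem7} yields $m^{+}_{\beta}(b_{1},b_{2})\leq J_{\beta}(U,V)<m^{+}_{0}(b_{1},b_{2})$, and the Pohozaev identity for the auxiliary system (no cubic terms) forces $J_{0}(U,V)=-\tfrac{1}{6}(\|\nabla U\|^{2}_{L^{2}}+\|\nabla V\|^{2}_{L^{2}})$, which reproduces the explicit constant on the right.

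The delicate step is the admissibility $(U,V)\in A_{R_{0}}$. A direct calculation using the scalings of Lemma~\ref{LE} gives
\[
\|\nabla U\|^{2}_{L^{2}}+\|\nabla V\|^{2}_{L^{2}}=\Bigl[\tfrac{4\beta^{4}b_{1}^{22/5}b_{2}^{8/5}}{16^{4/5}\|w\|^{6}_{L^{2}}}+\tfrac{\beta^{4}b_{1}^{16/5}b_{2}^{14/5}}{16^{2/5}\|w\|^{6}_{L^{2}}}\Bigr]\|\nabla w\|^{2}_{L^{2}},
\]
of order $\beta^{4}b^{6}$ in the synchronized regime $b_{1}\backsim b_{2}=b\to 0$. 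From the definition of $h$ in Lemma~\ref{LAA}, balancing the $t^{2}$ and $t^{3/2}$ contributions in $h(R_{0})=0$ at leading order gives $R_{0}\sim\beta^{2}\mathcal{D}_{4}^{2}\sim\beta^{2}b^{3}$, so that $R_{0}^{2}$ is also of order $\beta^{4}b^{6}$.

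Since both sides of the required admissibility bound share the same leading scale $\beta^{4}b^{6}$, the principal obstacle is a sharp quantitative comparison of constants rather than a naive scaling argument: one must keep a further order in the asymptotic expansion of $R_{0}$ (carrying along the subleading $t^{3}$ contribution driven by $\mathcal{D}_{1}+\mathcal{D}_{2}+\rho\mathcal{D}_{3}$) and verify that $\|\nabla U\|^{2}_{L^{2}}+\|\nabla V\|^{2}_{L^{2}}<R_{0}^{2}$ holds uniformly in the precise mass-collapse regime of Theorem~\ref{Th2}. Once this containment is in place, the strict pointwise inequality $J_{\beta}(U,V)<J_{0}(U,V)$ combined with Lemma~\ref{LE} closes the argument and delivers the displayed upper bound with the exact numerical constant.
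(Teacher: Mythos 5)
Your diagnosis of the statement is correct and agrees with the paper's own usage: the superscript is a typo, and the quantity being estimated is $\inf_{A_{R_{0}}}J_{\beta}$ (the paper's proof of Lemma \ref{LEM} opens with the line $m^{-}_{\beta}(b_{1},b_{2})=\inf_{A_{R_{0}}}J_{\beta}(u,v)$, which is exactly $m^{+}_{\beta}(b_{1},b_{2})$ as defined in Section \ref{sec4}, and the Introduction states the refined bound for $m^{+}_{\beta}$). Your main line is also the paper's: take the explicit solution $(U,V)=\big(L_{1}u_{0}(\theta_{1}\cdot),L_{2}v_{0}(\theta_{2}\cdot)\big)$ of the purely quadratic system from Lemma \ref{LE} as a competitor, observe that $J_{\beta}(U,V)=J_{0}(U,V)-\frac{1}{4}\int_{\mathbb{R}^{3}}\big(\mu_{1}U^{4}+\mu_{2}V^{4}+2\rho U^{2}V^{2}\big)dx<J_{0}(U,V)=m^{+}_{0}(b_{1},b_{2})$, and identify the explicit constant through the Nehari--Pohozaev identities, which indeed give $J_{0}(U,V)=-\frac{1}{6}\big(\|\nabla U\|^{2}_{L^{2}}+\|\nabla V\|^{2}_{L^{2}}\big)$.

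The genuine gap is the admissibility $(U,V)\in A_{R_{0}}$. You reduce it to a ``sharp comparison of constants'' between $\|\nabla U\|^{2}_{L^{2}}+\|\nabla V\|^{2}_{L^{2}}$ and $R_{0}^{2}$, both of order $\beta^{4}b^{6}$, and you do not carry this comparison out; moreover, an asymptotic expansion of $R_{0}$ would at best yield the containment in a limiting regime of small masses, whereas the lemma must hold for every $(b_{1},b_{2},\beta)$ satisfying \eqref{LA4}. The paper closes this step without any expansion: dropping the quartic terms gives, via \eqref{LA13}, the lower bound $J_{0}(u,v)\ge g\big((\|\nabla u\|^{2}_{L^{2}}+\|\nabla v\|^{2}_{L^{2}})^{1/2}\big)$ with $g(t)=\frac{1}{2}t^{2}-\frac{|\beta|}{2}\mathcal{D}_{4}t^{3/2}=\frac{1}{2}t^{3/2}\big(t^{1/2}-|\beta|\mathcal{D}_{4}\big)$, so $g(t)\ge 0$ precisely for $t\ge \widetilde{t}:=(|\beta|\mathcal{D}_{4})^{2}$. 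Since $J_{0}(U,V)=m^{+}_{0}(b_{1},b_{2})<0$, this forces $\big(\|\nabla U\|^{2}_{L^{2}}+\|\nabla V\|^{2}_{L^{2}}\big)^{1/2}<\widetilde{t}$. Finally $\widetilde{t}<R_{0}$ holds for all admissible parameters, because $h(t)=g(t)-\frac{1}{4}(\mathcal{D}_{1}+\mathcal{D}_{2}+\rho\mathcal{D}_{3})t^{3}\le g(t)$ and hence $g(R_{0})=\frac{1}{4}(\mathcal{D}_{1}+\mathcal{D}_{2}+\rho\mathcal{D}_{3})R_{0}^{3}>0$, which places $R_{0}$ strictly above the unique positive zero $\widetilde{t}$ of $g$. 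This yields $(U,V)\in A_{R_{0}}$ and completes the argument you outlined; you should replace your unproven quantitative comparison with this observation.
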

\begin{proof}
Since
\begin{align*}
J_{0}(u,v)&=\frac{1}{2}\int_{\mathbb{R}^{3}}(|\nabla u|^{2}+|\nabla v|^{2})dx-\frac{\beta}{2}\int_{\mathbb{R}^{3}}u^{2}vdx\\
&\geq \frac{1}{2}\int_{\mathbb{R}^{3}}(|\nabla u|^{2}+|\nabla v|^{2})dx-\frac{|\beta|}{2}\Big[\big(\frac{2}{3}b^{\frac{3}{2}}_{1}+\frac{1}{3}b^{\frac{3}{2}}_{2}\big)C^{3}_{3,3}\big[\|\nabla u\|^{2}_{L^{2}(\mathbb{R}^{3})}+\|\nabla v\|^{2}_{L^{2}(\mathbb{R}^{3})}\big]^{\frac{3}{4}}\Big]\\
&=g\Big(\big(\int_{\mathbb{R}^{3}}(|\nabla u|^{2}+|\nabla v|^{2})dx\big)^{\frac{1}{2}}\Big),
\end{align*}
where $$g(t)=\frac{1}{2}t^{2}-\frac{|\beta|}{2}\Big(\frac{2}{3}b^{\frac{3}{2}}_{1}+\frac{1}{3}b^{\frac{3}{2}}_{2}\Big)C^{3}_{3,3}t^{\frac{3}{2}}.$$
It is easy to see that $g(t)<0$ if  $t\in(0,\widetilde{t})$ and $g(t)>0$ if  $t\in(\widetilde{t},+\infty)$, where $\widetilde{t}=\Big[|\beta|\big(\frac{2}{3}b^{\frac{3}{2}}_{1}+\frac{1}{3}b^{\frac{3}{2}}_{2}\big)C^{3}_{3,3}\Big]^{2}$.
Since $J_{0}(L_{1}u_{0}(\theta_{1}x), L_{2}v_{0}(\theta_{2}x))=m^{+}_{0}(b_{1},b_{2})<0$,  we get $$\Big[\int_{\mathbb{R}^{3}}(|\nabla L_{1}u_{0}(\theta_{1}x)|^{2}+|\nabla L_{2}v_{0}(\theta_{2}x)|^{2})dx\Big]^{\frac{1}{2}}<\widetilde{t}.$$
From Lemma \ref{LE}, the definition of $R_{0}$ in Lemma \ref{LAA} and the definition of $\widetilde{t}$, we have $\widetilde{t}<R_{0}$,
$$\|L_{1}u_{0}(\theta_{1}x)\|^{2}_{L^{2}(\mathbb{R}^{3})}=b^{2}_{1},\ \|L_{2}v_{0}(\theta_{2}x)\|^{2}_{L^{2}(\mathbb{R}^{3})}=b^{2}_{2},$$
and
\begin{align*}
&\Big[\int_{\mathbb{R}^{3}}(|\nabla L_{1}u_{0}(\theta_{1}x)|^{2}+|\nabla L_{2}v_{0}(\theta_{2}x)|^{2})dx\Big]^{\frac{1}{2}}<\widetilde{t}<R_{0}.
\end{align*}
Therefore, we have
\begin{align*}
m^{-}_{\beta}(b_{1},b_{2})&=\inf_{A_{R_{0}}}J_{\beta}(u,v)\leq J_{\beta}(L_{1}u_{0}(\theta_{1}x),L_{2}v_{0}(\theta_{2}x))\\
&\quad <J_{0}(L_{1}u_{0}(\theta_{1}x),L_{2}v_{0}(\theta_{2}x))
=m_{0}(b_{1},b_{2})\\
&=-\frac{1}{6}\Big[\frac{4\beta^{4}b_{1}^{\frac{22}{5}}b_{2}^{\frac{8}{5}}}{16^{\frac{4}{5}}\|w\|^{6}_{L^{2}(\mathbb{R}^{3})}}+\frac{\beta^{4}b_{1}^{\frac{16}{5}}b_{2}^{\frac{14}{5}}}{16^{\frac{2}{5}}\|w\|^{6}_{L^{2}(\mathbb{R}^{3})}}\Big]\|\nabla w\|^{2}_{L^{2}(\mathbb{R}^{3})}.
\end{align*}
\end{proof}
\begin{proof}[\bf Proof of Theorem \ref{Th2}]
Let $b_{1,k},\ b_{2,k}\rightarrow 0^{+}$ with $b_{1,k}\backsim b_{2,k}$ as $k\rightarrow +\infty$  and  $(u_{b_{1,k},b_{2,k}},v_{b_{1,k},b_{2,k}})\in A_{R_{0}}$ be a positive minimizer of $m(b_{1,k},b_{2,k}, R_{0})$ \\ for each $k\in\mathbb{N}$. From
\begin{align*}
&P_{b_{1},b_{2}}(u_{b_{1,k},b_{2,k}},v_{b_{1,k},b_{2,k}})\\
&=\int_{\mathbb{R}^{3}}(|\nabla u_{b_{1,k},b_{2,k}}|^{2}+|\nabla v_{b_{1,k},b_{2,k}}|^{2})dx
-\frac{3}{4}\int_{\mathbb{R}^{3}}\left(\mu_{1}u_{b_{1,k},b_{2,k}}^{4}+\mu_{2}v_{b_{1,k},b_{2,k}}^{4}+2\rho u_{b_{1,k},b_{2,k}}^{2}v_{b_{1,k},b_{2,k}}^{2}\right)dx\\
&\quad-\frac{3}{4}\beta\int_{\mathbb{R}^{N}}u_{b_{1,k},b_{2,k}}^{2}v_{b_{1,k},b_{2,k}}dx=0,
\end{align*}
we have
\begin{align*}
J_{\beta}(u_{b_{1,k},b_{2,k}},v_{b_{1,k},b_{2,k}})
&=\frac{1}{6}\int_{\mathbb{R}^{3}}(|\nabla u_{b_{1,k},b_{2,k}}|^{2}+|\nabla v_{b_{1,k},b_{2,k}}|^{2})dx-\frac{\beta}{4}\int_{\mathbb{R}^{N}}u_{b_{1,k},b_{2,k}}^{2}v_{b_{1,k},b_{2,k}}dx\\
&=-\frac{1}{6}\int_{\mathbb{R}^{3}}(|\nabla u_{b_{1,k},b_{2,k}}|^{2}+|\nabla v_{b_{1,k},b_{2,k}}|^{2})dx\\
&\quad+\frac{1}{4}\int_{\mathbb{R}^{3}}\big(\mu_{1}u_{b_{1,k},b_{2,k}}^{4}+\mu_{2}v_{b_{1,k},b_{2,k}}^{4}+2\rho u_{b_{1,k},b_{2,k}}^{2}v_{b_{1,k},b_{2,k}}^{2}\big)dx\\
&<-\frac{1}{6}\Big[\frac{4\beta^{4}b_{1,k}^{\frac{22}{5}}b_{2,k}^{\frac{8}{5}}}{16^{\frac{4}{5}}\|w\|^{6}_{L^{2}(\mathbb{R}^{3})}}+\frac{\beta^{4}b_{1,k}^{\frac{16}{5}}b_{2,k}^{\frac{14}{5}}}{16^{\frac{2}{5}}\|w\|^{6}_{L^{2}(\mathbb{R}^{3})}}\Big]\|\nabla w\|^{2}_{L^{2}(\mathbb{R}^{3})}.
\end{align*}
Thus
\begin{align*}
\Big[\frac{4\beta^{4}b_{1,k}^{\frac{22}{5}}b_{2,k}^{\frac{8}{5}}}{16^{\frac{4}{5}}\|w\|^{6}_{L^{2}(\mathbb{R}^{3})}}+\frac{\beta^{4}b_{1,k}^{\frac{16}{5}}b_{2,k}^{\frac{14}{5}}}{16^{\frac{2}{5}}\|w\|^{6}_{L^{2}(\mathbb{R}^{3})}}\Big]\|\nabla w\|^{2}_{L^{2}(\mathbb{R}^{3})}\leq\int_{\mathbb{R}^{3}}(|\nabla u_{b_{1,k},b_{2,k}}|^{2}+|\nabla v_{b_{1,k},b_{2,k}}|^{2})dx
\end{align*}
and
\begin{align*}
\int_{\mathbb{R}^{3}}(|\nabla u_{b_{1,k},b_{2,k}}|^{2}+|\nabla v_{b_{1,k},b_{2,k}}|^{2})dx<\left[\frac{3|\beta|}{2}\Big[\big(\frac{2}{3}b^{\frac{3}{2}}_{1,k}+\frac{1}{3}b^{\frac{3}{2}}_{2,k}\big)C^{3}_{3,3}\Big]\right]^{4}.
\end{align*}
Therefore
\begin{align}\label{LE2}
&\Big[\frac{4\beta^{4}b_{1,k}^{\frac{22}{5}}b_{2,k}^{\frac{8}{5}}}{16^{\frac{4}{5}}\|w\|^{6}_{L^{2}(\mathbb{R}^{3})}}+\frac{\beta^{4}b_{1,k}^{\frac{16}{5}}b_{2,k}^{\frac{14}{5}}}{16^{\frac{2}{5}}\|w\|^{6}_{L^{2}(\mathbb{R}^{3})}}\Big]\|\nabla w\|^{2}_{L^{2}(\mathbb{R}^{3})}\\\nonumber
&\leq\int_{\mathbb{R}^{3}}(|\nabla u_{b_{1,k},b_{2,k}}|^{2}+|\nabla v_{b_{1,k},b_{2,k}}|^{2})dx<\left[\frac{3|\beta|}{2}\Big[\big(\frac{2}{3}b^{\frac{3}{2}}_{1,k}+\frac{1}{3}b^{\frac{3}{2}}_{2,k}\big)C^{3}_{3,3}\Big]\right]^{4}.
\end{align}
When $b_{1,k}\backsim b_{2,k} $, we  can get
\begin{align}\label{LE22}
\int_{\mathbb{R}^{3}}(|\nabla u_{b_{1,k},b_{2,k}}|^{2}+|\nabla v_{b_{1,k},b_{2,k}}|^{2})dx\backsim b_{1,k}^{\frac{22}{5}}b_{2,k}^{\frac{8}{5}}+b_{1,k}^{\frac{16}{5}}b_{2,k}^{\frac{14}{5}}.
\end{align}
From \eqref{LA11}-\eqref{LA12} and \eqref{LE2}, we have
\begin{align}\label{LE3}
&\frac{1}{4}\int_{\mathbb{R}^{3}}\big(\mu_{1}u_{b_{1,k},b_{2,k}}^{4}+\mu_{2}v_{b_{1,k},b_{2,k}}^{4}+2\rho u_{b_{1,k},b_{2,k}}^{2}v_{b_{1,k},b_{2,k}}^{2}\big)dx\\\nonumber
&\leq\frac{1}{4}\int_{\mathbb{R}^{3}}\Big((\mu_{1}+\rho)u_{b_{1,k},b_{2,k}}^{4}+(\mu_{2}+\rho)v_{b_{1,k},b_{2,k}}^{4}\Big)dx\\\nonumber
&\leq \Big[\frac{\mu_{1}+\rho}{4} C^{4}_{3,4}b_{1,k}+\frac{\mu_{2}+\rho}{4} C^{4}_{3,4}b_{2,k}\Big]\left[\|\nabla u_{b_{1,k},b_{2,k}}\|^{2}_{L^{2}(\mathbb{R}^{3})}+\|\nabla v_{b_{1,k},b_{2,k}}\|^{2}_{L^{2}(\mathbb{R}^{3})}\right]^{\frac{3}{2}}\\\nonumber
&\leq\Big[\frac{\mu_{1}+\rho}{4} C^{4}_{3,4}b_{1,k}+\frac{\mu_{2}+\rho}{4} C^{4}_{3,4}b_{2,k}\Big]\left[\frac{3|\beta|}{2}\Big[\big(\frac{2}{3}b^{\frac{3}{2}}_{1,k}+\frac{1}{3}b^{\frac{3}{2}}_{2,k}\big)C^{3}_{3,3}\Big]\right]^{6}\rightarrow 0
\end{align}
as $k\rightarrow +\infty.$
Since $$m^{-}_{\beta}(b_{1,k},b_{2,k})<-\frac{1}{6}\Big[\frac{4\beta^{4}b_{1,k}^{\frac{22}{5}}b_{2,k}^{\frac{8}{5}}}{16^{\frac{4}{5}}\|w\|^{6}_{L^{2}(\mathbb{R}^{3})}}+\frac{\beta^{4}b_{1,k}^{\frac{16}{5}}b_{2,k}^{\frac{14}{5}}}{16^{\frac{2}{5}}\|w\|^{6}_{L^{2}(\mathbb{R}^{3})}}\Big]\|\nabla w\|^{2}_{L^{2}(\mathbb{R}^{3})},$$
we obtain
\begin{align}
&-\frac{1}{6}\Big[\frac{4\beta^{4}b_{1,k}^{\frac{22}{5}}b_{2,k}^{\frac{8}{5}}}{16^{\frac{4}{5}}\|w\|^{6}_{L^{2}(\mathbb{R}^{3})}}+\frac{\beta^{4}b_{1,k}^{\frac{16}{5}}b_{2,k}^{\frac{14}{5}}}{16^{\frac{2}{5}}\|w\|^{6}_{L^{2}(\mathbb{R}^{3})}}\Big]\|\nabla w\|^{2}_{L^{2}(\mathbb{R}^{3})}\\\nonumber
&>m^{-}_{\beta}(b_{1,k},b_{2,k})=J_{\beta}(u_{b_{1,k},b_{2,k}},v_{b_{1,k},b_{2,k}})\\\nonumber
&\geq \inf_{\mathrm{T}_{b_{1,k}}\times \mathrm{T}_{b_{2,k}}}\Big\{\frac{1}{2}\int_{\mathbb{R}^{3}}(|\nabla u|^{2}+|\nabla v|^{2})dx-\frac{\beta}{2}\int_{\mathbb{R}^{3}}u^{2}vdx\Big\}\\\nonumber
&\quad-\frac{1}{4}\int_{\mathbb{R}^{3}}\big(\mu_{1}u_{b_{1,k},b_{2,k}}^{4}+\mu_{2}v_{b_{1,k},b_{2,k}}^{4}+2\rho u_{b_{1,k},b_{2,k}}^{2}v_{b_{1,k},b_{2,k}}^{2}\big)dx\\\nonumber
&=-\frac{1}{6}\Big[\frac{4\beta^{4}b_{1,k}^{\frac{22}{5}}b_{2,k}^{\frac{8}{5}}}{16^{\frac{4}{5}}\|w\|^{6}_{L^{2}(\mathbb{R}^{3})}}+\frac{\beta^{4}b_{1,k}^{\frac{16}{5}}b_{2,k}^{\frac{14}{5}}}{16^{\frac{2}{5}}\|w\|^{6}_{L^{2}(\mathbb{R}^{3})}}\Big]\|\nabla w\|^{2}_{L^{2}(\mathbb{R}^{3})}\\\nonumber
&\quad-\frac{1}{4}\int_{\mathbb{R}^{3}}\big(\mu_{1}u_{b_{1,k},b_{2,k}}^{4}+\mu_{2}v_{b_{1,k},b_{2,k}}^{4}+2\rho u_{b_{1,k},b_{2,k}}^{2}v_{b_{1,k},b_{2,k}}^{2}\big)dx\\
&\geq-\frac{1}{6}\Big[\frac{4\beta^{4}b_{1,k}^{\frac{22}{5}}b_{2,k}^{\frac{8}{5}}}{16^{\frac{4}{5}}\|w\|^{6}_{L^{2}(\mathbb{R}^{3})}}+\frac{\beta^{4}b_{1,k}^{\frac{16}{5}}b_{2,k}^{\frac{14}{5}}}{16^{\frac{2}{5}}\|w\|^{6}_{L^{2}(\mathbb{R}^{3})}}\Big]\|\nabla w\|^{2}_{L^{2}(\mathbb{R}^{3})}\\\nonumber
&\quad-\Big[\frac{\mu_{1}+\rho}{4} C^{4}_{3,4}b_{1,k}+\frac{\mu_{2}+\rho}{4} C^{4}_{3,4}b_{2,k}\Big]\left[\frac{3|\beta|}{2}\Big[\big(\frac{2}{3}b^{\frac{3}{2}}_{1,k}+\frac{1}{3}b^{\frac{3}{2}}_{2,k}\big)C^{3}_{3,3}\Big]\right]^{6}.
\end{align}
When $b_{1,k}\backsim b_{2,k} $, we  can get
\begin{align}\label{LE6}
m^{-}_{\beta}(b_{1,k},b_{2,k})\backsim b_{1,k}^{\frac{22}{5}}b_{2,k}^{\frac{8}{5}}+b_{1,k}^{\frac{16}{5}}b_{2,k}^{\frac{14}{5}}
\end{align}
and
\begin{align}\label{LE7}
\frac{1}{2}\int_{\mathbb{R}^{3}}(|\nabla u_{b_{1,k},b_{2,k}}|^{2}+|\nabla v_{b_{1,k},b_{2,k}}|^{2})dx-\frac{\beta}{2}\int_{\mathbb{R}^{N}}u_{b_{1,k},b_{2,k}}^{2}v_{b_{1,k},b_{2,k}}dx\backsim b_{1,k}^{\frac{22}{5}}b_{2,k}^{\frac{8}{5}}+b_{1,k}^{\frac{16}{5}}b_{2,k}^{\frac{14}{5}}.
\end{align}
The Lagrange multipliers rule implies the existence of some $\lambda_{1,k},\lambda_{2,k}\in \mathbb{R}$ such that
\begin{align*}
&\int_{\mathbb{R}^{3}}(\nabla u_{b_{1,k},b_{2,k}}\nabla \varphi) dx +\int_{\mathbb{R}^{3}}(\lambda _{1,k}u_{b_{1,k},b_{2,k}}\varphi)dx\\\nonumber
&=\int_{\mathbb{R}^{3}}(\mu_{1}u_{b_{1,k},b_{2,k}}^{3}\varphi+\rho v_{b_{1,k},b_{2,k}}^{2}u_{b_{1,k},b_{2,k}}\varphi)dx +\beta\int_{\mathbb{R}^{3}} u_{b_{1,k},b_{2,k}} v_{b_{1,k},b_{2,k}}\varphi,
\end{align*}
\begin{align*}
&\int_{\mathbb{R}^{3}}(\nabla v_{b_{1,k},b_{2,k}}\nabla \psi) dx +\int_{\mathbb{R}^{3}}(\lambda_{2,k}v_{b_{1,k},b_{2,k}}\psi)dx\\\nonumber
&=\int_{\mathbb{R}^{3}}(\mu_{2}v_{b_{1,k},b_{2,k}}^{3}\psi+\rho u_{b_{1,k},b_{2,k}}^{2}v_{b_{1,k},b_{2,k}}\psi)dx +\frac{\beta}{2}\int_{\mathbb{R}^{3}} u_{b_{1,k},b_{2,k}}^{2}\psi,
\end{align*}
for each $\varphi,\psi \in H^{1}(\mathbb{R}^{3})$. Taking $\varphi=u_{b_{1,k},b_{2,k}}$ and $\psi=v_{b_{1,k},b_{2,k}}$, we have
\begin{align}\label{AH}
\lambda _{1,k}b_{1,k}^{2}&=-\int_{\mathbb{R}^{3}}|\nabla u_{b_{1,k},b_{2,k}}|^{2} dx +
\int_{\mathbb{R}^{3}}(\mu_{1}u_{b_{1,k},b_{2,k}}^{4}+\rho v_{b_{1,k},b_{2,k}}^{2}u^{2}_{b_{1,k},b_{2,k}})dx\\\nonumber
 &+\beta\int_{\mathbb{R}^{3}} u^{2}_{b_{1,k},b_{2,k}} v_{b_{1,k},b_{2,k}},
 \end{align}
 \begin{align}\label{AG}
\lambda _{2,k}b_{2,k}^{2}&=-\int_{\mathbb{R}^{3}}|\nabla v_{b_{1,k},b_{2,k}}|^{2} dx +
\int_{\mathbb{R}^{3}}(\mu_{1}v_{b_{1,k},b_{2,k}}^{4}+\rho v_{b_{1,k},b_{2,k}}^{2}u^{2}_{b_{1,k},b_{2,k}})dx\\\nonumber
 &+\frac{\beta}{2}\int_{\mathbb{R}^{3}} u^{2}_{b_{1,k},b_{2,k}} v_{b_{1,k},b_{2,k}}.
 \end{align}
Since $P_{b_{1},b_{2}}(u_{b_{1,k},b_{2,k}},v_{b_{1,k},b_{2,k}})=0,$
we get
\begin{align}\label{LE8}
&\int_{\mathbb{R}^{3}}(|\nabla u_{b_{1,k},b_{2,k}}|^{2}+|\nabla v_{b_{1,k},b_{2,k}}|^{2})dx-\frac{3}{4}\beta\int_{\mathbb{R}^{N}}u_{b_{1,k},b_{2,k}}^{2}v_{b_{1,k},b_{2,k}}dx\\\nonumber
&-\frac{3}{4}\int_{\mathbb{R}^{3}}\left(\mu_{1}u_{b_{1,k},b_{2,k}}^{4}+\mu_{2}v_{b_{1,k},b_{2,k}}^{4}+2\rho u_{b_{1,k},b_{2,k}}^{2}v_{b_{1,k},b_{2,k}}^{2}\right)dx= 0.
\end{align}
Hence, from   \eqref{AH},  \eqref{AG}, \eqref{LE8} and $b_{1,k}\backsim b_{2,k}$, we have
\begin{align*}
&-\int_{\mathbb{R}^{3}}(|\nabla u_{b_{1,k},b_{2,k}}|^{2}+|\nabla v_{b_{1,k},b_{2,k}}|^{2} ) dx+\frac{3\beta}{2}\int_{\mathbb{R}^{3}} u^{2}_{b_{1,k},b_{2,k}} v_{b_{1,k},b_{2,k}}\\
&\leq\lambda _{1,k}b_{1,k}^{2}+\lambda _{2,k}b_{2,k}^{2}
=\int_{\mathbb{R}^{3}}(|\nabla u_{b_{1,k},b_{2,k}}|^{2}+|\nabla v_{b_{1,k},b_{2,k}}|^{2} ) dx\\
&\quad-\frac{1}{2}\int_{\mathbb{R}^{3}}\left(\mu_{1}u_{b_{1,k},b_{2,k}}^{4}+\mu_{2}v_{b_{1,k},b_{2,k}}^{4}+2\rho u_{b_{1,k},b_{2,k}}^{2}v_{b_{1,k},b_{2,k}}^{2}\right)dx,
\end{align*}
combing with  \eqref{LE22} and \eqref{LE7}, we get $$\lambda _{1,k}b_{1,k}^{2}+\lambda _{2,k}b_{2,k}^{2}\backsim b_{1,k}^{\frac{22}{5}}b_{2,k}^{\frac{8}{5}}+b_{1,k}^{\frac{16}{5}}b_{2,k}^{\frac{14}{5}},$$
thus $\lambda _{1,k}\backsim b_{1,k}^{4}$ and $\lambda _{2,k}\backsim b_{1,k}^{4}$ when $b_{1,k}\backsim b_{2,k}$.

Denote
$$\theta_{1,k}=\frac{2\beta^{2}b_{1,k}^{\frac{6}{5}}b_{2,k}^{\frac{4}{5}}}{16^{\frac{2}{5}}\|w\|^{2}_{L^{2}(\mathbb{R}^{3})}},\ \theta_{2,k}=\frac{\beta^{2}b_{1,k}^{\frac{8}{5}}b_{2,k}^{\frac{2}{5}}}{16^{\frac{1}{5}}\|w\|^{2}_{L^{2}(\mathbb{R}^{3})}},\ L_{1,k}=\frac{2\beta^{4}b_{1,k}^{\frac{14}{5}}b_{2,k}^{\frac{6}{5}}}{16^{\frac{3}{5}}\|w\|^{4}_{L^{2}(\mathbb{R}^{3})}},\ \ L_{2,k}=\frac{4\beta^{4}b_{1,k}^{\frac{12}{5}}b_{2,k}^{\frac{8}{5}}}{16^{\frac{4}{5}}\|w\|^{4}_{L^{2}(\mathbb{R}^{3})}}. $$
Define $$\widetilde{u}_{b_{1,k},b_{2,k}}=L^{-1}_{1,k}u_{b_{1,k},b_{2,k}}(\theta^{-1}_{1,k}x),\ \ \widetilde{v}_{b_{1,k},b_{2,k}}=L^{-1}_{2,k}v_{b_{1,k},b_{2,k}}(\theta^{-1}_{2,k}x),$$
then
\begin{align}\label{LE4}
&\int_{\mathbb{R}^{3}}(|\nabla \widetilde{u}_{b_{1,k},b_{2,k}}|^{2}+|\nabla \widetilde{v}_{b_{1,k},b_{2,k}}|^{2})dx\\\nonumber
&=\frac{\theta_{1,k}}{L^{2}_{1,k}}\int_{\mathbb{R}^{3}}|\nabla u_{b_{1,k},b_{2,k}}|^{2}dx+\frac{\theta_{2,k}}{L^{2}_{2,k}}\int_{\mathbb{R}^{3}}|\nabla v_{b_{1,k},b_{2,k}}|^{2}dx\\\nonumber
&\leq \Big[\frac{16^{\frac{4}{5}}\|w\|^{6}_{L^{2}(\mathbb{R}^{3})}}{2\beta^{6}b_{1,k}^{\frac{22}{5}}b_{2,k}^{\frac{8}{5}}}+\frac{16^{\frac{2}{5}}\|w\|^{6}_{L^{2}(\mathbb{R}^{3})}}{2\beta^{6}b_{1,k}^{\frac{16}{5}}b_{2,k}^{\frac{14}{5}}}\Big]\int_{\mathbb{R}^{3}}(|\nabla u_{b_{1,k},b_{2,k}}|^{2}+|\nabla v_{b_{1,k},b_{2,k}}|^{2})dx\\\nonumber
&\leq \Big[\frac{16^{\frac{4}{5}}\|w\|^{6}_{L^{2}(\mathbb{R}^{3})}}{2\beta^{6}b_{1,k}^{\frac{22}{5}}b_{2,k}^{\frac{8}{5}}}+\frac{16^{\frac{2}{5}}\|w\|^{6}_{L^{2}(\mathbb{R}^{3})}}{2\beta^{6}b_{1,k}^{\frac{16}{5}}b_{2,k}^{\frac{14}{5}}}\Big]\left[\frac{3|\beta|}{2}\Big[\big(\frac{2}{3}b^{\frac{3}{2}}_{1,k}+\frac{1}{3}b^{\frac{3}{2}}_{2,k}\big)C^{3}_{3,3}\Big]\right]^{4},
\end{align}
\begin{align}\label{LE5}
\int_{\mathbb{R}^{3}}(| \widetilde{u}_{b_{1,k},b_{2,k}}|^{2}+| \widetilde{v}_{b_{1,k},b_{2,k}}|^{2})dx&=\frac{\theta_{1,k}^{3}}{L^{2}_{1,k}}\int_{\mathbb{R}^{3}}| u_{b_{1,k},b_{2,k}}|^{2}dx+\frac{\theta_{2,k}^{3}}{L^{2}_{2,k}}\int_{\mathbb{R}^{3}}| v_{b_{1,k},b_{2,k}}|^{2}dx\\\nonumber
&=\frac{2\|w\|^{2}_{L^{2}(\mathbb{R}^{3})}+\|w\|^{2}_{L^{2}(\mathbb{R}^{3})}}{\beta^{2}},
\end{align}
\begin{align}\label{LE9}
&\int_{\mathbb{R}^{3}}\Big(\frac{\theta_{1,k}^{3}}{L^{4}_{1,k}}\mu_{1}u_{b_{1,k},b_{2,k}}^{4}+\mu_{2}\frac{\theta_{2,k}^{3}}{L^{4}_{2,k}}v_{b_{1,k},b_{2,k}}^{4}\Big)dx\\\nonumber
&\quad<\int_{\mathbb{R}^{3}}\left(\mu_{1}\widetilde{u}_{b_{1,k},b_{2,k}}^{4}+\mu_{2}\widetilde{v}_{b_{1,k},b_{2,k}}^{4}+2\rho \widetilde{u}_{b_{1,k},b_{2,k}}^{2}\widetilde{v}_{b_{1,k},b_{2,k}}^{2}\right)dx\\\nonumber
&=\int_{\mathbb{R}^{3}}\Big(\frac{\theta_{1,k}^{3}}{L^{4}_{1,k}}\mu_{1}u_{b_{1,k},b_{2,k}}^{4}+\mu_{2}\frac{\theta_{2,k}^{3}}{L^{4}_{2,k}}v_{b_{1,k},b_{2,k}}^{4}+2\rho u_{b_{1,k},b_{2,k}}^{2}v_{b_{1,k},b_{2,k}}^{2}\Big)dx\\\nonumber
&\quad\leq\int_{\mathbb{R}^{3}}\Big(\frac{\theta_{1,k}^{3}}{L^{4}_{1,k}}(\mu_{1}+\rho)u_{b_{1,k},b_{2,k}}^{4}+(\mu_{2}+\rho)\frac{\theta_{2,k}^{3}}{L^{4}_{2,k}}v_{b_{1,k},b_{2,k}}^{4}\Big)dx.
\end{align}
By the definition of $\theta_{1,k},\  \theta_{2,k},\  L_{1,k},\ L_{2,k}$, it is easy to see that
\begin{align}\label{LE10}
\int_{\mathbb{R}^{3}}\big(\mu_{1}\widetilde{u}_{b_{1,k},b_{2,k}}^{4}+\mu_{2}\widetilde{v}_{b_{1,k},b_{2,k}}^{4}+2\rho \widetilde{u}_{b_{1,k},b_{2,k}}^{2}\widetilde{v}_{b_{1,k},b_{2,k}}^{2}\big)dx\rightarrow+\infty\ \text{as}\ k\rightarrow+\infty.
\end{align}
From  \eqref{LE4}- \eqref{LE5}, we know that $(\widetilde{u}_{b_{1,k},b_{2,k}}, \widetilde{v}_{b_{1,k},b_{2,k}})$ is bounded in $H^{1}(\mathbb{R}^{3})\times H^{1}(\mathbb{R}^{3}).$ Then, we have

\begin{equation*}
\big(\widetilde{u}_{b_{1,k},b_{2,k}}(x),\widetilde{v}_{b_{1,k},b_{2,k}}(x)\big)\rightharpoonup (\bar{u}, \bar{v})\neq(0,0),
\end{equation*}
for some  $(\bar{u}, \bar{v})\in H^{1}(\mathbb{R}^{3})\times H^{1}(\mathbb{R}^{3})$.
Thus, we see that $(\widetilde{u}_{b_{1,k},b_{2,k}}(x),\widetilde{v}_{b_{1,k},b_{2,k}}(x))$ satisfies
\begin{equation}\label{LE12}
\begin{cases}

-\Delta \widetilde{u}_{b_{1,k},b_{2,k}}+\frac{\lambda_{1,k}}{\theta^{2}_{1,k}}\widetilde{u}_{b_{1,k},b_{2,k}}=\mu_{1}\frac{L^{2}_{1,k}}{\theta^{2}_{1,k}}\widetilde{u}_{b_{1,k},b_{2,k}}^{3}+\rho\frac{L^{2}_{2,k}}{\theta^{2}_{1,k}} \widetilde{v}_{b_{1,k},b_{2,k}}^{2}\widetilde{u}_{b_{1,k},b_{2,k}}+\beta \frac{L_{2,k}}{\theta^{2}_{1,k}}\widetilde{u}_{b_{1,k},b_{2,k}}\widetilde{v}_{b_{1,k},b_{2,k}}, \\

-\Delta \widetilde{v}_{b_{1,k},b_{2,k}}+\frac{\lambda_{2,k}}{\theta^{2}_{2,k}}\widetilde{v}_{b_{1,k},b_{2,k}}= \mu_{2}\frac{L^{2}_{2,k}}{\theta^{2}_{2,k}}\widetilde{v}_{b_{1,k},b_{2,k}}^{3}+\rho \frac{L^{2}_{1,k}}{\theta^{2}_{2,k}} \widetilde{u}_{b_{1,k},b_{2,k}}^{2}\widetilde{v}_{b_{1,k},b_{2,k}}+\frac{\beta}{2}\frac{L^{2}_{1,k}}{L_{2,k}\theta^{2}_{2,k}} \widetilde{u}_{b_{1,k},b_{2,k}}^{2}.
\end{cases}
\end{equation}
By the definition of $ \theta_{1,k},\  \theta_{2,k},\  L_{1,k},\ L_{2,k}$, we have  $$ \frac{L^{2}_{1,k}}{\theta^{2}_{1,k}}=\frac{\beta^{4}b_{1,k}^{\frac{16}{5}}b_{2,k}^{\frac{4}{5}}}{16^{\frac{2}{5}}\|w\|^{4}_{L^{2}(\mathbb{R}^{3})}}\rightarrow 0,\ \frac{L^{2}_{2,k}}{\theta^{2}_{2,k}}=\frac{\beta^{4}b_{1,k}^{\frac{8}{5}}b_{2,k}^{\frac{12}{5}}}{16^{\frac{1}{5}}\|w\|^{4}_{L^{2}(\mathbb{R}^{3})}}\rightarrow 0,$$
$$\frac{L^{2}_{2,k}}{\theta^{2}_{1,k}}=\frac{4\beta^{4}b_{1,k}^{\frac{12}{5}}b_{2,k}^{\frac{8}{5}}}{16^{\frac{4}{5}}\|w\|^{4}_{L^{2}(\mathbb{R}^{3})}}\rightarrow 0,\ \ \frac{L^{2}_{1,k}}{\theta^{2}_{2,k}}=\frac{4\beta^{4}b_{1,k}^{\frac{12}{5}}b_{2,k}^{\frac{8}{5}}}{16^{\frac{4}{5}}\|w\|^{4}_{L^{2}(\mathbb{R}^{3})}}\rightarrow 0,\ \frac{L_{2,k}}{\theta^{2}_{1,k}}=1,\ \ \frac{L^{2}_{1,k}}{L_{2,k}\theta^{2}_{2,k}}=1,$$
and there exists $\lambda^{\ast}_{1}>0$ and  $\lambda^{\ast}_{2}>0$ such that
$$\frac{\lambda_{1,k}}{\theta^{2}_{1,k}}\rightarrow \lambda^{\ast}_{1},\  \frac{\lambda_{2,k}}{\theta^{2}_{2,k}}\rightarrow \lambda^{\ast}_{2} \ \text{as}\ k\rightarrow+\infty. $$
Therefore, $(\bar{u}, \bar{v})$ solves that
\begin{equation}\label{LE13}
\begin{cases}

-\Delta u+\lambda^{\ast}_{1}u= \beta uv  & \text{in} \ \ \mathbb{R}^{3},\\

-\Delta v+\lambda^{\ast}_{2}v= \frac{\beta}{2}u^{2}& \text{in} \ \ \mathbb{R}^{3}.
\end{cases}
\end{equation}
From Theorem 1.2 of \cite{ZZS15}, we know that when $ \lambda^{\ast}_{1}=\lambda^{\ast}_{2}$, then $$\big(\sqrt{2}\lambda^{\ast}_{1}\beta^{-1} w((\lambda^{\ast}_{1})^{\frac{1}{2}}x), \beta^{-1}\lambda^{\ast}_{1}w((\lambda^{\ast}_{1})^{\frac{1}{2}}x)\big)$$ is the unique positive solution of above system, where $w$ is the unique positive solution of $$-\Delta u +u =u^{2}, \ u \in H^{1}(\mathbb{R}^{3}).$$
Texting \eqref{LE12} and \eqref{LE13} with $\widetilde{u}_{b_{1,k},b_{2,k}}-\bar{u}$, $\widetilde{v}_{b_{1,k},b_{2,k}}-\bar{v}$ respectively, we obtain
\begin{align}\label{LU}
\int_{\mathbb{R}^{3}}|\nabla &(\widetilde{u}_{b_{1,k},b_{2,k}}-\bar{u})|^{2}dx+\int_{\mathbb{R}^{3}}(\frac{\lambda_{1,k}}{\theta^{2}_{1,k}}\widetilde{u}_{b_{1,k},b_{2,k}}-\lambda^{\ast}_{1}\bar{u})(\widetilde{u}_{b_{1,k},b_{2,k}}-\bar{u})\\\nonumber
&-\beta\int_{\mathbb{R}^{3}}(\widetilde{u}_{b_{1,k},b_{2,k}}\widetilde{v}_{b_{1,k},b_{2,k}}-\bar{u}\bar{v})(\widetilde{u}_{b_{1,k},b_{2,k}}-\bar{u})\\\nonumber
&=\int_{\mathbb{R}^{3}}|\nabla (\widetilde{u}_{b_{1,k},b_{2,k}}-\bar{u})|^{2}dx+\lambda^{\ast}_{1}\int_{\mathbb{R}^{3}}|\widetilde{u}_{b_{1,k},b_{2,k}}-\bar{u}|^{2}=o_{k}(1),
\end{align}
\begin{align}\label{LUU}
\int_{\mathbb{R}^{3}}|\nabla &(\widetilde{v}_{b_{1,k},b_{2,k}}-\bar{v})|^{2}dx+\int_{\mathbb{R}^{3}}(\frac{\lambda_{2,k}}{\theta^{2}_{2,k}}\widetilde{v}_{b_{1,k},b_{2,k}}-\lambda^{\ast}_{2}\bar{v})(\widetilde{v}_{b_{1,k},b_{2,k}}-\bar{v})\\\nonumber
&-\frac{\beta}{2}\int_{\mathbb{R}^{3}}(\widetilde{u}^{2}_{b_{1,k},b_{2,k}}-\bar{u}^{2})(\widetilde{v}_{b_{1,k},b_{2,k}}-\bar{v})\\\nonumber
&=\int_{\mathbb{R}^{3}}|\nabla (\widetilde{v}_{b_{1,k},b_{2,k}}-\bar{v})|^{2}dx+\lambda^{\ast}_{2}\int_{\mathbb{R}^{3}}|\widetilde{v}_{b_{1,k},b_{2,k}}-\bar{v}|^{2}=o_{k}(1).
\end{align}
Therefore
\begin{align*}
&\frac{2(\lambda^{\ast}_{1})^{\frac{1}{2}}\|w\|^{2}_{L^{2}(\mathbb{R}^{3})}}{\beta^{2}}=\|\sqrt{2}\lambda^{\ast}_{1}\beta^{-1} w((\lambda^{\ast}_{1})^{\frac{1}{2}}x)\|^{2}_{L^{2}(\mathbb{R}^{3})}=\|\bar{u}\|^{2}_{L^{2}(\mathbb{R}^{3})}\\
&=\lim_{k\rightarrow +\infty}\int_{\mathbb{R}^{3}}| \widetilde{u}_{b_{1,k},b_{2,k}}|^{2}dx=\lim_{k\rightarrow +\infty}\frac{\theta_{1,k}^{3}}{L^{2}_{1,k}}\int_{\mathbb{R}^{3}}| u_{b_{1,k},b_{2,k}}|^{2}dx=\frac{2\|w\|^{2}_{L^{2}(\mathbb{R}^{3})}}{\beta^{2}},
\end{align*}
\begin{align*}
&\frac{(\lambda^{\ast}_{1})^{\frac{1}{2}}\|w\|^{2}_{L^{2}(\mathbb{R}^{3})}}{\beta^{2}}=\|\beta^{-1}\lambda^{\ast}_{1}w((\lambda^{\ast}_{1})^{\frac{1}{2}}x))\|^{2}_{L^{2}(\mathbb{R}^{3})}=\|\bar{v}\|^{2}_{L^{2}(\mathbb{R}^{3})}\\
&= \lim_{k\rightarrow +\infty}\int_{\mathbb{R}^{3}}| \widetilde{v}_{b_{1,k},b_{2,k}}|^{2}dx=\lim_{k\rightarrow +\infty}\frac{\theta_{1,k}^{3}}{L^{2}_{1,k}}\int_{\mathbb{R}^{3}}| v_{b_{1,k},b_{2,k}}|^{2}dx=\frac{\|w\|^{2}_{L^{2}(\mathbb{R}^{3})}}{\beta^{2}},
\end{align*}
therefore $$\lambda^{\ast}_{1}=\lambda^{\ast}_{2}=1.$$
From \eqref{LE12}, \eqref{LE13} ,\eqref{LU} and \eqref{LUU}, we have that $$(\widetilde{u}_{b_{1,k},b_{2,k}},\widetilde{v}_{b_{1,k},b_{2,k}})\rightarrow (\sqrt{2}\beta^{-1} w,\beta^{-1} w)\ \ \text{in}\ H^{1}(\mathbb{R}^{3})\times H^{1}(\mathbb{R}^{3}).$$
Moreover, as the limit function $(\sqrt{2}\beta^{-1} w,\beta^{-1} w)$ is independent of the sequence that we choose, which implies that the convergence is true for the whole sequence. Therefore
\begin{equation*}
\big(\widetilde{u}_{b_{1,k},b_{2,k}},\widetilde{v}_{b_{1,k},b_{2,k}}\big)=\big(L^{-1}_{1,k}u_{b_{1,k},b_{2,k}}(\theta^{-1}_{1,k}(x)),L^{-1}_{2,k}v_{b_{1,k},b_{2,k}}(\theta^{-1}_{2,k}(x))\big)
\rightarrow\big(\sqrt{2}\beta^{-1} w,\beta^{-1} w\big),
\end{equation*}
in $\ H^{1}(\mathbb{R}^{3})\times H^{1}(\mathbb{R}^{3})$ as $b_{1,k},b_{2,k}\to 0$ and $b_{1,k}\backsim b_{2,k}$.

\end{proof}
Next, we prove Theorem \ref{Th5}.
In order to prove Theorem \ref{Th5}, we first give following lemma.
\begin{lemma}\label{LK}
Let $\overline{b}_{1},\ \overline{b}_{2},\ \tau_{1},\ \tau_{2} >0.$ There exists $\widetilde{\beta}=\widetilde{\beta}(\overline{b}_{1}+\tau_{1},\overline{b}_{2}+\tau_{2})>0$ such that  if $0<b_{1}\leq \overline{b}_{1}$, $0<b_{2}\leq \overline{b}_{2}$ and $0<\beta<\widetilde{\beta}$, then

(i) $2R^{2}_{0}(\overline{b}_{1}+\tau_{1},\overline{b}_{2}+\tau_{2},\rho,\beta)<R^{2}_{1}(\overline{b}_{1},\overline{b}_{2},\rho,\beta)$,

(ii) for any  $d_{i}> 0,\  c_{i}>0, i=1,2 $  such that $d^{2}_{i}+c^{2}_{i}=b_{1}^{i},\ i=1,2$, then $R^{2}_{0}(c_{1},c_{2},\rho,\beta)+ R^{2}_{0} (d_{1},d_{2},\rho,\beta)\leq R^{2}_{1}(b_{1}, b_{2},\rho,\beta).$
\end{lemma}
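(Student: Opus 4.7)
The plan is to analyze the asymptotic behavior of $R_{0}$ and $R_{1}$ (the two positive zeros of the function $h$ from Lemma~\ref{LAA}) as $\beta\to 0^{+}$, uniformly in the mass parameters. Substituting $s=t^{1/2}$ in $h(t)=0$, I observe that $R_{0}^{1/2}$ and $R_{1}^{1/2}$ are the two positive roots of the cubic
\begin{equation*}
f(s):=K(b_{1},b_{2},\rho)\,s^{3}-2s+2\beta L(b_{1},b_{2})=0,
\end{equation*}
where $K=\mathcal{D}_{1}+\mathcal{D}_{2}+\rho\mathcal{D}_{3}$ and $L=\mathcal{D}_{4}$ are continuous and strictly increasing in $(b_{1},b_{2})$. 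First I will fix $\widetilde{\beta}_{0}>0$ so small that condition \eqref{LA4} holds for every $(b_{1},b_{2})\in(0,\overline{b}_{1}+\tau_{1}]\times(0,\overline{b}_{2}+\tau_{2}]$ and every $\beta\in(0,\widetilde{\beta}_{0})$; this is possible because \eqref{LA4} reads $\beta\cdot F(b_{1},b_{2},\rho)<\tfrac{2\sqrt{6}}{3}$ with $F$ continuous, hence bounded on that rectangle, so $R_{0}$ and $R_{1}$ are well defined throughout the parameter range.

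At $\beta=0$ the cubic factors as $s(Ks^{2}-2)=0$, so by continuous dependence of simple roots of polynomials on coefficients (applied at $s=0$ and at $s=\sqrt{2/K}$) I obtain $R_{0}(b_{1},b_{2},\rho,\beta)\to 0$ and $R_{1}(b_{1},b_{2},\rho,\beta)\to 2/K(b_{1},b_{2},\rho)$ as $\beta\to 0^{+}$. A bootstrap from $2s_{0}=Ks_{0}^{3}+2\beta L$ at the smaller root yields the quantitative bound $R_{0}(b_{1},b_{2},\rho,\beta)\leq C\,\beta^{2}$, where $C$ depends only on $(\overline{b}_{1}+\tau_{1},\overline{b}_{2}+\tau_{2},\rho)$, uniformly for $b_{i}\leq\overline{b}_{i}+\tau_{i}$. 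Moreover, differentiating $f=0$ implicitly and using $\partial_{s}f(s_{0})<0$ at the smaller root gives $\partial R_{0}/\partial K>0$ and $\partial R_{0}/\partial L>0$, so $R_{0}$ is monotone nondecreasing in each $b_{i}$; correspondingly, since $\partial_{s}f(s_{1})>0$ at the larger root, $R_{1}$ is monotone nonincreasing in each $b_{i}$.

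With these ingredients in hand, both claims follow by shrinking $\widetilde{\beta}$. For (i), the right-hand side $R_{1}^{2}(\overline{b}_{1},\overline{b}_{2},\rho,\beta)$ converges to $(2/K(\overline{b}_{1},\overline{b}_{2},\rho))^{2}>0$, while $2R_{0}^{2}(\overline{b}_{1}+\tau_{1},\overline{b}_{2}+\tau_{2},\rho,\beta)=O(\beta^{4})$. For (ii), the monotonicity of $R_{0}$ in the masses gives
\begin{equation*}
R_{0}^{2}(c_{1},c_{2},\rho,\beta)+R_{0}^{2}(d_{1},d_{2},\rho,\beta)\leq 2R_{0}^{2}(b_{1},b_{2},\rho,\beta)\leq 2R_{0}^{2}(\overline{b}_{1}+\tau_{1},\overline{b}_{2}+\tau_{2},\rho,\beta),
\end{equation*}
which is $O(\beta^{4})$, while $R_{1}^{2}(b_{1},b_{2},\rho,\beta)\geq R_{1}^{2}(\overline{b}_{1}+\tau_{1},\overline{b}_{2}+\tau_{2},\rho,\beta)$ stays bounded below by a positive constant. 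The main technical point is the uniform asymptotic $R_{0}=O(\beta^{2})$ combined with the persistent positive lower bound for $R_{1}$; both follow from the explicit cubic analysis, after which the remaining comparisons are routine.
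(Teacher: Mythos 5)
Your proof is correct and follows essentially the same strategy as the paper's: you establish that $R_{0}\to 0$ while $R_{1}$ tends to a positive limit as $\beta\to 0^{+}$, and you obtain the monotonicity of $R_{0}$ (increasing) and $R_{1}$ (decreasing) in the masses by implicit differentiation at the two roots, exactly as the paper does with the function $g(t,b_{1},b_{2},\rho,\beta)$. The only differences are cosmetic — you work with the cubic in $s=t^{1/2}$ and extract the quantitative rate $R_{0}=O(\beta^{2})$, where the paper relies on qualitative continuity; your final chain for (ii), ending with $R_{1}^{2}(b_{1},b_{2},\rho,\beta)\geq R_{1}^{2}(\overline{b}_{1}+\tau_{1},\overline{b}_{2}+\tau_{2},\rho,\beta)$, is in fact cleaner than the one printed in the paper.
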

\begin{proof}
By Lemma \ref{LAA}, $0<R_{0}=R_{0}(b_{1},b_{2},\rho,\beta)<R_{1}=R_{1}(b_{1},b_{2},\rho,\beta)$ are the roots of $$g(t,b_{1},b_{2},\rho,\beta)=\frac{1}{2}t^{\frac{1}{2}}-\frac{1}{4}(\mathcal{D}_{1}+\mathcal{D}_{2}+\rho\mathcal{D}_{3})t^{\frac{3}{2}}-\frac{1}{2}|\beta|\mathcal{D}_{4}=\varphi(t,b_{1},b_{2},\rho)-\frac{1}{2}|\beta|\mathcal{D}_{4}$$
and the existence of $R_{0}$ and $R_{1}$ is guaranteed by the condition
\begin{align}\label{LG}
\beta\left(2b^{\frac{3}{2}}_{1}+b^{\frac{3}{2}}_{2}\right)C^{3}_{3,3}C^{2}_{3,4}\sqrt{\mu_{1}b_{1}+\mu_{2}b_{2}+\rho b^{\frac{1}{2}}_{1}b^{\frac{1}{2}}_{2}}<\frac{2\sqrt{6}}{3}.
\end{align}
Let $\overline{b}_{1},\ \overline{b}_{2},\ \tau_{1},\ \tau_{2} >0$ and consider the range of $\beta>0$ such that \eqref{LG} is satisfied with $b_{1}=\overline{b}_{1}+\tau_{1},\ b_{2}=\overline{b}_{2}+\tau_{2} $. Taking the limit as $\beta\rightarrow 0^{+}$, by the continuity we have that $R_{0}(\overline{b}_{1}+\tau_{1},\overline{b}_{2}+\tau_{2},\rho,\beta)\rightarrow0$ and $R_{1}(\overline{b}_{1}+\tau_{1},\overline{b}_{2}+\tau_{2},\rho,\beta)$ converge to the only positive root of $\varphi(t,\overline{b}_{1}+\tau_{1},\overline{b}_{2}+\tau_{2},\rho)$. Particularity, for every $\overline{b}_{1},\ \overline{b}_{2},\ \tau_{1},\ \tau_{2} >0$ fixed , there exists $\widetilde{\beta}=\widetilde{\beta}(\overline{b}_{1}+\tau_{1},\overline{b}_{2}+\tau_{2})>0$ such that
\begin{align}\label{LJ}
2R^{2}_{0}(\overline{b}_{1}+\tau_{1},\overline{b}_{2}+\tau_{2},\rho,\beta)<R^{2}_{1}(\overline{b}_{1}+\tau_{1},\overline{b}_{2}+\tau_{2},\rho,\beta)\ \ \text{whenever} \
0<\beta<\widetilde{\beta}.
\end{align}
Let $0<b_{1}\leq \overline{b}_{1}+\tau_{1},$ $0<b_{2}\leq \overline{b}_{2}+\tau_{2}$ and $0<\beta<\widetilde{\beta}$. Under the condition of \eqref{LG}, we have that
$$\partial_{t}g(t,b_{1},b_{2},\rho,\beta)=\partial_{t}\varphi(t,b_{1},b_{2},\rho).$$ It is easy to check that $\varphi(t,b_{1},b_{2},\rho)$ has a unique critical point on $(0,+\infty)$, which is a strict maximum point in $\overline{t}=\overline{t}(b_{1},b_{2},\rho)$ with $0<R_{0}(b_{1},b_{2},\rho)<\overline{t}<R_{1}(b_{1},b_{2},\rho)$, therefore $$\partial_{t}g(R_{0}(b_{1},b_{2},\rho),b_{1},b_{2},\rho,\beta)=\partial_{t}\varphi(R_{0}(b_{1},b_{2},\rho),b_{1},b_{2},\rho)>0.$$ By the implicit function theorem, we know that $R_{0}(b_{1},b_{2},\rho,\beta)$ is a locally unique $C^{1}$ function of $((b_{1},b_{2},\rho,\beta))$ with $$\frac{\partial_{t}R_{0}(b_{1},b_{2},\rho,\beta)}{\partial b_{1}}=-\frac{\partial_{b_{1}} g(R_{0}(b_{1},b_{2},\rho,\beta),b_{1},b_{2},\rho,\beta)}{\partial_{t}g(R_{0}(b_{1},b_{2},\rho),b_{1},b_{2},\rho,\beta)}>0,$$
$$\frac{\partial_{t}R_{0}(b_{1},b_{2},\rho,\beta)}{\partial b_{2}}=-\frac{\partial_{b_{2}} g(R_{0}(b_{1},b_{2},\rho),b_{1},b_{2},\rho,\beta)}{\partial_{t}g(R_{0}(b_{1},b_{2},\rho,\beta),b_{1},b_{2},\rho,\beta)}>0.$$
Similarly, we can proof that $R_{1}(b_{1},b_{2},\rho,\beta)$ is a locally unique $C^{1}$ function of\\ $((b_{1},b_{2},\rho,\beta))$ with $$\frac{\partial_{t}R_{1}(b_{1},b_{2},\rho,\beta)}{\partial b_{1}}=-\frac{\partial_{b_{1}} g(R_{1}(b_{1},b_{2},\rho,\beta),b_{1},b_{2},\rho,\beta)}{\partial_{t}g(R_{1}(b_{1},b_{2},\rho),b_{1},b_{2},\rho,\beta)}<0,$$
$$\frac{\partial_{t}R_{1}(b_{1},b_{2},\rho,\beta)}{\partial b_{2}}=-\frac{\partial_{b_{2}} g(R_{1}(b_{1},b_{2},\rho),b_{1},b_{2},\rho,\beta)}{\partial_{t}g(R_{1}(b_{1},b_{2},\rho,\beta),b_{1},b_{2},\rho,\beta)}<0.$$ So, $R_{0} $ is monotone increasing in $b_{1},b_{2} $ and $R_{1}$ is monotone decreasing in $b_{1},b_{2} $. By \eqref{LJ} and  the monotonicity of $R_{1}$, we have $2R^{2}_{0}(\overline{b}_{1}+\tau_{1},\overline{b}_{2}+\tau_{2},\rho,\beta)<R^{2}_{1}(\overline{b}_{1},\overline{b}_{2},\rho,\beta)$. For any  $d_{i}> 0,\  c_{i}>0, i=1,2 $  such that $d^{2}_{i}+c^{2}_{i}=b_{1}^{i},\ i=1,2$, then
\begin{align*}
&R^{2}_{0}(c_{1},c_{2},\rho,\beta)+ R^{2}_{0} (d_{1},d_{2},\rho,\beta)< 2  R^{2}_{0} (b_{1},b_{2},\rho,\beta)<2  R^{2}_{0}(\overline{b}_{1}+\tau_{1},\overline{b}_{2}+\tau_{2},\rho,\beta)\\
&<2  R^{2}_{1}(\overline{b}_{1},\overline{b}_{2},\rho,\beta)<R^{2}_{1}(b_{1}, b_{2},\rho,\beta).
\end{align*}
\end{proof}
\begin{proof}[{\bf Proof of Theorem \ref{Th5}}]
Suppose that there exists a $\epsilon_{0}>0$ , a sequence of initial date $\{u^{0}_{n},v^{0}_{n}\}\subset H^{1}(\mathbb{R}^{3})\times H^{1}(\mathbb{R}^{3}) $ and a sequence $\{t_{n}\}\subset \mathbb{R}^{+}$ such that the solution $(u_{n},v_{n})$ of \eqref{L}  with the initial date $(u_{n}(0,\cdot),v_{n}(0,\cdot))=(u_{n}(\cdot), v_{n}(\cdot))$ satisfies $$dist_{H^{1}(\mathbb{R}^{3})\times H^{1}(\mathbb{R}^{3})}((u^{0}_{n},v^{0}_{n}),\mathcal{M}_{b_{1},b_{2}})<\frac{1}{n}$$ and $$dist_{H^{1}(\mathbb{R}^{3})\times H^{1}(\mathbb{R}^{3})}((u_{n}(t_{n},\cdot),v_{n}(t_{n},\cdot)),\mathcal{M}_{b_{1},b_{2}})\geq\epsilon_{0}.$$ Without loss of generality, we may assume that $\{(u^{0}_{n},v^{0}_{n})\}\subset \mathrm{T}_{b_{1}}\times \mathrm{T}_{b_{2}} ,$ since $$dist_{H^{1}(\mathbb{R}^{3})\times H^{1}(\mathbb{R}^{3})}((u^{0}_{n},v^{0}_{n}),\mathcal{M}_{b_{1},b_{2}})\rightarrow 0\  \text{as}\ \ n\rightarrow +\infty.$$ So, $\|u^{0}_{n}\|_{L^{2}(\mathbb{R}^{3})}:=b_{1n}\rightarrow b_{1}$\ \ \ $\|v^{0}_{n}\|_{L^{2}(\mathbb{R}^{3})}:=b_{2n}\rightarrow b_{2}$ and $J_{\beta}(u^{0}_{n},v^{0}_{n})\rightarrow m_{\beta}(b_{1},b_{2})$. By (i) of Lemma \ref{LK} and the continuity,  we can deduce that  $$\int_{\mathbb{R}^{3}}(|\nabla u^{0}_{n}|^{2}+|\nabla v^{0}_{n}|^{2})dx< R_{0}(b_{1}+\tau_{1},b_{2}+\tau_{2},\rho,\beta)<R_{1}(b_{1n}+\tau_{1},b_{2n}+\tau_{2},\rho,\beta)$$
for every $n$ large enough.  Since $$\int_{\mathbb{R}^{3}}(|\nabla u^{0}_{n}|^{2}+|\nabla v^{0}_{n}|^{2})dx\in[ R_{0}(b_{1n},b_{2n},\rho,\beta),R_{1}(b_{1n},b_{2n},\rho,\beta)],$$
we have  $J_{\beta}(u^{0}_{n},v^{0}_{n})\geq 0$. Thus, we can deduce that $$\int_{\mathbb{R}^{3}}(|\nabla u^{0}_{n}|^{2}+|\nabla v^{0}_{n}|^{2})dx< R_{0}(b_{1n},b_{2n},\rho,\beta)< R_{0}(b_{1}+\tau_{1},b_{2}+\tau_{2},\rho,\beta).$$
Since $(u^{0}_{n},v^{0}_{n})\in A_{R_{0}(b_{1n},b_{2n},\rho,\beta)}$, if $(u_{n}(t_{n},\cdot),v_{n}(t_{n},\cdot)$ exist from $A_{R_{0}(b_{1n},b_{2n},\rho,\beta)}$ there exists $t_{n}\in (0,T_{\max})$ such that $$\int_{\mathbb{R}^{3}}(|\nabla u_{n}(t_{n},\cdot)|^{2}+|\nabla v_{n}(t_{n},\cdot)|^{2})dx=R_{0}(b_{1n},b_{2n},\rho,\beta);$$ however, $J_{\beta}(u_{n}(t_{n},\cdot),v_{n}(t_{n},\cdot)\geq h(R_{0})= 0$ contract with the conservation of energy. Therefore, the solutions starting in $A_{R_{0}(b_{1n},b_{2n},\rho,\beta)}$ are globally defined in time  and satisfy $$\int_{\mathbb{R}^{3}}(|\nabla u_{n}(t_{n},\cdot)|^{2}+|\nabla v_{n}(t_{n},\cdot)|^{2})dx<R_{0}(b_{1n},b_{2n},\rho,\beta)<R_{0}(b_{1}+\tau_{1},b_{2}+\tau_{2},\rho,\beta)$$ for every $t_{n}\in (0,+\infty)$.
Then by the conservation laws of energy and mass $$\{(u_{n}(t_{n}, \cdot), v_{n}(t_{n}, \cdot))\}$$ is a minimizing sequence.  Thus,$\{(u_{n}(t_{n}, \cdot), v_{n}(t_{n}, \cdot))\}$ is a minimizing sequence of $m^{r}_{\beta}(b_{1},b_{2}) $.
Then according the proof of Theorem \ref{Th1} there exists $(u_{0},v_{0})\in \mathcal{M}_{b_{1},b_{2}}$ such that $(u_{n}(t_{n},\cdot), v_{n}(t_{n},\cdot))\rightarrow (u_{0},v_{0})
$ in $H^{1}(\mathbb{R}^{3})\times H^{1}(\mathbb{R}^{3})$, which contradicts to $$dist_{H^{1}(\mathbb{R}^{3})\times H^{1}(\mathbb{R}^{3})}((u_{n}(t_{n},\cdot),v_{n}(t_{n},\cdot)),\mathcal{M}_{b_{1},b_{2}})\geq\epsilon_{0}.$$
\end{proof}
\section{Proof of Theorems \ref{th1.4} and \ref{th1.5}} \label{sec16}

In this section, we first give some preliminaries. For $N=4$, we have the Sobolev inequality
	$$\mathcal{S}\|u\|_{L^4(\mathbb{R}^{4})}^2\leq \|\nabla u\|_{L^2(\mathbb{R}^{4})}^2,\quad \forall u\ \in D^{1,2}(\mathbb{R}^{4}),$$
where $D^{1,2}(\mathbb{R}^{4})$ is the completion of $C_c^{\infty}(\R^4)$ with respect to the norm $||u||_{D^{1,2}}:=\|\nabla u\|_2$. For $2<p<4$, the Gagliardo-Nirenberg inequality (see \cite{MIW}) is
\begin{equation}\label{b2}
\|u\|_{L^p(\mathbb{R}^{4})}\leq  C_{4,p}\|\nabla u\|_{L^2(\mathbb{R}^{4})}^{\gamma_p}\|u\|_{L^2(\mathbb{R}^{4})}^{1-\gamma_p},\quad \forall\ u\in H^1(\mathbb{R}^{4}),
\end{equation}
where $C_{4,p}>0$ is a constant and $\gamma_p=\frac{2(p-2)}{p}$.
If $p=4$, $\gamma_{4}=1$, then $\mathcal{S}= |C_{4,p}|^{-2}$.

\vskip1mm
We have the following already known results.

\begin{lemma}\label{lem2.1}(\cite[Corollary B.1]{LIZOU})
Suppose $(u,v)\in H^1(\mathbb{R}^{N})\times H^1(\mathbb{R}^{N})$ ($N\ge 3$) is a nonnegative solution of \eqref{23}-\eqref{24}, then $(u,v)$ is a smooth solution.
\end{lemma}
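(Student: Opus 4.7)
The strategy is the classical Brezis--Kato bootstrap followed by a Schauder iteration. Since $(u,v)\ge 0$ is a weak solution in $H^1(\R^N)\times H^1(\R^N)$, Sobolev embedding gives $u,v\in L^{2^*}(\R^N)$ with $2^*=\tfrac{2N}{N-2}$. Rewriting the first equation of \eqref{23} as
\[
-\Delta u+\lambda_1 u=a_1(x)u,\qquad a_1(x):=\beta v+\mu_1 u^2+\rho v^2,
\]
and analogously the second as $-\Delta v+\lambda_2 v=a_2(x)v+\tfrac{\beta}{2}u^2$ with $a_2(x):=\mu_2 v^2+\rho u^2$, the plan is to show $a_1,a_2\in L^{N/2}_{\mathrm{loc}}(\R^N)+L^\infty(\R^N)$ (possibly after an initial gain), which is exactly the setting of the Brezis--Kato lemma and produces $u,v\in L^q_{\mathrm{loc}}(\R^N)$ for every $q<\infty$.

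First I would handle the subcritical range $N=3$ (and check $N=4$), where $u,v\in L^{2^*}\hookrightarrow L^N$ immediately yields $u^2,v^2\in L^{N/2}$ and Brezis--Kato applies directly. For $N\ge 5$ the term $u^2$ is only in $L^{N/(N-2)}$, which is weaker than $L^{N/2}$, so a single application is not enough; here the plan is to perform a Moser-type iteration on the linearized equation: testing against $(u+1)^{2\beta+1}\eta^2$ (standard truncation, with a cut-off to handle the possibility that $u$ is only locally integrable to high powers) gives an improvement $L^p\Rightarrow L^{p\cdot 2^*/2}$ at each step, and iteration yields $u,v\in L^q_{\mathrm{loc}}$ for all $q<\infty$. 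At this stage, $\beta uv+\mu_1 u^3+\rho v^2u$ and its counterpart lie in $L^q_{\mathrm{loc}}$ for every $q<\infty$, so $L^p$-elliptic regularity upgrades $(u,v)$ to $W^{2,q}_{\mathrm{loc}}$ and hence, via Sobolev embedding, to $C^{1,\alpha}_{\mathrm{loc}}$ for some $\alpha\in(0,1)$.

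The final step is a Schauder bootstrap. Once $u,v\in C^{1,\alpha}$, the right-hand sides of \eqref{23} are themselves $C^{1,\alpha}$, hence interior Schauder estimates give $u,v\in C^{3,\alpha}_{\mathrm{loc}}$; repeating indefinitely yields $u,v\in C^\infty(\R^N)$.

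The one genuinely delicate point is the Moser iteration in the energy-supercritical regime $N\ge 5$: the nonlinearity $u^3$ sits above the Sobolev exponent, so the truncated test function $(u\wedge M)^{2\beta}u$ (or equivalent) must be used, and one has to verify uniform-in-$M$ bounds before letting $M\to\infty$. For $N=3,4$ (which are the dimensions actually used elsewhere in the paper) this difficulty does not appear and the proof reduces to one Brezis--Kato step plus Schauder iteration; for general $N\ge 3$ the result is exactly \cite[Corollary B.1]{LIZOU}, which we may invoke directly.
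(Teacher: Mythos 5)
The paper offers no argument for this lemma at all --- it is quoted verbatim from \cite[Corollary B.1]{LIZOU} --- so your bootstrap is doing strictly more work than the text does. For $N=3,4$ your argument is complete and correct: there $2^{*}\ge N$, so $u^{2},v^{2}\in L^{N/2}_{\mathrm{loc}}$ and $v\in L^{N/2}_{\mathrm{loc}}$ follow directly from the Sobolev embedding, one Brezis--Kato step gives $u,v\in L^{q}_{\mathrm{loc}}$ for every $q<\infty$, and the $W^{2,q}_{\mathrm{loc}}\rightarrow C^{1,\alpha}_{\mathrm{loc}}\rightarrow$ Schauder chain is routine (the inhomogeneous term $\frac{\beta}{2}u^{2}$ in the $v$-equation is handled by ordinary $L^{p}$-regularity once $u$ has been upgraded, which your scheme implicitly does).

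The genuine gap is the case $N\ge5$, and it is worth being precise about why your proposed fix does not close. The per-step gain $L^{p}\Rightarrow L^{p\cdot 2^{*}/2}$ in the Moser/Brezis--Kato iteration comes from estimating $\int u^{2}\,(u^{\beta+1}\eta)^{2}dx$ by $\|u\|^{2}_{L^{N}}\|u^{\beta+1}\eta\|^{2}_{L^{2^{*}}}$; this requires $u\in L^{N}_{\mathrm{loc}}$ (equivalently $a_{1}\in L^{N/2}_{\mathrm{loc}}$) \emph{before} the first step can be taken, and for $N\ge5$ one only has $u\in L^{2^{*}}$ with $2^{*}<N$. Truncating the test function settles its admissibility but does not change this exponent bookkeeping: the cubic term is Sobolev-supercritical in these dimensions and cannot be bootstrapped from $H^{1}$ information alone. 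Your escape hatch --- invoking \cite[Corollary B.1]{LIZOU} directly --- is exactly what the paper does, so the net content coincides; but note that the lemma is actually applied in the paper for $N=5$ (part 2 of Theorem \ref{th1.4}), where the additional hypothesis $u\in L^{p}(\mathbb{R}^{5})$ for some $p\le\frac{5}{3}$ concerns integrability at infinity and does nothing to repair the local regularity issue. In short, the $N\ge5$ difficulty you flag is real, is not resolved by your sketch, and is inherited from the citation rather than introduced by you.
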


\begin{lemma}\label{lem2.2}(\cite[Lemma 2.3]{LIZOU})
Suppose $\mu_1,\mu_2,\beta>0$ and $(u,v)\in H^1(\mathbb{R}^{4})\times H^1(\mathbb{R}^{4})$ is a nonnegative solution of \eqref{23}-\eqref{24}, then $u>0$ implies $\lambda_1>0$; $v>0$ implies $\lambda_2>0$.
\end{lemma}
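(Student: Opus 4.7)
The plan is to argue by contradiction, exactly in the spirit of the arguments already used in the proof of Theorem~\ref{Th3} and Lemma~\ref{Lem55} (where the analogous positivity of Lagrange multipliers in dimension $N=2,3$ was established via a Liouville-type theorem). Suppose $u>0$ in $\mathbb{R}^4$ but $\lambda_1\le 0$. Then the first equation of \eqref{23} can be rewritten as
\begin{equation*}
-\Delta u \;=\; -\lambda_1 u + \beta\, uv + \mu_1 u^3 + \rho\, v^2 u.
\end{equation*}
Since $\mu_1,\beta,\rho>0$, $u\ge 0$ and $v\ge 0$, every term on the right-hand side is nonnegative, hence $-\Delta u\ge 0$ in $\mathbb{R}^4$, i.e.\ $u$ is a nonnegative superharmonic function on $\mathbb{R}^4$.

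From Lemma~\ref{lem2.1} we know $u$ is smooth, and from $u\in H^1(\mathbb{R}^4)$ we have $u\in L^2(\mathbb{R}^4)$. At this point I would invoke the Liouville-type theorem for nonnegative superharmonic $L^p$-functions (the very same one cited as \cite[Lemma~A.2]{NI14} and used repeatedly earlier in the paper) to conclude that $u\equiv 0$, contradicting $u>0$. Therefore $\lambda_1>0$.

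The second claim is entirely symmetric: assuming $v>0$ and $\lambda_2\le 0$, the second equation of \eqref{23} yields
\begin{equation*}
-\Delta v \;=\; -\lambda_2 v + \tfrac{\beta}{2}u^2 + \mu_2 v^3 + \rho\, u^2 v \;\ge\; 0 \quad\text{in }\mathbb{R}^4,
\end{equation*}
so $v$ is a nonnegative $H^1$-superharmonic function on $\mathbb{R}^4$, hence $v\equiv 0$ by the same Liouville result, contradicting $v>0$. Thus $\lambda_2>0$.

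The only mildly delicate point is the applicability of the Liouville-type theorem in dimension $N=4$, but this is standard: a nonnegative superharmonic function in $\mathbb{R}^N$ that lies in $L^p$ for some $p\in(0,\infty)$ must vanish identically, and $N=4$ requires no special treatment. Since this has already been used in the paper in the lower-dimensional cases, the argument is very short; there is no real analytic obstacle, and the entire proof amounts to a clean sign analysis of the right-hand side together with the cited Liouville lemma.
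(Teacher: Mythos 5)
Your proof is correct. The paper itself gives no argument for this lemma --- it is quoted directly from \cite[Lemma 2.3]{LIZOU} --- but your sign analysis plus the Liouville theorem for nonnegative superharmonic functions is precisely the standard route, and it is the same technique the paper uses for the analogous claims in Step 1 of the proof of Theorem \ref{Th3}, in Lemma \ref{Lem55}, and in the $N=4$ part of the proof of Theorem \ref{th1.4}. The only point worth making explicit is that for $N=4$ the admissible range of the Liouville lemma \cite[Lemma A.2]{NI14} is $p\in(0,\tfrac{N}{N-2}]=(0,2]$, and the borderline exponent $p=2$ is exactly what $u\in H^1(\mathbb{R}^4)\subset L^2(\mathbb{R}^4)$ provides (indeed $u(x)\gtrsim |x|^{-2}$ at infinity forces $\int |u|^2\gtrsim\int r^{-1}\,dr=+\infty$), so no decay hypothesis beyond square-integrability is needed.
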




\vskip1mm

From \cite{CZ,PPW}, we get the least energy solutions to \eqref{a6}.
Note that \eqref{a6} has semi-trivial solutions $(\mu^{-\frac{1}{2}}_1 U_{\varepsilon}, 0)$ and $(0, \mu^{-\frac{1}{2}}_2 U_{\varepsilon})$. Here, we are only interested in nontrivial solutions of \eqref{a6}, this is $(\sqrt{k_1}U_{\varepsilon},\sqrt{k_2}U_{\varepsilon})$, where $U_\varepsilon$ is defined in \eqref{a5}, $k_1=\frac{\rho-\mu_2}{\rho^2-\mu_1\mu_2}$ and $k_2=\frac{\rho-\mu_1}{\rho^2-\mu_1\mu_2}$. The main results in this aspect are summarized below. Next, we turn to the related limiting elliptic system \eqref{a6}.

\begin{lemma}\label{lem2.5}(\cite[Theorems 1.5 and 4.1]{CZ}) For $\mu_1,\mu_2>0$,
if $0<\rho<\min\{\mu_1,\mu_2\}$ or $\rho>\max\{\mu_1,\mu_2\}$, then any positive least energy solution $(u,v)$ of \eqref{a6} must be of the form
$$(u,v)=\big(\sqrt{k_1}U_{\varepsilon}, \sqrt{k_2}U_{\varepsilon}\big).$$

\end{lemma}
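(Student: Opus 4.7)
The plan is to reduce the classification of positive least-energy solutions to a Rayleigh-quotient minimisation and then analyse its extremals. First, testing each equation of \eqref{a6} against its own component yields
\begin{equation*}
\|\nabla u\|_{L^2(\R^4)}^2=\mu_1\|u\|_{L^4(\R^4)}^4+\rho\|uv\|_{L^2(\R^4)}^2,
\end{equation*}
and analogously for $v$, so any positive solution attains the ratio defining $\mathcal{S}_{\mu_1,\mu_2,\rho}$ in \eqref{a7}; moreover the positive least-energy level is $\tfrac14 c^2$, where $c$ is the infimum of that ratio over pairs with both components positive. Hence the problem reduces to describing the positive extremals of \eqref{a7}.

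Next, I would evaluate the quotient on the ansatz $(u,v)=(s_1 U_\varepsilon,s_2 U_\varepsilon)$ and use $\|\nabla U_\varepsilon\|_2^2=\|U_\varepsilon\|_4^4=\mathcal{S}^2$ to reduce to the scalar minimisation of $(s_1^2+s_2^2)^2/(\mu_1 s_1^4+\mu_2 s_2^4+2\rho s_1^2 s_2^2)$ over $s_1,s_2>0$. A direct computation locates the unique interior positive critical point at $s_1^2/s_2^2=k_1/k_2=(\rho-\mu_2)/(\rho-\mu_1)$, which is positive precisely in the stated parameter range; the corresponding critical value simplifies to $k_1+k_2$ via the identities $\mu_1 k_1+\rho k_2=\mu_2 k_2+\rho k_1=1$, giving the upper bound $c\le \mathcal{S}\sqrt{k_1+k_2}$, attained by the synchronized pair $(\sqrt{k_1}U_\varepsilon,\sqrt{k_2}U_\varepsilon)$.

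For the matching lower bound and rigidity, set $A=\|u\|_{L^4}^2$, $B=\|v\|_{L^4}^2$, $C=\|uv\|_{L^2}^2$ and combine the scalar Sobolev inequality $\|\nabla u\|_2^2\ge \mathcal{S} A$, $\|\nabla v\|_2^2\ge \mathcal{S} B$ with Cauchy--Schwarz $C\le AB$ (whose sign is exploitable because $\rho>0$) and the algebraic identity
\begin{equation*}
(A+B)^2-(k_1+k_2)(\mu_1 A^2+\mu_2 B^2+2\rho AB)=(\rho^2-\mu_1\mu_2)(k_2 A-k_1 B)^2.
\end{equation*}
In the attractive regime $\rho>\max\{\mu_1,\mu_2\}$ the right-hand side is nonnegative, so $c\ge \mathcal{S}\sqrt{k_1+k_2}$; equality forces all three ingredients to saturate simultaneously, which gives $u^2\propto v^2$ almost everywhere, $k_2 A=k_1 B$, and both components equal to Aubin--Talanti bubbles. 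By the Caffarelli--Gidas--Spruck classification this yields $(u,v)=(\sqrt{k_1}U_\varepsilon(\cdot-x_0),\sqrt{k_2}U_\varepsilon(\cdot-x_0))$, as claimed.

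The main obstacle is the repulsive regime $0<\rho<\min\{\mu_1,\mu_2\}$, where $\rho^2-\mu_1\mu_2<0$ reverses the sign in the identity above and the unconstrained infimum of \eqref{a7} is actually realised by the semi-trivial pairs $(\mu_1^{-1/2}U_\varepsilon,0)$ and $(0,\mu_2^{-1/2}U_\varepsilon)$ rather than by the synchronized one. To conclude in this regime I would follow the argument of \cite{CZ} and impose the positivity constraint directly at the variational level: the moving-plane method, applicable because $\rho>0$ makes the system cooperative, reduces every positive solution to a radial profile about a common centre, after which ODE uniqueness (or a linearisation around the synchronized profile) rules out every positive critical point other than $(\sqrt{k_1}U_\varepsilon,\sqrt{k_2}U_\varepsilon)$.
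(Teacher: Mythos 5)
The paper does not prove this lemma at all: it is quoted verbatim from Chen--Zou \cite{CZ} (their Theorems 1.5 and 4.1), so your attempt is being measured against that source. Your argument for the attractive regime $\rho>\max\{\mu_1,\mu_2\}$ is essentially correct and complete: the identity $(A+B)^2-(k_1+k_2)(\mu_1A^2+\mu_2B^2+2\rho AB)=(\rho^2-\mu_1\mu_2)(k_2A-k_1B)^2$ checks out, and combined with $\|uv\|_{L^2}^2\le \|u\|_{L^4}^2\|v\|_{L^4}^2$ and the scalar Sobolev inequality it gives $\mathcal{S}_{\mu_1,\mu_2,\rho}\ge\sqrt{k_1+k_2}\,\mathcal{S}$ with the stated rigidity. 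One caveat even there: your opening claim that ``the positive least-energy level is $\tfrac14c^2$ where $c$ is the infimum of the ratio over pairs with both components positive'' is false in general, because that infimum coincides with the unconstrained one (perturb a semi-trivial pair by $\epsilon\varphi$), and in the repulsive regime it equals $\min\{\mu_1^{-1/2},\mu_2^{-1/2}\}\mathcal{S}<\sqrt{k_1+k_2}\,\mathcal{S}$.

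The genuine gap is the repulsive regime $0<\rho<\min\{\mu_1,\mu_2\}$. There your quadratic identity has the wrong sign, the synchronized point is a \emph{maximum} (not a minimum) of the quotient restricted to your ansatz, and the infimum of \eqref{a7} is realised only by semi-trivial pairs; so the variational reduction collapses, as you concede. The fallback you propose --- moving planes plus ``ODE uniqueness ruling out every positive critical point other than the synchronized one'' --- is not carried out, is not what \cite{CZ} does, and is too strong: the lemma classifies only \emph{least-energy} positive solutions, and critical systems of this type admit other positive solutions, so no uniqueness statement for all positive radial solutions is available. The argument that actually closes this case is elementary and works in both regimes: testing each equation against its own component gives $\|\nabla u\|_2^2=\mu_1\|u\|_4^4+\rho\|uv\|_2^2$ and its analogue for $v$; dividing by $\|u\|_4^2>0$, $\|v\|_4^2>0$ and using Sobolev plus Cauchy--Schwarz yields the two linear constraints $\mu_1A+\rho B\ge\mathcal{S}$ and $\rho A+\mu_2B\ge\mathcal{S}$ with $A=\|u\|_4^2$, $B=\|v\|_4^2$. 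Since $I(u,v)=\tfrac14(\|\nabla u\|_2^2+\|\nabla v\|_2^2)\ge\tfrac{\mathcal{S}}{4}(A+B)$, one minimises the linear functional $A+B$ over this polyhedron; the vector $(1,1)$ lies in the cone spanned by the normals $(\mu_1,\rho)$ and $(\rho,\mu_2)$ exactly in the two stated parameter ranges, so the unique minimiser is the vertex $(k_1\mathcal{S},k_2\mathcal{S})$, giving $I\ge\tfrac{k_1+k_2}{4}\mathcal{S}^2$; tracing the equality cases (Sobolev extremality of each component, equality in Cauchy--Schwarz, and $(A,B)$ at the vertex) forces the synchronized form. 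Without this step, or a genuine substitute, your proof covers only half of the stated parameter range.
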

Solutions to \eqref{a6} correspond to critical points of the functional
\begin{equation*}
I(u,v)=\int_{\mathbb{R}^{4}}\frac{1}{2}\big(|\nabla u|^2+|\nabla v^2|\big)dx-\frac{1}{4}\big(\mu_1|u|^4+\mu_2|v|^4+2\rho|u|^{2}|v|^{2}\big)dx,\ \
\end{equation*}
$u,v\in D^{1,2}(\R^4).$ From \cite[Lemma 2.1]{PPW}, if $0<\rho<\min\{\mu_1,\mu_2\}$ or $\rho>\max\{\mu_1,\mu_2\}$, $(u_0,v_0)$ is a least energy solution of \eqref{a6}, we can deduce that
\begin{equation*}
I(u_0,v_0)=\frac{k_1+k_2}{4}\mathcal{S}^2 \ \  \text{and} \ \ \mathcal{S}_{\mu_1,\mu_2, \rho}=\sqrt{k_1+k_2}\mathcal{S}.
\end{equation*}

We first study the case $\beta<0$, $\mu_i>0(i=1,2)$ and $\rho>0$.
\vskip1mm

\noindent \textbf{Proof of Theorem 1.11.} If $\beta<0$, let $(u,v)$ be a constraint critical point of $J_{\beta}$ on $\mathrm{T}_{b_{1}}\times \mathrm{T}_{b_{2}} $ and $u,v>0$. Therefore, $(u,v)$ solves \eqref{23}-\eqref{24} for some $\lambda_1,\lambda_2\in \R$.
By the Pohozaev identity $P_{b_1,b_2}(u,v)=0$, we have
\begin{align*}\label{y1}
\lambda_1\|u\|^2_{L^2(\mathbb{R}^{4})}+\lambda_2\|v\|^2_{L^2(\mathbb{R}^{4})}&=\frac{4-N}{4}\big(\mu_1\|u\|^4_{L^4(\mathbb{R}^{4})}+\mu_2\|v\|^4_{L^4(\mathbb{R}^{4})}+2\rho\|uv\|^2_{L^2(\mathbb{R}^{4})}\big)\\
&\quad+\frac{6-N}{4}\beta\int_{\mathbb{R}^{N}}|u|^2vdx
\end{align*}
For $N=4,5$, then one of the $\lambda_1,\lambda_2$ is negative. With out of generality, we assume $\lambda_1<0$. It follows from Lemma \ref{lem2.1} that $(u,v)$ is smooth, and is in $L^{\infty}(\mathbb{R}^{N})\times L^{\infty}(\R^N)$; thus, $|\Delta u|, |\Delta v|\in L^{\infty}(\R^N)$ as well, and stand gradient estimates for the Poisson equation (see formula (3,15) in \cite{Tru}) imply that $|\nabla u|, |\nabla v|\in L^{\infty}(\mathbb{R}^{N})$. Combining the fact that $u,v\in L^2(\mathbb{R}^{N})$, we get $u(x), v(x)\to 0$ as $|x|\to \infty$. Thus, we have
\begin{equation*}
-\Delta u=\big(-\lambda_1+\beta v+\mu_1u^2+\rho v^2\big)u\ge \frac{-\lambda_1}{2} u>0 \ \
\end{equation*}
for $|x|>R_0$, with $R_0>0$ large enough, and then $u$ is superharmonic at infinity. From the Hadamard three spheres theorem \cite[Chapter 2]{ProW}, this implies that the function $m(r):= \min\limits_{|x|=r} u(x)$ satisfies
\begin{equation*}
m(r) \ge \frac{m(r_1)\big(r^{2-N}-r_2^{2-N}\big) + m(r_2)\big( r_1^{2-N}- r^{2-N}\big)}{r_1^{2-N}- r_2^{2-N}} \qquad \forall R_0 < r_1 < r < r_2.
\end{equation*}
Since $u$ decays at infinity, we have that $m(r_2) \to 0$ as $r_2 \to +\infty$, it is not difficult to see that $r \mapsto r^{N-2} m(r)$ is monotone non-decreasing for $r>R_0$. Moreover, $m(r) >0$ for every $r>0$ because $u>0$ in $\R^N$. Thus,
\begin{equation*}
m(r) \ge m(R_0) R_0^{N-2}\cdot r^{2-N} \qquad \forall r>R_0.
\end{equation*}
If $N=4$, it follows from $u\in H^1(\R^N)$ that $u\in L^{\frac{N}{N-2}}(\mathbb{R}^{N})$. We deduce that
\begin{equation*}
\begin{split}
\|u\|_{L^{\frac{N}{N-2}}(\mathbb{R}^{N})}^{\frac{N}{N-2}}
 \ge C \int_{R_0}^{+\infty} |m(r)|^{\frac{N}{N-2}} r^{N-1} dr  \ge C \int_{R_0}^{+\infty} \frac{1}{r} dr=+\infty,
\end{split}
\end{equation*}
with $C>0$.  This is a contradiction. If $N=5$, the fact that $u\in H^1(\R^N)$ does not imply that $u\in L^{\frac{N}{N-2}}(\R^N)$ or that $u\in L^p(\R^N)$ for some $p\in (0,\frac{N}{N-2}]$. But, imposing such condition as an assumption, we still reach a contradiction.
\vskip1mm

If $N\ge 4$, and $u\in H^1(\R^N)$ is a radial function by \cite[Radial Lemma A.II]{BHL}, there exist $C>0$ and $R_1>0$ such that
\begin{equation*}
|u(x)|\le C |x|^{-\frac{N-1}{2}}\ \ \text{for}\ \ |x|\ge R_1.
\end{equation*}
If $\beta<0$, let $(u,v)$ be a non-trivial radial solution of \eqref{23}-\eqref{24}.
Similarly, we assume $\lambda_1<0$.
Setting $p(x)=-\beta v-\mu_1u^2-\rho v^2$, then
\begin{equation}\label{m2}
-\Delta u+p(x)u= -\lambda_1 u.
\end{equation}
Therefore, for $N\ge4$,
\begin{equation*}
\lim_{n\to +\infty}|x||p(x)|\le \lim_{n\to +\infty}\big[C|x|^{\frac{3-N}{2}}+C|x|^{2-N} \big]=0.
\end{equation*}
By Kato's result \cite{KT}, i.e. Schr\"{o}dinger operator $H = -\Delta+ p(x)$ has no positive eigenvalue with an $L^2$-eigenfunction if $p(x) = o(|x|^{-1})$, then \eqref{m2} has no solution. We get \eqref{23}-\eqref{24} has no non-trivial radial solution for $\beta<0$.
\qed

\begin{lemma}\label{lem3.1} If $0<\beta b_1<\!\frac{3}{2|C_{4,3}|^{3}}$ and $0<\beta b_2<\!\frac{3}{|C_{4,3}|^{3}}$, for all $(u,v)\in \mathrm{T}_{b_{1}}\times \mathrm{T}_{b_{2}} $, there exists $t_{(u,v)}$ such that $t_{(u,v)}\star(u,v)\in\mathcal{P}_{b_1,b_2}$. $t_{(u,v)}$ is the unique critical point of the function $\Psi_{u,v}$ and is a strict maximum point at positive level.
Moreover:
\vskip1mm
\noindent $(1)$ $\Psi_{u,v}''(0)<0$ and $P_{b_1,b_2}(u,v)<0$ iff $t_{(u,v)}<0$.\\
\noindent $(2)$ $\Psi_{u,v}$ is strictly increasing in $(-\infty,t_{(u,v)})$. \\
\noindent $(3)$ The map $(u,v)\mapsto t_{(u,v)} \in \mathbb{R}$ is of class $C^1$.
\end{lemma}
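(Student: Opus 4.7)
The plan is to carry out the analysis of the fiber map $\Psi_{u,v}(t)=J_\beta(t\star(u,v))$ in the special dimension $N=4$, where both the Dirichlet energy and the quadratic coupling $\beta\int u^2v\,dx$ scale like $e^{2t}$ while the quartic terms scale like $e^{4t}$. Writing $a:=\int_{\R^4}(|\nabla u|^2+|\nabla v|^2)dx$, $b:=\int_{\R^4}(\mu_1u^4+\mu_2v^4+2\rho u^2v^2)dx$, and $c:=\beta\int_{\R^4}u^2v\,dx$, a direct computation using $(t\star u)(x)=e^{2t}u(e^tx)$ gives
$$\Psi_{u,v}(t)=\tfrac{e^{2t}}{2}(a-c)-\tfrac{e^{4t}}{4}b,\qquad \Psi_{u,v}'(t)=e^{2t}(a-c)-e^{4t}b,\qquad \Psi_{u,v}''(t)=2e^{2t}(a-c)-4e^{4t}b,$$
and in particular $P_{b_1,b_2}(u,v)=\Psi_{u,v}'(0)=(a-c)-b$.

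The decisive step, which I expect to be the main obstacle, is showing that the mass conditions force $a-c>0$; once this is established the rest is essentially algebraic. To this end I would invoke the Gagliardo--Nirenberg inequality \eqref{4} with $N=4$, $p=3$ (so $\gamma_3=2/3$), giving $\|w\|_{L^3}^3\le|C_{4,3}|^3\|\nabla w\|_{L^2}^2\|w\|_{L^2}$, and combine it with H\"older and Young's inequality to obtain
$$\int_{\R^4} u^2 v\,dx\le\|u\|_{L^3}^2\|v\|_{L^3}\le\tfrac{2}{3}\|u\|_{L^3}^3+\tfrac{1}{3}\|v\|_{L^3}^3\le\tfrac{2b_1|C_{4,3}|^3}{3}\|\nabla u\|_{L^2}^2+\tfrac{b_2|C_{4,3}|^3}{3}\|\nabla v\|_{L^2}^2.$$
The hypotheses $\beta b_1<\tfrac{3}{2|C_{4,3}|^3}$ and $\beta b_2<\tfrac{3}{|C_{4,3}|^3}$ are precisely what is needed to make both coefficients $\beta\cdot\frac{2b_1|C_{4,3}|^3}{3}$ and $\beta\cdot\frac{b_2|C_{4,3}|^3}{3}$ strictly less than $1$, yielding a constant $\kappa\in(0,1)$ with $c\le\kappa a$. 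Since $(u,v)\in\mathrm{T}_{b_1}\times\mathrm{T}_{b_2}$ forces $a>0$ (constants are not in $L^2(\R^4)$), we conclude $a-c\ge(1-\kappa)a>0$.

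With $a-c>0$ and $b>0$ secured, the equation $\Psi_{u,v}'(t)=e^{2t}\bigl[(a-c)-e^{2t}b\bigr]=0$ admits the unique real solution
$$t_{(u,v)}=\tfrac{1}{2}\log\tfrac{a-c}{b},$$
so $t_{(u,v)}\star(u,v)\in\mathcal{P}_{b_1,b_2}$. Direct substitution yields
$$\Psi_{u,v}''(t_{(u,v)})=-\tfrac{2(a-c)^2}{b}<0,\qquad \Psi_{u,v}(t_{(u,v)})=\tfrac{(a-c)^2}{4b}>0,$$
so $t_{(u,v)}$ is a strict maximum at positive level. Combined with $\Psi_{u,v}(-\infty)=0^+$ (the $e^{2t}$ term dominates since $a-c>0$) and $\Psi_{u,v}(+\infty)=-\infty$ (the $-e^{4t}$ term dominates), the uniqueness of the critical point implies $\Psi_{u,v}$ is strictly increasing on $(-\infty,t_{(u,v)})$ and strictly decreasing on $(t_{(u,v)},+\infty)$, which gives~(2).

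For (1), the chain of equivalences $t_{(u,v)}<0\Longleftrightarrow\frac{a-c}{b}<1\Longleftrightarrow(a-c)-b<0\Longleftrightarrow P_{b_1,b_2}(u,v)<0$ is immediate, and whenever $t_{(u,v)}<0$ one has $a-c<b<2b$, whence $\Psi_{u,v}''(0)=2(a-c)-4b<0$; conversely both strict inequalities $\Psi_{u,v}''(0)<0$ and $P_{b_1,b_2}(u,v)<0$ together trivially give $t_{(u,v)}<0$. Finally, (3) follows from the implicit function theorem applied to $\Phi(t,u,v):=\Psi_{u,v}'(t)$ at the point $(t_{(u,v)},u,v)$: the map $(u,v)\mapsto(a,b,c)$ is $C^1$ on $H^1(\R^4)\times H^1(\R^4)$, and $\partial_t\Phi(t_{(u,v)},u,v)=\Psi_{u,v}''(t_{(u,v)})<0$, so the implicit function theorem produces a $C^1$ local branch, and by uniqueness this global branch is $(u,v)\mapsto t_{(u,v)}$.
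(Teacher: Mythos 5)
Your proposal is correct and follows essentially the same route as the paper: reduce $\Psi_{u,v}$ to $\frac{e^{2t}}{2}(a-c)-\frac{e^{4t}}{4}b$ and use the Gagliardo--Nirenberg inequality with $p=3$, $N=4$ together with H\"older and Young to show that the hypotheses $\beta b_1<\frac{3}{2|C_{4,3}|^3}$, $\beta b_2<\frac{3}{|C_{4,3}|^3}$ force $a-c>0$, after which everything is elementary calculus of the map $t\mapsto \frac{e^{2t}}{2}(a-c)-\frac{e^{4t}}{4}b$. Your version is in fact slightly cleaner than the paper's, since the explicit formula $t_{(u,v)}=\frac12\log\frac{a-c}{b}$ and the direct computation $\Psi_{u,v}''(t_{(u,v)})=-\frac{2(a-c)^2}{b}$ replace the paper's contradiction argument for strict concavity at the maximum.
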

\begin{proof}
We are therefore considering the defocusing nonlinearity mass-critical. We rewrite \eqref{pp} as
\begin{equation*}
\begin{aligned}
\Psi_{u,v}(s) &=\frac{e^{2s}}{2}\int_{\mathbb{R}^{4}}\big(|\nabla u|^2+|\nabla v|^2-\beta|u|^2v\big)dx-\frac{e^{4s}}{4}\int_{\mathbb{R}^{4}}\big(\mu_1|u|^4+\mu_2|v|^4+2\rho|u|^2|v|^{2}\big)dx\\
&\ge \frac{e^{2s}}{2}\Big[\|\nabla u\|^2_{L^2(\R^4)}\big(1-\frac{2\beta|C_{4,3}|^3}{3}b_1\big)
+\|\nabla v\|^2_{L^2(\R^4)}\big(1-\frac{\beta|C_{4,3}|^3}{3}b_2\big)\Big]\\
&\quad-\frac{e^{4s}}{4}\Big(\mu_1\|u\|^4_{L^4(\R^4)}+\mu_2\|v\|^4_{L^4(\R^4)}+2\rho\|uv\|^{2}_{L^2(\R^4)}\Big).\\
\end{aligned}
\end{equation*}
$s\star (u,v)\in \mathcal{P}_{b_1,b_2}$ if and only if $\Psi'_{u,v}(s)=0$, it's easy to see that if $\big[\|\nabla u\|^2_{L^2(\R^4)}\big(1-\frac{2\beta|C_{4,3}|^3}{3}b_1\big)
+\|\nabla v\|^2_{L^2(\R^4)}\big(1-\frac{\beta|C_{4,3}|^3}{3}b_2\big)\big]$ is positive, then $\Psi_{u,v}(s)$ has a unique critical point $t_{(u,v)}$, which is a strict maximum point at positive level. Therefore, under the condition of $0<b_1<\!\frac{3}{2\beta|C_{4,3}|^{3}}$ and $0<b_2<\!\frac{3}{\beta|C_{4,3}|^{3}}$, we know that $\int_{\mathbb{R}^{4}}\big(|\nabla u|^2+|\nabla v|^2-\beta|u|^2v\big)dx>0$. If $(u,v)\in \mathcal{ P}_{b_1,b_2}$, then $t_{(u,v)}$ is a maximum point, we have that $\Psi''_{u,v}(t_{(u,v)})\le 0$. We claim that $\Psi''_{u,v}(t_{(u,v)})< 0$. By contradiction, this is $\Psi'_{u,v}(0)=\Psi''_{u,v}(0)=0$, then necessarily $\int_{\mathbb{R}^{4}} \big(\mu_1|u|^4+\mu_2|v|^4+2\rho|u|^2|v|^2\big)dx=0$, which is not possible because $(u,v)\in \mathrm{T}_{b_{1}}\times \mathrm{T}_{b_{2}} $. Thus, $\Psi_{u,v}''(0)<0$.

\vskip1mm

As in the proof of Lemma \ref{Lem4} shows that the map $(u,v)\in \mathrm{T}_{b_{1}}\times \mathrm{T}_{b_{2}} \mapsto t_{(u,v)}\in \R$ is of class $C^1$. Finally, $\Psi'_{u,v}(s)<0$ if and only if $s>t_{(u,v)}$, then $P_{b_1,b_2}(u,v)=\Psi'_{u,v}(0)<0$ if and only if $t_{(u,v)}<0$.
\end{proof}

\begin{lemma}\label{lem3.2}If $0<\beta b_1<\!\frac{3}{2|C_{4,3}|^{3}}$ and $0<\beta b_2<\!\frac{3}{|C_{4,3}|^{3}}$, then the set $\mathcal{P}_{b_1,b_2}$ is a $C^1$-submanifold of codimension 1 in $\mathrm{T}_{b_{1}}\times \mathrm{T}_{b_{2}} $, and it is a $C^1$-submanifold of codimension 3 in $H^1(\mathbb{R}^{4})\times H^1(\mathbb{R}^{4})$.
\end{lemma}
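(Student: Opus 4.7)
The plan is to apply the submersion theorem to the constraint map
\begin{equation*}
\Phi(u,v) := \big(P_{b_1,b_2}(u,v),\; \|u\|_{L^2(\mathbb{R}^{4})}^2 - b_1^2,\; \|v\|_{L^2(\mathbb{R}^{4})}^2 - b_2^2\big),
\end{equation*}
viewed as a $C^1$ map from $H^1(\mathbb{R}^{4})\times H^1(\mathbb{R}^{4})$ to $\mathbb{R}^3$. Both claims (codimension $1$ in $\mathrm{T}_{b_1}\times \mathrm{T}_{b_2}$ and codimension $3$ in $H^1\times H^1$) will follow simultaneously once I verify that $d\Phi(u,v)$ is surjective at every $(u,v)\in \mathcal{P}_{b_1,b_2}=\Phi^{-1}(0,0,0)$.

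The key preliminary step is to rule out the degenerate set $\mathcal{P}_{b_1,b_2}^0=\{(u,v)\in\mathcal{P}_{b_1,b_2}:\Psi''_{u,v}(0)=0\}$. Using the explicit form of $\Psi_{u,v}(s)$ from the proof of Lemma~\ref{lem3.1}, a direct computation gives
\begin{equation*}
\Psi'_{u,v}(0)=\|\nabla u\|_2^2+\|\nabla v\|_2^2-\beta\!\int_{\mathbb{R}^{4}}\!u^2v\,dx-\big(\mu_1\|u\|_4^4+\mu_2\|v\|_4^4+2\rho\|uv\|_2^2\big),
\end{equation*}
\begin{equation*}
\Psi''_{u,v}(0)=2\big(\|\nabla u\|_2^2+\|\nabla v\|_2^2-\beta\!\int_{\mathbb{R}^{4}}\!u^2v\,dx\big)-4\big(\mu_1\|u\|_4^4+\mu_2\|v\|_4^4+2\rho\|uv\|_2^2\big).
\end{equation*}
Substituting the Pohozaev identity $P_{b_1,b_2}(u,v)=\Psi'_{u,v}(0)=0$ into the second formula collapses it to $\Psi''_{u,v}(0)=-2\big(\mu_1\|u\|_4^4+\mu_2\|v\|_4^4+2\rho\|uv\|_2^2\big)$, which is strictly negative since the constraint $(u,v)\in\mathrm{T}_{b_1}\times\mathrm{T}_{b_2}$ forces $(u,v)\ne(0,0)$. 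Thus $\mathcal{P}_{b_1,b_2}^0=\emptyset$.

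To establish surjectivity of $d\Phi(u,v)$ at an arbitrary $(u,v)\in\mathcal{P}_{b_1,b_2}$, I will test it against three carefully chosen directions. Let $\xi_0=(2u+x\cdot\nabla u,\,2v+x\cdot\nabla v)$ be the tangent vector at $s=0$ to the $L^2$-preserving scaling curve $s\mapsto s\star(u,v)$; since that curve preserves $L^2$-norms, $\xi_0$ is $L^2$-orthogonal to both $u$ and $v$, and by the chain rule $dP_{b_1,b_2}(u,v)[\xi_0]=\Psi''_{u,v}(0)$. The previous step shows this is nonzero, so
\begin{equation*}
d\Phi(u,v)[\xi_0]=\big(\Psi''_{u,v}(0),\,0,\,0\big),\quad \Psi''_{u,v}(0)<0.
\end{equation*}
Testing next against $\xi_1=(u,0)$ and $\xi_2=(0,v)$ produces image vectors whose second and third coordinates equal $2b_1^2$ and $2b_2^2$ respectively, so the matrix of $d\Phi(u,v)$ in the basis $\{\xi_0,\xi_1,\xi_2\}$ is lower triangular with nonzero diagonal entries. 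Consequently $d\Phi(u,v)$ is surjective, and the standard submersion theorem yields simultaneously that $\mathcal{P}_{b_1,b_2}$ is a $C^1$-submanifold of $H^1(\mathbb{R}^{4})\times H^1(\mathbb{R}^{4})$ of codimension $3$ and of $\mathrm{T}_{b_1}\times\mathrm{T}_{b_2}$ of codimension $1$.

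I do not anticipate any serious obstacle: the smoothness of $\Phi$ is routine, and the algebraic identity $\Psi''_{u,v}(0)=-2(\mu_1\|u\|_4^4+\mu_2\|v\|_4^4+2\rho\|uv\|_2^2)$ on $\mathcal{P}_{b_1,b_2}$ (which does not even require the smallness conditions $\beta b_1<3/(2|C_{4,3}|^3)$, $\beta b_2<3/|C_{4,3}|^3$ beyond ensuring $\mathcal{P}_{b_1,b_2}$ is non-empty via Lemma~\ref{lem3.1}) makes the transversality check essentially automatic; the scaling direction $\xi_0$ plays exactly the same role here as in the proof of Lemma~\ref{lem4.3}, so the structural argument is simply the $N=4$, $L^2$-critical analogue of that one.
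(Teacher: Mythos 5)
Your computations are all correct: the formulas for $\Psi'_{u,v}(0)$ and $\Psi''_{u,v}(0)$, the collapse $\Psi''_{u,v}(0)=-2\big(\mu_1\|u\|_{L^4}^4+\mu_2\|v\|_{L^4}^4+2\rho\|uv\|_{L^2}^2\big)<0$ on $\mathcal{P}_{b_1,b_2}$ (which is exactly the non-degeneracy already recorded in Lemma \ref{lem3.1}), the observation that this identity is independent of the smallness of $\beta b_1,\beta b_2$, and the triangularity of the matrix against $(u,0)$ and $(0,v)$. The gap is in the third test direction. The vector $\xi_0=(2u+x\cdot\nabla u,\,2v+x\cdot\nabla v)$ need not belong to $H^1(\mathbb{R}^{4})\times H^1(\mathbb{R}^{4})$ for a generic $(u,v)\in \mathcal{P}_{b_1,b_2}$: multiplication by the unbounded weight $x$ can destroy square-integrability of $\nabla u$ (and $\xi_0\in H^1$ would in addition require control of second derivatives), while elements of $\mathcal{P}_{b_1,b_2}$ are a priori only $H^1$ functions satisfying integral identities, not solutions of any equation. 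Consequently the step $d\Phi(u,v)[\xi_0]=\frac{d}{ds}\Phi\big(s\star(u,v)\big)\big|_{s=0}=(\Psi''_{u,v}(0),0,0)$ is not a legitimate application of the chain rule: the curve $s\mapsto s\star(u,v)$ is continuous but in general not differentiable as a curve in $H^1\times H^1$, and the smoothness of the scalar map $s\mapsto\Phi(s\star(u,v))$ does not place the vector $(\Psi''_{u,v}(0),0,0)$ in the image of the bounded linear map $d\Phi(u,v)$. So surjectivity is not established.

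The paper's actual route (the proof of Lemma \ref{lem3.2} is referred to that of Lemma \ref{lem4.3}) repairs precisely this point by arguing by contradiction: if $dP_{b_1,b_2}(u,v)$ were a linear combination of $dG(u)$, $dF(v)$ (and $dE(u,v)$), then $(u,v)$ would be a weak solution of an autonomous elliptic system with Lagrange multipliers; elliptic regularity and decay for that system then supply exactly the integrability needed to justify its Pohozaev (dilation) identity, which amounts to $\Psi''_{u,v}(0)=0$ and contradicts the non-degeneracy $\Psi''_{u,v}(0)<0$ (equivalently, it forces a relation between $\|\nabla u\|_{L^2}^2+\|\nabla v\|_{L^2}^2$ and $\int_{\mathbb{R}^4}(\mu_1u^4+\mu_2v^4+2\rho u^2v^2)dx$ incompatible with $P_{b_1,b_2}(u,v)=0$). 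This is your argument in the correct logical order: the degeneracy hypothesis is what earns the right to differentiate along the scaling curve. To make your proof complete, replace the direct use of $\xi_0$ by this Lagrange-multiplier/Pohozaev contradiction, or otherwise justify the dilation identity at the putative degenerate point.
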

\begin{proof}
The proof is similar to that of Lemma \ref{lem4.3}.
\end{proof}

\begin{lemma} \label{lem3.3}
Assume $\mu_i,b_i>0(i=1,2)$ and $\rho,\beta>0$. Let $0<\beta b_1<\!\frac{3}{2|C_{4,3}|^{3}}$ and $0<\beta b_2<\!\frac{3}{|C_{4,3}|^{3}}$, then
\begin{equation*}
m_{\beta}(b_1,b_2):=\inf_{(u,v)\in \mathcal{ P}_{b_1,b_2}}J_{\beta}(u,v)>0.\\
\end{equation*}
\end{lemma}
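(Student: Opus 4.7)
\textbf{Proof plan for Lemma \ref{lem3.3}.} The natural first step is to rewrite the energy on the constraint. Using $P_{b_1,b_2}(u,v)=0$, which in the $N=4$ setting reads
\begin{equation*}
\int_{\R^4}\bigl(|\nabla u|^2+|\nabla v|^2\bigr)dx-\beta\int_{\R^4}u^2v\,dx=\int_{\R^4}\bigl(\mu_1u^4+\mu_2v^4+2\rho u^2v^2\bigr)dx,
\end{equation*}
I would substitute into $J_\beta$ to obtain the compact identity
\begin{equation*}
J_\beta(u,v)=\frac14\int_{\R^4}\bigl(|\nabla u|^2+|\nabla v|^2\bigr)dx-\frac{\beta}{4}\int_{\R^4}u^2v\,dx\qquad\text{on }\mathcal{P}_{b_1,b_2}.
\end{equation*}
So the whole task reduces to showing that this right-hand side is bounded below by a positive constant depending only on $b_1,b_2,\mu_i,\rho,\beta$.

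Next, I would control the cubic coupling using the Gagliardo--Nirenberg inequality in $N=4$ with $p=3$ (exactly as in \eqref{LA13} applied in dimension four): the estimate
\begin{equation*}
\beta\int_{\R^4}u^2v\,dx\le \tfrac{2\beta}{3}|C_{4,3}|^3b_1\|\nabla u\|_{L^2}^2+\tfrac{\beta}{3}|C_{4,3}|^3b_2\|\nabla v\|_{L^2}^2
\end{equation*}
combined with the standing assumptions $\beta b_1<\frac{3}{2|C_{4,3}|^3}$ and $\beta b_2<\frac{3}{|C_{4,3}|^3}$ yields constants $\kappa_1:=1-\tfrac{2\beta|C_{4,3}|^3 b_1}{3}>0$ and $\kappa_2:=1-\tfrac{\beta|C_{4,3}|^3 b_2}{3}>0$ such that
\begin{equation*}
\int_{\R^4}\bigl(|\nabla u|^2+|\nabla v|^2\bigr)dx-\beta\int_{\R^4}u^2v\,dx\ge \min\{\kappa_1,\kappa_2\}\bigl(\|\nabla u\|_{L^2}^2+\|\nabla v\|_{L^2}^2\bigr).
\end{equation*}
Consequently $J_\beta(u,v)\ge\frac{\min\{\kappa_1,\kappa_2\}}{4}\bigl(\|\nabla u\|_{L^2}^2+\|\nabla v\|_{L^2}^2\bigr)$ on $\mathcal{P}_{b_1,b_2}$.

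The remaining task is to produce a uniform lower bound on $\|\nabla u\|_{L^2}^2+\|\nabla v\|_{L^2}^2$ for elements of $\mathcal{P}_{b_1,b_2}$. For this I use the Sobolev inequality $\mathcal{S}\|w\|_{L^4}^2\le\|\nabla w\|_{L^2}^2$ together with $\int u^2v^2\le\|u\|_{L^4}^2\|v\|_{L^4}^2$ to bound
\begin{equation*}
\int_{\R^4}\bigl(\mu_1u^4+\mu_2v^4+2\rho u^2v^2\bigr)dx\le C_0\bigl(\|\nabla u\|_{L^2}^2+\|\nabla v\|_{L^2}^2\bigr)^{\!2}
\end{equation*}
for a constant $C_0=C_0(\mu_1,\mu_2,\rho,\mathcal{S})$. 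Inserting this together with the lower estimate from the previous step into $P_{b_1,b_2}(u,v)=0$ produces
\begin{equation*}
\min\{\kappa_1,\kappa_2\}\bigl(\|\nabla u\|_{L^2}^2+\|\nabla v\|_{L^2}^2\bigr)\le C_0\bigl(\|\nabla u\|_{L^2}^2+\|\nabla v\|_{L^2}^2\bigr)^{\!2},
\end{equation*}
hence $\|\nabla u\|_{L^2}^2+\|\nabla v\|_{L^2}^2\ge\min\{\kappa_1,\kappa_2\}/C_0>0$ on $\mathcal{P}_{b_1,b_2}$. Combined with the first estimate, this gives the strictly positive lower bound
\begin{equation*}
m_\beta(b_1,b_2)\ge\frac{\min\{\kappa_1,\kappa_2\}^2}{4C_0}>0.
\end{equation*}

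The only delicate point is verifying that the coercivity constants $\kappa_i$ remain positive, which is precisely where the mass thresholds $\beta b_1<\frac{3}{2|C_{4,3}|^3}$ and $\beta b_2<\frac{3}{|C_{4,3}|^3}$ are used; once those are in hand, the argument is essentially algebraic. I do not anticipate any compactness or Brezis--Lieb issue here since the claim is simply the strict positivity of an infimum, not its attainment.
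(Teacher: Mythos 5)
Your proposal is correct and follows essentially the same route as the paper: rewrite $J_\beta=\frac14\big[\|\nabla u\|_{L^2}^2+\|\nabla v\|_{L^2}^2-\beta\int_{\R^4}u^2v\,dx\big]$ on $\mathcal{P}_{b_1,b_2}$, absorb the quadratic coupling via the Gagliardo--Nirenberg inequality using the mass thresholds, and derive a uniform positive lower bound on the gradient norms from the Pohozaev identity together with the (vector) Sobolev inequality. The only difference is notational: the paper uses the constant $\mathcal{S}_{\mu_1,\mu_2,\rho}$ where you use a generic $C_0$ built from $\mathcal{S}$, and it keeps the two coercivity coefficients separate instead of taking their minimum.
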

\begin{proof}
If $(u,v)\in \mathcal{P}_{b_1,b_2}$, then by Gagliardo-Nirenberg and the Sobolev inequalities, we have
\begin{equation*}
\begin{aligned}
&\|\nabla u\|^2_{L^2(\mathbb{R}^{4})}+\|\nabla v\|^2_{L^2(\mathbb{R}^{4})}\\
&=\mu_1\|u\|^4_{L^4(\mathbb{R}^{4})}+\mu_2\|v\|^4_{L^4(\mathbb{R}^{4})}+2\rho\|uv\|^2_{L^2(\mathbb{R}^{4})}+\beta\int_{\mathbb{R}^{4}}|u|^2vdx\\
&\le \mathcal{S}^2_{\mu_1,\mu_2,\rho}\big[\|\nabla u\|^2_{L^2(\mathbb{R}^{4})}+\|\nabla v\|^2_{L^2(\mathbb{R}^{4})}\big]^2+\frac{2\beta|C_{4,3}|^3}{3}b_1\|\nabla u\|^2_{L^2(\mathbb{R}^{4})}+\frac{\beta|C_{4,3}|^3}{3}b_2\|\nabla v\|^2_{L^2(\mathbb{R}^{4})}.\\
\end{aligned}
\end{equation*}
Moreover, $0<\beta b_1<\!\frac{3}{2|C_{4,3}|^{3}}$ and $0<\beta b_2<\!\frac{3}{|C_{4,3}|^{3}}$, and $\|\nabla u\|^2_{L^2(\R^4)}+\|\nabla v\|^2_{L^2(\R^4)}\neq 0$ (since $(u,v)\in \mathrm{T}_{b_{1}}\times \mathrm{T}_{b_{2}} $), we get
\begin{equation*}
\inf_{(u,v)\in \mathcal{P}_{b_1,b_2}}\|\nabla u\|^2_{L^2(\mathbb{R}^{4})}+\|\nabla v\|^2_{L^2(\mathbb{R}^{4})}\ge C>0.
\end{equation*}
So
\begin{equation*}
\begin{aligned}
m_{\beta}(b_1,b_2)&=\inf_{(u,v)\in \mathcal{P}_{b_1,b_2}}J_{\beta}(u,v)\\
&=\inf_{(u,v)\in \mathcal{P}_{b_1,b_2}}\frac{1}{4}\big[\|\nabla u\|^2_{L^2(\mathbb{R}^{4})}+\|\nabla v\|^2_{L^2(\mathbb{R}^{4})}-\beta\int_{\mathbb{R}^{4}}|u|^2vdx\big]\\
&\ge \inf_{(u,v)\in \mathcal{P}_{b_1,b_2}}\frac{1}{4}\big(1-\frac{2\beta|C_{4,3}|^3b_1}{3}\big)\|\nabla u\|^2_{L^2(\mathbb{R}^{4})}+\frac{1}{4}\big(1-\frac{\beta|C_{4,3}|^3b_2}{3}\big)\|\nabla v\|^2_{L^2(\mathbb{R}^{4})}\\
&\ge C>0.
\end{aligned}
\end{equation*}
\end{proof}

From the above lemmas, then $J_{\beta}\big|_{\mathrm{T}_{b_{1}}\times \mathrm{T}_{b_{2}} }$ has a mountain pass geometry. We need an estimate from above on $m_{\beta}(b_1,b_2)=\inf\limits_{(u,v)\in\mathcal{P}_{b_1,b_2}}J_{\beta}(u,v)$.

\begin{lemma}\label{lem3.4}
Let $\beta,\mu_1,\mu_2>0$, $\rho\in \big(0,\min\{\mu_1,\mu_2\}\big) \cup\big(\max\{\mu_1,\mu_2\},\infty\big)$. If $0<\beta b_1<\!\frac{3}{2|C_{4,3}|^{3}}$ and $0<\beta b_2<\!\frac{3}{|C_{4,3}|^{3}}$, then $$0<m_{\beta}(b_1,b_2)<\frac{k_1+k_2}{4}\mathcal{S}^2,$$
where $k_1=\frac{\rho-\mu_2}{\rho^2-\mu_1\mu_2}$ and $k_2=\frac{\rho-\mu_1}{\rho^2-\mu_1\mu_2}$.
\end{lemma}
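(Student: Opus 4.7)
The lower bound $m_\beta(b_1,b_2)>0$ is Lemma~\ref{lem3.3}, so I concentrate on the strict upper bound. Writing $A(u,v):=\|\nabla u\|_2^2+\|\nabla v\|_2^2-\beta\int u^2v\,dx$ and $B(u,v):=\mu_1\|u\|_4^4+\mu_2\|v\|_4^4+2\rho\|uv\|_2^2$, the fiber map $\Psi_{u,v}(t)=\tfrac{e^{2t}}{2}A-\tfrac{e^{4t}}{4}B$ attains its unique maximum $A^2/(4B)$ at $e^{2t_\ast}=A/B$ whenever $A(u,v)>0$; hence by Lemma~\ref{lem3.1},
\begin{equation*}
m_\beta(b_1,b_2)\le \max_{t\in\mathbb{R}}\Psi_{u,v}(t)=\frac{A(u,v)^2}{4B(u,v)}\qquad\forall\,(u,v)\in\mathrm{T}_{b_1}\times\mathrm{T}_{b_2},\ A(u,v)>0.
\end{equation*}
The whole task reduces to producing a single such pair with $A^2/(4B)<\tfrac{k_1+k_2}{4}\mathcal{S}^2$.

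\textbf{Bubble-plus-tail construction.} I would mimic the limit configuration $(\sqrt{k_1}U_\varepsilon,\sqrt{k_2}U_\varepsilon)$, which realises the Sobolev optimum $\mathcal{S}_{\mu_1,\mu_2,\rho}^2=(k_1+k_2)\mathcal{S}^2$ of \eqref{a6} but fails to lie in $L^2(\R^4)$. Fix a radial cutoff $\eta\in C_c^\infty(\R^4)$ with $\eta\equiv 1$ on $B_1$ and $\operatorname{supp}\eta\subset B_2$, and set $\phi_\varepsilon:=\eta U_\varepsilon$; standard bubble asymptotics give $\|\nabla\phi_\varepsilon\|_2^2=\mathcal{S}^2+O(\varepsilon^2)$, $\|\phi_\varepsilon\|_4^4=\mathcal{S}^2+O(\varepsilon^4)$, $\|\phi_\varepsilon\|_2^2=O(\varepsilon^2|\log\varepsilon|)$, while the direct scaling of $U_\varepsilon^3$ yields the decisive lower-order identity $\int_{\R^4}\phi_\varepsilon^3\,dx=C_\ast\varepsilon+o(\varepsilon)$ with $C_\ast>0$. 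Pick $\chi\in C_c^\infty(\R^4)$ nonnegative, $\|\chi\|_2=1$, supported in $B_2(p_0)\setminus B_1(p_0)$ with $|p_0|\ge 3$, and set $\chi_R(x):=R^{-2}\chi(x/R)$, so that $\|\chi_R\|_2=1$, $\|\nabla\chi_R\|_2^2=O(R^{-2})$, $\|\chi_R\|_4^4=O(R^{-4})$, and $\operatorname{supp}\chi_R\cap\operatorname{supp}\phi_\varepsilon=\emptyset$ once $R$ is large. Define
\begin{equation*}
u_{\varepsilon,R}:=\sqrt{k_1}\,\phi_\varepsilon+\sqrt{b_1^2-k_1\|\phi_\varepsilon\|_2^2}\;\chi_R,\quad v_{\varepsilon,R}:=\sqrt{k_2}\,\phi_\varepsilon+\sqrt{b_2^2-k_2\|\phi_\varepsilon\|_2^2}\;\chi_R,
\end{equation*}
which for $\varepsilon$ small lies in $\mathrm{T}_{b_1}\times\mathrm{T}_{b_2}$ with both components strictly positive, and hence $\beta\int u_{\varepsilon,R}^2v_{\varepsilon,R}\,dx>0$.

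\textbf{Final estimate.} Disjoint supports erase all bubble--tail cross terms, and the algebraic identities $\mu_1k_1+\rho k_2=\mu_2k_2+\rho k_1=1$ and $\mu_1k_1^2+\mu_2k_2^2+2\rho k_1k_2=k_1+k_2$ (encoding that $(\sqrt{k_1}U_\varepsilon,\sqrt{k_2}U_\varepsilon)$ solves \eqref{a6}) produce
\begin{align*}
A(u_{\varepsilon,R},v_{\varepsilon,R})&=(k_1+k_2)\|\nabla\phi_\varepsilon\|_2^2-\beta k_1\sqrt{k_2}\int\phi_\varepsilon^3\,dx+O(R^{-2}),\\
B(u_{\varepsilon,R},v_{\varepsilon,R})&=(k_1+k_2)\|\phi_\varepsilon\|_4^4+O(R^{-4}).
\end{align*}
Taking $R=\varepsilon^{-1}$ (so both tail residues become $O(\varepsilon^2)$) and expanding the quotient around the leading term $(k_1+k_2)\mathcal{S}^2/4$ gives
\begin{equation*}
\frac{A(u_{\varepsilon,R},v_{\varepsilon,R})^2}{4B(u_{\varepsilon,R},v_{\varepsilon,R})}=\frac{k_1+k_2}{4}\mathcal{S}^2-\frac{\beta\, k_1\sqrt{k_2}\,C_\ast}{2}\,\varepsilon+O(\varepsilon^2),
\end{equation*}
so fixing $\varepsilon>0$ small enough the linear negative correction dominates, yielding the desired strict inequality.

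\textbf{Main obstacle.} The real subtlety lies precisely in the joint tuning of $\varepsilon$ and $R$: a bare concentrating bubble has vanishing $L^2$ norm and cannot respect the constraint $\mathrm{T}_{b_1}\times\mathrm{T}_{b_2}$, yet a tail large enough to restore the mass defect must be pushed far out in space and forced to decay in its gradient and quartic norms strictly faster than the linear-in-$\varepsilon$ gain from the quadratic coupling. The mechanism depends crucially on the fact that in dimension four the subcritical quantity $\int\phi_\varepsilon^3\,dx$ vanishes only at the linear rate $\varepsilon$, strictly slower than the $O(\varepsilon^2)$ Brezis-Nirenberg residues and the $O(R^{-2})$ tail losses at the scaling $R=\varepsilon^{-1}$; this is exactly what allows the quadratic interaction $\beta\int u^2v\,dx$ to produce a decisive strict negative correction separating $m_\beta(b_1,b_2)$ from the critical threshold $\tfrac{k_1+k_2}{4}\mathcal{S}^2$.
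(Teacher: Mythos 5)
Your argument is correct, and it reaches the conclusion by a genuinely different test-function construction than the paper's. Both proofs share the same skeleton: take the lower bound from Lemma~\ref{lem3.3}, use the projection of Lemma~\ref{lem3.1} to reduce the upper bound to exhibiting one admissible pair whose fiber-maximum lies strictly below $\tfrac{k_1+k_2}{4}\mathcal{S}^2$, build that pair out of a truncated Aubin--Talenti bubble in the proportion $(\sqrt{k_1},\sqrt{k_2})$, and let the cubic coupling $\beta\int u^2v$ supply the strict gain because $\|W_\varepsilon\|_{L^3}^3=O(\varepsilon)$ decays more slowly than the $O(\varepsilon^2)$ Brezis--Nirenberg residues. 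Where you diverge is in how the mass constraint is met: the paper rescales the bubble in amplitude, taking $(b_1 W_\varepsilon/\|W_\varepsilon\|_2,\,b_2 W_\varepsilon/\|W_\varepsilon\|_2)\in \mathrm{T}_{b_1}\times\mathrm{T}_{b_2}$ and letting the dilation $\tau_\varepsilon\star$ restore the correct scale, which after bookkeeping produces a gain of order $|\ln\varepsilon|^{-1/2}$ against a loss of $O(\varepsilon^2)$; you instead keep the bubble at its natural amplitude and graft on a disjointly supported, spread-out tail $\chi_R$ carrying the missing mass, then compute $A^2/(4B)$ in closed form, obtaining a gain of order $\varepsilon$ against losses of $O(R^{-2})=O(\varepsilon^2)$ at the coupling $R=\varepsilon^{-1}$. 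Your route has the advantage of making the competition between the correction terms completely explicit (the identities $\mu_1k_1+\rho k_2=\mu_2k_2+\rho k_1=1$ cleanly collapse $B$ to $(k_1+k_2)\|\phi_\varepsilon\|_4^4+O(R^{-4})$), and the bubble-plus-tail device is the standard way to decouple concentration from the $L^2$-constraint; the paper's route avoids introducing the auxiliary parameter $R$ and the non-radial tail, at the cost of a slightly more delicate two-variable maximization in $(s_1,s_2)$. Two cosmetic remarks: your components are nonnegative rather than strictly positive (zero outside the two supports), which is harmless since all you need is $\beta\int u^2v>0$; and it is worth saying explicitly that the hypotheses $0<\beta b_i<\tfrac{3}{2}|C_{4,3}|^{-3}$, $\tfrac{3}{|C_{4,3}|^{3}}$ guarantee $A(u,v)>0$ for every admissible pair via Gagliardo--Nirenberg, which is exactly what Lemma~\ref{lem3.1} provides and what legitimizes the formula $\max_t\Psi_{u,v}(t)=A^2/(4B)$.
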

\begin{proof}
From \eqref{a5}, $U_{\varepsilon}=\frac{2\sqrt{2}\varepsilon}{\varepsilon^2+|x|^2}$, taking a radially decreasing cut-off function $\xi \in C_0^{\infty}(\mathbb{R}^{4})$ such that $\xi \equiv 1$ in $B_1$, $\xi \equiv 0$ in $\R^4 \backslash B_2$, and let $W_{\varepsilon}(x) = \xi(x) U_{\varepsilon}(x)$. We have (see \cite[Lemma 7.1]{JTT}),
\begin{equation}\label{ab1}
\|\nabla W_\varepsilon\|^2_{L^2(\mathbb{R}^{4})}=\mathcal{S}^2+O(\varepsilon^2),    \quad \quad \|W_\varepsilon\|^4_{L^4(\mathbb{R}^{4})}=\mathcal{S}^2+O(\varepsilon^4),
\end{equation}
and
\begin{equation}\label{ab2}
 \| W_\varepsilon\|^3_{L^3(\mathbb{R}^{4})}=O(\varepsilon), \quad \quad \|W_\varepsilon\|^2_{L^2(\mathbb{R}^{4})}=O(\varepsilon^2|\ln\varepsilon|).
\end{equation}
Setting
\begin{equation*}
\big(\overline{W}_{\varepsilon},\overline{V}_{\varepsilon}\big)=\Big(a_1\frac{W_{\varepsilon}}{\|W_{\varepsilon}\|_{L^2(\mathbb{R}^{4})}},a_2\frac{W_{\varepsilon}}{\|W_{\varepsilon}\|_{L^2(\mathbb{R}^{4})}}\Big), \end{equation*}
then $(\overline{W}_{\varepsilon},\overline{V}_{\varepsilon})\in \mathrm{T}_{b_{1}}\times \mathrm{T}_{b_{2}} $. From Lemma \ref{lem3.1}, there exists $\tau_{\varepsilon}\in\R$ such that $\tau_{\varepsilon}\star(\overline{W}_{\varepsilon},\overline{V}_{\varepsilon})\in \mathcal{P}_{b_1,b_2}$,
this implies that,
\begin{equation*}
m_{\beta}(b_1,b_2)\le J_{\beta}\big(\tau_{\varepsilon}\star(\overline{W}_{\varepsilon},\overline{V}_{\varepsilon})\big)=\max_{t\in \R}J_{\beta}\big(t\star(\overline{W}_{\varepsilon},\overline{V}_{\varepsilon})\big),
\end{equation*}
and
\begin{equation}\label{ab3}
\begin{aligned}
\big[\|\nabla \overline{W}_{\varepsilon}\|_{L^2(\mathbb{R}^{4})}^2+\|\nabla \overline{V}_{\varepsilon}\|_{L^2(\mathbb{R}^{4})}^2\big]&=e^{2\tau_{\varepsilon}}\big[\mu_1\|\overline{W}_{\varepsilon}\|_{L^4(\mathbb{R}^{4})}^4+\mu_2\|\overline{V}_{\varepsilon}\|_{L^4(\mathbb{R}^{4})}^4\\
&\quad+2\rho\|\overline{W}_{\varepsilon}\overline{V}_{\varepsilon}\|^2_{L^2(\mathbb{R}^{4})}\big]+\int_{\mathbb{R}^{4}}\beta|\overline{W}_{\varepsilon}|^2\overline{V}_{\varepsilon}dx.\\
\end{aligned}
\end{equation}
Then,
\begin{equation*}
\begin{aligned}
e^{2\tau_{\varepsilon}}&\big[\mu_1\|\overline{W}_{\varepsilon}\|_{L^4(\mathbb{R}^{4})}^4+\mu_2\|\overline{V}_{\varepsilon}\|_{L^4(\mathbb{R}^{4})}^4+2\rho\|\overline{W}_{\varepsilon}\overline{V}_{\varepsilon}\|^2_{L^2(\mathbb{R}^{4})}\big]\\
&=\big[\|\nabla \overline{W}_{\varepsilon}\|_{L^2(\mathbb{R}^{4})}^2+\|\nabla \overline{V}_{\varepsilon}\|_{L^2(\mathbb{R}^{4})}^2\big]-\int_{\mathbb{R}^{4}}\beta|\overline{W}_{\varepsilon}|^2\overline{V}_{\varepsilon}dx\\
&\ge \big(1-\frac{2\beta|C_{4,3}|^3b_1}{3}\big)\|\nabla \overline{W}_{\varepsilon}\|^2_{L^2(\mathbb{R}^{4})}+\big(1-\frac{\beta|C_{4,3}|^3b_2}{3}\big)\|\nabla \overline{V}_{\varepsilon}\|^2_{L^2(\mathbb{R}^{4})}.\\
\end{aligned}
\end{equation*}
Therefore, for $\varepsilon$ small enough, we have
\begin{equation*}
\begin{aligned}
J_{\beta}\big(\tau_{\varepsilon}\star(\overline{W}_{\varepsilon},\overline{V}_{\varepsilon})\big)&=\frac{e^{2\tau_{\varepsilon}}}{2}\big(\|\nabla \overline{W}_{\varepsilon}\|_{L^2(\mathbb{R}^{4})}^2+\|\nabla \overline{V}_{\varepsilon}\|^2_{L^2(\mathbb{R}^{4})}\big)-\frac{e^{2\tau_{\varepsilon}}}{2}\int_{\mathbb{R}^{4}}|\overline{W}_{\varepsilon}|^{2}\overline{V}_{\varepsilon}dx\\
&\quad-\frac{e^{4\tau_{\varepsilon}}}{4}\big(\mu_1\|\overline{W}_{\varepsilon}\|_{L^4(\mathbb{R}^{4})}^4+\mu_2\|\overline{V}_{\varepsilon}\|_{L^4(\mathbb{R}^{4})}^4+2\rho\|\overline{W}_{\varepsilon}\overline{V}_{\varepsilon}\|^2_{L^2(\mathbb{R}^{4})}\big)\\
&\le \max_{s_1,s_2>0}\frac{s^2_1+s^2_2}{2}\|\nabla W_{\varepsilon}\|_{L^2(\mathbb{R}^{4})}^2-\frac{\big(\mu_1s^4_1+\mu_2s^4_2+2\rho s^2_1s^2_2\big)}{4}\|W_{\varepsilon}\|_{L^4(\mathbb{R}^{4})}^4\\
&\quad-\frac{e^{2\tau_{\varepsilon}}}{2}\int_{\mathbb{R}^{4}}|\overline{W}_{\varepsilon}|^{2}\overline{V}_{\varepsilon}dx,\\
\end{aligned}
\end{equation*}
where $s_1=\frac{e^{\tau_{\varepsilon}}b_1}{\|W_{\varepsilon}\|_{L^2(\mathbb{R}^{4})}}$ and $s_2=\frac{e^{\tau_{\varepsilon}}b_2}{\|W_{\varepsilon}\|_{L^2(\mathbb{R}^{4})}}$.
Define $$f(s_1,s_2)=\frac{s^2_1+s^2_2}{2}\|\nabla W_{\varepsilon}\|_{L^2(\mathbb{R}^{4})}^2-\frac{(\mu_1s^4_1+\mu_2s^4_2+2\rho s^2_1s^2_2)}{4}\|W_{\varepsilon}\|_{L^4(\mathbb{R}^{4})}^4.$$ Using that, for all $0<s_1,s_2$,
\begin{equation*}
\max_{s_1,s_2>0}f(s_1,s_2)\le \frac{k_1+k_2}{4}\mathcal{S}^2+O(\varepsilon^2).
\end{equation*}
Finally,
\begin{equation*}
\begin{aligned}
\frac{e^{2\tau_{\varepsilon}}}{2}\int_{\mathbb{R}^{4}}|\overline{W}_{\varepsilon}|^{2}\overline{V}_{\varepsilon}dx&=\frac{e^{2\tau_{\varepsilon}}}{2}\frac{b^2_1b_2}{\|W_\varepsilon\|^3_{L^2(\mathbb{R}^{4})}}\int_{\mathbb{R}^{4}}|W_{\varepsilon}|^{3}dx\\
&\ge \frac{C}{\|W_{\varepsilon}\|_{L^2(\mathbb{R}^{4})}} \int_{\mathbb{R}^{4}}|W_{\varepsilon}|^{3}dx\\
&\ge C|\ln\varepsilon|^{-\frac{1}{2}},
\end{aligned}
\end{equation*}
hence, we deduce that
\begin{equation*}
\max_{t\in \R}J_{\beta}\big(t\star(\overline{W}_{\varepsilon},\overline{V}_{\varepsilon})\big)<\frac{k_1+k_2}{4}\mathcal{S}^2.
\end{equation*}
\end{proof}

Now, we are ready to show that the infimum is attained by nontrivial positive radial functions.
\begin{lemma}\label{lem3.5}
Let $\mu_i,b_i>0(i=1,2)$, and $\rho\in \big(0,\min\{\mu_1,\mu_2\}\big)\cup\big(\max\{\mu_1,\mu_2\},\infty\big)$.
If $0<\beta b_1<\!\frac{3}{2|C_{4,3}|^{3}}$, $0<\beta b_2<\!\frac{3}{|C_{4,3}|^{3}}$ and
\begin{equation*}
0<m_{\beta}(b_1,b_2)<\frac{k_1+k_2}{4}\mathcal{S}^2,
\end{equation*}
where $k_1=\frac{\rho-\mu_2}{\rho^2-\mu_1\mu_2}$ and $k_2=\frac{\rho-\mu_1}{\rho^2-\mu_1\mu_2}$, then $m_{\beta}(b_1,b_2)$ can be achieved by some function $(u_{b_1,b_{2}},v_{b_{1}, b_2})\in \mathrm{T}_{b_{1}}\times \mathrm{T}_{b_{2}} $ which is real valued, positive, radially symmetric and radially decreasing.
\end{lemma}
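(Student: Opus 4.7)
The plan is to realize $m_{\beta}(b_1,b_2)$ as the limit of a radial, nonnegative Palais--Smale sequence lying on the Pohozaev manifold $\mathcal{P}_{b_1,b_2}$, and then to rule out the loss of compactness caused by the Sobolev--critical quartic terms by exploiting the strict gap $m_{\beta}(b_1,b_2)<\tfrac{k_1+k_2}{4}\mathcal{S}^2$. First, using the homotopy--stable family built from the fiber map $\Psi_{u,v}$ (convex--concave by Lemma \ref{lem3.1}) together with Ekeland's principle on the $C^1$ manifold $\mathcal{P}_{b_1,b_2}$ (Lemma \ref{lem3.2}), exactly as in the argument for Lemma \ref{Lem2}, I would produce $(u_n,v_n)\subset\mathrm{T}_{b_1}\times\mathrm{T}_{b_2}$ with $J_{\beta}(u_n,v_n)\to m_{\beta}(b_1,b_2)$, $J_{\beta}'|_{\mathrm{T}_{b_1}\times\mathrm{T}_{b_2}}(u_n,v_n)\to 0$ and $P_{b_1,b_2}(u_n,v_n)\to 0$. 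Replacing $(u_n,v_n)$ by the Schwarz symmetrization of $(|u_n|,|v_n|)$, in the spirit of Lemma \ref{LM} and Lemma \ref{Lemg}, I may assume the sequence is real valued, nonnegative, radially symmetric and radially nonincreasing. Boundedness in $H^1(\R^4)\times H^1(\R^4)$ follows from the identity $J_{\beta}(u_n,v_n)=\tfrac14(\|\nabla u_n\|_2^2+\|\nabla v_n\|_2^2-\beta\int u_n^2 v_n)+o(1)$ valid on $\mathcal{P}_{b_1,b_2}$, combined with the Gagliardo--Nirenberg estimate used in Lemma \ref{lem3.1} and the smallness hypotheses $\beta b_1<3/(2|C_{4,3}|^3)$, $\beta b_2<3/|C_{4,3}|^3$.

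Passing to a subsequence, $(u_n,v_n)\rightharpoonup(u,v)$ weakly in $H^1_r\times H^1_r$; the compact embedding $H^1_r(\R^4)\hookrightarrow L^p(\R^4)$ for $p\in(2,4)$ furnishes $\int u_n^2 v_n\to\int u^2 v$. Testing the Lagrange multiplier identities as in Lemma \ref{Lem55}, the multipliers $(\lambda_{1,n},\lambda_{2,n})$ are bounded and converge to some $(\lambda_1,\lambda_2)$, and $(u,v)$ weakly solves \eqref{23}; in particular the Pohozaev identity $P_{b_1,b_2}(u,v)=0$ holds, whence $J_{\beta}(u,v)=\tfrac14\int(\mu_1 u^4+\mu_2 v^4+2\rho u^2 v^2)\ge 0$. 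Setting $\tilde u_n:=u_n-u$, $\tilde v_n:=v_n-v$, the Brezis--Lieb decompositions of $P_{b_1,b_2}$ and $J_{\beta}$, combined with $P_{b_1,b_2}(u_n,v_n)\to 0$, yield
\begin{equation*}
\ell:=\lim_{n\to\infty}\bigl(\|\nabla\tilde u_n\|_2^2+\|\nabla\tilde v_n\|_2^2\bigr)=\lim_{n\to\infty}\!\int_{\R^4}\!\bigl(\mu_1\tilde u_n^4+\mu_2\tilde v_n^4+2\rho\tilde u_n^2\tilde v_n^2\bigr).
\end{equation*}
The definition of $\mathcal{S}_{\mu_1,\mu_2,\rho}$ in \eqref{a7} then forces the dichotomy $\ell=0$ or $\ell\ge\mathcal{S}_{\mu_1,\mu_2,\rho}^2=(k_1+k_2)\mathcal{S}^2$.

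The main obstacle is ruling out the second branch of this dichotomy, i.e.\ excluding the concentration of a Sobolev--critical bubble. Here the Brezis--Lieb energy splitting gives $m_{\beta}(b_1,b_2)=J_{\beta}(u,v)+\tfrac{\ell}{4}$ in the limit, so that $\ell\ge(k_1+k_2)\mathcal{S}^2$ together with $J_{\beta}(u,v)\ge 0$ would yield $m_{\beta}(b_1,b_2)\ge\tfrac{(k_1+k_2)\mathcal{S}^2}{4}$, contradicting the standing assumption. Therefore $\ell=0$ and $\tilde u_n,\tilde v_n\to 0$ in $D^{1,2}(\R^4)$. To promote this to strong $H^1$ convergence, I would combine the test--function identity obtained from the equation against $(u_n-u,v_n-v)$ with the observation that any nontrivial radial $v\in H^1(\R^4)$ solving $-\Delta v=\mu_2 v^3$ must vanish (the Aubin--Talanti bubbles \eqref{a5} are not in $L^2(\R^4)$), which rules out the scenarios $u\equiv 0$ or $v\equiv 0$; indeed $v\equiv 0$ would force $\tfrac{\beta}{2}u^2\equiv 0$, hence $u\equiv 0$, which combined with the dichotomy would give $m_{\beta}(b_1,b_2)=0$, contradicting Lemma \ref{lem3.3}. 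Once both components are nontrivial, the strong maximum principle applied to \eqref{23} gives $u,v>0$, Lemma \ref{lem2.2} gives $\lambda_1,\lambda_2>0$, and the identity $\lambda_i(b_i^2-\|w_i\|_2^2)=0$ (with $w_1=u,w_2=v$) obtained by comparing the tested PS equation with the tested limit equation forces $\|u\|_2=b_1$, $\|v\|_2=b_2$. Weak convergence plus norm convergence gives $L^2$ strong convergence, so $(u_n,v_n)\to(u,v)$ in $H^1\times H^1$, and $(u,v)\in\mathrm{T}_{b_1}\times\mathrm{T}_{b_2}$ is the desired real valued, positive, radially symmetric and radially decreasing minimizer.
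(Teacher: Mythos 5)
Your proposal is correct and follows essentially the route the paper intends: the paper's own proof consists only of invoking Lemma \ref{Lem2} for the radial nonnegative Palais--Smale sequence with $P_{b_1,b_2}(u_n,v_n)\to 0$ and then deferring the compactness argument to Lemma 5.5 of \cite{LYZ}, and the Brezis--Lieb splitting, the dichotomy $\ell=0$ or $\ell\ge \mathcal{S}^2_{\mu_1,\mu_2,\rho}=(k_1+k_2)\mathcal{S}^2$, and the energy comparison with $\tfrac{k_1+k_2}{4}\mathcal{S}^2$ are exactly the standard argument being referenced. The only step you should tighten is the exclusion of the semitrivial limit $u\equiv 0$, $v\not\equiv 0$: the limit equation is $-\Delta v+\lambda_2 v=\mu_2 v^3$ rather than $-\Delta v=\mu_2 v^3$, so you must first apply the Pohozaev identity for this scalar critical equation in $\mathbb{R}^4$ (which yields $\lambda_2\|v\|^2_{L^2(\mathbb{R}^4)}=0$, hence $\lambda_2=0$ whenever $v\neq 0$) before invoking the fact that nontrivial $D^{1,2}$ solutions of $-\Delta v=\mu_2 v^3$ are Aubin--Talenti bubbles and fail to lie in $L^2(\mathbb{R}^4)$.
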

\begin{proof}
By Lemma \ref{Lem2}, we can also find a radial Palais-Smale sequence for $J_\beta\big|_{\mathrm{T}_{b_{1}}\times \mathrm{T}_{b_{2}} }$ at level $m_{\beta}(b_1,b_2)$ such that $P(u_n,v_n)\to 0$ and $u^{-}_n,v^{-}_n\to 0$ a.e. in $\R^4$. The rest of proof is similar to that of Lemma 5.5 in \cite{LYZ}, and just needs a slight modification..
\end{proof}

\begin{lemma}\label{lem3.6}
Let $\mu_i,\beta>0(i=1,2)$, and $\rho\in \big(0,\min\{\mu_1,\mu_2\}\big)\cup\big(\max\{\mu_1,\mu_2\},\infty\big)$.
If $0<b_1<\frac{3}{2\beta|C_{4,3}|^3}$ and $0<b_2<\frac{3}{\beta|C_{4,3}|^3}$, for any ground state $(u_{b_1,b_{2}},v_{b_{1},b_2})$ of \eqref{23}-\eqref{24}, then
\begin{equation*}
J_{\beta}(u_{b_1,b_{2}},v_{b_1,b_{2}})=m_{\beta}(b_1,b_2)\to\frac{k_1+k_2}{4}\mathcal{S}^2\ \ \text{as} \ \ (b_1,b_2)\to (0,0).
\end{equation*}
Moreover, there exists $\varepsilon_{b_1,b_2}>0$ such that
\begin{equation*}
\big(\sigma_1 u_{b_1,b_{2}}(\sigma_1 x),\sigma_1 v_{b_1,b_{2}}(\sigma_1 x)\big)\to (\sqrt{k_1}U_{\varepsilon_0},\sqrt{k_2}U_{\varepsilon_0})\ \ \text{in} \ D^{1,2}(\mathbb{R}^{4})\times D^{1,2}(\mathbb{R}^{4}),
\end{equation*}
for some $\varepsilon_0>0$ as $(b_1,b_2)\to (0,0)$ up to a subsequence, where
$k_1=\frac{\rho-\mu_2}{\rho^2-\mu_1\mu_2}$ and $k_2=\frac{\rho-\mu_1}{\rho^2-\mu_1\mu_2}$.
\end{lemma}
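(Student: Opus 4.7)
The plan is to first establish the energy convergence $m_\beta(b_1,b_2)\to\tfrac{k_1+k_2}{4}\mathcal{S}^2$, derive a corresponding energy splitting, and then upgrade this to a $D^{1,2}$-profile convergence after a suitable rescaling. Since $(u_{b_1,b_2},v_{b_1,b_2})\in\mathcal{P}_{b_1,b_2}$, the Pohozaev identity $P_{b_1,b_2}(u_{b_1,b_2},v_{b_1,b_2})=0$ collapses the energy to
\[
J_\beta(u_{b_1,b_2},v_{b_1,b_2}) = \tfrac{1}{4}T_{b_1,b_2} - \tfrac{\beta}{4}\!\int_{\R^4} u_{b_1,b_2}^2 v_{b_1,b_2}\,dx,
\]
where $T_{b_1,b_2}:=\|\nabla u_{b_1,b_2}\|_{L^2(\R^4)}^2+\|\nabla v_{b_1,b_2}\|_{L^2(\R^4)}^2$. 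Using the definition of $\mathcal{S}_{\mu_1,\mu_2,\rho}$ in \eqref{a7} together with the same Pohozaev identity yields $T_{b_1,b_2}-\beta\!\int u_{b_1,b_2}^2 v_{b_1,b_2}\,dx \le ((k_1+k_2)\mathcal{S}^2)^{-1} T_{b_1,b_2}^2$, while the Gagliardo--Nirenberg estimate as in \eqref{LA13} yields $\beta\!\int u_{b_1,b_2}^2 v_{b_1,b_2}\,dx\le (1-\alpha_{b_1,b_2})T_{b_1,b_2}$, with $\alpha_{b_1,b_2}:=1-\max\{\tfrac{2\beta C_{4,3}^3 b_1}{3},\tfrac{\beta C_{4,3}^3 b_2}{3}\}\to 1$. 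Chaining these inequalities forces $T_{b_1,b_2}\ge\alpha_{b_1,b_2}(k_1+k_2)\mathcal{S}^2$ and hence $m_\beta(b_1,b_2)\ge\tfrac{\alpha_{b_1,b_2}^2(k_1+k_2)\mathcal{S}^2}{4}$; combined with the upper bound $m_\beta(b_1,b_2)<\tfrac{k_1+k_2}{4}\mathcal{S}^2$ of Lemma \ref{lem3.4}, this gives the first claim.

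A closer inspection of the same chain of inequalities produces the energy splitting
\[
T_{b_1,b_2}\to(k_1+k_2)\mathcal{S}^2,\qquad M_{b_1,b_2}:=\mu_1\|u_{b_1,b_2}\|_{L^4}^4+\mu_2\|v_{b_1,b_2}\|_{L^4}^4+2\rho\|u_{b_1,b_2}v_{b_1,b_2}\|_{L^2}^2\to(k_1+k_2)\mathcal{S}^2,
\]
because $\beta\!\int u_{b_1,b_2}^2 v_{b_1,b_2}\,dx = T_{b_1,b_2}-M_{b_1,b_2} = T_{b_1,b_2}-4m_\beta(b_1,b_2)\to 0$. In particular $(u_{b_1,b_2},v_{b_1,b_2})$ is an optimizing sequence for the Sobolev constant $\mathcal{S}_{\mu_1,\mu_2,\rho}$ defined in \eqref{a7}. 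To extract a genuine profile I introduce the $D^{1,2}$-invariant rescaling
$(\hat u,\hat v)(x):=(\sigma_1 u_{b_1,b_2}(\sigma_1 x),\sigma_1 v_{b_1,b_2}(\sigma_1 x))$
and choose $\sigma_1=\sigma_1(b_1,b_2)>0$ so that neither concentration nor vanishing occurs in the rescaled sequence; a convenient normalization is $\hat u(0)^2+\hat v(0)^2=1$, which is admissible because $u_{b_1,b_2},v_{b_1,b_2}$ are smooth, positive and radially decreasing (so their maxima are attained at the origin) in view of Lemmas \ref{lem2.1} and \ref{lem3.5}. This scaling preserves both the numerator and denominator in \eqref{a7}, so $(\hat u,\hat v)$ remains an optimizing sequence for $\mathcal{S}_{\mu_1,\mu_2,\rho}$, while its $L^2$ masses vanish in the limit.

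Along a subsequence, $(\hat u,\hat v)\rightharpoonup(\bar u,\bar v)$ weakly in $D^{1,2}(\R^4)\times D^{1,2}(\R^4)$; the pointwise normalization at the origin combined with the radially decreasing property and standard elliptic interior estimates for the rescaled equation prevents $(\bar u,\bar v)\equiv(0,0)$, so the limit is nontrivial. Since the Sobolev quotient is achieved in the limit, dichotomy is excluded by the strict subadditivity of $\mathcal{S}_{\mu_1,\mu_2,\rho}$, and the convergence is strong in $D^{1,2}\times D^{1,2}$; the limit $(\bar u,\bar v)$ is then a nontrivial nonnegative radial minimizer of \eqref{a7}, hence a least energy solution of \eqref{a6} after a harmless renormalization of $\sigma_1$. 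Under the hypothesis $\rho\in(0,\min\{\mu_1,\mu_2\})\cup(\max\{\mu_1,\mu_2\},\infty)$, Lemma \ref{lem2.5} identifies $(\bar u,\bar v)=(\sqrt{k_1}U_{\varepsilon_0},\sqrt{k_2}U_{\varepsilon_0})$ for some $\varepsilon_0>0$, giving the desired convergence. The main obstacle will be to rule out concentration/vanishing of $(\hat u,\hat v)$ without any $L^2$ control (since the masses of the rescaled sequence tend to $0$); this is handled through the pointwise normalization, the radial-decreasing structure, and the uniqueness statement of Lemma \ref{lem2.5}, following the profile-decomposition analysis of Lemma 3.5 in \cite{LL} invoked by the authors.
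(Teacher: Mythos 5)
Your proposal is correct, and the first half takes a genuinely different (and in fact cleaner) route than the paper. For the energy convergence, the paper argues by dichotomy: it first shows $\beta\int u^2v\,dx\to 0$ directly via H\"older and Gagliardo--Nirenberg, splits into the cases $T_{b_1,b_2}\to 0$ and $T_{b_1,b_2}\to l>0$, and rules out vanishing by invoking the second-order information $\Psi_{u,v}''(0)<0$, which yields $T<2M+\beta\int u^2v\,dx$ and hence a contradiction with the coercivity constants. You instead chain the Sobolev-type inequality \eqref{a7} (written as $M\le T^2/((k_1+k_2)\mathcal{S}^2)$ via $P_{b_1,b_2}=0$) with the Gagliardo--Nirenberg control $\beta\int u^2v\,dx\le(1-\alpha_{b_1,b_2})T$ to obtain the quantitative lower bound $T\ge\alpha_{b_1,b_2}(k_1+k_2)\mathcal{S}^2$ in one stroke; this dispenses with the case analysis and with any use of $\Psi''$, and it gives an explicit rate $m_\beta(b_1,b_2)\ge\frac{\alpha_{b_1,b_2}^2(k_1+k_2)}{4}\mathcal{S}^2$ that the paper's argument does not. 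Your identities $M=4m_\beta(b_1,b_2)$ and $\beta\int u^2v\,dx=T-M$ then recover the same energy splitting the paper derives. For the profile convergence the two arguments coincide in substance: both reduce to the observation that $(u_{b_1,b_2},v_{b_1,b_2})$ is a radial minimizing sequence for \eqref{a7} and then appeal to the compactness of such sequences ([WM, Theorem 1.41] or [LL, Lemma 3.5]) together with the uniqueness in Lemma \ref{lem2.5}. The only point where you are thinner than you should be is the non-vanishing of the rescaled sequence: your pointwise normalization $\hat u(0)^2+\hat v(0)^2=1$ requires control of the rescaled Lagrange multipliers to run the elliptic estimates, whereas the cited compactness results normalize by the (scale-invariant) gradient mass; but since you explicitly defer to the same profile-decomposition reference the authors cite, this matches the paper's own level of detail.
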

\begin{proof}
By $P_{b_1,b_2}(u_{b_1},v_{b_2})=0$, we have
\begin{equation*}
\begin{aligned}
\frac{k_1+k_2}{4}\mathcal{S}^2&>J_{\beta}(u_{b_1,b_{2}},v_{b_{1},b_2})=\frac{1}{4}\big(\|\nabla u_{b_1,b_{2}}\|^2_{L^2(\mathbb{R}^{4})}+\|\nabla v_{b_1,b_{2}}\|^2_{L^2(\mathbb{R}^{4})}-\beta\int_{\mathbb{R}^{4}}|u_{b_1,b_{2}}|^2v_{b_1,b_{2}} dx \big)\\
&\ge \frac{1}{4}\big(1-\frac{2\beta b_1}{3}|C_{4,3}|^3\big)\|\nabla u_{b_1,b_{2}}\|^2_{L^2(\mathbb{R}^{4})}+\frac{1}{4}\big(1-\frac{\beta b_2}{3}|C_{4,3}|^3\big)\|\nabla v_{b_1,b_{2}}\|^2_{L^2(\mathbb{R}^{4})},
\end{aligned}
\end{equation*}
then $\{(u_{b_1,b_{2}},v_{b_1,b_{2}})\}$ is bounded in $H^1(\mathbb{R}^{4})\times H^1(\mathbb{R}^{4})$.  Using the H\"{o}lder and Gagliardo-Nirenberg inequality again,
\begin{equation*}
\beta\int_{\mathbb{R}^{4}}|u_{b_1,b_{2}}|^2v_{b_1,b_{2}}dx\le \beta|C_{4,3}|^{3}\|\nabla u_{b_1,b_{2}}\|^{\frac{4}{3}}_{L^2(\mathbb{R}^{4})}\|\nabla v_{b_1,b_{2}}\|^{\frac{2}{3}}_{L^2(\mathbb{R}^{4})}b^2_1b_2\to 0,
\end{equation*}
as $(b_1,b_2)\to (0,0)$. Therefore, it follows from $P(u_{b_1,b_{2}},v_{b_1,b_{2}})=0$ that
\begin{align*}
\|\nabla u_{b_1,b_{2}}\|^2_{L^2(\mathbb{R}^{4})}+\|\nabla v_{b_1,b_{2}}\|^2_{L^2(\mathbb{R}^{4})}=\mu_1\|u_{b_1,b_{2}}\|^4_{L^4(\mathbb{R}^{4})}+\mu_2\|v_{b_1,b_{2}}\|^4_{L^4(\mathbb{R}^{4})}+2\rho\|u_{b_1,b_{2}}v_{b_1,b_{2}}\|^2_{L^2(\mathbb{R}^{4})}+o(1).
\end{align*}
From \eqref{a7}, we have
\begin{align*}
\sqrt{k_1+k_2}\mathcal{S}\big(\mu_1\|u_{b_1,b_{2}}\|^{4}_{L^4(\mathbb{R}^{4})}+\mu_2\|v_{b_1,b_{2}}\|^{4}_{L^4(\mathbb{R}^{4})}+2\beta\|u_{b_1,b_{2}}v_{b_1,b_{2}}\|^2_{L^2(\mathbb{R}^{4})}\big)^{\frac{1}{2}}
\le ||\nabla u_{b_1,b_{2}}||^2_{L^2(\mathbb{R}^{4})}+\|\nabla v_{b_1,b_{2}}\|^2_{L^2(\mathbb{R}^{4})}.
\end{align*}
Thus, we distinguish the two cases
\begin{align*}
\text{either}\ \ (i)\ \ ||\nabla u_{b_1,b_{2}}||^2_{L^2(\mathbb{R}^{4})}+\|\nabla v_{b_1,b_{2}}\|^2_{L^2(\mathbb{R}^{4})}\to 0\ \ \text{or}(ii) \ \ ||\nabla u_{b_1,b_{2}}||^2_{L^2(\mathbb{R}^{4})}+\|\nabla v_{b_1,b_{2}}\|^2_{L^2(\mathbb{R}^{4})}\to l>0.
\end{align*}
We claim that $(i)$ is impossible. Indeed, if $l=0$, by $\Psi''_{u_{b_1,b_{2}},v_{b_1,b_{2}}}(0)<0$,
\begin{align*}
\|\nabla u_{b_1,b_{2}}\|^2_{L^2(\mathbb{R}^{4})}+\|\nabla v_{b_1,b_{2}}\|^2_{L^2(\mathbb{R}^{4})}
&<2\big[\mu_1\|u_{b_1,b_{2}}\|^4_{L^4(\mathbb{R}^{4})}+\mu_2\|v_{b_1,b_{2}}\|^4_{L^4(\mathbb{R}^{4})}\\
&\quad +2\rho\|u_{b_1,b_{2}}v_{b_1,b_{2}}\|^2_{L^2(\mathbb{R}^{4})} \big]+\int_{\mathbb{R}^{4}}\beta|u_{b_1,b_{2}}|^2v_{b_1,b_{2}}dx,
\end{align*}
we obtain a contradiction
\begin{align*}
&\big(1-\frac{2\beta b_1}{3}|C_{4,3}|^3\big)\|\nabla u_{b_1,b_{2}}\|^2_{L^2(\mathbb{R}^{4})}+\big(1-\frac{\beta b_2}{3}|C_{4,3}|^3\big)\|\nabla v_{b_1,b_{2}}\|^2_{L^2(\mathbb{R}^{4})}\\
&<\frac{2}{\mathcal{S}^2_{\mu_1,\mu_2,\rho}}\big[\|\nabla u_{b_1,b_{2}}\|^2_{L^2(\mathbb{R}^{4})}+\|\nabla v_{b_1,b_{2}}\|^2_{L^2(\mathbb{R}^{4})}\big]^2.
\end{align*}
The claim is proved. Therefore, $l>0$, we obtain that $J_{\beta}(u_{b_1,b_{2}},v_{b_1,b_{2}})\to \frac{k_1+k_2}{4}\mathcal{S}^2$ as $(b_1,b_2)\to (0,0)$.

Thus, we know that
\begin{align*}
&||\nabla u_{b_1,b_{2}}||^2_{L^2(\mathbb{R}^{4})}+\|\nabla v_{b_1,b_{2}}\|^2_{L^2(\mathbb{R}^{4})}\to (k_1+k_2)\mathcal{S}^2,\\ &\mu_1\|u_{b_1,b_{2}}\|^{4}_{L^4(\mathbb{R}^{4})}+\mu_2\|v_{b_1,b_{2}}\|^{4}_{L^4(\mathbb{R}^{4})}+2\beta\|u_{b_1,b_{2}}v_{b_1,b_{2}}\|^2_{L^2(\mathbb{R}^{4})}\to (k_1+k_2)\mathcal{S}^2,
\end{align*}
as $(b_1,b_2)\to (0,0)$. It follows that, up to a subsequence, $\{(u_{b_1,b_{2}},v_{b_1,b_{2}})\}$ is a minimizing sequence of the minimizing problem \eqref{a7}. From Lemma \ref{lem3.5}, $(u_{b_1,b_{2}},v_{b_1,b_{2}})$ is radially symmetric. By \cite[Theorem 1.41]{WM} or \cite[Lemma 3.5]{LL}, up to a subsequence, there exists $\sigma_1:=\sigma_1(a_1,a_2)$ such that for some $\varepsilon_0>0$,
\begin{equation*}
\big(\sigma_1 u_{b_1,b_{2}}(\sigma_1 x),\sigma_1 v_{b_1,b_{2}}(\sigma_1 x)\big)\to \big(\sqrt{k_1}U_{\varepsilon_0},\sqrt{k_2}U_{\varepsilon_0}\big)
\end{equation*}
in $D^{1,2}(\mathbb{R}^{4})\times D^{1,2}(\mathbb{R}^{4})$, as $(b_1,b_2)\to (0,0)$.
\end{proof}

\noindent \textbf{Proof of Theorem 1.5.}
The proof is finished when we combine Lemma \ref{lem3.5} and Lemma \ref{lem3.6}.
\qed

{\bf Acknowledgements: }
 The research of J. Wei is partially supported by NSERC of Canada. X. Luo was supported by the
National Natural Science Foundation of China (Grant No. 11901147) and the Fundamental Research Funds for the Central University of China(Grant No. JZ2020HGTB0030). Maoding Zhen was supported by the Fundamental Research Funds for the Central Universities (Grant No. JZ2021HGTA0177).

\end{document}